\def\blfootnote{\xdef\@thefnmark{}\@footnotetext}
\newcommand\ccnote{
    \blfootnote{\copyright\,\, Dallas Albritton, Jacob Bedrossian, and Matthew Novack}
    \blfootnote{\ccLogo\, \ccAttribution\,\, Licensed under a \href{https://creativecommons.org/licenses/by/4.0/}{Creative Commons Attribution License (CC-BY)}.}
}
\numberwithin{equation}{section}
\renewcommand{\leq}{\leqslant}
\renewcommand{\geq}{\geqslant}
\renewcommand{\mathbb}{\varmathbb}
\newtheorem{theorem}{Theorem}[section]
\newtheorem{lemma}[theorem]{Lemma}
\newtheorem{corollary}[theorem]{Corollary}
\newtheorem{proposition}[theorem]{Proposition}
\newtheorem{remark}[theorem]{Remark}
\newcommand{\eps}{\epsilon}
\newcommand{\norm}[1]{\left\| #1 \right\|}
\newcommand{\normm}[1]{\left| #1 \right|}
\newcommand{\abs}[1]{\left| #1 \right|}
\newcommand{\set}[1]{\left\{ #1 \right\}}
\newcommand{\brak}[1]{\left\langle #1 \right\rangle} 
\newcommand{\R}{\mathbb{R}}
\newcommand{\N}{\mathbb{N}}
\newcommand{\C}{\mathbb{C}}
\newcommand{\cM}{\mathcal{M}}
\newcommand{\dee}{d} 
\newcommand{\red}[1]{\textcolor{red}{#1}}
\DeclareMathOperator{\Div}{\mathrm{div}}
\DeclareMathOperator{\Id}{\mathrm{Id}}
\newcommand{\p}{\partial}
\renewcommand{\div}{\Div}
\newtheorem*{lemma*}{Lemma}
\theoremstyle{definition}
\newcommand{\dacomment}[1]{\marginpar{\raggedright\scriptsize{\textcolor{red}{#1}}}}
\renewcommand{\leq}{\leqslant}
\renewcommand{\geq}{\geqslant}
\renewcommand{\N}{\mathbb{N}}
\renewcommand{\R}{\mathbb{R}}
\renewcommand{\eps}{\varepsilon}
\renewcommand{\subset}{\subseteq}
\renewcommand{\subset}{\subseteq}
\newcommand{\les}{\lesssim}
\newcommand{\la}{\langle}
\newcommand{\ra}{\rangle}
\renewcommand{\sf}[1]{\mathsf{#1}}
\numberwithin{equation}{section}
\address{Dallas Albritton, Department of Mathematics, University of Wisconsin--Madison, Madison, WI 53706, USA}
\email{dalbritton@wisc.edu}
\address{Jacob Bedrossian, Department of Mathematics, University of California, Los Angeles, Los Angeles, CA 90095, USA} 
\email{jacob@math.ucla.edu}
\address{Matthew Novack, Purdue University, West Lafayette, IN 47907-2067, USA}
\email{mdnovack@purdue.edu}
\begin{document}

\thispagestyle{empty}

\begin{minipage}{0.28\textwidth}
\begin{figure}[H]
\includegraphics[width=2.5cm,height=2.5cm,left]{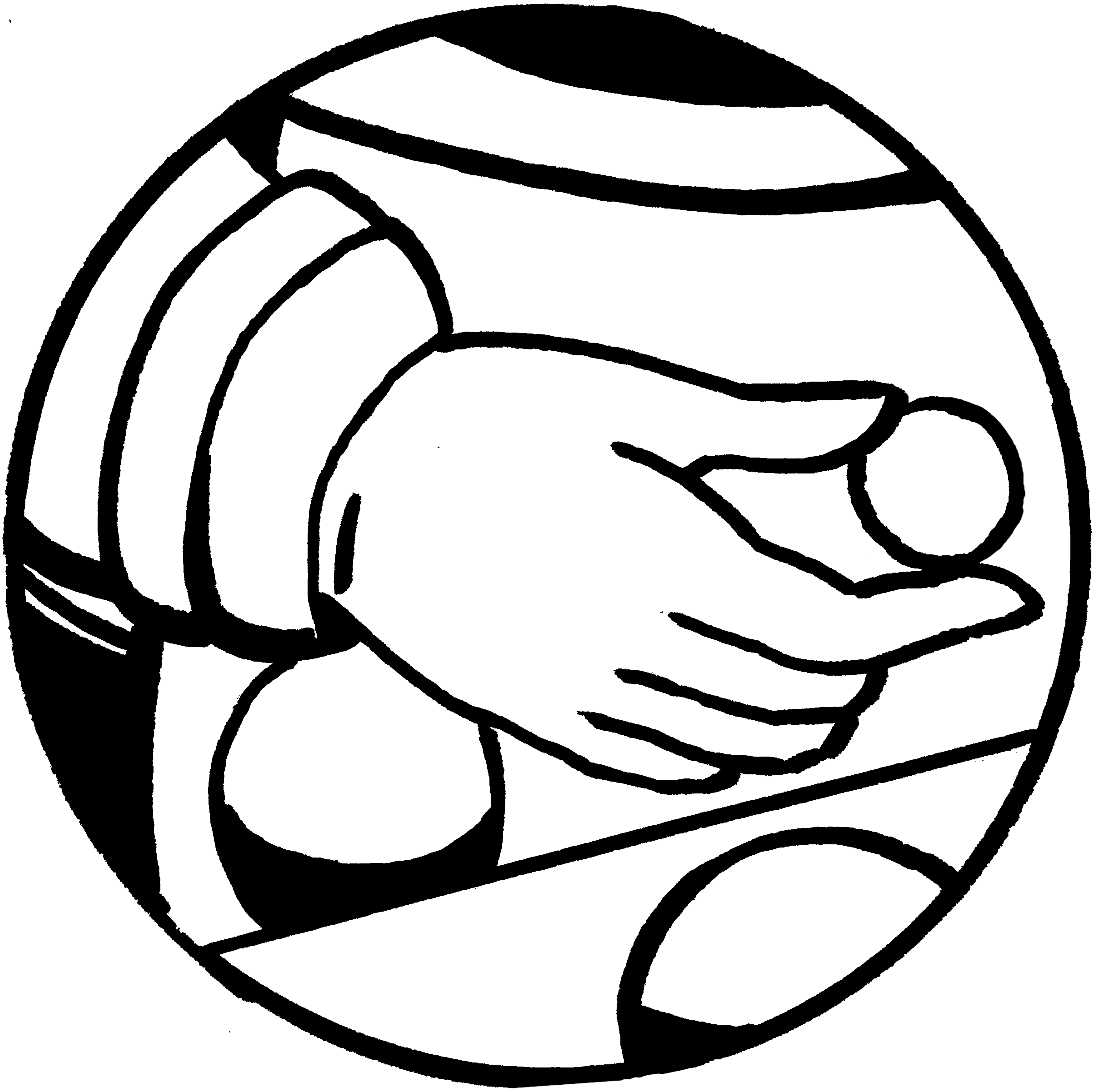}
\end{figure}
\end{minipage}
\begin{minipage}{0.7\textwidth} 
\begin{flushright}
Ars Inveniendi Analytica (2026), Paper No. 1, 87 pp.
\\
DOI 10.15781/5z5b3583
\\
ISSN: 2769-8505
\end{flushright}
\end{minipage}

\ccnote

\vspace{1cm}


\begin{center}
\begin{huge}
\textit{Kinetic shock profiles for the Landau equation}


\end{huge}
\end{center}

\vspace{1cm}


\begin{minipage}[t]{.28\textwidth}
\begin{center}
{\large{\bf{Dallas Albritton}}} \\
\vskip0.15cm
\footnotesize{University of Wisconsin--Madison}
\end{center}
\end{minipage}
\hfill
\noindent
\begin{minipage}[t]{.28\textwidth}
\begin{center}
{\large{\bf{Jacob Bedrossian}}} \\
\vskip0.15cm
\footnotesize{University of California, \\ Los Angeles}
\end{center}
\end{minipage}
\hfill
\noindent
\begin{minipage}[t]{.28\textwidth}
\begin{center}
{\large{\bf{Matthew Novack}}} \\
\vskip0.15cm
\footnotesize{Purdue University} 
\end{center}
\end{minipage}

\vspace{1cm}


\begin{center}
\noindent \em{Communicated by Laurent Desvillettes}
\end{center}
\vspace{1cm}


\noindent \textbf{Abstract.} \textit{The physical quantities in a gas should vary continuously across a shock. However, the physics inherent in the compressible Euler equations is insufficient to describe the width or structure of the shock. We demonstrate the existence of weak shock profiles to the kinetic Landau equation, that is, traveling wave solutions with Maxwellian asymptotic states whose hydrodynamic quantities satisfy the Rankine-Hugoniot conditions. These solutions serve to capture the structure of weak shocks at the kinetic level. Previous works considered only the Boltzmann equation with hard sphere and angular cut-off potentials.}
\vskip0.3cm

\setcounter{tocdepth}{2}
{\small\tableofcontents}

\noindent \textbf{Keywords.}  Kinetic theory, Landau equation, shock profiles. 
\vspace{0.5cm}

\section{Introduction}\label{sec:Intro}

The most classical model of gas dynamics is the compressible Euler equations
\begin{align*}
  & \partial_t \varrho + \div_x (\varrho u) = 0 \\
  & \partial_t (\varrho u) + \div_x (\varrho u \otimes u + p {\rm I}) = 0 \\
  & \partial_t \left(\varrho \left( e + \frac{\abs{u}^2}{2} \right) \right) + \div_x \left( u \varrho \left(e + \frac{1}{2}\abs{u}^2\right) + up \right) = 0 \, ,
\end{align*}
with equation of state $p = (\gamma - 1) \varrho e = \varrho \theta$ and adiabatic exponent $\gamma > 1$. As usual, $\varrho$ represents the density, $u$ the fluid velocity, $\theta$ the temperature, $e$ the internal energy density, and $p$ the pressure. 
While this system can form discontinuous shock waves in finite time, the Rankine-Hugoniot conditions correctly predict the speed of planar shocks when compared to experimental evidence. 
However, the physical quantities in a gas should vary continuously, and the physics inherent in the compressible Euler equations is insufficient to describe the width or structure of the inner `shock layer' (see early discussions in, e.g.,~\cite{stokes1848liv,Taylor1910}). 
On the other hand, the compressible Navier-Stokes equations~\eqref{eq:compressibleNSintro} admit smooth traveling wave solutions, alternatively called `viscous shocks' or `shock profiles', moving at the speed $s$ dictated by the Rankine-Hugoniot conditions applied to the asymptotic values $(\varrho_L,u_L,\theta_L)$ as $x \to -\infty$ and $(\varrho_R, u_R, \theta_R)$ as $x \to\infty$~\cite{weyl1949shock,gilbarg1951existence}.
For weak shocks, for which the jump between end states is small, the Navier-Stokes profiles are observed to be approximately consistent with experiments.  However, it has been long known that the Navier-Stokes shock profiles differ significantly from experiments in the large shock regime; see, for example, the early experiments \cite{schmidt1969,Alsmeyer1976} and the early numerical computations \cite{Liepmann1962,Bird1970} and discussions in the survey \cite{fiszdon1976plane}. Various models for larger shocks have also been studied, such as bimodal models as in \cite{MottSmith1951} or the Burnett equations \cite{Salomons1992}.  However, simulations of kinetic models such as the Boltzmann equation have yielded consistently better results \cite{Bird1970,Ohwada1993,Dodulad2013,malkov2015high,sharipov2018} for larger shocks.

Shock layer profiles at the kinetic level are traveling wave solutions which approach local Maxwellians asymptotically as $x\rightarrow \mp \infty$ with the associated hydrodynamic moments 
\begin{equation}\notag
\lim_{x \to \mp \infty} F(x,v) = \frac{ \varrho_{\sfrac{L}{R}} }{( 2\pi \theta_{\sfrac{L}{R}} )^{3/2} } e^{- \frac{ \abs{v-u_{\sfrac{L}{R}} }^2 }{ 2\theta_{\sfrac{L}{R}} } }  \, . 
\end{equation}
The work of Caflisch and Nicolaenko~\cite{CN82} provided a mathematically rigorous construction of small shock profiles for the Boltzmann equation with hard cut-off potentials (see also the earlier work~\cite{NT75}). 
Later, M{\'e}tivier and Zumbrun~\cite{MZ08} gave an alternative proof with quantitative improvements for hard spheres.

In this work, we remove the restriction to hard, cut-off potentials by constructing small shock profiles governed by the Landau collision operator.
More precisely, we construct solutions $F(t,x,v) = F(x-s_0 t,v)$ to the traveling wave Landau equation
\begin{align}
(v_1 - s_0) \partial_x F = Q(F,F)  \, , \qquad (x,v)\in \R \times \R^3 \, ,\label{def:LandauIntro}
\end{align}
where $s_0$ is the shock speed.
Note that these are solutions to the full three-dimensional equations with one-dimensional spatial dependence. 
The collision operator is defined by\index{$Q(F,G)$}
\begin{align}
Q(F_1,F_2)(v) & = \nabla_v \cdot \left( \int_{\R^3} \phi(v-u) \left( F_1(u)\nabla_v F_2(v) - F_2(v) \nabla_u F_1(u) \right) \, \dee u \right) \notag\\
\phi^{ij}(v) &= \left( \delta^{ij} - \frac{v^i v^j}{|v|^2} \right) |v|^{-1}  \, . \label{symmed}
\end{align}
The equilibria of $Q$ (that is, $\mu$ satisfying $Q(\mu,\mu) = 0$) are precisely the Maxwellians
\begin{equation}
\label{eq:parameterizationofMaxwellians}
\mu(\varrho,u,\theta) = \frac{\varrho}{(2 \pi \theta)^{3/2}} e^{-\frac{\abs{v - u}^2}{2\theta}} \, .
\end{equation}
We denote by $\mu_0$ the standard Maxwellian $\mu(1,0,1)$.
Given any distribution $F$, we can extract its mass $\varrho$, momentum $m = \varrho u$, and energy $E = \sfrac{\varrho}{2} (|u|^2 + 3\theta)$ (therefore, also its velocity $u$ and temperature $\theta$) by computing moments:
\begin{equation}\notag
(\varrho,m,E) = \int F (1,v,\sfrac{\abs{v}^2}{2}) \, dv \, .
\end{equation}
These quantities are preserved by collisions, as represented by
\begin{align*}
\int Q(F,G) (1, v, \sfrac{\abs{v}^2}{2}) \, \dee v = 0 \, , \quad \forall F, G \, .
\end{align*}
From~\eqref{def:LandauIntro}, the end states of $F$ should be Maxwellians. Moreover, we integrate the equation~\eqref{def:LandauIntro} against $(1,v,\sfrac{\abs{v}^2}{2})$ on $\R \times \R^3$ to obtain that they should satisfy the Rankine-Hugoniot conditions
\begin{equation}\notag
\begin{aligned}
s_0 \llbracket \varrho \rrbracket &= \llbracket m \rrbracket \\
s_0 \llbracket m \rrbracket &= \llbracket u_1 m + p(1,0,0) \rrbracket \\
s_0 \llbracket E \rrbracket &= \llbracket (u_1 (E+p) ) \rrbracket \, ,
\end{aligned}
\end{equation}
where $\llbracket \cdot \rrbracket = (\cdot) |_{x = -\infty} - (\cdot)|_{x = +\infty}$ is the jump in the quantity $(\cdot)$.

In fact, the small-shock regime can be studied via the classical Chapman-Enskog expansion for hydrodynamics limits, as discussed in more detail below in Section~\ref{sec:outline}. This is because small shocks with jump $\varepsilon = |\llbracket (u,m,E) \rrbracket| \ll 1$ vary on characteristic length scale $\ell = \varepsilon^{-1} \gg 1$,\index{$\varepsilon$} and therefore the Knudsen number $\textrm{Kn} = \textrm{mean free path}/\textrm{macroscopic length scale} \sim \varepsilon \ll 1$ is small. 
In this limit, the 1D compressible Navier-Stokes equations
\begin{align}
	\label{eq:compressibleNSintro}
\begin{cases}
&\partial_x ( \varrho (u_1-s_0) )(t,x) = 0 \, , \\
&\partial_x (\varrho u_1 (u_1-s_0) + p)(t,x) = \partial_x \left( {\frac 43 \mu(\theta)}  \partial_x u_1 \right) \, , \\
&\partial_x (\varrho u_i (u_1-s_0) ) = \partial_x(\mu(\theta) \partial_x u_i) \qquad\qquad\qquad (i=2,3) \, ,  \\
&\partial_x \left( \varrho \left( e + \frac{|u|^2}{2} \right) (u_1-s_0) + p u_1 \right) = \partial_x (\kappa(\theta) \partial_x \theta) \\
&\qquad \qquad \qquad + \partial_x \left( {\frac 43 \mu(\theta)} u_1 \partial_x u_1 \right) + \partial_x \left(\mu(\theta)u_2 \partial_x u_2 \right) + \partial_x \left(\mu(\theta)u_3 \partial_x u_3 \right) \, , 
\end{cases}
\end{align}
appear naturally.\footnote{More precisely, the 3D compressible Navier-Stokes equations in traveling wave form with one-dimensional spatial dependence.} The pressure is given by $p=\rho \theta$, and the viscosity coefficients $\mu$ and $\kappa$ are smooth, positive functions of $\theta > 0$; see Appendix~\ref{sec:chapmanenskogcomputations}. 
We reprove in Appendix~\ref{sec:appendixode} the classical result that for $\varepsilon$ sufficiently small, there exists a unique (up to spatial translations) solution $\left( \varrho_{\rm NS}, u_{\rm NS}, \theta_{\rm NS} \right)$ to~\eqref{eq:compressibleNSintro} with end states satisfying the Rankine-Hugoniot conditions. 
As expected based on experiments and existing results for the cut-off Boltzmann equation, the solution we construct of \eqref{def:LandauIntro}~is a small perturbation of this Navier-Stokes profile. 

Upon applying a Galilean transformation (see Remark~\ref{rmk:galileaninvariance}), we normalize the left end state $\mu_L$\index{$\mu_L$} to be the Maxwellian with $(\varrho_L,u_L,\theta_L) = (1,\varepsilon,1)$ and $0 < \varepsilon \ll 1$. It will be convenient to fix the shock speed $s_0 := \sqrt{\sfrac{5}{3}}$.\footnote{There is some freedom in the choice of normalization; typical choices are (i) fix $\mu_L = \mu_0$, so that the shock speed is $\sqrt{\sfrac{5}{3}} - \varepsilon$, or (ii) fix shock speed zero. There are some technical advantages to our convention.} \index{$s_0$} Then locally there is a unique right end-state $\mu_R$\index{$\mu_R$} with $(\varrho_R,u_R,\theta_R)(\varepsilon)$ satisfying the Rankine-Hugoniot conditions (see the discussion around~\eqref{eq:urparam}) such that the jump between end states is $O(\varepsilon)$.


With this notation, we now state the main theorem. 

\begin{theorem} \label{thm:main}
Let $N \geq 8$ and $0 \leq q_0 < 1$. Let $0 < \varepsilon \ll_{N,q_0} 1$. There exists a smooth shock profile solution $F$ to the Landau equations
\begin{equation}\notag
(v_1 - s_0)\p_x F = Q(F,F)
\end{equation}
satisfying the decomposition
\begin{equation}\notag
F = F_{\rm NS} + \mu_0^{\sfrac{1}{2}} f \, ,
\end{equation}
where $F_{\rm NS}$,\index{$F_{\rm NS}$} defined in~\eqref{def:fNS}, is a `lifting' to the kinetic setting of the unique, up to spatial translations, compressible Navier-Stokes shock profile $(\varrho_{\rm NS},u_{\rm NS},\theta_{\rm NS})$. For any $0 \leq \delta \ll_{N,q_0} 1$, there exists a constant $C_0 = C_0(N,q_0) > 0$ such that for any $\alpha \in \N$ and multi-index $\beta \in \N^3$ with $|\alpha| + |\beta| \leq N$, the remainder $f$ satisfies
\begin{equation}
	\label{eq:estimatetobesatisfied}
\left\| e^{\delta \langle \varepsilon x \rangle^{\sfrac{1}{2}}} \mu_0^{-q_0} \varepsilon^{-\alpha} \p_x^\alpha \p_v^\beta f \right\|_{L^2} \leq C_0 \varepsilon^2 \, .
\end{equation}
The solution is unique, up to spatial translations, in the class of solutions satisfying the decomposition and the estimate~\eqref{eq:estimatetobesatisfied} with $\delta = q_0 = 0$ for all $|\alpha|+|\beta| \leq 8$.
\end{theorem}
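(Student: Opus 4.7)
The plan is to carry out a perturbative construction around the Chapman--Enskog lifting $F_{\rm NS}$. Setting $F = F_{\rm NS} + \mu_0^{1/2} f$ and substituting into~\eqref{def:LandauIntro} yields an equation of the schematic form
\begin{equation*}
(v_1 - s_0)\partial_x f - \mathcal{L}_\varepsilon f = \mathcal{N}(f) + R_\varepsilon \, ,
\end{equation*}
where $\mathcal{L}_\varepsilon f := \mu_0^{-1/2}\bigl[Q(F_{\rm NS}, \mu_0^{1/2} f) + Q(\mu_0^{1/2} f, F_{\rm NS})\bigr]$ is the linearization at $F_{\rm NS}$, $\mathcal{N}(f) := \mu_0^{-1/2} Q(\mu_0^{1/2} f, \mu_0^{1/2} f)$ collects the quadratic self-interaction, and $R_\varepsilon := \mu_0^{-1/2}\bigl[Q(F_{\rm NS}, F_{\rm NS}) - (v_1 - s_0)\partial_x F_{\rm NS}\bigr]$ is the residual of the approximate solution. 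Because $F_{\rm NS}$ is built from the NS shock profile via a first-order Chapman--Enskog expansion in the Knudsen number $\varepsilon$, a direct computation shows $R_\varepsilon = O(\varepsilon^3)$ in the weighted norms of~\eqref{eq:estimatetobesatisfied}. This gain of one power of $\varepsilon$ over the naive local-Maxwellian ansatz is what opens the room for a perturbative scheme to close at size $f = O(\varepsilon^2)$.

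The heart of the matter is then the linear problem $(v_1 - s_0)\partial_x f - \mathcal{L}_\varepsilon f = g$. Decomposing $f = Pf + (I-P)f$ according to the kernel $\mathrm{span}\{1,v_1,v_2,v_3,|v|^2\}\mu_0^{1/2}$ of the equilibrium linearization $\mathcal{L}_0$, the microscopic piece $(I-P)f$ is controlled by the spectral gap of $-\mathcal{L}_0$ on the orthogonal complement of its kernel, yielding $\|(I-P)f\|_{\rm diss} \lesssim \|Pf\| + \|g\|$ in the natural anisotropic Landau dissipation norm. The five macroscopic coefficients of $Pf$ then satisfy, to leading order, the linearization of the compressible Euler system at the base shock. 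Crucially, the normalization $s_0 = \sqrt{\sfrac{5}{3}}$ is chosen so that $s_0$ coincides exactly with the sound speed at the left end state; the principal symbol of this ODE therefore has a vanishing eigenvalue at $\varepsilon = 0$, corresponding to the translation zero-mode of the shock profile. Following the Caflisch--Nicolaenko strategy, I would quotient out this zero-mode by imposing a single scalar orthogonality (equivalently, fixing the spatial translation), after which the reduced macroscopic ODE combined with the microscopic estimate gives invertibility of the linearized operator in an appropriately weighted $L^2$. The spatial weight $e^{\delta \langle \varepsilon x \rangle^{1/2}}$ is compatible because the stable/unstable manifolds of the two end-state equilibria decay at rate $O(\varepsilon)$; the square-root exponent leaves room to absorb the sonic degeneracy.

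With the linear theory in place, a Banach fixed-point argument in the space defined by~\eqref{eq:estimatetobesatisfied} closes the nonlinear problem for $\varepsilon$ sufficiently small, since $\mathcal{N}(f) = O(\varepsilon^4)$ when $f = O(\varepsilon^2)$, and is thus subordinate to $R_\varepsilon = O(\varepsilon^3)$. The higher-order estimates $\|\varepsilon^{-\alpha}\partial_x^\alpha \partial_v^\beta f\|$ are obtained by commuting derivatives through the equation and bootstrapping, using that each $\partial_x$ costs a factor of $\varepsilon$ because the profile varies on length scale $\varepsilon^{-1}$, while the sub-Gaussian velocity weight $\mu_0^{-q_0}$ with $q_0 < 1$ provides just enough room to absorb the growth of the Landau coefficients $\phi(v-u)|v|$ without running out of Gaussian decay. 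Uniqueness up to translation follows by subtracting two solutions and reapplying the linear invertibility after quotienting by the translation mode. The main obstacle---and the chief novelty beyond Caflisch--Nicolaenko and M\'etivier--Zumbrun---is that the Landau collision operator is an unbounded, anisotropic second-order diffusion in velocity rather than a bounded perturbation of a multiplication operator. Consequently, every weighted estimate must be carried out in the Landau dissipation norm (built from the directional weights $\phi(v-u)$), and the hypocoercive commutators between $(v_1 - s_0)\partial_x$ and this degenerate diffusion must be controlled simultaneously with the sonic degeneracy of the macroscopic ODE and the exponential decay at both spatial infinities. Reconciling these three constraints in a single functional framework is the technical core of the argument.
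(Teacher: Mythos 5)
Your proposal captures the broad architecture (Chapman--Enskog lifting $F_{\rm NS}$, micro-macro decomposition, residual of size $\varepsilon^3$, contraction), but the macroscopic half of your linear analysis has a genuine gap. You say $Pf$ solves, to leading order, the \emph{linearized Euler} system, and that the near-sonic degeneracy is handled by quotienting the translation zero-mode. That is not enough. Even modulo the translation direction, $[\nabla H(U_{\rm NS})-s_0\Id]\,\partial_x U = \dots$ is a degenerate transport operator whose inverse blows up as $\varepsilon\to 0$, and the microscopic spectral-gap bound $\|(I-P)f\|\lesssim \varepsilon\|Pf\|+\|z\|$ does not by itself repair this: plugging it into the macroscopic equation produces a loop that does not close at any power of $\varepsilon$. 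What the paper actually does (following M\'etivier--Zumbrun) is carry out a \emph{linearized Chapman--Enskog expansion}: write $V = P^\perp_{\rm NS}F$, invert $\mathsf{L}_{\mu_{\rm NS}}$ on the microscopic subspace to express $V$ in terms of $\partial_x U$ and the data, and substitute back. This upgrades the macroscopic ODE to the \emph{linearized Navier--Stokes} system $\partial_x([\nabla H-s_0\Id]U) - \partial_x(B(U_{\rm NS})\partial_x U) = \partial_x G$. The two decisive consequences are that the microscopic feedback enters only through $\partial_x P^\perp f$ (hence gains an extra $\varepsilon$), and that the viscous block $B$ renders the macroscopic operator invertible at cost $\varepsilon^{-1}$ (Proposition~\ref{pro:macroscopicestimate}). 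A pure Euler reduction, quotiented or not, does not have this structure.

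Second, your proposal is silent on \emph{existence} for the linear problem, which for Landau is a real obstruction and occupies Section~\ref{sekk:linear} of the paper. The collision operator loses velocity moments, so the natural Galerkin truncation delivers only $\mathcal{H}^1$-type energy estimates, while the macro-micro loop above needs roughly ten extra polynomial moments on $f$ (see~\eqref{eq:doesn't:close}). The paper resolves this by regularizing to $Q_\kappa = Q+\kappa Q_{\rm R}$ with a harder potential (so moments are gained rather than lost), proving existence for the regularized, Galerkin-truncated problem via a finite-dimensional ODE dimension count, extending the $\varepsilon$-range uniformly in $\kappa$ by a method of continuity, and only then passing $\kappa\to 0^+$. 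A fixed-point argument presupposes this linear solvability and cannot substitute for it. A minor factual slip as well: $s_0=\sqrt{5/3}$ is the sound speed at the \emph{base} Maxwellian $\mu_0=\mu(1,0,1)$, not at the left end state $\mu_L=\mu(1,\varepsilon,1)$; the left $u+c$ characteristic exceeds $s_0$ by exactly $\varepsilon$, which is why the central eigenvalue of the macroscopic ODE is of size $\varepsilon$ rather than zero. Similarly, the square-root exponent in $e^{\delta\langle\varepsilon x\rangle^{1/2}}$ reflects the interpolation between moment loss in the hypocoercivity estimate and the available Gaussian weight in $v$, not the sonic degeneracy.
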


\begin{remark}
The approximation $F_{\rm NS}$ is given, to leading order, as a Maxwellian $\mu_{\rm NS} := \mu(\varrho_{\rm NS},u_{\rm NS},\theta_{\rm NS})$, though it also contains a microscopic component $G_{\rm NS}$ which is $O(\varepsilon^2)$. Moreover, we remark that $F_{\rm NS}$ decays exponentially, like $e^{-\delta \brak{\eps x}}$ for some $\delta > 0$, whereas the remainder $f$ is only guaranteed to decay \emph{stretched exponentially}, as in~\eqref{eq:estimatetobesatisfied}. This is a feature shared by the shock profiles in~\cite{CN82} for Boltzmann with hard cut-off potentials. Heuristically, it represents the physical principle that, for potentials softer than `hard spheres,' particles with larger velocities travel farther before equilibrating. Mathematically, the interpretation is that the Chapman-Enskog expansion does not yield a good approximation uniformly in $x$ and $v$. 
\end{remark}

\begin{remark}
It will be evident from the proof that uniqueness holds in the class of solutions satisfying~\eqref{eq:estimatetobesatisfied} with a small multiple of $\varepsilon$ on the right-hand side.
\end{remark}

\subsection*{Comparison with existing mathematical literature}
	\label{sec:comparisonwithmathematicallitearture}


Although the existence and stability of \emph{large} kinetic shock profiles is an important long-term goal of the theory, previous constructions and our own for the Boltzmann and Landau equations concern only small shocks.  

Nicolaenko and Thurber~\cite{NT75} established the existence of weak kinetic shock profiles for the Boltzmann equation with hard sphere potential, that is, with collision kernel
\begin{equation}
	\label{eq:hardspherekernel}
Q(F,G) =  \int_{\mathbb{R}^3}\int_{\mathbb{S}^{2}} \left| (v-v_*) \cdot \sigma \right| 
[ F(v')G(v'_*)-F(v) G(v_*) ] d v_* \, d \sigma \, ,
\end{equation}
where
\begin{align}\notag
	v'=\displaystyle \frac{v+v_*}{2}+\frac{|v-v_*|}{2}\sigma \, , \quad
		v'_*=\displaystyle \frac{v+v_*}{2}-\frac{|v-v_*|}{2}\sigma \, .
\end{align}
Subsequently, Caflisch and Nicolaenko~\cite{CN82} proved existence for the Boltzmann equation with hard cut-off potentials. The method in~\cite{NT75,CN82} (see, also,~\cite{NicolaenkoGeneral,NicolaenkoShockWave}) is a generalized Lyapunov-Schmidt procedure. For~\eqref{eq:hardspherekernel}, there exists a curve of solutions $\phi(s)$ to the generalized eigenvalue problem $(v_1 - s) \tau \phi + \mathsf{L} \phi = 0$, where $s = \sqrt{\sfrac{5}{3}} - \varepsilon$, $|\varepsilon| + |\tau| \ll 1$, and $- \mathsf{L} F = Q(\mu_0,F) + Q(F,\mu_0)$ is the linearized operator~\cite[Corollary 3.10]{NT75}. A key point is to extract the ODE
\begin{equation}\notag
\dot c_0(x) = [\varepsilon \gamma_1 + O(\varepsilon^2)] c_0(x) + [\gamma_2 + O(\varepsilon^2)] c_0(x)^2 \, ,
\end{equation}
which is an approximate Burgers equation governing the coefficient $c_0(x)$ of a well-chosen projection of the perturbation $F$ onto $\phi$, see~\cite[Equation (33)]{NT75} and~\cite[Equation (3.52)]{CN82}. Furthermore, it is verified in~\cite{NT75,CN82} that the above ODE governs, to leading order, weak compressible Navier-Stokes profiles, cf.~\eqref{eq:centermanifoldode}.

In~\cite{LiuYuMicroMacro}, Liu and Yu gave a new approach to the existence of weak shock profiles. The main idea is to prove that an approximate solution, based on the compressible Navier-Stokes solution and Chapman-Enskog procedure, is dynamically stable for perturbations with zero macroscopic contribution: $\int_\R (\varrho,m,E)|_{t = 0} \, dx = 0$. They introduce a macro-micro decomposition and an energy method which carefully extracts the dissipative effects in the macroscopic variables. Because positivity is propagated by the evolution, this method yields positivity of the shock profiles. This method was used by Yu~\cite{YuLimitShock} to analyze inviscid limits to compressible Euler solutions with initial data containing a small shock. These dynamical results were sharpened in~\cite{YuImproved,YuInitialLayers}.

In~\cite{MZ08}, M{\'e}tivier and Zumbrun proved existence of shock profiles for the case of hard spheres by working directly with the steady equations and exploiting more directly the information from the compressible Navier-Stokes equations; see also the related papers on relaxation profiles:~\cite{metivier2009existence,metivier2012existence} and~\cite{CuestaWeakShocksBGKConservation,CuestaWeakShocksBGK}.
One important aspect is that in these cases, the equation can be rewritten in the abstract form $A u' = B(u,u)$, where $A = \langle v \rangle^{-1} (v_1-s)$ and $B(u,u) = \langle v \rangle^{-1} Q(F,F)$ are both \emph{bounded} operators.
This structure is not shared by potentials `softer' than hard spheres or operators with singular potentials; the Landau operator has both issues.

In our work, we build on the general strategy of M\'etivier and Zumbrun, combining it with a more intricate treatment of the collision operator to deal with the central difficulty that the operators $A$ and $B$ are not bounded (due to both moment and derivative loss).   
The moment loss requires one to track more localization in $v$, which complicates even the qualitative proof of existence for the linearized equation (from a Galerkin truncation argument).
To overcome the moment loss, we regularize the collision operator; however this necessitates a functional analysis framework which depends on the regularization parameter itself. These key new difficulties are summarized more precisely in Remark~\ref{rmk:keynewdifficulties}.
Moreover, a significantly more complicated set of norms must be balanced against one another to close the arguments. 
The stretched exponential decay in $x$ is a result of a weak Poincar\'e-type argument which interpolates the loss of moments in the (hypo)coercivity against the available Gaussian moments in $v$.

After the present work, Wynter~\cite{wynter2024shock} gave a construction of weak shock profiles for the Boltzmann equation with a certain class of long-range potentials. His construction involves elements of~\cite{MZ08} and some of the present work. The ODE analysis in~\cite{wynter2024shock} is more explicit than that in Appendix~\ref{sec:appendixode}, which may be instructive.

Standard constructions of weak shocks for viscous systems of conservation laws~\cite{MajdaPego,Pego} proceed by center manifold analysis. In this vein, Liu and Yu~\cite{LiuYuInvariantManifolds} and Pogan and Zumbrun~\cite{PZ18,PZ19,ZumbrunInvariantReview} developed techniques for the study of invariant manifolds for the steady Boltzmann equation. Liu and Yu developed a time-asymptotic method based on the pointwise Green's functions. The method of Pogan and Zumbrun is based on a reduction to a canonical form and, in this form, resolvent estimates, which allow one to construct a bi-semigroup. This approach is evocative of, although not the same as, the analysis of~\cite{NT75}. Both approaches extract the approximate Burgers ODE for the evolution on the center manifold and yield the additional information that the wave speed $\lambda_3 = u+c$, where $c = \sqrt{\sfrac{\gamma p}{\varrho}}$ is the local sound speed, is monotone along the shock profile. Liu and Yu additionally study related half-space problems and the structure of solutions in the transition from evaporation/condensation; see also~\cite{BardosGolseSone,BernhoffGolse}.

We mention the works~\cite{LiuYangYuZhaoRarefaction,XinZengRarefaction,HuangWangYangBoltzmannContact,HuangWangWangYangRiemann,DuanYangYuRarefaction,DuanYangYuContactWaves}, among others, on contact discontinuities and rarefaction waves. These closely related works study the kinetic analogues of basic hydrodynamic flows emanating from the Riemann problem. In these cases, the solutions to Landau that are constructed are self-similar decaying solutions, rather than traveling wave solutions, and the corresponding hydrodynamic solutions admit smooth approximations within the compressible Euler equations themselves. Though by no means easy, these aspects do simplify both the construction of approximate solutions themselves and the stability argument necessary to close a perturbation argument.

Finally, existence and stability of large shocks have been demonstrated for certain discrete velocity models~\cite{CaflischBroadwell,CaflischLiuBroadwell} and BGK models~\cite{GolseShockProfilesPerthameTadmor,CuestaBulletin} (see also~\cite{BenAbdallahSchmeiserFermionic} for an application of the approach in~\cite{GolseShockProfilesPerthameTadmor} and~\cite{BouchutBGKModels} for a discussion of BGK models).

\subsection*{Acknowledgments} DA was supported by National Science Foundation Postdoctoral Fellowship Grant No.\ 2002023, NSF Grant No. 2406947, and the Office of the Vice Chancellor for Research and Graduate Education at the University of Wisconsin--Madison with funding from the Wisconsin Alumni Research Foundation. JB was supported by NSF CAREER grant DMS-1552826, NSF RNMS \#1107444 (Ki-Net), and NSF DMS-2108633. MN was supported by NSF grant DMS-1926686 while employed by the Institute for Advanced Study and by DMS-2307357.

\section{Outline} \label{sec:outline}


\subsection{The formal Chapman-Enskog expansion for the approximate solution}

We follow the general blueprint of M{\'e}tivier and Zumbrun~\cite{MZ08}. The first step in the proof of Theorem \ref{thm:main} is to build an approximate solution based on the classical Chapman-Enskog expansion for hydrodynamic limits; see Section~\ref{sss:approx}.  We first review some basic facts concerning Maxwellian equilibria of the Landau equation.

\subsubsection{Geometry of Maxwellian equilibria}\label{sss:maxwellians}
The set of all Maxwellians $\mu$ is parametrized by mass, velocity, and temperature (written here in units of specific energy, i.e., energy per unit mass) $(\varrho,u,\theta) \in \R_+ \times \R^3 \times \R_+$:
\begin{equation}
    \label{eq:mplusdef}
M_+ = \left\{ \frac{\varrho}{\left(2\pi \theta\right)^{\sfrac 32}} \exp\left( - \frac{|v-u|^2}{2\theta} \right) : (\varrho,u,\theta) \in \R_+ \times \R^3 \times \R_+ \right\} \, .
\end{equation}
To leading order, the approximate solution we construct is locally Maxwellian; that is, $F(x,\cdot) \in M_+$ for each $x \in \mathbb R$. It is hence important to be more precise about the geometry of $M_+$.

Consider the Banach space $\mathbb{H} := \{ F \in L^1_v(\R^3) : F, vF, |v|^2 F \in L^1 \}$ with an appropriate norm.
This is a natural Banach space on which integration against the collision invariants $1, v$, and $\sfrac 12 |v|^2$ is well defined.\index{collision invariants} Let\footnote{$\mathbb{H}_+$ is open in $\mathbb{H}$ (hence, it has the structure of a Banach manifold). This discussion is not particularly sensitive to the exact choices of $\mathbb{H}$ and $\mathbb{H}_+$. We could, for example, ask for exponential decay measured in $L^2$. Notice, however, that in the $L^1$-based topology we employ above, the requirement that $F > 0$ is not open, so we do not impose it.}
\begin{equation}
\mathbb{H}_+ := \left\{ F \in \mathbb{H} : \int F \, \dee v > 0 \, , \int |v|^2 F \, \dee v > 0 \right\} \, .
\end{equation}
Define the linear operator $I : \mathbb{H} \to \R \times \R^3 \times \R$ by\index{$I[\cdot]$}
\begin{equation}\label{op:i}
I[F] := \begin{pmatrix} \varrho \\ m \\ E \end{pmatrix}(F) := \int F \begin{pmatrix} 1 \\ v \\ \sfrac{1}{2} |v|^2 \end{pmatrix} \, \dee v  \, ,
\end{equation}
where $\varrho,m,E$ are the mass, momentum, and energy of the distribution $F$. 
By design, $I|_{\mathbb{H}_+} : \mathbb{H}_+ \to \R_+ \times \R^3 \times \R_+$, and on its image, the change of variables which relates mass, velocity, and temperature to mass, momentum, and energy,
\begin{equation}
    \label{eq:changeofvars}
\R_+ \times \R^3 \times \R_+ \to \R_+ \times \R^3 \times \R_+ : (\varrho,u,\theta) \mapsto (\varrho,\varrho u, \sfrac{1}{2} \varrho (|u|^2 + 3 \theta)) \, ,
\end{equation}
is a diffeomorphism (as may be checked by hand or via the inverse function theorem).
Hence, given $F \in \mathbb{H}_+$, we may write $m = \varrho u$ and $E = \varrho (\sfrac{1}{2}|u|^2 + e)$.

We can now regard $M_+ \subset \mathbb{H}_+$ as a five-dimensional embedded submanifold of $\mathbb{H}_+$, and we may obtain the tangent space at a given Maxwellian $\mu_0$,
\begin{equation}
    \label{eq:characterizationoftangentspace}
T_{\mu_0} M_+ = {\rm span}\left( \mu_0, v \mu_0, |v|^2 \mu_0 \right) \, ,
\end{equation}
by differentiating the parameterization in~\eqref{eq:mplusdef}. (Notice that the spanning vectors we have written on the right-hand side of~\eqref{eq:characterizationoftangentspace} are not, in fact, the obvious derivatives of the parameterization.)

We define a global nonlinear projection $\mu:\mathbb{H}_+ \to M_+$ as follows.
For a given $F\in\mathbb{H}_+$, we compute the hydrodynamic moments $(\varrho,m,E)$ via the linear operator $I$ defined above in \eqref{op:i}, then compute $(\varrho,u,\theta)$ by inverting the change of variables~\eqref{eq:changeofvars}, and finally form the associated Maxwellian:
 \begin{equation}\label{eq:M}
    \mu(F) := \frac{\varrho}{\left(2\pi \theta\right)^{\sfrac 32}} \exp\left( - \frac{|v-u|^2}{2\theta} \right) \, .
 \end{equation}

For a given Maxwellian ${\mu}$, we adopt the standard notation\index{$\sf{L}_{{\mu}}$}
\begin{equation}\label{def:sfl}
\sf{L}_{{\mu}} F := - Q({\mu},F) - Q(F, {\mu})
\end{equation}
for the linearized collision operator, which we consider as an unbounded operator 
on the weighted Hilbert space $L^2({\mu}^{-1} \, \dee v)$ with inner product
\begin{equation}
    \label{eq:innerproduct}
\langle F, G \rangle_{{\mu}} = \int_{\R^3} FG \, {\mu}^{-1} \, \dee v \, .
\end{equation}
This operator is well known to be symmetric and positive semi-definite on this Hilbert space; see Section~\ref{sec:propertiesofLandGamma} for this and other details regarding $\sf{L}_{{\mu}}$. Crucially, its kernel is precisely the tangent space at $M_+$:
\begin{equation}\label{kernie}
\ker \sf{L}_{{\mu}} = T_{{\mu}} M_+ \, .
\end{equation}
Define the orthogonal projection onto $\ker \sf{L}_{{\mu}}$, relative to the inner product $\langle \cdot, \cdot \rangle_{{\mu}}$ in~\eqref{eq:innerproduct}:\index{$\sf{P}_{{\mu}}$}
\begin{equation}\label{def:sfP}
\sf{P}_{{\mu}} : \mathbb{H}\rightarrow \ker \sf{L}_{{\mu}}  \, , \qquad {\ker \sf{L}_{\mu_0} := \mathbb{U}} \, ,
\end{equation}
{where $\mu_0=\mu(1,0,1)$.} We shall also use $\mathbb{V}$ to refer to the infinite-dimensional, purely microscopic range of $\sf{P}^\perp_{{\mu}_0}$.  \index{$\mathbb{V}$}\index{$\mathbb{U}$} It is not difficult to verify that, given $F \in L^2({\mu}^{-1} \, \dee v)$, $\sf{P}_{{\mu}} F$ is simply the unique element of $T_{{\mu}}$ which has the same hydrodynamic moments as $F$. Notably, $\sf{P}_{{\mu}} F = 0$ if and only if $F \in {\rm Mic} := \ker I$.

A crucial takeaway is the following: For each ${\mu}$, the space $L^2({\mu}^{-1} \, \dee v)$ is orthogonally decomposed into $T_{{\mu}} = \ker \sf{L}_{{\mu}}$ and its orthogonal complement. The ``microscopic subspace" ${\rm Mic} \cap L^2({\mu}^{-1} \, \dee v)$ does not depend on ${\mu}$ (except through the intersection with $L^2({\mu}^{-1} \, \dee v)$, which is in a non-technical sense a mild topological restriction). In particular, the images of $(F,G) \mapsto Q(F,G)$ and $\sf{L}_{{\mu}}$ are, up to mild topological restrictions, \emph{purely microscopic} and independent of ${\mu}$. Meanwhile, the ``macroscopic subspace" $T_{{\mu}}$ does depend on ${\mu}$, as does the inner product $\langle \cdot, \cdot \rangle_{{\mu}}$. 

\begin{remark}
	\label{rmk:remarkaboutderivativeofthenonlinearprojection}
One can further verify that the derivative $d{\mu}|_{{\mu}}$ of the global nonlinear projection at ${\mu}$, restricted to $L^2({\mu}^{-1} \, \dee v)$, is exactly the orthogonal projection $\sf{P}_{{\mu}}$. 
\end{remark}


\subsubsection{Construction of the approximate solution}\label{sss:approx}

We ultimately want to solve the traveling wave Landau equation
\begin{align}
(v_1-s_0) \partial_x F = Q(F,F) \label{eq:StandWaveCE}
\end{align}
for small shocks with shock speed $s_0 = \sqrt{\sfrac{5}{3}}$ with Maxwellian end states satisfying the Rankine-Hugoniot conditions: $(\varrho_L,u_L,\theta_L) = (1,\varepsilon,1)$ and $(\varrho_R,u_R,\theta_R) = (1,0,1) + O(\varepsilon)$; see~\eqref{eq:urparam}, \eqref{eq:rightevecs}, \eqref{eq:genuinenonlinearity}, and the discussion preceding Theorem~\ref{thm:main}. We expect that the shocks vary like $O(\varepsilon)$ in amplitude on the length scale $O(\varepsilon^{-1})$; this is true of the small Navier-Stokes shocks. In particular, $(v_1 -s_0) \p_x F$ may be regarded as $O(\varepsilon^2)$, so we expect from~\eqref{eq:StandWaveCE} that $F$ is close to locally Maxwellian.


Let $\mu(F)(t,x,v)$ be the local Maxwellian given by the global nonlinear projection defined above in~\eqref{eq:M}. We decompose $F = \mu(F) + G$, so that $G$ is purely microscopic (i.e., it has vanishing hydrodynamic moments). Computing the hydrodynamic moments by applying $I$ to~\eqref{eq:StandWaveCE}, we obtain (see Appendix~\ref{sec:chapmanenskogcomputations})
\begin{subequations}
\label{eq:compressibleEulerintravelingwaveform}
\begin{align}
&\partial_x ( \varrho (u_1-s_0) )(t,x) = 0 \, , \\
&\partial_x (\varrho u_1 (u_1-s_0) + p)(t,x) = - \int_{\R^3} v_1^2 \partial_x G \,dv \, , \\
&\partial_x (\varrho u_i (u_1-s_0))(t,x) = - \int_{\R^3} v_1 v_i \partial_x G \,dv \qquad\qquad (i=2,3) \, ,  \\
&\partial_x \left( \varrho \left( e + \frac{|u|^2}{2} \right) (u_1-s_0) + p u_1 \right)(t,x) = - \int_{\R^3} \frac{1}{2} v_1 |v|^2 \partial_x G \, dv \, , 
\end{align}
\end{subequations}
where the pressure is given by $p=\varrho \theta$.

We see that if $G = 0$, then these equations reduce to the compressible Euler equations.\footnote{More accurately, the 3D compressible Euler equations, with 1D spatial dependence, in traveling wave form.}
The Chapman-Enskog expansion provides a method for computing the next order corrections, which in this case yield the compressible Navier-Stokes equations. We apply the projection $\sf{P}_{\mu(F)}^\perp$,\index{$\sf{P}_{\mu(F)}^\perp$} onto the microscopic subspace, to the Landau equation~\eqref{eq:StandWaveCE}:
\begin{align}
    \sf{P}_{\mu(F)}^\perp \left[ (v_1-s_0) \partial_x \mu(F) + (v_1-s_0) \partial_x G \right] + \sf{L}_{\mu(F)} G =  Q(G,G) \, , \label{eq:formalMic}
\end{align}
where $\sf{P}_{\mu(F)}^\perp Q = Q$ and $\sf{P}_{\mu(F)}^\perp \sf{L}_{\mu(F)} = \sf{L}_{\mu(F)}$. 
Therefore, \eqref{eq:formalMic} is purely microscopic, and we can invert $\sf{L}_{\mu(F)}$, yielding the leading order approximation 
\begin{equation}
	\label{eq:Gapprox}
G \approx - \sf{L}_{\mu(F)}^{-1} \sf{P}^\perp_{\mu(F)}( v_1 \partial_x \mu(F)) \, ,
\end{equation}
valid up to $O(\varepsilon^3)$ errors, where we used that $\sf{P}^\perp_{\mu(F)} s_0 \p_x \mu(F) = 0$. We insert this approximation back into the right-hand side of~\eqref{eq:compressibleEulerintravelingwaveform} to obtain
the compressible Navier-Stokes equations:
\begin{align}\label{NSE:outline}
\begin{cases}
&\partial_x ( \varrho (u_1-s_0) )(t,x) = 0 \, , \\
&\partial_x (\varrho u_1 (u_1-s_0) + p)(t,x) = \partial_x \left( \frac 43\mu(\theta) \partial_x u_1 \right) \, , \\
&\partial_x (\varrho u_i (u_1-s_0) ) = \partial_x(\mu(\theta) \partial_x u_i) \qquad\qquad\qquad (i=2,3) \, ,  \\
&\partial_x \left( \varrho \left( e + \frac{|u|^2}{2} \right) (u_1-s_0) + p u_1 \right) = \partial_x (\kappa(\theta) \partial_x \theta) \\
&\qquad \qquad \qquad + \partial_x \left(  \frac 43 \mu(\theta)  u_1 \partial_x u_1 \right) + \partial_x \left(\mu(\theta)u_2 \partial_x u_2 \right) + \partial_x \left(\mu(\theta)u_3 \partial_x u_3 \right) \, ,
\end{cases}
\end{align}
valid up to $O(\varepsilon^3)$ errors, where the viscosity coefficients $\mu$ and $\kappa$ are smooth, positive functions of $\theta > 0$; see Appendix~\ref{sec:chapmanenskogcomputations} for details. 

In Appendix~\ref{sec:appendixode}, we reprove that for $\eps$ sufficiently small, there exists a unique (in a certain class, up to translation-in-$x$) solution $\left( \varrho_{\rm NS}, u_{\rm NS}, \theta_{\rm NS} \right)$ to this equation with the boundary conditions
\begin{align*}
\lim_{x \to \mp\infty} \left( \varrho_{\rm NS}, u_{\rm NS}, \theta_{\rm NS} \right) = \left( \varrho_{\sfrac{L}{R}}, u_{\sfrac{L}{R}}, \theta_{\sfrac{L}{R}} \right) \, . 
\end{align*}
A precise statement is contained in Proposition~\ref{pro:smallshocksgeneralhyperbolic}, and we suppose that the particular translation of $(\varrho_{\rm NS}, u_{\rm NS}, \theta_{\rm NS})$ is chosen to satisfy the estimates therein. Associated to the Navier-Stokes solution are the local Maxwellian $\mu_{\rm NS} = \mu(\varrho_{\rm NS}, u_{\rm NS}, \theta_{\rm NS})$, in the notation of~\eqref{eq:parameterizationofMaxwellians}, and the density, momentum, and energy $(\varrho_{\rm NS}, m_{\rm NS}, E_{\rm NS}) = I[\mu_{\rm NS}]$.
We now define our approximate solution $F_{\rm NS}$:\index{$F_{\rm NS}$}\index{$\sf{L}_{\mu_{\rm NS}}^{-1}$}\index{$G_{\rm NS}$}\index{$\sf{P}_{\mu_{\rm NS}}^\perp$}\index{$\sf{P}_{\mu_{\rm NS}}$}\index{$\mu_{\rm NS}$}
\begin{equation}
G_{\rm NS} = - \sf{L}_{\mu_{\rm NS}}^{-1} \sf{P}^\perp_{\mu_{\rm NS}}( v_1 \partial_x \mu_{\rm NS}) \, , \qquad  F_{\rm NS} = \mu_{\rm NS} + G_{\rm NS} \, . \label{def:fNS}
\end{equation}
Let us compute the residual associated to the approximate solution $F_{\rm NS}$:\index{$\mathcal{E}_{\rm NS}$}
\begin{equation}
\mathcal{E}_{\rm NS} := (v_1-s_0) \p_x F_{\rm NS} - Q(F_{\rm NS},F_{\rm NS}) \, .\label{def:residual}
\end{equation}
We have
\begin{equation}\label{pure}
\begin{aligned}
\mathcal{E}_{\rm NS} &= (v_1-s_0) \p_x \mu_{\rm NS} + (v_1-s_0) \p_x G_{\rm NS} + \sf{L}_{\mu_{\rm NS}} G_{\rm NS} - Q(G_{\rm NS},G_{\rm NS}) \\
&= (v_1-s_0) \p_x \mu_{\rm NS} + (v_1-s_0) \p_x G_{\rm NS} - \sf{P}^\perp_{\mu_{\rm NS}}( (v_1-s_0) \partial_x \mu_{\rm NS}) - Q(G_{\rm NS},G_{\rm NS}) \\
&= \sf{P}_{\mu_{\rm NS}}( (v_1-s_0) \p_x \mu_{\rm NS}) - (v_1-s_0) \p_x [\sf{L}_{\mu_{\rm NS}}^{-1} \sf{P}^\perp_{\mu_{\rm NS}}( v_1 \partial_x \mu_{\rm NS})] - Q(G_{\rm NS},G_{\rm NS}) \\
&= - \sf{P}^\perp_{\mu_{\rm NS}} (v_1-s_0) \p_x [\sf{L}_{\mu_{\rm NS}}^{-1} \sf{P}^\perp_{\mu_{\rm NS}}( v_1 \partial_x \mu_{\rm NS})] - Q(G_{\rm NS},G_{\rm NS}) \, ,
\end{aligned}
\end{equation}
where in the last equality we used that $I(\mu_{\rm NS})$ satisfies the compressible Navier-Stokes equations \eqref{NSE:outline}. In particular, $\mathcal{E}_{\rm NS}$ is purely microscopic.
We prove below in Section~\ref{sec:error:CE} that the error is in fact $\mathcal{O}(\eps^3)$ in the relevant norms.

\subsection{Solving for the correction}

At this point, we naturally now seek an exact solution of \eqref{eq:StandWaveCE} which is a small perturbation of $F_{\rm NS}$. In a slight abuse of notation, we shall denote the small perturbation by $F$, so that the solution of the traveling wave equation \eqref{eq:StandWaveCE} is $F_{\rm NS}+F$.  We will eventually use the contraction mapping theorem to solve for $F$. However, it will take some work to see how to set this up.   

We start by looking for solutions $F$ to equations of the type
\begin{equation}
(v_1-s_0) \p_x F + \sf{L}_{\mu_{\rm NS}}F = \mathcal{E} \, , \label{def:LinCE}
\end{equation}
where $\sf{L}_{\mu_{\rm NS}}$ is defined as in~\eqref{def:sfl} and $\mathcal{E}$ is a purely microscopic right-hand side.\index{$\mathcal{E}$}
Eventually, we will 
specifically construct solutions with 
\begin{equation}
 \mathcal{E} = Q(G_{\rm NS},F) + Q(F, G_{\rm NS}) + Q(F,F) - \mathcal{E}_{\rm NS} \, .\label{def:calE}
\end{equation}
The exact linearized operator around a kinetic shock profile will have a non-trivial kernel containing the $x$-derivative of the profile. We will see that the linearized operator on the left-hand side of~\eqref{def:LinCE} also has a one-dimensional kernel.

It will be convenient later (see subsubsections~\ref{sss:micro} and \ref{sss:macro}) to parameterize the macroscopic portion of $F$ using the five-dimensional kernel $\mathbb{U}$ of $\sf{L}_{\mu_0}$ defined in~\eqref{def:sfP}, where $\mu_0 = \mu(1,0,1)$\index{$\mu_0$} is the standard Maxwellian. 
Note that the usefulness of this parameterization relies on the fact that we are constructing small shocks, whose hydrodynamic moments are close to those of $\mu_0$. Notice also that the kernels of $\sf{P}_{\mu_0}$ and $\sf{P}_{\mu_{\rm NS}}$ are the same, as are the ranges of $\sf{P}_{\mu_0}^{\perp}$ and $\sf{P}_{\mu_{\rm NS}}^\perp$.\index{$\mathbb{U}$}\index{$\mathbb{V}$}

Since the linearized collision operator $\sf{L}_{\mu_{0}}$ has a kernel, it is natural to try some version of hypocoercivity 
in order to solve~\eqref{def:LinCE}.
We will use a variant of hypocoercivity commonly used in the context of viscous conservation laws introduced by Kawashima and Shizuta~\cite{KawashimaThesis,ShizutaKawashima1985}. It is essentially equivalent to energy methods used in~\cite{guo2006boltzmann}; see Appendix~\ref{sec:kawashima}. 
However, hypocoercivity techniques such as the Kawashima compensator method only recover control on $\p_x F$; there is a loss in the kernel of $\sf{L}_{\mu_0}$ at low spatial frequencies (see Lemma~\ref{lem:steadyestimate} below). Hence, a separate estimate needs to be made 
to control the kernel of $\sf{L}_{\mu_0}$. In particular, we will need a linearized Chapman-Enskog expansion to extract the linearized Navier-Stokes equations out of~\eqref{def:LinCE}. This will simultaneously make visible the one-dimensional kernel of $(v_1-s_0)\p_x + \sf{L}_{\mu_{\rm NS}}$. \\

At this point we will introduce enough notation to state the main linear theorem. First, it is convenient to write $F = \mu_0^{\sfrac{1}{2}} f$ and $\mathcal{E} = \mu_0^{\sfrac{1}{2}} z$ and conjugate the equation~\eqref{def:LinCE} by $\mu_0^{\sfrac{1}{2}}$. We obtain
\begin{align}
(v_1-s_0) \partial_x f + L_{\rm NS} f = z\, , \label{eq:NLSL}
\end{align}
with the by-now standard notations\index{$L$}\index{$\Gamma$}\index{$L_{\rm NS}$}
\begin{subequations}\label{ells:and:Qs}
\begin{align}
  & L f = - \mu_0^{-\sfrac 12} Q(\mu_0, f \sqrt{\mu_0}) - \mu_0^{-\sfrac 12} Q(f \sqrt{\mu_0}, \mu_0) \label{eq:just:L} \\ 
  & \Gamma[f,g] = \mu_0^{-\sfrac 12} Q(\mu_0^{\sfrac 12} f, \mu_0^{\sfrac 12} g)\label{eq:L:and:gamma} \\
  & L_{\rm NS}(f) = - \Gamma[\mu_0^{-\sfrac 12} \mu_{\rm NS},f] -  \Gamma[f,\mu_0^{-\sfrac 12} \mu_{\rm NS}] \, .
\end{align}
\end{subequations}
We will use $P$ to denote the $L^2_v$-orthogonal projection onto the five-dimensional kernel of $L = -\Gamma[\mu_0^{\sfrac 12},\cdot] - \Gamma[\cdot, \mu_0^{\sfrac 12}] $. This can be expressed in terms of $\sf{P}_{\mu_0}$, defined in~\eqref{def:sfP}, as\index{$P$}\index{$P^\perp$}
\begin{equation}\label{pproj}
P(f) := {\mu_0}^{-\sfrac 12} (\sf{P}_{\mu_0} \left(\sqrt{\mu_0} f)\right) \, , \quad P^\perp = I - P \, .
\end{equation}

We loosely follow two notational conventions: (i) Unconjugated functions ($F$, $\mathcal{E}$) will be uppercase, whereas conjugated functions ($f, z$) will be lowercase, and (ii) unconjugated operators ($\mathsf{L}$, $\mathsf{P}$) will be sans serif, and conjugated operators ($L$, $P$) will be standard.\index{conventions for conjugation}

We now introduce the natural weighted norms for this problem.  We first define the Gaussian weight\index{${\rm w}(\ell, q)$}
\begin{align}
{\rm w }(\ell,q) = \brak{v}^{-\ell} \exp\left(\frac{q}{4}\brak{v}^2\right) \, . \label{def:double:u}  
\end{align}
For the majority of the proof the parameter $q$ will either be $0$ or any fixed $0<q_0<1$; 
hence we suppress the dependence of $\rm w$ on $q$ in most steps. 

We will use the following velocity norm with polynomial weights:  \index{$\normm{\cdot}_{\sigma,\ell,q}$}
\begin{align}
\abs{f}_{\sigma,\ell,q}^2 = \int_{\mathbb R^3} \sigma^{ij} {\rm w}^2(\ell,q) \left(\partial_i f \partial_j f  + v_i v_j f^2 \right) \dee v \, ,  \label{eq:monday:night}
\end{align}
where $\sigma^{ij}(v) = \phi^{ij} \ast \mu_0$.\index{$\sigma$} If $q = 0$, we will abbreviate the norm by $|f|_{\sigma,\ell}$.\index{$|\cdot|_{\sigma,\ell}$} If furthermore $\ell=0$, we simply write $|f|_{\sigma}$.\index{$|\cdot|_\sigma$} It will be convenient to have function spaces corresponding to the norms. Namely, $\mathcal{H}^1_{\ell}$\index{$\mathcal{H}^1_{\ell}$} and $\mathcal{H}^1_{\ell, \rm w}$\index{$\mathcal{H}^1_{\ell, \rm w}$} are Banach spaces consisting precisely of $H^1_{\rm loc}(\R^3)$ functions satisfying\footnote{By a cut-off and mollification procedure, these spaces are equal to the closure of test functions in their norms.}
\begin{subequations}\label{eq:sigma:2}
\begin{align}
  \norm{f}_{\mathcal{H}^1_{{\ell}}}^2 := \int_{\R^3} \langle v \rangle^{-2\ell} \sigma^{ij} \left(\partial_{v_i} f \partial_{v_j} f + v_i v_j f^2 \right)  \, \dee v < +\infty \, , \ \\
    \norm{f}_{\mathcal{H}^1_{\ell,\rm{w}}}^2 := \int_{\R^3} {\rm w}^2(\ell,q_0) \sigma^{ij} \left(\partial_{v_i} f \partial_{v_j} f + v_i v_j f^2 \right) \, \dee v < + \infty \, ,  
\end{align}
\end{subequations}
respectively. When $\ell = 0$, we omit it. The relevance of $\mathcal{H}^1$ and its norm $\| \cdot \|_{\mathcal{H}^1} := |\cdot|_\sigma$ is in the classical lower bound (see \cite{G02} and Section~\ref{sec:error:CE} below)
\begin{align}\label{eq:wed:morning}
\brak{f,Lf} \gtrsim \left| P^\perp f \right|_\sigma^2 \, , \quad \forall f \in \mathcal{H}^1 \, .
\end{align}
For $\beta \in \mathbb N^3$ and $\alpha \in \mathbb N$,\index{$\partial_\beta^\alpha$} we use the standard notation
\begin{equation}\notag
\partial_\beta^{\alpha} := \partial_x^\alpha \partial_v^\beta. 
\end{equation}
We similarly define Sobolev spaces and norms associated to higher derivatives (note the tradeoff between $v$ derivatives and $v$ weights, as is commonly used for the Landau equations, for example, in~\cite{G02}):\index{$\mathcal{H}^{k+1}$}\index{$\mathcal{H}^{k+1}_{\rm w}$}
\begin{align}
\| f \|_{\mathcal{H}^{k+1}} := \sum_{|\beta| \leq k} \| \p_\beta f \|_{\mathcal{H}^1_{|\beta|}} \, \quad\quad \| f \|_{\mathcal{H}^{k+1}_{\rm w}} := \sum_{|\beta|\leq k} \| \p_\beta f \|_{\mathcal{H}^1_{|\beta|,\rm w}} \, .  \notag
\end{align}
We further define appropriate function spaces for right-hand sides: $\mathcal{H}^{-1} = (\mathcal{H}^1)^*$ and\index{$(\mathcal{H}^{1}_\ell)^*$}\index{$(\mathcal{H}^{1}_{\ell, \rm w})^*$}
\begin{equation}\label{def:dual:spaces}
\| z \|_{\mathcal{H}^{-1}_\ell} := \| \langle v \rangle^{-\ell} z \|_{\mathcal{H}^{-1}} \, , \qquad \| z \|_{\mathcal{H}^{-1}_{\ell,\rm w}} := \| {\rm w}(\ell,q_0) z \|_{\mathcal{H}^{-1}} \, .
\end{equation}
Notably, $\| z \|_{\mathcal{H}^{-1}_\ell} \lesssim \| z \|_{\mathcal{H}^{-1}_{\ell,{\rm w}}}$; we prove this in section 3, Lemma~\ref{weddy:night}. \index{$\mathcal{H}^{-1}_{\ell, \rm w}$}\index{$\mathcal{H}^{-1}$}\index{$\mathcal{H}^{-1}_{\ell}$}
For functions $U$ depending on $x$ with values in the finite-dimensional subspace ${\ker}\, {\sf{L}}$ (respectively $\ker L$), we express $U$ in the basis $(1,v,|v|^2/2) \mu_0$ (respectively $(1,v,|v|^2/2) \mu_0^{\sfrac{1}{2}})$) and consider the finite-dimensional norm as the $\ell^2$ norm of the coefficients, for the sake of definiteness. Then we define the Sobolev norm\index{$H^N_\varepsilon$}
\begin{align}\label{xnorm}
\norm{U}_{H^N_\eps} = \sum_{j \leq N} \eps^{-j} \norm{\partial^j U}_{L^2_x} \, .
\end{align}
We further define
\begin{equation}\notag
\| f \|_{H^N_\varepsilon \mathcal{H}^{\pm 1}_\ell} = \sum_{j\leq N} \varepsilon^{-j} \| \partial^j f \|_{\mathcal{H}^{\pm 1}_\ell}
\end{equation}
and analogous function spaces.
Finally, we define the Sobolev spaces and norms used for the full set of $x,v$ variables via\index{$\mathbb X^{N}_{\eps}$}\index{$\mathbb X^{N}_{\eps, \rm w}$}
\begin{align}\notag
\| f \|_{\mathbb X^{N}_{\eps}}  := \sum_{|\beta| + \abs{\alpha} \leq N} \eps^{-\alpha} \| \p_\beta^\alpha f \|_{L^2 \mathcal{H}^1_{|\beta|}} \, , \qquad
\| f \|_{\mathbb X^{N}_{\eps,\rm{w}}}  := \sum_{|\beta| + \abs{\alpha} \leq N} \eps^{-\alpha} \| \p_\beta^\alpha f \|_{L^2\mathcal{H}^1_{|\beta|, \rm w}} \, , 
\end{align}
and their dual-space variants\index{$\mathbb Y^{N}_{\eps}$}\index{$\mathbb Y^{N}_{\eps, \rm w}$}
\begin{align}\notag
\| f \|_{\mathbb Y^{N}_{\eps}}  := \sum_{|\beta| + \abs{\alpha} \leq N} \eps^{-\alpha} \| \p_\beta^\alpha f \|_{L^2\mathcal{H}^{-1}_{|\beta|}} \, , \qquad
\| f \|_{\mathbb Y^{N}_{\eps,\rm{w}}}  := \sum_{|\beta| + \abs{\alpha} \leq N} \eps^{-\alpha} \| \p_\beta^\alpha f \|_{L^2 \mathcal{H}^{-1}_{|\beta|, \rm w}} \, . 
\end{align}
As a corollary of Lemma~\ref{weddy:night} we have $\| f \|_{\mathbb Y^{N}_{\eps}} \lesssim \| f \|_{\mathbb Y^{N}_{\eps,\rm w}}$.

The main linear result is the following. For the proof of this result as a consequence of the variety of linear estimates we prove in section~\ref{secc:micro:linear}, we refer to Remark~\ref{rem:implications}.

\begin{proposition}[Linear estimate]\label{lem:MainLinear}
Let $0 < \varepsilon,\delta \ll 1$ and $N \geq 0$. Let $z \in H^2 \mathcal{H}^{-1}_{-10} \cap \mathbb{Y}^N_{\eps,\rm w}$ be purely microscopic and
satisfy
\begin{align*}
\norm{e^{\delta \brak{\eps x}^{\sfrac 12}} z}_{\mathbb{Y}^N_{\eps}} < \infty  \, .
\end{align*}
Then there exists a unique solution $f$ to~\eqref{eq:NLSL} (that is, $F= \mu_0^{\sfrac 12} f$ solves~\eqref{def:LinCE}) with $f\in H^2 \mathcal{H}^1_{-10} \cap \mathbb{X}^N_{\eps,\rm w}$
satisfying the linear constraint\index{$\ell_\eps$}
\begin{align*}
\ell_\eps( Pf(0) ) = 0 \, ,
\end{align*}
defined in Remark~\ref{rmk:theconstraint} in Appendix~\ref{sec:appendixode}. 
This solution obeys the quantitative estimate
\begin{align*}
\varepsilon \norm{f}_{\mathbb M^N_\eps} \lesssim \norm{z}_{\mathbb{Y}^N_{\eps, \rm w}} + \norm{e^{\delta \brak{\eps x}^{\sfrac 12}} z}_{\mathbb{Y}^N_{\eps}} \, , 
\end{align*}
where\index{$\mathbb{M}_{\varepsilon}^N$}
\begin{align*}
\norm{f}_{\mathbb M^N_\eps} := \norm{e^{\delta \brak{\eps x}^{\sfrac 12} } P f}_{H^N_\eps} + \norm{e^{\delta \brak{\eps x}^{\sfrac 12} } P^{\perp} f}_{\mathbb X^N_\eps} + \norm{P^{\perp} f}_{\mathbb X^N_{\eps,\rm w}}  \, .
\end{align*}
\end{proposition}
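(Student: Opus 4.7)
I would prove the a priori estimate first and then obtain existence by a vanishing-regularization Galerkin scheme; uniqueness is immediate from the a priori bound applied to the difference of two solutions. The a priori estimate splits into three pieces, matched to the three summands defining $\|f\|_{\mathbb{M}_\eps^N}$: the macroscopic control $\|e^{\delta\brak{\eps x}^{1/2}}Pf\|_{H^N_\eps}$, the polynomial-weighted stretched-exponential microscopic control $\|e^{\delta\brak{\eps x}^{1/2}}P^\perp f\|_{\mathbb{X}^N_\eps}$, and the Gaussian-weighted microscopic control $\|P^\perp f\|_{\mathbb{X}^N_{\eps,\rm w}}$ without the stretched exponential. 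The base bricks are the coercivity $\brak{f,Lf}\gtrsim|P^\perp f|_\sigma^2$ in~\eqref{eq:wed:morning}, perturbed to $L_{\rm NS}$ since $\mu_{\rm NS}-\mu_0=O(\eps)$, and the skew-symmetry of $(v_1-s_0)\partial_x$ on $L^2_{x,v}$. Testing~\eqref{eq:NLSL} against $f$ controls $P^\perp f$ in $L^2_x\mathcal{H}^1$ by a pairing with $z$ and $Pf$. To upgrade this to control of $\partial_x f$, I would apply the Kawashima–Shizuta compensator $K$, anti-self-adjoint in $x$ and built so that, together with $L_{\rm NS}$, it gives full $H^1_x$ dissipation up to a bounded loss at low spatial frequencies inside $\ker L$; this is Lemma~\ref{lem:steadyestimate}. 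Commuting with $\partial_\beta^\alpha$ and paying the usual polynomial moment loss of the Landau operator (one $\brak{v}$ per $v$-derivative, consistent with the definition of $\mathcal{H}^1_{|\beta|}$) assembles the $\mathbb{X}^N_\eps$ norm on $P^\perp f$ and furnishes the high-frequency portion of the macroscopic norm.

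The low-frequency macroscopic piece, where the hypocoercive estimate is blind, is recovered by a linearized Chapman–Enskog reduction. Applying $P$ to~\eqref{eq:NLSL} and inverting $L_{\rm NS}$ on the microscopic projection $L_{\rm NS}P^\perp f=z-P^\perp(v_1-s_0)\partial_x f$, one obtains, for the five hydrodynamic moments $U=I[\mu_0^{1/2}f]$, an ODE of the form $\partial_x(A_0 U)-\partial_x(B(x)\partial_x U)=\partial_x S$, where $A_0$ is the Euler flux Jacobian at $\mu_0$, $B(x)$ is the viscosity matrix at $(\varrho_{\rm NS},u_{\rm NS},\theta_{\rm NS})$, and $S$ is a source linear in $z$ plus $\mu_{\rm NS}$-dependent perturbations of lower order. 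This is precisely the linearization, about the NS profile, of the system~\eqref{NSE:outline}; by the analysis recalled in Appendix~\ref{sec:appendixode}, it has a one-dimensional kernel spanned by $\partial_x(\varrho_{\rm NS},u_{\rm NS},\theta_{\rm NS})$, and the constraint $\ell_\eps(Pf(0))=0$ is the transversality condition removing it. The associated Green's function in the rescaled variable $\eps x$ decays exponentially at a rate independent of $\eps$, so $\|e^{\delta\brak{\eps x}^{1/2}}Pf\|_{H^N_\eps}$ is controlled by the $\eps$-weighted norms of $S$, and combining with the hypocoercive step yields the first two terms of $\mathbb{M}^N_\eps$.

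For the Gaussian-weighted term I would run a separate energy estimate in $\mathcal{H}^1_{|\beta|,\rm w}$ with weight ${\rm w}(|\beta|,q_0)$, $0\leq q_0<1$. Because $P^\perp f$ is purely microscopic, a weighted coercivity of the form $\brak{P^\perp f, L_{\rm NS}P^\perp f}_{\rm w}\gtrsim|P^\perp f|_{\sigma,|\beta|,q_0}^2-C|P^\perp f|_\sigma^2$ holds, the commutator of $L$ with the Gaussian weight being non-microscopic but controllable by the unweighted estimate already secured. Crucially, the transport term $(v_1-s_0)\partial_x$ is not compatible with the Gaussian weight together with the stretched exponential in $x$: the commutator $[e^{2\delta\brak{\eps x}^{1/2}},(v_1-s_0)\partial_x]$ produces a $v_1\eps^{1/2}\brak{\eps x}^{-1/2}$ factor that, while small at the level of the polynomial norms, would force a moment loss that no Gaussian weight alone can repay uniformly in $v$. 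This is the reason the Gaussian-weighted piece is not decorated with $e^{\delta\brak{\eps x}^{1/2}}$; conversely, the stretched-exponential weight on the first two pieces of $\mathbb{M}^N_\eps$ is absorbed exactly because those pieces carry only polynomial moments in $v$, and the bound $|z\brak{v}|_{\ast}\lesssim\|z\|_{\mathcal{H}^{-1}_{|\beta|,\rm w}}$ implicit in the right-hand side swallows the remaining mismatch. This tradeoff, a weak Poincar\'e-type interpolation between hypocoercivity dissipation and available Gaussian moments, is what produces stretched rather than genuine exponential decay.

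The main obstacle, and the reason this argument is substantially more delicate than in the hard-sphere setting of~\cite{MZ08}, is that the Landau operator has both moment and derivative loss, so $(v_1-s_0)$ and the collision operator cannot be jointly bounded on any single Hilbert space. This forces the three-term architecture of $\mathbb{M}^N_\eps$ and a careful bookkeeping of which loss is paid by which estimate; in particular the coupling term between the linearized Chapman–Enskog reduction and the hypocoercive estimate, in the Gaussian-weighted norm at top order $|\alpha|+|\beta|=N$, is where the scheme is tightest. Existence itself I would obtain by first regularizing the collision operator so that $L_{\rm NS}$ becomes bounded and strongly coercive, solving the regularized problem by a Galerkin truncation in $x$ and $v$ (standard once $A$ and $B$ are bounded), and passing to the limit in the regularization using only the a priori estimates, which are uniform in the regularization parameter; uniqueness then follows by applying the bound to the zero-source, zero-constraint difference of two putative solutions.
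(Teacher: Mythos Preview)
Your proposal is correct and follows essentially the same architecture as the paper: the hypocoercive energy estimate with the Kawashima compensator for $P^\perp f$ and $\partial_x f$, the linearized Chapman--Enskog reduction to the macroscopic ODE (Proposition~\ref{pro:macroscopicestimate}) with the constraint $\ell_\eps(Pf(0))=0$ to kill the kernel, the separate Gaussian-weighted estimate without the stretched exponential, the interpolation argument for the stretched-exponential decay, and existence via a regularized collision operator plus Galerkin approximation in $v$. Two minor points of precision: the coefficient in your macroscopic ODE should be $(\nabla H)(U_{\rm NS})-s_0 I$ (varying with $x$), not the flux Jacobian at $\mu_0$; and in the paper the Galerkin existence initially only yields $0<\eps\ll_\kappa 1$, so a method-of-continuity step (Section~\ref{sec:methodofcontinuity}) is needed to make the $\eps$-range uniform in $\kappa$ before sending $\kappa\to 0^+$.
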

\begin{remark}
We use only the $x$-norm defined in~\eqref{xnorm} to measure $Pf$ for the following reason.  Since $P$ is the projection operator defined in~\eqref{pproj}, which for every $x$ projects identically in $v$ onto a five-dimensional space, all $v$-norms are therefore equivalent.  We note also that the estimate could be improved by changing the definition of $\mathbb{M}^N_\eps$ to include an $\eps^{-1}$ prefactor in front of the microscopic terms, i.e. those with $P^\perp f$.  In other words, the $\eps$ loss would only apply to the macroscopic portion $Pf$; see Remark~\ref{rem:improved}.
\end{remark}

We outline the main ideas behind Proposition~\ref{lem:MainLinear} below, which include a linearized Chapman-Enskog expansion, macroscopic and microscopic \emph{a priori} linear estimates, and a fixed point argument which uses the linear estimates to prove Theorem~\ref{thm:main}. 

\subsubsection{Linearized Chapman-Enskog expansion}\label{sss:macro}

It is natural to seek a solution to the linearized Landau equation~\eqref{def:LinCE} within the framework of a linearized Chapman-Enskog expansion.  However, due to the moment loss inherent in the Landau operator~\eqref{symmed}, we must construct solutions to~\eqref{def:LinCE} by replacing the collision operator $Q$ with a regularized operator $Q_\kappa$ where $\kappa\in(0,1]$; in the case $\kappa=0$, $Q_0=Q$.\index{$Q_0$}\index{$Q_\kappa$}  The precise nature of this regularization is discussed further in subsubsection~\ref{sec:existencediscussion}, where we outline the existence argument.  For now we simply note that the regularized operators $Q_\kappa$ and $\sf{L}_{\mu_{\rm NS}}^{\kappa}$ enjoy all the properties necessary to perform a linearized Chapman-Enskog expansion; see subsection~\ref{sec:propertiesofLandGamma} for a delineation of these properties.  We now outline this expansion, using $\sf{L}_{\mu_{\rm NS}}^\kappa$ for $\kappa\in[0,1]$.\index{$\sf{L}^\kappa_{\mu_{\rm NS}}$}

We seek a solution to the equation 
\begin{equation}\label{def:LinCE:reg}
(v_1-s_0) \partial_x F + \sf{L}_{\mu_{\rm NS}}^\kappa F = \mathcal{E} 
\end{equation}
of the form
\begin{equation}
F = \sf{P}_{\mu_{\rm NS}} F + V \, ,\label{def:FLinCH}
\end{equation}
where $V$ is purely microscopic.  
We recognize~\eqref{def:FLinCH} as the linearization of the decomposition $F = \mu(F) + G$ around $F_{\rm NS}$, since $d\mu(F)(\cdot) = \sf{P}_{\mu(F)}(\cdot)$ according to Remark~\ref{rmk:remarkaboutderivativeofthenonlinearprojection}. We introduce $U = \sf{P}_{\mu_0} F$ and observe
\begin{equation}
\sf{P}_{\mu_{\rm NS}} F = \sf{P}_{\mu_{\rm NS}} U \,.  \label{eq:PFNSeqPFNSU}
\end{equation}

Cosmetically, our analysis will look different from that of Section~\ref{sss:approx} because we parameterize the macroscopic part~\eqref{eq:PFNSeqPFNSU} of $F$ by $U \in \mathbb{U}$ instead of by $(\varrho,m,E) \in \R^5$, but the two approaches are equivalent by the linear change of coordinates $U \overset{I}{\mapsto} (\varrho,m,E)$. For example, the nonlinear equations~\eqref{eq:compressibleEulerintravelingwaveform} in Section~\ref{sss:approx}, obtained using the decomposition $F = \mu(F) + G$, are equivalent to
\begin{equation}
	\label{eq:heyitwasequivalentallalong}
\p_x \underbrace{\sf{P}_{\mu_0} (v_1 - s_0) \mu(U) }_{=: H(U)-s_0U} = - \p_x \sf{P}_{\mu_0} (v_1-s_0) G \, ,
\end{equation}
obtained by applying $\sf{P}_{\mu_0}$ to the Landau equation~\eqref{eq:StandWaveCE} and substituting $\mu(U) = \mu(F)$. Notably, $H(U)$ is the Euler flux function written in the $U$ variable.

We now extract the linearized Navier-Stokes equations from the linearized Landau equation~\eqref{def:LinCE}. Applying $\sf{P}_{\mu_0}$ to \eqref{def:LinCE}, we have
\begin{equation}
    \label{eq:firsteq}
\sf{P}_{\mu_0} [(v_1 - s_0) \p_x \sf{P}_{\mu_{\rm NS}} F + v_1 \p_x V ] = 0 \,,
\end{equation}
where we have used that $\mathcal{E}$, $Q$, and $s_0 \p_x V$ are purely microscopic. Again, applying $\sf{P}_{\mu_0}$ is equivalent to applying $I$, as we did in~\eqref{eq:compressibleEulerintravelingwaveform}, but it will give an ODE in the range $\mathbb{U}$ of $\mathsf{P}_{\mu_0}$. 
We will find a leading order approximation, analogous to~\eqref{eq:Gapprox}, for $V$. Applying $\mathsf{P}^\perp_{\mu_{\rm NS}}$ to~\eqref{def:LinCE}, we have
\begin{equation}\notag
\sf{P}_{\mu_{\rm NS}}^\perp [(v_1-s_0) \p_x \sf{P}_{\mu_{\rm NS}} F + (v_1-s_0) \p_x V ] + \sf{L}_{\mu_{\rm NS}}^\kappa V = \mathcal{E} \, ,
\end{equation}
since $\mathsf{L}^\kappa_{\mu_{\rm NS}} \mathsf{P}_{\mu_{\rm NS}} F = 0$. Using again that $\mathcal{E}$ is purely microscopic, we therefore have 
\begin{equation}\notag
V= - \left(\sf{L}_{\mu_{\rm NS}}^\kappa\right)^{-1} \left[\sf{P}_{\mu_{\rm NS}}^\perp [ (v_1-s_0) \p_x \sf{P}_{\mu_{\rm NS}} F + (v_1-s_0) \p_x V] - \mathcal{E}\right] \, .
\end{equation}
Hence, plugging back into~\eqref{eq:firsteq}, we find
\begin{equation}\label{lin:ce}
\sf{P}_{\mu_0} \left[(v_1-s_0) \p_x \sf{P}_{\mu_{\rm NS}} F - v_1 \p_x \left[ \left(\sf{L}_{\mu_{\rm NS}}^\kappa\right)^{-1} \left(\sf{P}_{\mu_{\rm NS}}^\perp [(v_1-s_0) \p_x \sf{P}_{\mu_{\rm NS}} F + (v_1-s_0) \p_x V] - \mathcal{E}]\right) \right] \right] = 0 \, .
\end{equation}
At this point, we recall~\eqref{eq:PFNSeqPFNSU} and 
seek an ODE for $U = \sf{P}_{\mu_0} F$. First, we recognize the gradient of the Euler flux function $H(U)$ defined in~\eqref{eq:heyitwasequivalentallalong}: \index{$\nabla H$}
\begin{equation}
\label{eq:thingtosee}
\sf{P}_{\mu_0} [(v_1-s_0) \sf{P}_{\mu_{\rm NS}} U] = [(\nabla H)(U_{\rm NS}) - s_0\Id] U \, ,
\end{equation}
where $U_{\rm NS} := \sf{P}_{\mu_0} F_{\rm NS}$.\index{$U_{\rm NS}$} To obtain~\eqref{eq:thingtosee}, one again utilizes Remark~\ref{rmk:remarkaboutderivativeofthenonlinearprojection} that the derivative $d\mu|_F$ of the nonlinear projection at $F$ is the linear projection $\sf{P}_{\mu(F)}$.  Next, we seek an expression in $\p_x U$. We have
\begin{equation}\notag
\p_x \sf{P}_{\mu_{\rm NS}} F = (\p_x \sf{P}_{\mu_{\rm NS}}) U + \sf{P}_{\mu_{\rm NS}} \p_x U \, .
\end{equation}
The commutator term, $\p_x \sf{P}_{\mu_{\rm NS}} U = [\partial_x,\sf{P}_{\mu_{\rm NS}}] U$, will eventually be seen to be perturbative in our arguments. 
Define\index{$B_\kappa$}\index{$B$}
\begin{equation}\label{BB}
\sf{P}_{\mu_0} \left[ v_1 \left( \left(\sf{L}_{\mu_{\rm NS}}^\kappa\right)^{-1} (\sf{P}_{\mu_{\rm NS}}^\perp [ v_1 \sf{P}_{\mu_{\rm NS}}])\right) \right] =: B_\kappa(U_{\rm NS}) \, ,
\end{equation}
which is an $x$-dependent linear operator (that is, identifiable with a matrix) on $\mathbb{U}$.  When $\kappa=0$, we shall sometimes abbreviate $B_0$ by simply $B$. The equation \eqref{lin:ce} becomes
\begin{equation}\label{becomes}
\p_x ([(\nabla H)(U_{\rm NS}) - s_0\Id] U) - \p_x (B_\kappa(U_{\rm NS}) \p_x U) = \p_x G \, ,
\end{equation}
where\index{$G$}
\begin{equation}\label{gdef}
G = \sf{P}_{\mu_0}  \left[  v_1 \left( \left(\sf{L}_{\mu_{\rm NS}}^\kappa\right)^{-1} (\sf{P}_{\mu_{\rm NS}}^\perp [ (v_1-s_0) (\p_x \sf{P}_{\mu_{\rm NS}}) U + (v_1-s_0) \p_x V] - \mathcal{E}) \right)  \right]
\end{equation}
refers now to our forcing and error term.
Moreover, by the linear coordinate change\footnote{This is the ``constant linear coordinate change" refered to in~\cite[p. 683]{MZ08}.} $U \mapsto I[U]$, we may rewrite the equations in the variables $(\varrho,m,E) \in \R \times \R^3 \times \R$ instead of $U \in \mathbb{U}$, and the equations become a variant of the linearized compressible Navier-Stokes equations; specifically, without the perturbative $\p_x (B_\kappa'(U_{\rm NS}) U \p_x U_{\rm NS})$.  Finally, we shall actually fix the operator $B_0$ on the left-hand side, and estimate the error coming from $B_\kappa-B_0$ as part of the right-hand side.

\begin{remark}[Key new difficulties]
	\label{rmk:keynewdifficulties}
We now draw special attention to the term
\begin{equation}
	\label{eq:hugeproblem}
\sf{P}_{\mu_0}  \left[  v_1 \left( \left(\sf{L}_{\mu_{\rm NS}} \right)^{-1} (\sf{P}_{\mu_{\rm NS}}^\perp [  (v_1-s_0) \p_x V ] \right)  \right]
\end{equation}
in~\eqref{gdef}, where we set the regularization parameter $\kappa=0$ in $\sf{L}_{\mu_{\rm NS}}^{-1}$. This represents an error term, which must be controlled somehow. $V$ has essentially the same estimates as $\sf{P}^\perp F$, as the two differ by $(\sf{P}_{\mu_{\rm NS}} - \sf{P}) F$. The term $v_1 \p_x V$ \emph{inside} $\sf{L}_{\mu_{\rm NS}}^{-1}$ therefore presents a problem. The linearized Landau operator on $\mathcal{H}^1$ is invertible when the microscopic right-hand side belongs to $\mathcal{H}^{-1}$; this simply does not hold when $\p_x V$ is only estimated in the natural energy space $\mathcal{H}^1$, since (1) $L_{\mu_{\rm NS}}$ loses moments when inverted ($\mathcal{H}^{-1}$ functions may have better localization than $\mathcal{H}^1$ functions), and (2) multiplication by $v_1$ represents a \emph{further} moment loss. Any moment loss will be forgiven once $\sf{P}_{\mu_0}$ is applied on the outside, but it is necessary to make sense of the inside of~\eqref{eq:hugeproblem} first. This leads to a proliferation of technicalities. First, the basic energy estimates should track enough $v$-moments to make sense of~\eqref{eq:hugeproblem}; specifically, we track $\langle v \rangle^{10} f \in H^2 \mathcal{H}^1$ instead of $f \in H^2 \mathcal{H}^1$, which was used in~\cite{MZ08}. At the level of \emph{a priori} estimates, this is enough to close. However, it is also necessary to prove \emph{existence} for~\eqref{def:LinCE:reg}, obtained by a Galerkin approximation procedure in $v$ which moreover exploits the linearized Chapman-Enskog expansion. This procedure is inherently $L^2_v$-based and not well adapted to tracking moments. Our way around this is to prove existence at the level of a regularized collision operator, namely, Landau plus $\kappa$ times a `hard sphere' Landau whose inverse, like that of the Boltzmann hard sphere kernel, \emph{gains} one moment, exactly enough to compensate for the multiplication by $v_1$ inside $(L_{\mu_{\rm NS}}^\kappa)^{-1} v_1 \p_x V$. Existence with the Landau operator is then obtained when $\kappa \to 0^+$.
\end{remark}

\subsubsection{Macroscopic a priori linear estimates} \label{sss:macrolinaprior}

The linear stability\footnote{Specifically, stability within the class of traveling wave solutions, not dynamical stability.} of the traveling wave solutions to the Navier-Stokes equations will be used to estimate the macroscopic variables.
This is only necessary for low wave-numbers in $x$; derivatives of $U$ will be handled at the same time as the microscopic part. 
\begin{proposition}[Macroscopic estimate]
\label{pro:macroscopicestimate}
Let $0 < \varepsilon \ll 1$. In the above notation, for all $G \in L^2 \cap C(\R)$ and $d \in \R$, the linearized macroscopic equation
\begin{equation}\notag
B(U_{\rm NS}) U_x = [(\nabla H)(U_{\rm NS}) - s_0 \Id ] U + G
\end{equation}
has a unique solution $U \in L^2 \cap C(\R)$ satisfying a one-dimensional constraint
\begin{equation}\notag
\ell_\varepsilon(U(0)) = d \, .
\end{equation}
The solution obeys the estimate
\begin{equation}
	\label{eq:Ufromlinearmacroestimate}
\| U \|_{L^2} \lesssim \varepsilon^{-1} \| G \|_{L^2} + |d| \, .
\end{equation}
\end{proposition}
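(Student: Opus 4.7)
The plan is to treat this as a linear ODE system in $U \in \mathbb{U} \cong \R^5$ parameterized by $x \in \R$, with coefficients approaching constants at $\pm\infty$, and carry out a classical viscous-shock stability analysis mirroring the Navier-Stokes profile theory of Appendix~\ref{sec:appendixode}. The first step is to dispose of the fact that $B(U_{\rm NS})$ is singular: the compressible Navier-Stokes mass equation~\eqref{NSE:outline} has no dissipation, so $B$ has a one-dimensional cokernel. In natural coordinates $(\varrho,m,E)$ on $\mathbb{U}$ (via the change of variables $I$), the mass row of $B U_x = A U + G$, where $A := (\nabla H)(U_{\rm NS}) - s_0 \Id$, is purely algebraic and lets me solve for one component of $U$ in terms of the others and $G$. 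This yields a genuine $4\times 4$ ODE $\tilde U_x = \Lambda(x)\tilde U + \tilde G(x)$, with $\Lambda(x)\to \Lambda_{L/R}$ exponentially as $x\to\mp\infty$ and with $\tilde G$ controlled by $G$ in $L^2$.

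Next I would analyze the asymptotic spectrum of $\Lambda_{L/R}$: three of its eigenvalues are of order one and separated from the imaginary axis, while the fourth is of size $O(\varepsilon)$ — the ``slow'' mode, corresponding to the sonic characteristic $u_1+c$ that changes sign across the shock by the Lax condition (equivalently, by the genuine nonlinearity of the associated field, as used in Proposition~\ref{pro:smallshocksgeneralhyperbolic}). The count of decaying modes at $+\infty$ plus growing modes at $-\infty$ equals $5$, one more than the dimension of the reduced system, the extra direction being the translation mode spanned by $(U_{\rm NS})_x$. I would then construct the solution via an exponential dichotomy (roughness of dichotomies applied to the endpoint constant-coefficient problems), match stable/unstable subspaces at $x=0$, and write the solution by variation of constants against the resulting Green's function. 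Matching leaves a one-parameter family, which is cut down to a unique solution by imposing $\ell_\varepsilon(U(0))=d$; transversality of $\ell_\varepsilon$ to the translation mode, built into Remark~\ref{rmk:theconstraint}, makes this selection well-posed and produces the $|d|$ term in the estimate. The $\varepsilon^{-1}$ loss comes entirely from the slow mode: its dichotomy rate is $\sim\varepsilon$, so variation of constants in $L^2$ on that subspace loses exactly a factor of $\varepsilon^{-1}$, while on the fast modes the Green's function is bounded $L^2\to L^2$ uniformly in $\varepsilon$.

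The main obstacle will be tracking the $\varepsilon$-dependence of the exponential dichotomy uniformly along the profile: I must show that the slow/fast splitting of $\Lambda(x)$ persists uniformly in $x$ with the explicit $O(\varepsilon)$ scaling of the slow eigenvalue. This is a perturbation argument exploiting that $U_{\rm NS} - (1,0,1)$ has size $O(\varepsilon)$ and varies on the long scale $\varepsilon^{-1}$, so $\Lambda(x)$ is a \emph{slowly varying} small perturbation of $\Lambda_{L/R}$; roughness then transfers the endpoint dichotomies along the profile with quantitatively sharp $\varepsilon$-dependence on the slow direction. A secondary issue is a clean algebraic reduction of the singular $B$ that is uniform in $x$ and compatible with the $L^2$ framework, so that the reduced forcing $\tilde G$ inherits its bound from $G$ without spurious losses; this is largely bookkeeping but has to be set up carefully because one is quotienting by the degenerate direction of $B$.
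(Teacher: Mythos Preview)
Your proposal is essentially correct and follows the same overall strategy as the paper's proof (Proposition~\ref{pro:ODEpropappendix} in Appendix~\ref{sec:appendixode}): eliminate the degenerate (mass) row algebraically to obtain a non-singular first-order system, identify the slow/fast spectral splitting with exactly one $O(\varepsilon)$ central eigenvalue, solve by variation of constants, and fix the remaining one-parameter freedom with $\ell_\varepsilon$; the $\varepsilon^{-1}$ loss is correctly attributed to the slow propagator.

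The one noteworthy implementation difference is how the uniform-in-$x$ splitting is obtained. You propose roughness of exponential dichotomies transferred from the endpoints; the paper instead block-diagonalizes the coefficient matrix $m(x)$ \emph{pointwise} via the spectral projections $P^s(m(x))$, $P^c(m(x))$, $P^u(m(x))$ and the conjugator $Q(x)=P^s(x)P^s_0+P^c(x)P^c_0+P^u(x)P^u_0$. Because $m(x)=m_0+O(\varepsilon)$ and the central eigenvalue is simple, these projections depend smoothly on $m$ and hence on $x$, with $Q'(x)=O(\varepsilon^2)$; the change of variables $z=Q^{-1}w$ then decouples the system exactly into stable, unstable, and one-dimensional center components, and the sharp $\varepsilon^{-1}$ estimate on the center block is read off directly from the scalar propagator $e^{\int_y^x \mu(y')\,dy'}$. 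This bypasses the need to track $\varepsilon$-dependence through a roughness argument, which is the ``main obstacle'' you flagged. Your route would also close, but the paper's explicit diagonalization is cleaner for the quantitative $\varepsilon$-bookkeeping.
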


The proof of Proposition~\ref{pro:macroscopicestimate} is involved but essentially contained in \cite{MZ08}. The proof requires detailed knowledge of the shock profile which is obtained from the center manifold analysis in its construction. For completeness, we revisit both the construction of the shock profile and subsequent proof of Proposition~\ref{pro:macroscopicestimate} in Appendix~\ref{sec:appendixode}. We draw special attention to the prefactor $\varepsilon^{-1}$ in~\eqref{eq:Ufromlinearmacroestimate}.

The constraint $\ell_\varepsilon(U(0)) = d$, defined in Remark~\ref{rmk:theconstraint}, can be understood in the following way. 
Suppose we invert $B(U_{\rm NS})$ and view~\eqref{eq:linearizedODE} as a linear ODE $\dot U = m(x) U + \tilde{G}$. Then the unstable subspace $E^u_-(x)$ flowing from $x = -\infty$ and stable subspace $E^s_+(x)$ flowing from $x = +\infty$ have a one-dimensional intersection at $x=0$ which is fixed by the constraint. Technically, $B(U_{\rm NS})$ is not invertible because the density $\varrho$ is not dissipated, and the workaround is explained in Appendix~\ref{sec:appendixode}.

\subsubsection{Microscopic a priori linear estimates}\label{sss:micro}
As explained in the beginning of subsubsection~\ref{sss:macro}, we must treat the linearized equation~\eqref{def:LinCE:reg}, which includes the regularized linear operator $Q_\kappa$ with regularization parameter $\kappa\in[0,1]$.  We view $L^\kappa_{\rm NS}$ (defined analogously to~\eqref{ells:and:Qs}, but with a regularized operator $Q_\kappa$ replacing $Q$) as a perturbation of $L^\kappa$, yielding the linearized, conjugated, regularized equation \index{$Q_\kappa$}\index{$L^\kappa$}\index{$L^\kappa_{\rm NS}$}
\begin{equation}\label{def:LinCE:reg:conj}
 (v_1-s_0)\p_x f + L^\kappa f = \Gamma_\kappa [\mu_0^{-\sfrac{1}{2}} (\mu_{\rm NS} - \mu_0),f] + \Gamma_\kappa [f,\mu_0^{-\sfrac{1}{2}} (\mu_{\rm NS} - \mu_0)] + z \, .
 \end{equation}
Solutions to this equation are estimated in three steps. First is the standard $L^2$ energy estimate
\begin{equation}\notag
\| P^\perp f \|_{L^2 \mathcal{H}^1} \lesssim \varepsilon \| Pf \|_{L^2} + \| z \|_{L^2 \mathcal{H}^{-1}} \, ,
\end{equation}
which fails to be coercive in the kernel of $L$. The second is a `twisted' energy estimate, involving the Kawashima compensator $K$, which is coercive on $\p_x P f$. Together, the standard and twisted estimates yield a `baseline' estimate
\begin{equation}\label{eq:basicmicroestintro}
\left\| \p_x f \right\|_{L^2 \mathcal{H}^1} + \left\| P^\perp f \right\|_{L^2 \mathcal{H}^1} \les \varepsilon \left\| Pf \right\|_{L^2} + \left\| z, \p_x z \right\|_{L^2 \mathcal{H}^{-1}} \, ;
\end{equation}
see Lemma~\ref{lem:basicmicroest}.   The final estimate is an analogue of~\eqref{eq:basicmicroestintro}, but with polynomial weights in velocity.  Roughly speaking, assuming that $\langle v \rangle^{10} z \in L^2 \mathcal{H}^{-1}$ provides bounds for $\langle v \rangle^{10} P^\perp f$ and $\langle v \rangle^{10} \partial_x f$ in $L^2\mathcal{H}^1$. The precise energy estimates are described in Section~\ref{sec:linearestimatesbaseline}, and a version of the Kawashima compensator theory is developed in Appendix~\ref{sec:kawashima}.

Notice that the microscopic estimate is entirely predicated on $\varepsilon \| Pf \|_{L^2}$, whereas the macroscopic estimate depends on $P^\perp f$ through the right-hand side $G$ defined in~\eqref{gdef}. The crucial point in `closing the loop' is that \emph{$G$ depends on $P^\perp f$ through $\p_x P^\perp f$}, which gains a further power of $\varepsilon$. We then, \emph{roughly speaking}, have $\| Pf \| \lesssim \varepsilon^{-1} \| \p_x P^\perp f \| + \| \mathcal{E} \| \lesssim \varepsilon \| Pf \| + \| \mathcal{E} \|$. This is made precise for solutions with $\langle v \rangle^{10} f \in H^2 \mathcal{H}^1$ in Proposition~\ref{pro:basicexistence}.

The unique solutions are subsequently bootstrapped to (i) high regularity in $x$ and $v$, (ii) high regularity in $x$ and $v$ with Gaussian weights in $v$, and (iii) stretched exponential decay in~$x$, unweighted in~$v$.  In particular, we prove in Lemmas~\ref{lem:bs:1} and~\ref{lem:bs:2} that
\begin{equation}\label{est:boot:intro}
\| f \|_{\mathbb{X}^N_{\varepsilon, \rm w}} + \left\| e^{\delta \langle \varepsilon x \rangle^{\sfrac{1}{2}}} f \right\|_{\mathbb{X}^N_\varepsilon} \lesssim \varepsilon^{-1} \left( \| z \|_{\mathbb{Y}^N_{\varepsilon, \rm w}} + \| e^{\delta \langle \varepsilon x \rangle^{\sfrac{1}{2}}} z \|_{\mathbb{Y}^N_\varepsilon} \right) \, .	
\end{equation}

\subsubsection{Existence}
	\label{sec:existencediscussion}

We have now sketched the \emph{a priori} estimates, but it is necessary to demonstrate that solutions to~\eqref{def:LinCE} exist. Somewhat surprisingly, proving existence is fairly non-trivial. Following~\cite{MZ08}, we apply a Galerkin approximation procedure, which includes a linearized Chapman-Enskog expansion. However, we encounter a new difficulty precisely having to do with the moment loss in the Landau collision operator. Namely, to close the macro-micro loop, it is necessary to track a sufficiently large number of moments of $f$; that is, we need an estimate of the type $\langle v \rangle^{10} f \in H^2 \mathcal{H}^1$; see~\eqref{eq:doesn't:close}. This does not play well with the Galerkin approximation, which is adapted only to standard $\mathcal{H}^1$ energy estimates.

To address this difficulty, we prove existence for the linearized equation with a \emph{regularized} collision kernel, which does not suffer from the same loss of moments. We define
\begin{align}
Q_{\rm R}(F_1,F_2)(v) & = \nabla_v \cdot \left( \int_{\R^3} \phi_{\rm R}(v-u) \left( F_1(u)\nabla_v F_2(v) - F_2(v) \nabla_u F_1(u) \right) \, \dee u \right)\notag  \\
\phi_{\rm R}^{ij}(v) &= \left( \delta^{ij} - \frac{v^i v^j}{|v|^2} \right) |v|  \, .  \label{hard:landau}
\end{align}
This is a `harder' Landau collision operator than that used in~\eqref{def:LandauIntro}, and has been studied (for example) in~\cite{G02}. 
The regularized collision operator is then chosen as\index{$Q_\kappa$}\index{$Q_{\rm R}$}\index{$\kappa$}
\begin{equation}\label{Qkappadef}
Q_\kappa = Q + \kappa Q_{\rm R} \, .
\end{equation}
We write $\sf{L}_{\mu_{\rm NS}}^\kappa = \sf{L}_{\mu_{\rm NS}} + \kappa \sf{L}_{\mu_{\rm NS}}^{\rm R}$ for the regularized, linearized operator, and $L_{\rm NS}^{\kappa}$ for the regularized, linearized, conjugated operator.  In general, any notation pertaining to $Q$ can be appended with a $\kappa$, signifying usage of $Q_\kappa$ instead of $Q$.

The regularized operator has different mapping properties than the non-regularized operator. We define\index{$\sigma_{\rm R}$}\index{$\sigma_\kappa$}
\begin{equation}\label{twofiddytwo}
\sigma_{\rm R} = \phi_{\rm R} \ast \mu \, , \quad \sigma_\kappa = \phi_\kappa \ast \mu_0 = \sigma + \kappa \sigma_{\rm R} \, .
\end{equation}
We define the following polynomial weighted norms:  \index{$\mathcal{H}^1_{\rm R}$}\index{$\mathcal{H}^1_\kappa$}
\begin{equation}\label{H1kappa}
|f|_{\sigma_{\rm R},\ell}^2 = \int_{\R^3} \langle v \rangle^{2\ell}  \sigma_{\rm R}^{ij} \left( \partial_i f \partial_j f + v_i v_j f^2 \right) \, \dee v  \, , \quad |f|_{\sigma_\kappa,\ell}^2 = |f|_{\sigma,\ell}^2 + \kappa |f|_{\sigma_{\rm R},-\ell}^2 \, .
\end{equation}
Note the difference between the $\ell=0$ and $\ell\neq 0$ cases in the definition of the $|\cdot|_{\sigma_\kappa,\ell}$ norm, which is due to the difference in moment gain/loss between $\sigma$ and $\sigma_{\rm R}$ and roughly follows the convention in~\cite{G02}. \index{$\normm{\cdot}_{\sigma_\kappa, \ell}$} \index{$\normm{\cdot}_{\sigma_{\rm R}, \ell}$}
The linearized collision operator is coercive in the norm $|f|_{\sigma_\kappa, 0}:=|f|_{\sigma_\kappa}$, which is analogous to the norm $|\cdot|_{\sigma,0,0}$ defined in~\eqref{eq:monday:night}.  We associate to these norms the function spaces consisting of $H^1_{\rm loc}$ functions with finite norm, as in~\eqref{eq:sigma:2}.  We define $\mathcal{H}^{-1}_{\ell, \kappa}$ to be the associated dual space, as in~\eqref{def:dual:spaces}; specifically,\index{$\mathcal{H}^{-1}_{\ell,\kappa}$}
\begin{equation}\label{newduals}
 \| z \|_{\mathcal{H}^{-1}_{\ell, \kappa}} = \| \langle v \rangle^{-\ell} z \|_{\mathcal{H}^{-1}_\kappa} \, .
\end{equation}
We will not need any of the exponentially weighted spaces corresponding to $L^\kappa$ for $\kappa>0$, which is why we only consider $q=0$ above.\index{$\mathcal{H}^{-1}_{\rm R}$}\index{$\mathcal{H}^1_\kappa$}

To prove existence of solutions to~\eqref{def:LinCE}, we first study solutions to the regularized linearized equation
\begin{equation}\notag
(v_1 - s_0) \p_x f + L_{\rm NS}^\kappa f = z \, .
\end{equation}
Solutions to this equation obey the same \emph{a priori} estimates as solutions to~\eqref{def:LinCE}, but existence is easier. In Section~\ref{sekk:linear}, we prove existence for this equation by a non-trivial Galerkin approximation procedure in the space $H^2 \mathcal{H}^1_\kappa$.  We note that passing to the limit in $\kappa$ requires a proof that the smallness parameter $\varepsilon$ (the size of the shock) can be taken uniformly positive, irrespective of $\kappa\rightarrow 0$.  We achieve this in subsection~\ref{sec:methodofcontinuity} using the method of continuity.

\subsubsection{The proof of Theorem \ref{thm:main}} \label{sss:outproof}

Since the linear problem has now been solved by Proposition~\ref{lem:MainLinear}, it remains to estimate $\mathcal{E}$. Recalling the definitions~\eqref{def:calE} and~\eqref{def:residual}, we have the following estimates.

\begin{proposition}
	\label{pro:remainderest}
The following estimate holds on the remainder:
\begin{align}
\norm{e^{\delta \brak{\eps x}^{\sfrac 12}} \mu_0^{-\sfrac 12} \mathcal{E}_{\rm NS}}_{\mathbb Y^N_\eps} + \norm{\mu_0^{-\sfrac 12} \mathcal{E}_{\rm NS}}_{\mathbb Y^N_{\eps, \rm w}} \lesssim \eps^3 \, .   \label{rem:est:one}
\end{align}
Furthermore, if we let $f=\mu_0^{-\sfrac 12} F \in \mathbb X^N_{\eps, \rm w}$, then we have the estimates 
\begin{align*}
\norm{e^{\delta \brak{\eps x}^{\sfrac 12}} \Gamma[g_{\rm NS},f] }_{\mathbb Y^N_\eps} + \norm{\mu_0^{-\sfrac 12} \Gamma[g_{\rm NS},f] }_{\mathbb Y^N_{\eps, \rm w}} &\lesssim \eps^2 \norm{f}_{\mathbb M^N_\eps} \\
\norm{e^{\delta \brak{\eps x}^{\sfrac 12}} \Gamma[f,f] }_{\mathbb Y^N_\eps} + \norm{\mu_0^{-\sfrac 12} \Gamma[f,f] }_{\mathbb Y^N_{\eps, \rm w}} &\lesssim \norm{f}_{\mathbb M^N_\eps}^2 \, . 
\end{align*}
\end{proposition}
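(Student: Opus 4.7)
The proof of Proposition~\ref{pro:remainderest} splits into three parts, one for each bound in the statement, with the residual estimate the most involved and the two bilinear estimates running in parallel.

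For the residual $\mathcal{E}_{\rm NS}$, I would start from the compact form~\eqref{pure}, $\mathcal{E}_{\rm NS} = -\sf{P}^\perp_{\mu_{\rm NS}}(v_1-s_0)\p_x G_{\rm NS} - Q(G_{\rm NS},G_{\rm NS})$, in which every term is already manifestly purely microscopic. The first step is to assemble quantitative control on the Navier-Stokes profile from Proposition~\ref{pro:smallshocksgeneralhyperbolic} in Appendix~\ref{sec:appendixode}: $(\varrho_{\rm NS},u_{\rm NS},\theta_{\rm NS})-(1,0,1)$ is $O(\varepsilon)$ with genuine exponential decay $e^{-c\varepsilon|x|}$, and each $\p_x$ trades a factor of $\varepsilon$. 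Via the smoothness of the Maxwellian parameterization~\eqref{eq:mplusdef} and the chain rule, this translates to $\|\p_x^\alpha \mu_0^{-\sfrac{1}{2}}(\mu_{\rm NS}-\mu_0)\|_{\mathcal{H}^1_{|\beta|,\rm w}}\lesssim \varepsilon^{1+\alpha}$ pointwise in $x$, with the condition $q_0<1$ leaving enough room for the $\mu_0^{-\sfrac{1}{2}}$ factor; moreover the stretched exponential $e^{\delta\langle \varepsilon x\rangle^{\sfrac 12}}$ is dominated by the genuine exponential decay of the profile for $\delta$ small, so we may include it. The bounded inverse $\sf{L}_{\mu_{\rm NS}}^{-1}\sf{P}^\perp_{\mu_{\rm NS}}$ developed in Section~\ref{sec:propertiesofLandGamma} then yields $\|\p_x^\alpha G_{\rm NS}\|\lesssim \varepsilon^{2+\alpha}$, after handling the commutators between $\p_x$ and the $x$-dependent projection/inverse (which each cost $O(\varepsilon)$). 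Consequently the first term in $\mathcal{E}_{\rm NS}$ is $O(\varepsilon^{3+\alpha})$ and the quadratic $Q(G_{\rm NS},G_{\rm NS})$ is $O(\varepsilon^{4+\alpha})$ by a $\Gamma$-bilinear estimate; after factoring the $\varepsilon^{-\alpha}$ built into the $\mathbb Y^N_\varepsilon$ norms, the target $\varepsilon^3$ bound follows.

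For the bilinear estimates on $\Gamma[g_{\rm NS},f]$ and $\Gamma[f,f]$, the workhorse is the Landau $\Gamma$-bilinear inequality from Section~\ref{sec:propertiesofLandGamma} (schematically, $\|\Gamma[a,b]\|_{\mathcal{H}^{-1}_\ell}\lesssim \|a\|_{L^\infty_{v,*}}|b|_{\sigma,\ell}+|a|_{\sigma,\ell}\|b\|_{L^\infty_{v,*}}$, plus its Gaussian-weighted analogue). Applying $\p_\beta^\alpha$ and the Leibniz rule to $\Gamma[g_{\rm NS},f]$, each $x$-derivative falling on $g_{\rm NS}$ gains a power of $\varepsilon$ from the profile scaling, exactly matching the $\varepsilon^{-\alpha}$ factor in the $\mathbb Y^N_\varepsilon$ norm, while the baseline $L^\infty_{x,v}$-type control $\|g_{\rm NS}\|_\infty \lesssim \varepsilon^2$ (also inherited from the profile bounds) supplies the overall prefactor. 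Sobolev embedding in $x\in\mathbb R$ and $v\in\mathbb R^3$, which is freely available since $N\geq 8$, converts the $\mathbb X^N_\varepsilon$-type norms on $f$ into the needed $L^\infty$ inputs, yielding the claimed $\varepsilon^2\|f\|_{\mathbb M^N_\varepsilon}$. For $\Gamma[f,f]$ the same bilinear inequality combined with Leibniz and Sobolev embedding (the factor carrying fewer derivatives is thrown into $L^\infty$) produces $\|f\|_{\mathbb M^N_\varepsilon}^2$.

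The main obstacle, and the place where the proof requires real care, is the distribution of velocity weights across the bilinear pairing. For the Gaussian-weighted dual norm $\mathbb Y^N_{\varepsilon,\rm w}$, the combined weight ${\rm w}(|\beta|,q_0)\mu_0^{-\sfrac{1}{2}}$ must be absorbed into one factor of the bilinear product. Since $Pf$ is automatically a Gaussian-multiple in $v$ (being a combination of $(1,v,|v|^2)\mu_0^{\sfrac{1}{2}}$), the weight naturally falls onto the $P^\perp f$ factor, which is controlled precisely by the $\|P^\perp f\|_{\mathbb X^N_{\varepsilon,\rm w}}$ piece of $\|f\|_{\mathbb M^N_\varepsilon}$, while the other factor can sit in a weaker $\mathbb X^N_\varepsilon$ space. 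For the exponentially $x$-weighted norms the weight is purely in $x$ and distributes via Cauchy--Schwarz onto one factor, matched by the first two pieces of $\|f\|_{\mathbb M^N_\varepsilon}$. The constraint $q_0<1$ is exactly what keeps the product of $\mu_0^{-\sfrac{1}{2}}$, the ${\rm w}$-weight, and the natural Maxwellian decay of the bilinear output square-integrable in $v$. This bookkeeping, together with the Leibniz product rule for the $x$-dependent projections/inverses appearing in the residual term, constitutes essentially the entire technical content.
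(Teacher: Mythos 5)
Your proposal follows the same route the paper takes: you start from the identity~\eqref{pure} for $\mathcal{E}_{\rm NS}$, appeal to profile bounds on $\mu_0^{-\sfrac{1}{2}}\mu_{\rm NS}$ and $g_{\rm NS}$ (the paper's Lemma~\ref{lem:mugests}), invoke the invertibility of $L_{\rm NS}$ on the microscopic subspace (Lemma~\ref{lem:inverttofindg}), and finish with the nonlinear $\Gamma$ estimates; you also correctly identify the $O(\varepsilon^{3+\alpha})$ versus $O(\varepsilon^{4+\alpha})$ counting, the absorption of $e^{\delta\langle\varepsilon x\rangle^{\sfrac{1}{2}}}$ into the genuine exponential decay of the profile, and the role of $q_0<1$ in handling the Gaussian weights. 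The paper's own proof is essentially a pointer to these same lemmas, so the strategy matches.

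The one place your sketch misstates a tool: the schematic bilinear bound $\|\Gamma[a,b]\|_{\mathcal{H}^{-1}_\ell}\lesssim\|a\|_{L^\infty_{v,*}}|b|_{\sigma,\ell}+|a|_{\sigma,\ell}\|b\|_{L^\infty_{v,*}}$ is not what the paper uses and would be awkward to justify here. The actual workhorse, Lemma~\ref{lem:lemma10:new}, is a trilinear estimate pairing $\partial_\beta^\alpha\Gamma[g_1,g_2]$ against a test function $g_3$, bounded by a product of three $|\cdot|_{\sigma,\ell}$ or $|\cdot|_{\sigma,\ell,q,\theta}$ norms, with no $L^\infty_v$ control anywhere. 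Passing to the dual norm by optimizing over $g_3$ gives precisely the operator bounds needed. In particular no Sobolev embedding in the $v$-variable is invoked or needed — only the $L^\infty_x$ embedding to handle the product in $x$ — and this is important: the Landau $\sigma$-norms of Lemma~\ref{l:norm:control} carry anisotropic $(1+|v|)$-weights on $P_v$ and $(I-P_v)$ components, and do not compare cleanly to $L^\infty_v$. This is a bookkeeping correction to your sketch rather than a missing idea, but if carried out literally as written the $L^\infty_v$ step would not close.
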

We prove this proposition in section~\ref{sec:error:CE}. From here, it is straightforward to use a fixed point argument to complete the proof of Theorem~\ref{thm:main}; see section~\ref{sec:fixed:point}. 

\section{Error estimates for Chapman-Enskog expansion}\label{sec:error:CE}


\subsection{The $\sigma$ norms}
In this section, our main references are~\cite{G02,GS08}.

For the sake of generality, we introduce a parameter $\theta \in [0,2]$ and the weight functions\index{${\rm w}$}\index{${\rm w}_{\rm R}$}
\begin{equation}\label{waits}
{\rm w}(\ell,q,\theta)=\langle v \rangle^{-\ell} e^{\frac q 4 |v|^{\theta}} \, , \qquad {\rm w}_{\rm R}(\ell) = \langle v \rangle^\ell
\end{equation} 
where $\ell \in \R$ and $(q,\theta) \in ([0,+\infty) \times [0,2)) \cup ((0,1) \times \{ 2 \})$; see \cite{GS08}.  Note the difference in the powers of $\langle v \rangle$ between the weight corresponding to $Q$ and the weight corresponding to $Q_{\rm R}$. The norm $|\cdot|_{\sigma,\ell,q,\theta}$ is defined as in~\eqref{eq:monday:night}.  We recall the norms $|\cdot|_{\sigma_{\rm R}, \ell}$ and $|\cdot|_{\sigma_{\kappa}, \ell}$ from~\eqref{H1kappa}.  The first estimate below is contained in~\cite[Lemma~5]{GS08}, the second is contained in~\cite[Corollary~1]{G02} (although adjusted to include both an upper and lower bound, as in~\cite{GS08}), and the third is an immediate consequence of the first two and~\eqref{twofiddytwo}.  In each estimate, $|\cdot|_2$ refers to the unweighted, standard $L^2$ norm in the variable $v$.


\begin{lemma}[Norm control]\label{l:norm:control}
Given $v \neq 0$, we write $P_v = v \otimes v |v|^{-2}$. 
There exist implicit constants such that, for any $f, g \in H^1_{\rm loc}(\R^3)$ with $|g|_{\sigma,\ell,q,\theta} < +\infty$ and $|f|_{\sigma_{\rm R},\ell} < +\infty$, we have the norm equivalence:
\begin{subequations}\notag
\begin{align*}
|g|_{\sigma,\ell,q,\theta}^2 &\approxeq \left| {\rm w}(\ell, q, \theta) [1+|v|]^{-\sfrac{3}{2}} \{ P_v \partial_i g \} \right|_{2}^2 + \left| {\rm w}(\ell, q, \theta) [1+|v|]^{-\sfrac{1}{2}} \{[I-P_v] \partial_i g \} \right|_{2}^2 \notag\\
&\qquad  + \left| {\rm w}(\ell, q, \theta) [1+|v|]^{-\sfrac{1}{2}} g \right|_{2}^2 \, , \\
|f|_{\sigma_{\rm R},\ell}^2 &\approxeq \left| {\rm w}_{\rm R}(\ell) [1+|v|]^{- \sfrac 12} \{ P_v \partial_i f \} \right|_{2}^2 + \left|{\rm w}_{\rm R}(\ell) [1+|v|]^{\sfrac 12} \{[I-P_v] \partial_i f \} \right|_{2}^2  + \left| {\rm w}_{\rm R}(\ell) [1+|v|]^{\sfrac 12} f \right|_{2}^2  \\
|f|^2_{\sigma_\kappa, \ell} &\approxeq |f|^2_{\sigma, \ell, 0,0} + \kappa |f|^2_{\sigma_{\rm R}, - \ell }  \, . 
\end{align*}
\end{subequations}
\end{lemma}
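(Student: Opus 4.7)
The approach is to reduce all three equivalences to the spectral structure of the matrix-valued functions $\sigma^{ij}(v)$ and $\sigma_{\rm R}^{ij}(v)$, following the classical template of Guo~\cite{G02} and Strain--Guo~\cite{GS08}.

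First, I would record that both $\sigma^{ij}(v)$ and $\sigma_{\rm R}^{ij}(v)$ are symmetric, radial, and commute with the orthogonal projection $P_v$; hence they diagonalize as
$$\sigma^{ij}(v) = \lambda_1(|v|) P_v^{ij} + \lambda_2(|v|) (I-P_v)^{ij}, \qquad \sigma_{\rm R}^{ij}(v) = \lambda_1^{\rm R}(|v|) P_v^{ij} + \lambda_2^{\rm R}(|v|)(I-P_v)^{ij}.$$
Since $\phi^{ij}(u)$ has transverse eigenvalues $|u|^{-1}$ and a vanishing radial eigenvalue (and $\phi_{\rm R}^{ij}$ has transverse $|u|$ and radial $0$), convolution against the Gaussian $\mu_0$ together with a direct stationary-phase / explicit-integral computation gives the asymptotics
$$\lambda_1(|v|) \approxeq \langle v \rangle^{-3}, \quad \lambda_2(|v|) \approxeq \langle v \rangle^{-1}, \quad \lambda_1^{\rm R}(|v|) \approxeq \langle v \rangle^{-1}, \quad \lambda_2^{\rm R}(|v|) \approxeq \langle v \rangle.$$
These are classical; I would simply cite them from~\cite{G02,GS08}.

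Next, I would decompose $\nabla_v f = P_v \nabla_v f + (I-P_v)\nabla_v f$ and use the orthogonality of the two projections to get the pointwise identities
$$\sigma^{ij}(v)\partial_i f \partial_j f = \lambda_1(|v|)\,|P_v \nabla_v f|^2 + \lambda_2(|v|)\,|(I-P_v)\nabla_v f|^2,$$
and likewise for $\sigma_{\rm R}^{ij}$. Because $P_v v = v$, the zeroth-order term satisfies
$$\sigma^{ij}(v) v_i v_j = \lambda_1(|v|)\,|v|^2 \approxeq \langle v \rangle^{-1}, \qquad \sigma_{\rm R}^{ij}(v) v_i v_j = \lambda_1^{\rm R}(|v|)\,|v|^2 \approxeq \langle v \rangle,$$
which explains the $[1+|v|]^{-\sfrac 12}$ (resp.\ $[1+|v|]^{\sfrac 12}$) weight attached to $g$ (resp.\ $f$) in the two equivalences. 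Multiplying by ${\rm w}(\ell,q,\theta)^2$ (resp.\ ${\rm w}_{\rm R}(\ell)^2$), integrating in $v$, and substituting the eigenvalue asymptotics from the previous paragraph then yield the first two stated equivalences directly. The third equivalence is immediate from the linearity $\sigma_\kappa = \sigma + \kappa \sigma_{\rm R}$ and the definition in~\eqref{H1kappa}; the reversed sign of $\ell$ appearing in $|f|_{\sigma_{\rm R},-\ell}$ is not a computation but is baked into the convention of that definition.

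The main substantive step is the eigenvalue asymptotics in the first paragraph. Once those are in hand, everything else is pointwise algebra followed by an integration, so I would devote the bulk of the write-up to quoting the precise statements from~\cite{G02,GS08} and verifying that the single additional ingredient here (the $\sigma_{\rm R}$ case, handled identically but with the kernel $|v|$ in place of $|v|^{-1}$) fits into the same template. No new obstacle arises from the polynomial and exponential weights, since ${\rm w}(\ell,q,\theta)$ is radial and hence commutes with the $P_v$ decomposition.
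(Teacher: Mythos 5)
The paper itself does not prove this lemma: it defers to \cite[Lemma~5]{GS08} for the first equivalence, \cite[Corollary~1]{G02} for the second, and the definition~\eqref{twofiddytwo}--\eqref{H1kappa} for the third. Your strategy of actually rederiving the first two equivalences from the spectral decomposition $\sigma^{ij} = \lambda_1 P_v^{ij} + \lambda_2 (I-P_v)^{ij}$ (and likewise for $\sigma_{\rm R}$) is exactly the argument underlying those references, so the approach is sound; and your handling of the third equivalence (it is just the definition) is correct.

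However, there is a genuine gap in the step from the pointwise eigenvalue asymptotics to the integrated norm equivalence. You assert the pointwise bound
\begin{equation*}
\sigma^{ij}(v)\,v_i v_j \;=\; \lambda_1(|v|)\,|v|^2 \;\approxeq\; \langle v\rangle^{-1}\,,
\end{equation*}
and then say "everything else is pointwise algebra followed by an integration." The displayed equivalence is \emph{false} near $v = 0$: there $\lambda_1(|v|)$ is bounded above and below by positive constants, so $\lambda_1(|v|)\,|v|^2 \approx |v|^2 \to 0$, whereas $\langle v\rangle^{-1} \to 1$. The pointwise lower bound $\sigma^{ij}v_iv_j \gtrsim \langle v\rangle^{-1}$ therefore fails on $\{|v| \ll 1\}$. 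Consequently, the lower bound in the \emph{integrated} statement
\begin{equation*}
\int {\rm w}^2 \left( \lambda_1 |P_v \nabla g|^2 + \lambda_2 |(I-P_v)\nabla g|^2 + \lambda_1 |v|^2 g^2 \right) dv \;\gtrsim\; \int {\rm w}^2 \langle v\rangle^{-1} g^2\,dv
\end{equation*}
does not follow from pointwise algebra alone near the origin, and one must use the gradient terms there. Concretely, on $\{|v|\le 1\}$ one has $\lambda_1,\lambda_2 \approx 1$, so the left-hand side controls $\int_{|v|\le 1}\left(|\nabla g|^2 + |v|^2 g^2\right)$, and it then takes a Hardy/Poincar\'e-type argument (localizing near the origin and using the annulus $1 \le |v|\le 2$, where the zeroth-order weight is bounded below, as a reference region) to recover $\int_{|v|\le 1} g^2$. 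This is precisely the nontrivial content of \cite[Corollary~1]{G02} and \cite[Lemma~5]{GS08}, and your sketch should either include that localization argument or invoke the full statements of those references directly rather than rederiving the zeroth-order piece from the eigenvalue asymptotics. The same remark applies to the $\sigma_{\rm R}$ case.
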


\begin{corollary}[Control by weighted dual norms]\label{weddy:night}
We have that $\|z\|_{\mathcal{H}^{-1}_\ell} \les \| z \|_{\mathcal{H}^{-1}_{\ell, \rm w}}$, and $\| z \|_{\mathcal{H}^{-1}_{\theta,\kappa}} \les \| z \|_{\mathcal{H}^{-1}_{\theta-m,\kappa}}$ for $\kappa\in[0,1]$, $\theta\in\mathbb{R}$, and $m\geq 0$.
\end{corollary}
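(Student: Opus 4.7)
My plan is to reduce both bounds to their dual statements on $\mathcal{H}^1$ and then verify that the offending multiplier is bounded on the appropriate $\mathcal{H}^1$-type space. By the definitions in~\eqref{def:dual:spaces} and the elementary identity
\begin{equation*}
\int \langle v \rangle^{-\ell} z \cdot \phi \, \dv = \int {\rm w}(\ell,q_0) z \cdot e^{-q_0 \langle v\rangle^2/4} \phi \, \dv \, ,
\end{equation*}
the first estimate $\|z\|_{\mathcal{H}^{-1}_\ell} \lesssim \|z\|_{\mathcal{H}^{-1}_{\ell,{\rm w}}}$ is equivalent, by duality, to
\begin{equation*}
\| e^{-q_0 \langle v\rangle^2/4} \phi \|_{\mathcal{H}^1} \lesssim \|\phi\|_{\mathcal{H}^1} \qquad \text{for all } \phi \in \mathcal{H}^1 \, .
\end{equation*}
Likewise, the second estimate reduces via~\eqref{newduals} to the multiplier bound $\|\langle v\rangle^{-m}\phi\|_{\mathcal{H}^1_\kappa} \lesssim \|\phi\|_{\mathcal{H}^1_\kappa}$ for all $m \geq 0$ and $\kappa \in [0,1]$.

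The key observation behind both multiplier bounds is that the gradient of the multiplier is proportional to $v$. For $h(v) := e^{-q_0 \langle v\rangle^2/4}$, a direct computation gives $\partial_i h = -(q_0/2) v_i h$, so that
\begin{equation*}
\sigma^{ij}(\partial_i h)(\partial_j h) = (q_0/2)^2 h^2\, \sigma^{ij} v_i v_j \leq (q_0/2)^2 \sigma^{ij} v_i v_j \, ,
\end{equation*}
using $|h| \leq 1$. Similarly, for $h(v) := \langle v\rangle^{-m}$ with $m \geq 0$ one has $\partial_i h = -m v_i \langle v\rangle^{-m-2}$, and hence $\sigma_\kappa^{ij}(\partial_i h)(\partial_j h) \leq m^2 \sigma_\kappa^{ij} v_i v_j$. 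In each case the derivative of the weight is pointwise controlled by the potential-type term $\sigma^{ij} v_i v_j$ (resp.~$\sigma_\kappa^{ij} v_i v_j$) already present in the $\mathcal{H}^1$ (resp.~$\mathcal{H}^1_\kappa$) norm.

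With this in hand, I would simply expand
\begin{equation*}
\|h\phi\|_{\mathcal{H}^1}^2 = \int \sigma^{ij}\bigl[ (h\, \partial_i \phi + \phi\, \partial_i h)(h\, \partial_j \phi + \phi\, \partial_j h) + v_i v_j h^2 \phi^2 \bigr] \dv
\end{equation*}
and bound each of the four resulting terms by $\|\phi\|_{\mathcal{H}^1}^2$: the two ``pure'' terms $h^2 \sigma^{ij} \partial_i \phi\, \partial_j \phi$ and $h^2 \sigma^{ij} v_i v_j \phi^2$ use only $|h| \leq 1$; the term $\phi^2 \sigma^{ij}(\partial_i h)(\partial_j h)$ uses the observation above; and the remaining cross term $2\int h\phi\, \sigma^{ij} (\partial_i h)(\partial_j \phi) \dv$ is handled by the matrix Cauchy--Schwarz inequality $|2\sigma^{ij} a_i b_j| \leq \sigma^{ij} a_i a_j + \sigma^{ij} b_i b_j$, valid since $\sigma$ is positive semidefinite, applied with $a_i = \phi\, \partial_i h$ and $b_j = h\, \partial_j \phi$. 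This reduces the cross term to the two preceding cases. The same expansion with $\sigma$ replaced by $\sigma_\kappa = \sigma + \kappa \sigma_{\rm R}$ handles the second estimate, since $\sigma_{\rm R}$ is also positive semidefinite.

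The only mildly delicate point is the cross term, where one might naively look for a product estimate for multiplication by a bounded function on an anisotropic norm; matrix Cauchy--Schwarz sidesteps this cleanly using only positive semidefiniteness of the Landau weight. There is no substantive obstacle beyond this bookkeeping.
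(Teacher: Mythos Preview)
Your proof is correct and begins with the same duality reduction as the paper: both reduce the first inequality to the multiplier bound $\| e^{-q_0\langle v\rangle^2/4}\phi\|_{\mathcal{H}^1}\lesssim\|\phi\|_{\mathcal{H}^1}$ (and analogously for the second). The difference lies in how this multiplier bound is verified. The paper invokes the explicit eigenvalue asymptotics $|\sigma^{ij}|\lesssim\langle v\rangle^{-1}$, $\sigma^{ij}v_iv_j\sim\langle v\rangle^{-1}$ from \cite[Lemma~3]{G02}, together with the norm equivalence of Lemma~\ref{l:norm:control} and the super-polynomial decay of the Gaussian to absorb any moment loss. You instead exploit the structural fact that $\nabla h$ is parallel to $v$ with $|h|\leq 1$, so that $\sigma^{ij}(\partial_i h)(\partial_j h)$ is pointwise dominated by the potential term $\sigma^{ij}v_iv_j$ already present in the norm, and then dispatch the cross term by matrix Cauchy--Schwarz using only that $\sigma$ is positive semidefinite. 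Your route is more self-contained---it needs neither the external asymptotics nor the norm-equivalence lemma---and it transfers verbatim to $\sigma_\kappa$, giving a uniform treatment of both inequalities where the paper only says the second ``is similar'' and omits the details.
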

\begin{proof}
We prove the first estimate by writing
\begin{align*}
\| z \|_{\mathcal{H}^{-1}_\ell} &= \| z \langle v \rangle^{-\ell} \|_{\mathcal{H}^{-1}}\\
&= \sup_{\| f \|_{\mathcal{H}^1}=1} \left \langle f , z \langle v \rangle^{-\ell} \right \rangle_{(\mathcal{H}^1, \mathcal{H}^{-1})} \\
&= \sup_{\| f \|_{\mathcal{H}^1}=1} \left \langle \exp\left(- \frac q4 \langle v \rangle^2 \right) f , z {\rm w} \right \rangle_{(\mathcal{H}^1, \mathcal{H}^{-1})} \\
&\leq \sup_{\| f \|_{\mathcal{H}^1}=1} \left\| f \exp\left(- \frac q4 \langle v \rangle^2 \right)  \right\|_{\mathcal{H}^1} \| z \|_{\mathcal{H}^{-1}_{\ell, \rm w}} \, .
\end{align*}
The result will then follow from the inequality 
$$\left\| f \exp\left(- \frac q4 \langle v \rangle^2 \right)  \right\|_{\mathcal{H}^1} \les \| f \|_{\mathcal{H}^1}  \, . $$ 
This claim however follows from direct computation, \cite[Lemma~3]{G02}, which shows that $|\sigma^{ij}(v)| \les \langle v \rangle^{-1}$ and that $\sigma^{ij}(v) v^i v^j f^2 = \lambda_1(v)|v|^2 f^2$, where $\lambda_1(v)\sim \langle v \rangle^{-3}$ asymptotically as $|v|\rightarrow \infty$, the fact that $\exp(- \sfrac q4 \langle v \rangle^2 ) \les \langle v \rangle^{\ell'}$ for any $\ell'$ (albeit with an implicit constant depending on $\ell'$), and Lemma~\ref{l:norm:control}.  The proof of the second inequality uses~\eqref{newduals} and is similar, and we omit further details.
\end{proof}


\subsection{Properties of $L$ and $\Gamma$}
\label{sec:propertiesofLandGamma}

We first note that the regularized operator $Q_\kappa$ defined \index{$Q_\kappa$}in~\eqref{Qkappadef} for $\kappa\in[0,1]$ preserves mass, momentum and energy, i.e. $I[Q_\kappa]=0$ for $I[\cdot]$ defined in~\eqref{op:i}.  Indeed, these conservation laws follow from the divergence-form structure of $Q_\kappa$, integration by parts, the evenness of $\phi^{ij}$, $\phi^{ij}_{\rm R}$, and thus $\phi^{ij}+\kappa\phi^{ij}_{\rm R}$, and the identity $\phi^{ij}(v) v_i = \phi^{ij}_{\rm R}(v) v_i =0$.  Next, the H-theorem and the fact that the manifold of Maxwellians $M_+ \subset \mathbb{H}_+$ described in subsubsection~\ref{sss:maxwellians} characterizes all equilibria of $Q_\kappa$ in $\mathbb{H}_+$ follow using the aforementioned properties in conjunction with the fact that $\phi^{ij}$, $\phi^{ij}_{\rm R}$, and thus $\phi^{ij}+\kappa\phi^{ij}_{\rm R}$ are projection operators in $v$ multiplied by positive functions of $v$.  Now considering the linearized operator $\sf{L}_\mu^\kappa$ for general Maxwellians $\mu$, we check the properties~\eqref{kernie}--\eqref{def:sfP}.  Considering $\sf{L}_\mu^\kappa$ as an unbounded operator on the same Hilbert space as in~\eqref{eq:innerproduct}, we claim that it is symmetric and positive-definite, which follows immediately from the same properties for $\sf{L}_\mu$ and $\sf{L}_\mu^{\rm R}$.  As before, the kernel of $\sf{L}_\mu^\kappa$ is therefore the tangent space to $M_+$, and so the projection operators $\sf{P}_\mu$ and $\sf{P}_\mu^\perp$ do not depend on $\kappa$.    Thus we have all the properties needed to carry out the linearized Chapman-Enskog expansion in subsubsection~\ref{sss:macro}.

We \index{$L^{\rm R}$}\index{$L^\kappa$}decompose the conjugated linearized operators $L^{\rm R}$ (defined analogously to~\eqref{ells:and:Qs} but with $Q$ replaced by $Q_{\rm R}$ as defined in~\eqref{hard:landau}) and $L^\kappa$ for $\kappa\in[0,1]$ (defined by $L^\kappa=L+\kappa L^{\rm R}$ for $\kappa\in[0,1]$), as
\begin{equation}\notag L^\bullet = -A_\bullet - K_\bullet
\end{equation}
for $\bullet=\kappa, {\rm R}$.  Following~\cite[Eq. (47)-(48)]{GS08}, for $\bullet=\kappa, {\rm R}$, we set $\sigma_\bullet^i = \sigma_\bullet^{ij}v_j/2$ and
\begin{equation}\label{some:IDs}
\begin{aligned}
A_\bullet g &= \p_i [\sigma_\bullet^{ij} \p_j g ] - \sigma_\bullet^{ij} \frac{v_i}{2} \frac{v_j}{2} + \p_i \sigma_\bullet^i g \\
K_\bullet g &= - \mu^{-\sfrac{1}{2}} \p_i ( \mu [\phi_\bullet^{ij} \ast (\mu^{\sfrac{1}{2}} [\p_j g + \frac{v_j}{2} g] )]) \, .
\end{aligned}
\end{equation}
It is useful to recognize that $A_\bullet g = \mu^{-\sfrac{1}{2}} \div [ (\sigma_\bullet \mu) \nabla (\mu^{-\sfrac{1}{2}} g) ]$, which, in particular, reveals its one-dimensional kernel $\R \cdot \mu^{\sfrac{1}{2}}$. We write $A_{\bullet,1} = \p_i [\sigma_\bullet^{ij} \p_j g ] - \sfrac14 \sigma_\bullet^{ij} v_i v_j$, which is evidently coercive, and $A_{\bullet, 2} = \p_i \sigma^i_\bullet g$. Additionally, we recognize that $K_\bullet g = -\mu^{-\sfrac{1}{2}} \nabla (\mu [\phi_\bullet \ast (\mu^{\sfrac{1}{2}} g)])$.

For $g \in \mathcal{H}^1$, \cite[Lemma~5]{G02} gives that
\begin{equation}\notag
|P^\perp g|_{\sigma}^2 \lesssim \langle Lg,g \rangle \, .
\end{equation}
In fact, \cite[Lemma~5]{G02} proves the analogous coercivity estimate for harder Landau operators, including $Q_{\rm R}$.  Thus for $g \in \mathcal{H}^1_\kappa$, defined in~\eqref{H1kappa} for $\kappa\in(0,1]$ and understood to be the usual $\mathcal{H}^1$ 
 when $\kappa=0$, we have that there exist constants $C_{\rm R}$ and $C_1$ such that
\begin{equation}
\label{eq:coercivityestimate}
|P^\perp g |_{\sigma_{\rm R}}^2 \leq C_{\rm R} \langle L^{\rm R} g, g \rangle  \, , \qquad  | P^\perp g |_{\sigma_\kappa}^2 \leq C_1 \langle L^\kappa g,g \rangle \, .
\end{equation}
We therefore see that for $\bullet=\kappa, {\rm R}$, the operator $L^\bullet : \mathcal{H}^1_\bullet \to \mathcal{H}^{-1}_\bullet$ maps $\mathcal{H}^1_\bullet$ to $\mathcal{H}^{-1}_\bullet$ boundedly.  In fact, \cite[Lemmas~5,6]{G02} give that $A_\bullet, K_\bullet : \mathcal{H}^1_\bullet \to \mathcal{H}^{-1}_\bullet$ map $\mathcal{H}^1_\bullet$ to $\mathcal{H}^{-1}_\bullet$ boundedly.  Moreover, given $h \in \mathcal{H}^{-1}_{\bullet, \rm mic} := \mathcal{H}^{-1}_\bullet \cap \{ Pf = 0 \}$, there exists a unique solution $f \in \mathcal{H}^1_{\bullet, \rm mic} := \mathcal{H}^1_\bullet \cap \{ Pf = 0 \}$ to $L_\kappa f = h$. This may be proven via the Lax-Milgram theorem in the Hilbert space $\mathcal{H}^1_{\bullet, \rm mic}$ with $B(f,g) = \langle L^\bullet f, g\rangle$ and the coercivity estimate~\eqref{eq:coercivityestimate}. 

Frequently, it will be necessary to understand the behavior of $L^\bullet$ in more general function spaces, so as to obtain coercivity estimates more general than~\eqref{eq:coercivityestimate}.  In the next lemma, we first recall~\cite[Lemma 9]{GS08} in estimates~\eqref{GS:Lemma:9a} and~\eqref{GS:Lemma:9b}, slightly adapt~\cite[Lemma~6]{G02} in~\eqref{new:co}, and combine the estimates to obtain general coercivity estimates for $L^\kappa$ in~\eqref{new:co:kappa}.  The proofs of~\eqref{new:co} and~\eqref{new:co:kappa} are contained in Appendix~\ref{sec:proofofnonlinearestimate}.

\begin{lemma}[Coercivity of $L, L^{\rm R}, L^\kappa$]\label{lem:propertiesofL}
The following coercivity estimates hold.
\begin{enumerate}
\item Let $0\leq q<1$, $l\geq 0$, $|\beta|>0$, $\ell=|\beta|-l$, and ${\rm w}(\ell, q)$ defined as in~\eqref{def:double:u}. Then for $\eta>0$ small enough there exists $C(\eta)>0$ such that for any $g$ for which the norms on the right-hand side are finite, we have
\begin{align}\label{GS:Lemma:9a}
\left\langle {\rm w}^2(\ell, q) \p_\beta[L g] , \p_\beta g \right\rangle \geq \left| \p_\beta g \right|^2_{\sigma,\ell,q} - \eta \sum_{|\beta_1|=|\beta|} \left| \p_{\beta_1} g \right|_{\sigma,\ell,q}^2 - C(\eta) \sum_{|\beta_1|<|\beta|} \left| \p_{\beta_1} g \right|_{\sigma,|\beta_1|-l,q}^2 \, .
\end{align}
If $|\beta|=0$, we have
\begin{align}\label{GS:Lemma:9b}
\left\langle {\rm w}^2(\ell, q) [L g] , g \right\rangle \geq \delta_q^2 \left| g \right|_{\sigma,\ell,q} - C(\eta) \left| {\rm w}(\ell,0) \overline{\chi}_{C(\eta)} g \right|_2^2 \, ,
\end{align}
where $\delta_q = 1 - q^2 - \eta >0$ for $\eta>0$ small enough and $\overline\chi_R(|v|) $ a smooth cutoff function which vanishes for $|v|\geq 2R$ and is equal to $1$ for $|v|\leq R$.

\item Let $\theta > 0$ be given.  Then there exist $\delta>0$ and $C$ such that, for any $g$ such that the norms on the right-hand side are finite, 
\begin{align}\label{new:co}
\left \langle \langle v \rangle^{2\theta} L^{\rm R} g, g \right \rangle  \geq \delta^2 \left| g \right|^2_{\sigma_{\rm R}, \theta} - C |g|_{\sigma_{\rm R}, \theta - \sfrac 12}^2 \, .
\end{align}
\item Let $\kappa \in [0,1]$ and $\vartheta<0$ be given and $|\cdot|_{\sigma_\kappa, \vartheta}$ defined as in~\eqref{H1kappa}.  Then there exist $\delta>0$ and $C$ such that, for any $g$ such that the norms on the right-hand side are finite, 
\begin{equation}\label{new:co:kappa}
\left \langle \langle v \rangle^{-2\vartheta} L^\kappa g, g \right \rangle \geq \delta^2 | g |_{\sigma_\kappa,\vartheta}^2 - C | g |^2_{\sigma_\kappa, \vartheta + \sfrac 12}
\end{equation}
\end{enumerate}
\end{lemma}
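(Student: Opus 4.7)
\textbf{Proof plan for Lemma~\ref{lem:propertiesofL}.}

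Part~(1) is a direct citation from~\cite[Lemma~9]{GS08}; the Gaussian weight ${\rm w}(\ell,q)$ is exactly the one treated there, and the statement quoted here is just a specialization after choosing $\theta\in\{0,2\}$ in the authors' more general weight family. I would simply quote the result without reproving it.

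For part~(2), my plan is to adapt Guo's coercivity proof~\cite[Lemma~6]{G02} to the harder operator $L^{\rm R}$. The key decomposition is $L^{\rm R}=-A_{\rm R}-K_{\rm R}$ from~\eqref{some:IDs}. Testing against $\langle v\rangle^{2\theta}g$ and integrating by parts, the $A_{\rm R}$ contribution produces the quadratic form
\begin{align*}
\int_{\R^3} \langle v\rangle^{2\theta}\,\sigma_{\rm R}^{ij}\bigl(\partial_i g\,\partial_j g+\tfrac14 v_i v_j g^2\bigr)\,\dee v
\;+\;(\text{commutators}),
\end{align*}
where the commutators arise from moving $\langle v\rangle^{2\theta}$ past a $\partial_i$ and involve one derivative with one fewer power of $\langle v\rangle^{\sfrac12}$. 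Since $\sigma_{\rm R}=\phi_{\rm R}\ast\mu_0$ grows like $\langle v\rangle$, the norm-equivalence in Lemma~\ref{l:norm:control} identifies the main term with $\delta^2|g|_{\sigma_{\rm R},\theta}^2$; the commutators are absorbed into $C|g|_{\sigma_{\rm R},\theta-\sfrac12}^2$ by Cauchy--Schwarz. For the compact piece $K_{\rm R}g=-\mu_0^{-\sfrac12}\nabla(\mu_0[\phi_{\rm R}\ast(\mu_0^{\sfrac12}g)])$, the Maxwellian convolution kernel produces rapid Gaussian decay (despite the polynomial factor $|v|$ in $\phi_{\rm R}$), so the weighted pairing $\langle \langle v\rangle^{2\theta}K_{\rm R}g,g\rangle$ is controlled by any lower-order weighted norm of $g$ and in particular by $C|g|_{\sigma_{\rm R},\theta-\sfrac12}^2$. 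The main obstacle I anticipate here is keeping track of the derivative-weight trade-offs in the commutator bookkeeping; Lemma~\ref{l:norm:control} is essential because the anisotropy $P_v$ vs.\ $I-P_v$ affects which weight one can absorb into which component of $|\cdot|_{\sigma_{\rm R}}$.

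For part~(3), I would combine parts~(1) and~(2) additively. Writing $L^\kappa=L+\kappa L^{\rm R}$ and using the norm decomposition~\eqref{H1kappa},
\begin{align*}
|g|_{\sigma_\kappa,\vartheta}^2 = |g|_{\sigma,\vartheta,0,0}^2 + \kappa|g|_{\sigma_{\rm R},-\vartheta}^2,\qquad -\vartheta>0,
\end{align*}
I apply the $|\beta|=0$, $q=0$ case of part~(1) with $\ell=\vartheta$ to bound $\langle\langle v\rangle^{-2\vartheta}Lg,g\rangle$ from below by $\delta^2|g|_{\sigma,\vartheta,0,0}^2$ minus a compactly supported $L^2$-error, and I apply part~(2) with $\theta=-\vartheta$ multiplied by $\kappa$ to bound $\kappa\langle\langle v\rangle^{-2\vartheta}L^{\rm R}g,g\rangle$ from below by $\kappa\delta^2|g|_{\sigma_{\rm R},-\vartheta}^2-C\kappa|g|_{\sigma_{\rm R},-\vartheta-\sfrac12}^2$. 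The compactly supported error from part~(1) is absorbed into $|g|_{\sigma_\kappa,\vartheta+\sfrac12}^2$ since on any fixed ball all weighted $\sigma_\kappa$ norms are uniformly equivalent to the unweighted $L^2$ norm. The $\kappa$-weighted commutator error from part~(2) fits exactly into the $\kappa$-term of $|g|_{\sigma_\kappa,\vartheta+\sfrac12}^2$, which is important for obtaining a $\kappa$-uniform estimate — the crucial point, since the scheme of Section~\ref{sec:existencediscussion} requires passing to the limit $\kappa\to 0$. Rescaling constants gives~\eqref{new:co:kappa}.
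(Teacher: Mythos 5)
Your proposal matches the paper's proof in its overall structure — part~(1) is cited directly from~\cite[Lemma~9]{GS08}, part~(2) uses the decomposition $L^{\rm R}=-A_{\rm R}-K_{\rm R}$ from~\eqref{some:IDs} with integration by parts and commutator bookkeeping, and part~(3) simply adds~\eqref{GS:Lemma:9b} at $q=0$, $\ell=\vartheta$ to~$\kappa$ times part~(2) at $\theta=-\vartheta$, absorbing the compactly supported error into the $\vartheta+\sfrac12$ norm. That is exactly what the paper does in Appendix~E, so the approach is right.

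One statement in your part~(2) is slightly stronger than what you can cite: you claim the pairing $\langle\langle v\rangle^{2\theta}K_{\rm R}g,g\rangle$ is ``controlled by any lower-order weighted norm of $g$ and in particular by $C|g|_{\sigma_{\rm R},\theta-\sfrac12}^2$.'' The result the paper actually invokes, \cite[Lemma~5]{G02}, is a relative-compactness estimate: for every $m>1$ there is $C(m)$ with
\begin{equation*}
\left|\left\langle\langle v\rangle^{2\theta}K_{\rm R}g,g\right\rangle\right|
\leq \frac{C}{m}\,|g|^2_{\sigma_{\rm R},\theta}
+ C(m)\int_{|v|\leq C(m)}\left|\langle v\rangle^\theta g\right|^2\,\dee v \,,
\end{equation*}
so the $K_{\rm R}$ contribution is not bounded by a lower-order norm alone --- it comes with a small multiple of the \emph{same}-order norm $|g|^2_{\sigma_{\rm R},\theta}$, which must then be absorbed into the leading $A_{\rm R}$ term by choosing $m$ large. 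Your Gaussian-decay intuition is the right heuristic (the $\mu^{\sfrac12}$ factors surviving the integration by parts do localize the bad moments), but both $g$ factors in the integrand carry one velocity derivative, and without the $m$-splitting you cannot cleanly push the whole thing below the $\theta$ level. Since the residual compactly supported $L^2$ term is then dominated by $|g|^2_{\sigma_{\rm R},\theta-\sfrac12}$, the \emph{final} form of the estimate coincides with what you wrote; just be explicit that a small multiple of $|g|^2_{\sigma_{\rm R},\theta}$ appears as an intermediate step and is absorbed. With that bookkeeping made explicit, your argument is complete.
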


\begin{remark}
	\label{rmk:technicalbootstrappingremark}
We quickly review one way to bootstrap solutions $f \in \mathcal{H}^1_{\rm mic}$ to the linearized equation $Lf = h \in \mathcal{H}^{-1}$ into the appropriate function spaces to apply energy estimates like Lemma~\ref{lem:propertiesofL}. This reasoning is not often made explicit, so we mention it here for completeness. This strategy will be called upon in Lemmas~\ref{lemma:boot:v} and~\ref{lem:bs:1}, but only in a qualitative way.

Recall that $-L = A+K$ and that $A$ is decomposed into $A_1$ and $A_2$, defined below~\eqref{some:IDs}, where $A_1$ is coercive. Let $\chi_R = \chi(\cdot/R)$ be a smooth cut-off function with $\chi \equiv 1$ on $B_1$ and $\chi \equiv 0$ outside $B_2$.
Since $|\p_\beta \sigma|(z) \lesssim_\beta \langle z \rangle^{-1-|\beta|}$, we decompose $A_2 = (1-\chi_R) A_2 + \chi_R A_2$. Then $\tilde{A} = A_1 + (1-\chi_R) A_2$ is a small perturbation of $A_1$ when $R \gg 1$, and we write $-\tilde{A} f = h - \chi_R A_2f + Kf$. It is not difficult to justify the mapping properties of the perturbed operator $\tilde{A}$ on weighted spaces (for example, one can replace $\sigma$ by $\sigma + \delta I$, justify all estimates, and subsequently allow $\delta \to 0^+$). Then one may utilize Lemma~7 (see, additionally, the comment beneath) and Lemma~8 in~\cite{GS08} to prove that $Kf$ belongs to the relevant dual spaces (notably, it gains one derivative and belongs to nearly optimal Gaussian weighted spaces with $\theta = 2$ and $q \in (0,1)$). Finally, the same strategy can be used to bootstrap solutions $f \in L^2 \mathcal{H}^1$ to the inhomogeneous problem $(v_1 - s_0) \p_x f + Lf = h \in L^2 \mathcal{H}^{-1}$. Analogous results hold in the space $\mathcal{H}^1_{\kappa}$ for the $\kappa$-regularized collision kernel.
\end{remark}

The following lemma is slightly adapted from \cite[Lemma~10]{GS08}; see also \cite[Lemmas~6.4, 6.5]{DuanYangYuContactWaves}.  The proof is contained in Appendix~\ref{sec:proofofnonlinearestimate}.

\begin{lemma}[Nonlinear estimates]\label{lem:lemma10:new}
Let $\alpha,\beta$ be given, $0\leq \theta \leq 2$, and $\ell\geq 0$. 
Then, for any $q \geq 0$ (or $q \in (0,1)$ when $\theta = 2$), we have
\begin{align}\notag
\left\langle {\rm w}^2(\ell,q,\theta) \partial_\beta^\alpha \Gamma[g_1,g_2] , \partial_\beta^\alpha g_3 \right\rangle &\lesssim \sum_{\substack{|\alpha_1|+|\beta_1| \leq N \\ \bar{\beta}\leq \beta_1 \leq \beta}} \left|\partial_{\bar\beta}^{\alpha_1} g_1\right|_{{\sigma},\ell} \, \left|\partial_{\beta-\beta_1}^{\alpha-\alpha_1} g_2\right|_{{\sigma},\ell,q,\theta} \left| \partial_\beta^\alpha g_3 \right|_{\sigma,\ell,q,\theta} \, .
\end{align}
For $\Gamma_{\rm R}$, we further restrict the values of $q=\theta=0$ (so that no exponential weights are considered).  Then 
\begin{align}\notag
\left\langle {\rm w}^2(\ell,0,0) \partial_\beta^\alpha \Gamma_{\rm R}[g_1,g_2] , \partial_\beta^\alpha g_3 \right\rangle & \lesssim \sum_{\substack{|\alpha_1|+|\beta_1| \leq N \\ \bar{\beta}\leq \beta_1 \leq \beta}} \left|\partial_{\bar\beta}^{\alpha_1} g_1\right|_{{\sigma_{\rm R}},-\ell} \, \left|\partial_{\beta-\beta_1}^{\alpha-\alpha_1} g_2\right|_{{\sigma_{\rm R}},-\ell} \left| \partial_\beta^\alpha g_3 \right|_{\sigma_{\rm R},-\ell} \, ,
\end{align}
for all $g_1$, $g_2$, $g_3$ whose norms on the right-hand side are finite.
\end{lemma}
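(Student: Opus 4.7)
The plan is to follow the strategy of \cite[Lemma 10]{GS08}, which treats the first (soft) case, and then adapt the same scheme to the harder kernel $\phi_{\rm R}$ to obtain the second estimate; since both $\Gamma$ and $\Gamma_{\rm R}$ share the same algebraic structure (divergence-form convolution with Gaussian conjugation), the proofs run in parallel, and only the placement of velocity weights changes.

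First I would apply the Leibniz rule to $\partial_\beta^\alpha \Gamma_\bullet[g_1,g_2]$ for $\bullet \in \{\emptyset, {\rm R}\}$. Writing $\Gamma_\bullet[f,g] = \mu_0^{-\sfrac 12} Q_\bullet(\mu_0^{\sfrac 12} f, \mu_0^{\sfrac 12} g)$ and expanding the outer $\p_v$-divergence in $Q_\bullet$, the $v$-derivatives fall on either the kernel $\phi_\bullet(v-u)$ (which we then move onto $g_2, g_3$ by integration by parts in $v$), on the Gaussian factor $\mu_0^{\sfrac 12}$ (producing polynomial weights $v^{\bar\beta}$ and accounting for the multi-index $\bar\beta \leq \beta_1$), or on the $g_i$ themselves. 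The $x$-derivatives are benign since they simply distribute. This yields a sum of trilinear expressions of the schematic form
\begin{equation}\notag
\int_{\R^3}\int_{\R^3} {\rm w}^2(\ell,q,\theta)\,\phi_\bullet^{ij}(v-u)\,\mu_0^{\sfrac 12}(u)\,v^{\bar\beta}\,(\partial_{\bar\beta}^{\alpha_1} g_1)(u)\,(D g_2)(v)\,(D g_3)(v)\,\dee u\,\dee v \, ,
\end{equation}
where each $D$ is either $\partial$ or multiplication by $v$, and the indices on $g_2, g_3$ are the ones appearing on the right-hand side of the stated inequality.

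Next I would apply Cauchy--Schwarz in the $v$-variable to split the $g_3$ factor from the product in $g_1, g_2$. The $g_3$ factor, weighted appropriately and integrated against $\sigma_\bullet^{ij}(v) = \phi_\bullet^{ij}\ast\mu_0$ once one bounds the $\mu_0^{\sfrac 12}(u)$-convolution by its upper Gaussian envelope, produces exactly $|\partial_\beta^\alpha g_3|_{\sigma_\bullet,\ell,q,\theta}$ (respectively $|\partial_\beta^\alpha g_3|_{\sigma_{\rm R},-\ell}$). For the remaining factor in $g_1, g_2$, the key manipulation is the \emph{weight redistribution}: using $\mu_0^{\sfrac 12}(u) \lesssim \mu_0^{\sfrac 14}(u)\,\mu_0^{\sfrac 14}(v-u)\,\mu_0^{\sfrac 14}(v)^{-1}$ (valid for $|u|$ in the relevant regime) together with the inequality ${\rm w}(\ell,q,\theta)(v) \leq {\rm w}(\ell,q,\theta)(u) \cdot C e^{\frac{q}{4}(|v|^\theta - |u|^\theta)}$, one transfers the exponential/polynomial weight in $v$ onto $u$ at the cost of a factor absorbed by a bit of leftover Gaussian. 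The admissibility condition $q<1$ at $\theta=2$ is exactly what allows this transfer; for $\theta<2$ no smallness of $q$ is needed. Applying Cauchy--Schwarz in $u$ then produces the norms $|\partial_{\bar\beta}^{\alpha_1} g_1|_{\sigma_\bullet,\ell}$ and $|\partial_{\beta-\beta_1}^{\alpha-\alpha_1} g_2|_{\sigma_\bullet,\ell,q,\theta}$, using the norm equivalence of Lemma~\ref{l:norm:control} to recognize the $\sigma_\bullet$-norm on the right from the bilinear form $\sigma_\bullet^{ij}(\partial_i h \partial_j h + v_i v_j h^2)$.

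The main obstacle will be the weighted case: getting the weights to land correctly across the convolution without losing the balance between derivatives in $v$, polynomial factors $v^{\bar\beta}$, and the prescribed moment $\ell$, particularly in the endpoint $\theta = 2$ where the redistribution inequality is delicate. A secondary subtlety, specific to the $\Gamma_{\rm R}$ estimate, is the sign flip in the exponent of the polynomial weight: because $\phi_{\rm R}(v-u) \sim |v-u|$ \emph{loses} moments (rather than gaining them, as $\phi$ does), each of the three $\sigma_{\rm R}$-norms must carry the weight $\langle v \rangle^{-\ell}$ (equivalently, weight $-\ell$) in order for the pointwise estimate $|\phi_{\rm R}^{ij}(v-u)| \lesssim \langle v\rangle \langle u\rangle$ to be absorbed; this is why the statement of the second estimate has $-\ell$ throughout, and it forces us to restrict to $\theta=q=0$ since exponential weights against the harder kernel would blow up the Cauchy--Schwarz splitting.
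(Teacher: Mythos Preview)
Your proposal correctly identifies the starting point (Leibniz expansion, \cite[Lemma~10]{GS08}) and the general shape of the argument, but it is missing the essential mechanism that closes the estimate in the anisotropic $\sigma$-norm. The plan to ``Cauchy--Schwarz in $v$, then Cauchy--Schwarz in $u$'' is too crude: by Lemma~\ref{l:norm:control} the norm $|\cdot|_{\sigma,\ell,q,\theta}$ carries \emph{different} polynomial weights on $P_v\partial g$ and $(I-P_v)\partial g$, so a term of the form $\int {\rm w}^2\,\phi^{ij}(v)\,\partial_j g_2\,\partial_i g_3\,\dee v$ cannot be bounded by the product of $\sigma$-norms unless one first invokes the null structure $\phi^{ij}(v)v_i=0$ to kill the $P_v$--$P_v$ interaction. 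Your weight-redistribution inequality $\mu_0^{\sfrac 12}(u)\lesssim \mu_0^{\sfrac 14}(u)\,\mu_0^{\sfrac 14}(v-u)\,\mu_0^{\sfrac 14}(v)^{-1}$ is also false in general (take $u\cdot v<0$ with both large), and the qualifier ``in the relevant regime'' is never made precise.

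What the paper actually does, following \cite{GS08}, is to split the double integral into three regions: $\{|v|\leq 1\}$, $\{2|v'|\geq |v|,\,|v|\geq 1\}$, and $\{2|v'|\leq |v|,\,|v|\geq 1\}$. In the second region the correct transfer is $\mu_0^{\sfrac12}(v')\lesssim\mu_0^{\sfrac{\delta}{100}}(v')\,\mu_0^{\sfrac14-\delta}(v)$ (valid because $|v|\leq 2|v'|$), and this is where the restriction $q<1$ at $\theta=2$ enters. In the third and critical region one Taylor-expands $\phi_\gamma^{ij}(v-v') = \phi_\gamma^{ij}(v) - \partial_k\phi_\gamma^{ij}(v)\,v_k' + \tfrac12\,\partial_{km}\phi_\gamma^{ij}(\tilde v)\,v_k'v_m'$ and then uses the identities $\phi_\gamma^{ij}(v)v_i=0$ and $\partial_k\phi_\gamma^{ij}(v)v_iv_j=0$ to eliminate precisely those $P_v$--$P_v$ pairings that the anisotropic norm cannot control; only after this cancellation does Cauchy--Schwarz in $v$ close. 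This Taylor expansion together with the null structure is the heart of the argument and is absent from your outline. The adaptation to $\Gamma_{\rm R}$ is then handled uniformly by writing $\gamma\in\{-3,-1\}$ and tracking the exponent $\gamma+2$ in $|\phi_\gamma^{ij}|\sim[1+|v|]^{\gamma+2}$; the sign flip in the polynomial weight for the hard case is built into the convention $\ell_\gamma=-\ell$ when $\gamma=-1$, rather than arising from a separate pointwise bound on $\phi_{\rm R}$.
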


Before we estimate $g_{\rm NS} = \mu_0^{-\sfrac{1}{2}} G_{\rm NS}$, we make two comments. First, the above estimates were centered at the standard Maxwellian $\mu_0$. However, a change of variables $v = (w-u)/\theta$ yields analogous properties for the collision operator linearized about and conjugated with respect to any Maxwellian $\mu(\varrho,u,\theta)$. Second, when $\theta > 1$, then $\mu(1,u,\theta)$ simply does not belong to the optimal Gaussian weighted spaces for the standard Maxwellian $\mu_0$; that is, $| \mu_0^{-\sfrac{1}{2}} \mu(1,u,\theta) |_{\sigma,0,q,2} = +\infty$ when $|q - 1| \ll 1$. We will therefore fix $q_0 \in (0,1)$ and subsequently choose $\varepsilon$.


\begin{lemma}
\label{lem:mugests}
Let $0 < q_0 < q_1 < 1$. Let $0 < \varepsilon \ll 1$ be small enough to guarantee that 
$$\frac{q_1+1}{4} < \frac{1}{2\theta_{\rm NS}}  \, . $$
Then, for all $\alpha \in \N$ and multi-indices $\beta \in \N^3$, we have
\begin{equation}\notag
\begin{aligned}
| \p^\alpha_\beta [\mu_0^{-\sfrac{1}{2}} (\mu_{\rm NS} - \mu_0)]|_{\sigma,0,q_0,2} &\lesssim \varepsilon^{\alpha+1} \\
| \p^\alpha_\beta g_{\rm NS} |_{\sigma,0,q_0,2} &\lesssim \varepsilon^{\alpha+2} \, .
\end{aligned}
\end{equation}
\end{lemma}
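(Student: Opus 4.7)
The approach is to leverage the Navier--Stokes shock profile estimates from Appendix~\ref{sec:appendixode} (Proposition~\ref{pro:smallshocksgeneralhyperbolic}), which provide the pointwise bound $|\partial_x^k (\varrho_{\rm NS}-1, u_{\rm NS}, \theta_{\rm NS}-1)(x)| \lesssim_k \varepsilon^{k+1}$ for all $k \geq 0$, reflecting the fact that the profile has amplitude $O(\varepsilon)$ and varies on length scale $O(\varepsilon^{-1})$. In addition, the smallness hypothesis $\frac{q_1+1}{4} < \frac{1}{2\theta_{\rm NS}}$ together with $q_0 < q_1$ ensures that
$$
\mu_0^{-\sfrac{1}{2}}(v)\, e^{\frac{q_0}{4}|v|^2}\, \mu(\varrho,u,\theta)(v) \lesssim e^{-c|v|^2}
$$
for some $c > 0$, uniformly in $(\varrho,u,\theta)$ close to $(1,0,1)$ and along the $\tau$-homotopy used below. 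This uniform Gaussian gap is what will make all of the weighted $v$-integrals finite with bounds independent of $x$.

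For the first bound, I would write the difference as a parameter integral
$$
\mu_{\rm NS} - \mu_0 = \int_0^1 \frac{d}{d\tau}\, \mu\bigl((1-\tau)(1,0,1) + \tau(\varrho_{\rm NS}, u_{\rm NS}, \theta_{\rm NS})\bigr)\, d\tau,
$$
and observe that $\frac{d}{d\tau}$ produces a linear combination of $(\varrho_{\rm NS}-1,\, u_{\rm NS},\, \theta_{\rm NS}-1)$ multiplied by derivatives of the Maxwellian with respect to its parameters, each of which is itself a Maxwellian-type function in $v$. Applying $\mu_0^{-\sfrac{1}{2}}$ and then $\partial_\beta^\alpha$ via the Leibniz rule distributes the $x$-derivatives among the (undifferentiated in $x$) parameter factor and any number of additional factors $\partial_x^{k_j}(\varrho_{\rm NS}-1, u_{\rm NS}, \theta_{\rm NS}-1)$ with $\sum k_j = \alpha$, while $v$-derivatives produce polynomial factors in $v$ that are harmless under the Gaussian weight. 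The undifferentiated factor contributes $O(\varepsilon)$, each differentiated factor contributes $O(\varepsilon^{k_j+1})$, and the resulting total power of $\varepsilon$ is at least $\alpha+1$. Supremum over $\tau \in [0,1]$ and $x \in \R$ yields the claimed bound.

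For the second bound, one first obtains from the same Leibniz argument that $\partial_\beta^\alpha [\mu_0^{-\sfrac{1}{2}} v_1 \partial_x \mu_{\rm NS}]$ has $|\cdot|_{\sigma,0,q_0,2}$ norm $\lesssim \varepsilon^{\alpha+2}$, since now at least one $x$-derivative has already been used up. The remaining task is to invert $\sf{L}_{\mu_{\rm NS}}$ on the microscopic subspace and commute $\partial_\beta^\alpha$ past the inverse. The cleanest route is the linear change of variables $w = (v - u_{\rm NS})/\sqrt{\theta_{\rm NS}}$ combined with conjugation by $\mu_{\rm NS}^{\sfrac{1}{2}}$, which reduces $\sf{L}_{\mu_{\rm NS}}$ to the standard conjugated operator $L$ studied in Section~\ref{sec:propertiesofLandGamma}; the coercivity estimate~\eqref{eq:coercivityestimate} together with Lemma~\ref{lem:propertiesofL} then supplies $\sigma$-norm control with Gaussian weights $q_0$. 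Each spatial derivative is commuted using $[\partial_x, \sf{L}_{\mu_{\rm NS}}^{-1}] = -\sf{L}_{\mu_{\rm NS}}^{-1}[\partial_x,\sf{L}_{\mu_{\rm NS}}]\sf{L}_{\mu_{\rm NS}}^{-1}$, where $[\partial_x,\sf{L}_{\mu_{\rm NS}}]$ produces $\Gamma$-type commutators involving $\partial_x \mu_{\rm NS}$ that are controlled by Lemma~\ref{lem:lemma10:new}; each such commutator costs a factor of $\varepsilon$ and preserves the microscopic structure needed to re-apply $\sf{L}_{\mu_{\rm NS}}^{-1}$.

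The main obstacle is the bookkeeping across the two different Gaussian weight systems: $\sf{L}_{\mu_{\rm NS}}$ is naturally coercive in its own $\mu_{\rm NS}$-weighted space, whereas the target norm is defined with the $\mu_0$-Gaussian weight $e^{\frac{q_0}{4}|v|^2}$. The gap $q_1 - q_0 > 0$ built into the hypothesis is precisely what allows the uniform $L^\infty_v$ control of the weight ratio $\mu_0^{-\sfrac{1}{2}} \mu_{\rm NS}^{\sfrac{1}{2}} e^{\frac{q_0 - q_1}{4}|v|^2}$ during the transfer, with constants independent of $x$ and of the regularization parameter.
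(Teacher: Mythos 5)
Your proposal is correct and takes essentially the same route as the paper. The only cosmetic difference is in how you organize the derivative argument for $g_{\rm NS}$: you invoke the abstract resolvent identity $[\partial_x, \sf{L}_{\mu_{\rm NS}}^{-1}] = -\sf{L}_{\mu_{\rm NS}}^{-1}[\partial_x,\sf{L}_{\mu_{\rm NS}}]\sf{L}_{\mu_{\rm NS}}^{-1}$, whereas the paper differentiates the defining equation $Q(\mu_{\rm NS},G_{\rm NS}) + Q(G_{\rm NS},\mu_{\rm NS}) = \sf{P}^\perp_{\mu_{\rm NS}}(v_1\partial_x\mu_{\rm NS})$ directly via Leibniz, moving the commutator-type terms $Q(\partial_x^{\alpha'}\mu_{\rm NS},\cdot)$ to the right-hand side before re-inverting; these are the same computation written in different orders. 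Two minor points of imprecision in your sketch that do not affect the outcome: the commutator $[\partial_x,\sf{L}_{\mu_{\rm NS}}]$ actually gains $\varepsilon^2$ (not $\varepsilon$), since $\partial_x\mu_{\rm NS}=O(\varepsilon^2)$, so your accounting is a safe under-claim; and the paper notes that the formal differentiation of the equation must be justified by mollification, a technicality you omit but which is standard.
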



\begin{proof}[Proof of Lemma~\ref{lem:mugests}]
Estimates on $\mu_{\rm NS} = \mu(\varrho_{\rm NS},u_{\rm NS},\theta_{\rm NS})$ follow directly from the estimates on the compressible Navier-Stokes shock profile; see Proposition~\ref{pro:smallshocksgeneralhyperbolic}. Recall from~\eqref{def:fNS} that $G_{\rm NS}$ is the purely microscopic solution, in a suitable space, to 
$$Q(\mu_{\rm NS},G_{\rm NS}) + Q(G_{\rm NS},\mu_{\rm NS}) = \sf{P}^\perp_{\mu_{\rm NS}}( v_1 \partial_x \mu_{\rm NS}) \, . $$
 From the variant of Lemma~\ref{lem:propertiesofL} centered at $\mu_{\rm NS}$, we have that $G_{\rm NS}$ exists and satisfies estimates on $\p_\beta G_{\rm NS}$ described in the surrounding discussion. To estimate derivatives, we differentiate the equation to obtain
\begin{align}
&Q(\mu_{\rm NS},\p^\alpha G_{\rm NS}) + Q(\p^\alpha G_{\rm NS},\mu_{\rm NS}) \notag\\
&= - \sum_{\alpha'=1}^{\alpha-1}  C(\alpha',\alpha) \left[ Q(\p_x^{\alpha'} \mu_{\rm NS}, \p_x^{\alpha-\alpha'} G_{\rm NS}) + Q(\p_x^{\alpha-\alpha'} G_{\rm NS},\p_x^{\alpha'} \mu_{\rm NS}) \right] + \p_x^\alpha [ \sf{P}^\perp_{\mu_{\rm NS}}( v_1 \partial_x \mu_{\rm NS}) ]\label{eq:differentiateequationwoot}
\end{align}
and apply the same estimates with a new right-hand side, which is estimated according to the nonlinear estimates in Lemma~\ref{lem:lemma10:new}. (The calculation~\eqref{eq:differentiateequationwoot} is at first formal but may be justified by mollifying the equation, estimating the mollified equation with a new commutator term on the right-hand side, and sending the mollification parameter to zero. A version of this is carried out in the proof of Lemma~\ref{lem:basicmicroest}.) Notably, the operator $\sf{P}^\perp_{\mu_{\rm NS}} = I - \sf{P}_{\mu_{\rm NS}}$ is explicit and gains $\varepsilon$ whenever it is differentiated in $x$.
\end{proof}

Frequently, it will be necessary to invert the linearized collision operator at a Maxwellian which is nearby the standard Maxwellian.
\begin{lemma}[Invertibility at nearby Maxwellia]
\label{lem:inverttofindg}
Let $\mu_1 = \mu(\varrho_1,u_1,\theta_1)$ be a Maxwellian near the standard Maxwellian $\mu_0$; that is, $|(\varrho_1-1,u_1,\theta_1-1)| \ll 1$. Then the linearized operator
\begin{equation}\notag
L_{\mu_1} : f \mapsto - \Gamma\left[\mu_0^{-\sfrac 12}\mu_1, f\right] - \Gamma\left[f,\mu_0^{-\sfrac 12}\mu_1 \right]
\end{equation}
maps $\mathcal{H}^1$ to $\mathcal{H}^{-1}$. Moreover, $L_{\mu_1}$, restricted to the (conjugated) microscopic subspace $\mathcal{H}^1_{\rm mic}$, as a map to the (conjugated) microscopic subspace $\mathcal{H}^{-1}_{\rm mic}$, is invertible and satisfies
\begin{equation}\label{invert:est}
\| L_{\mu_1}^{-1} \|_{\mathcal{H}^{-1}_{\rm mic} \to \mathcal{H}^1_{\rm mic}} \les 1 \, .
\end{equation}
Finally, we have that $L_{\mu_1}$, $L_{\mu_1} \circ P^\perp$, and $L_{\mu_1} \circ P_{\rm NS}^\perp$ are equal on $\mathcal{H}^1_{\rm mic}$, where $P^\perp$ is defined immediately below~\eqref{eq:wed:morning} and $P_{\rm NS}^\perp$ is defined analogously but using $\mu_{\rm NS}$ as defined above~\eqref{def:fNS}.
\end{lemma}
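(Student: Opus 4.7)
The plan is to realize $L_{\mu_1}$ as a small perturbation of the already-understood operator $L$ and then invoke Lax--Milgram on the microscopic subspace. Decompose
\begin{equation*}
L_{\mu_1} = L + R_{\mu_1}, \qquad R_{\mu_1} f := -\Gamma\left[h, f\right] - \Gamma\left[f, h\right], \qquad h := \mu_0^{-\sfrac 12}(\mu_1 - \mu_0).
\end{equation*}
The nonlinear estimate in Lemma~\ref{lem:lemma10:new}, applied with $\alpha = \beta = 0$ and $\ell = q = \theta = 0$, immediately yields $\|R_{\mu_1}\|_{\mathcal{H}^1 \to \mathcal{H}^{-1}} \lesssim |h|_\sigma$. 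A direct computation -- a first-order Taylor expansion of the Gaussian parametrization around $(1, 0, 1)$ combined with the pointwise bound $|\sigma^{ij}(v)| \lesssim \langle v \rangle^{-1}$ invoked in the proof of Corollary~\ref{weddy:night} -- shows that $|h|_\sigma$ can be made arbitrarily small by taking $|(\varrho_1-1,u_1,\theta_1-1)|$ sufficiently small. Together with the known $\mathcal{H}^1 \to \mathcal{H}^{-1}$ boundedness of $L$ from Section~\ref{sec:propertiesofLandGamma}, this settles the mapping property $L_{\mu_1}:\mathcal{H}^1\to \mathcal{H}^{-1}$.

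Next I would prove invertibility on $\mathcal{H}^1_{\rm mic}$ via Lax--Milgram. First, $L_{\mu_1}$ actually takes values in $\mathcal{H}^{-1}_{\rm mic}$: since $\mu_0^{\sfrac 12} L_{\mu_1} f = -\sf{L}_{\mu_1}(\mu_0^{\sfrac 12} f)$ and the collision operator annihilates the collision invariants, $\langle L_{\mu_1} f, \phi \rangle_{L^2_v} = 0$ for every $\phi \in \ker L = \mathrm{span}(\mu_0^{\sfrac 12}, v \mu_0^{\sfrac 12}, |v|^2 \mu_0^{\sfrac 12})$. Second, by~\eqref{eq:wed:morning} together with the small-perturbation bound above, for $f \in \mathcal{H}^1_{\rm mic}$ (on which $P^\perp f = f$),
\begin{equation*}
\langle L_{\mu_1} f, f\rangle \geq \langle L f, f \rangle - |\langle R_{\mu_1} f, f\rangle| \gtrsim (1 - C |h|_\sigma) |f|_\sigma^2 \gtrsim \|f\|_{\mathcal{H}^1}^2,
\end{equation*}
provided $\mu_1$ is close enough to $\mu_0$. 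Applying Lax--Milgram to the coercive (nonsymmetric) bilinear form $(f, g) \mapsto \langle L_{\mu_1} f, g\rangle$ on $\mathcal{H}^1_{\rm mic}$ produces, for each $\tilde h \in \mathcal{H}^{-1}_{\rm mic}$, a unique $f \in \mathcal{H}^1_{\rm mic}$ with $\langle L_{\mu_1} f, g\rangle = \langle \tilde h, g\rangle$ for all $g \in \mathcal{H}^1_{\rm mic}$, satisfying $\|f\|_{\mathcal{H}^1} \lesssim \|\tilde h\|_{\mathcal{H}^{-1}}$. Extending this identity to all $g \in \mathcal{H}^1$ by splitting $g = Pg + P^\perp g$ and using that both $L_{\mu_1} f$ and $\tilde h$ annihilate $\ker L$ then yields~\eqref{invert:est}.

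For the final assertion, I would exploit the observation stressed in Section~\ref{sss:maxwellians} that the nonlinear projection condition $\sf{P}_\mu F = 0$ is equivalent to $F$ having vanishing hydrodynamic moments and is therefore \emph{independent} of the base Maxwellian $\mu$. Consequently, in the $\mu_0$-conjugated picture, the conditions $Pf = 0$ and $P_{\rm NS} f = 0$ both reduce to the universal requirement $\int f \mu_0^{\sfrac 12} (1, v, \sfrac 12 |v|^2) \, dv = 0$, so $P^\perp$ and $P^\perp_{\rm NS}$ both act as the identity on $\mathcal{H}^1_{\rm mic}$, making $L_{\mu_1}$, $L_{\mu_1}\circ P^\perp$, and $L_{\mu_1}\circ P^\perp_{\rm NS}$ trivially equal there. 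The only substantive technical step in the whole argument is quantifying the smallness of $|h|_\sigma$ in terms of $|(\varrho_1-1,u_1,\theta_1-1)|$; once that bound is in hand, the remainder is soft functional analysis.
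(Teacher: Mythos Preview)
Your argument is correct and matches the paper's approach essentially line for line: decompose $L_{\mu_1}=L+R_{\mu_1}$, bound $R_{\mu_1}$ via Lemma~\ref{lem:lemma10:new} and smallness of $h=\mu_0^{-\sfrac12}(\mu_1-\mu_0)$, deduce invertibility on $\mathcal{H}^1_{\rm mic}$ from coercivity of $L$ plus a small perturbation, and observe that $P^\perp$, $P_{\rm NS}^\perp$ agree with the identity on the microscopic subspace. The paper compresses the invertibility step to the phrase ``small perturbation of $L$, which is bounded from below,'' whereas you spell out the Lax--Milgram mechanism; the content is the same.
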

\begin{proof}
The mapping property follows from the nonlinear estimate for $\Gamma$ from Lemma~\ref{lem:lemma10:new}. The invertibility and the estimate~\eqref{invert:est} follow from considering $L_{\mu_1} = L -2 \Gamma[\mu_0^{-\sfrac 12}(\mu_1-\mu_0), \cdot]$ to be a small perturbation of $L$, which is bounded from below on $\mathcal{H}^1_{\rm mic}$ and satisfies the estimate~\eqref{invert:est}. Finally, the claim regarding the restrictions is true because $P^\perp$ and $P_{\rm NS}^\perp$ restricted to the microscopic subspace are the identity.
\end{proof}

With the results of this section in hand, we can prove the estimate for the remainder given in Proposition~\ref{pro:remainderest}.

\begin{proof}[Proof of Proposition~\ref{pro:remainderest}]
We utilize the conjugated version of~\eqref{pure}:
\begin{equation}
\mu_0^{-\sfrac{1}{2}} \mathcal{E}_{\rm NS} := - {P}^\perp (v_1-s) \p_x [L_{{\rm NS}}^{-1} {P}^\perp ( (v_1-s) \partial_x \mu_0^{-\sfrac{1}{2}} \mu_{\rm NS})] - \Gamma(g_{\rm NS},g_{\rm NS}) \, .\notag
\end{equation}
We complete the proof by referring to the estimates on $\mu_0^{-\sfrac{1}{2}} \mu_{\rm NS}$ and $g_{\rm NS}$ in Lemma~\ref{lem:mugests}, the coercivity estimates in Lemma~\ref{lem:propertiesofL}, the nonlinear estimates in Lemma~\ref{lem:lemma10:new}, and invertibility properties of $L_{\rm NS}$ in Lemma~\ref{lem:inverttofindg}.
\end{proof}

\section{Linear estimates}\label{secc:micro:linear}


\subsection{Statement of estimates}
	\label{sec:preliminaries}

We now work with the conjugated linearized equation \eqref{def:LinCE:reg:conj}, where $z$ is a purely microscopic (conjugated) right-hand side (see~\eqref{pure}). We will use $P$ to denote the $L^2$-orthogonal projection onto the five-dimensional kernel of $L^\kappa$ for $\kappa\in[0,1]$ (which does not depend on $\kappa$), defined as in~\eqref{ells:and:Qs} by 
$$L^\kappa(\cdot) = -\Gamma_\kappa[\mu_0^{\sfrac 12},\cdot] -\Gamma_\kappa[\cdot, \mu_0^{\sfrac 12}] = - \mu_0^{-\sfrac 12} Q_\kappa (\mu_0, \mu_0^{\sfrac 12}\cdot) - \mu_0^{-\sfrac 12} Q_\kappa (\mu_0^{\sfrac 12}\cdot,\mu_0) \, . $$
We also recall the equality~\eqref{pproj}, which relates $P$ and $\sf{P}_{\mu_0}$. With these notations, the goal of this section is to prove the following two propositions.

\begin{proposition}[Linear estimates: Baseline]
\label{pro:basicexistence}
Let $0 < \varepsilon, \kappa \ll 1$.
Let $z \in H^2 \mathcal{H}_{-10,{\kappa}}^{-1}$ and $d \in \R$. There exists a unique solution $f \in H^2 \mathcal{H}^1_{-10,\kappa}$ to \eqref{def:LinCE:reg:conj} satisfying the one-dimensional constraint\index{$\ell_\varepsilon$}
\begin{equation}\notag
\ell_\varepsilon( Pf(0) ) = d
\end{equation}
defined in Proposition~\ref{pro:ODEpropappendix} and Remark~\ref{rmk:theconstraint}. The solution satisfies
\begin{equation}\label{apriori:est}
\| f \|_{H^2_\varepsilon \mathcal{H}^1_{-10,\kappa}} \les \varepsilon^{-1} \| z \|_{H^2_\varepsilon \mathcal{H}_{-10,\kappa}^{-1}} + |d| \, . 
\end{equation}
\end{proposition}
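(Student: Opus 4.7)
The plan is to establish the a priori bound~\eqref{apriori:est} by coupling microscopic and macroscopic linear estimates through the linearized Chapman--Enskog expansion of subsubsection~\ref{sss:macro}. Uniqueness then follows from linearity, and existence will be obtained via a Galerkin approximation in the space $H^2 \mathcal{H}^1_{-10,\kappa}$. The regularization $Q_\kappa = Q + \kappa Q_{\rm R}$ is essential here: by Lemma~\ref{lem:propertiesofL}(3) the regularized operator $L^\kappa$ is coercive in the polynomially weighted norm $|\cdot|_{\sigma_\kappa,-10}$ for every $\kappa > 0$, whereas the pure Landau operator $L$ loses moments and does not admit such an estimate.

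The a priori estimate is obtained in three layers. First, testing~\eqref{def:LinCE:reg:conj} against $f$ and combining the $|\beta|=0$ coercivity~\eqref{GS:Lemma:9b}, the nonlinear estimate Lemma~\ref{lem:lemma10:new}, and the $\mathcal{O}(\varepsilon)$ bound on $\mu_0^{-\sfrac 12}(\mu_{\rm NS}-\mu_0)$ from Lemma~\ref{lem:mugests} yields
\begin{align*}
\|P^\perp f\|_{L^2 \mathcal{H}^1_\kappa} \lesssim \varepsilon \|Pf\|_{L^2} + \|z\|_{L^2 \mathcal{H}^{-1}_\kappa},
\end{align*}
which is dissipative only on the microscopic part of $f$. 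Second, a Kawashima-twisted estimate obtained by testing against $\partial_x K f$, with $K$ the compensator developed in Appendix~\ref{sec:kawashima}, extracts dissipation on $\partial_x P f$ and produces the baseline bound~\eqref{eq:basicmicroestintro}. Third, both estimates are upgraded to their polynomially weighted analogues with weight $\langle v\rangle^{-10}$ via Lemma~\ref{lem:propertiesofL}(3); the additional $\partial_x$ derivatives needed to fill out the $H^2_\varepsilon$ norm are treated by commuting $\partial_x$ through the equation, since $\partial_x$ commutes with $L^\kappa$ and generates only commutators involving $\partial_x^j(\mu_{\rm NS}-\mu_0)$, each of which is $\mathcal{O}(\varepsilon^{j+1})$ by Lemma~\ref{lem:mugests}.

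To close the macro--micro loop I would apply the linearized Chapman--Enskog expansion. Writing $U := \sf{P}_{\mu_0}(\mu_0^{\sfrac 12} f)$ and eliminating the microscopic component yields the linear ODE~\eqref{becomes} for $U$, whose source $\partial_x G$ defined in~\eqref{gdef} is, through the invertibility of $\sf{L}_{\mu_{\rm NS}}^\kappa$ on the microscopic subspace (Lemma~\ref{lem:inverttofindg}) and the nonlinear estimate Lemma~\ref{lem:lemma10:new}, controlled by $\|\partial_x P^\perp f\|_{L^2 \mathcal{H}^1_{-10,\kappa}}$, $\|\partial_x z\|_{L^2 \mathcal{H}^{-1}_{-10,\kappa}}$, and commutators of the form $\partial_x \sf{P}_{\mu_{\rm NS}}$ and $B_\kappa - B_0$ that cost extra powers of $\varepsilon$. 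Proposition~\ref{pro:macroscopicestimate} then gives
\begin{align*}
\|Pf\|_{L^2} \lesssim \varepsilon^{-1}\bigl( \|\partial_x P^\perp f\|_{L^2 \mathcal{H}^1_{-10,\kappa}} + \|z\|_{H^1_\varepsilon \mathcal{H}^{-1}_{-10,\kappa}} \bigr) + |d|.
\end{align*}
The decisive observation, emphasized in subsubsection~\ref{sss:micro}, is that the source depends on $P^\perp f$ only through $\partial_x P^\perp f$ rather than $P^\perp f$ itself; substituting the third-layer microscopic bound and taking $\varepsilon$ sufficiently small produces a closed inequality yielding~\eqref{apriori:est}.

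For existence I would implement a Galerkin scheme in $H^2 \mathcal{H}^1_{-10,\kappa}$ compatible with the splitting of $L^2_v$ into $\ker L^\kappa$ and its $L^2_v$-orthogonal complement. The constraint $\ell_\varepsilon(Pf(0))=d$ is imposed at each finite-dimensional level by a one-parameter modification of the right-hand side, and the a priori estimate, being uniform in the truncation parameter, provides weak compactness and hence a limiting solution in $H^2 \mathcal{H}^1_{-10,\kappa}$. The main obstacle throughout is the moment-loss issue flagged in subsubsection~\ref{sec:existencediscussion}: the proof of~\eqref{apriori:est} genuinely needs the polynomially weighted $\mathcal{H}^1_{-10}$ coercivity of Lemma~\ref{lem:propertiesofL}(3), which for the pure Landau operator fails and is rescued only by the regularization $\kappa Q_{\rm R}$; this is precisely the reason that~\eqref{apriori:est} is stated with $\kappa>0$ fixed, and why the later passage $\kappa \to 0^+$ must be carried out by a separate continuity-in-$\kappa$ argument rather than directly in the Galerkin construction.
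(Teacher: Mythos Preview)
Your outline is correct and follows the paper's approach: the a priori estimate combines the standard energy estimate, the Kawashima compensator (Lemma~\ref{lem:kawa:comp}), and moment bootstrapping via Lemma~\ref{lem:propertiesofL}(3), then closes the macro--micro loop through the linearized Chapman--Enskog expansion and Proposition~\ref{pro:macroscopicestimate}, exactly as carried out in Lemmas~\ref{lem:basicmicroest}--\ref{lem:bootstrappingonedegreeofreg} and the subsequent ``closing the loop'' argument. One clarification on your existence sketch: the paper's Galerkin construction (Section~\ref{sekk:linear}) includes an additional elliptic regularization $-\eta\partial_x^2$ and, because the convergence of the truncated Chapman--Enskog coefficients in~\eqref{eq:convergenceofcoefficients} depends on $\kappa$, initially only yields solutions for $\varepsilon \ll_\kappa 1$; the full range $0<\varepsilon,\kappa\ll 1$ asserted in the proposition is then reached by the method of continuity in $\varepsilon$ (Proposition~\ref{pro:methodofcontinuity}), not a continuity-in-$\kappa$ argument. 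The passage $\kappa\to 0^+$ that you mention is a separate weak-compactness step used only later, in the proof of Proposition~\ref{lem:MainLinear}.
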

\noindent The proof of Proposition~\ref{pro:basicexistence} will be split into three parts. First, we \emph{assume}  in section~\ref{secc:micro:linear} that $f\in H^2 \mathcal{H}^1$ is a solution and prove a version of the \emph{a priori} estimate~\eqref{apriori:est}, but with $-10$ replaced by $0$.  We then immediately use this result to bootstrap polynomial velocity moments up to order $10$ so as to verify~\eqref{apriori:est}.  We finally prove the existence of a unique solution in section~\ref{sekk:linear}.  

We also prove the following weighted, higher order estimates in subsection~\ref{ss:le:boot}.  For these estimates, we need only consider $\kappa=0$ and $d=0$, so that we are studying the original linearized equation~\eqref{eq:NLSL}.

\begin{proposition}[Linear estimate: Bootstrapping]\label{p:boots}
Let $N \geq 2$ and $0 < \varepsilon \ll_N 1$. Suppose that $z \in \mathbb{Y}^N_{\varepsilon,\rm w}$ and $e^{\delta\langle \varepsilon x \rangle^{\sfrac 12} } z \in \mathbb{Y}^N_{\varepsilon}$. If $f \in H^2 \mathcal{H}^1_{-10}$ is the unique solution to~\eqref{eq:NLSL} with $\ell_\varepsilon(Pf(0)) = 0$ as in Proposition~\ref{pro:basicexistence}, then
\begin{subequations}\label{p:boots:est}
\begin{align}
\label{v:weighted}
\| f \|_{\mathbb{X}^N_{\varepsilon, \rm w}} &\les \varepsilon^{-1} \| z \|_{\mathbb{Y}^N_{\varepsilon, \rm w}} \,  , \\
\label{x:weighted}
\| e^{\delta \langle \varepsilon x \rangle^{\sfrac 12}} f \|_{\mathbb{X}^N_{\varepsilon}} &\les \varepsilon^{-1} \left( \|  e^{\delta \langle \varepsilon x \rangle^{\sfrac 12}} z \|_{\mathbb{Y}^N_{\varepsilon}} + \| z \|_{\mathbb{Y}^N_{\varepsilon, \rm w}} \right) \, .
\end{align}
\end{subequations}
\end{proposition}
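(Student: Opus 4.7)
The plan is to treat the two estimates sequentially. First I would obtain the Gaussian/polynomial weighted estimate~\eqref{v:weighted} by iterating weighted $\p_\beta^\alpha$-energy estimates for $|\alpha|+|\beta|\le N$, taking the baseline Proposition~\ref{pro:basicexistence} as the starting point. Then I would use this bound to close the stretched exponential estimate~\eqref{x:weighted} via a weak Poincar\'e-type splitting in $v$.

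For~\eqref{v:weighted}, I would proceed by induction on $|\alpha|+|\beta|$, in the spirit of the combined $v$-moment/$v$-derivative framework familiar from~\cite{G02}. At each inductive step (after a mollify-and-commute step of the type carried out in Lemma~\ref{lem:basicmicroest}) one differentiates~\eqref{eq:NLSL} by $\p_\beta^\alpha$ and pairs with ${\rm w}^2(|\beta|,q_0)\p_\beta^\alpha f$. The coercive contribution is controlled below by Lemma~\ref{lem:propertiesofL}(1), with $\eta$-small top-order error and lower-order errors in $|\beta|$ absorbed by the induction. The transport term is skew modulo the commutator $[\p_\beta,v_1]\p_x^{\alpha+1}f$, which is lower order in $|\beta|$ but one order higher in $\alpha$; the $\varepsilon^{-\alpha}$ weighting in the $\mathbb X^N$-norm makes this exactly perturbative upon summation. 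The Navier-Stokes-driven perturbation $\Gamma[\mu_0^{-\sfrac 1 2}(\mu_{\rm NS}-\mu_0),f]+\Gamma[f,\mu_0^{-\sfrac 1 2}(\mu_{\rm NS}-\mu_0)]$ is handled by Lemma~\ref{lem:lemma10:new} together with Lemma~\ref{lem:mugests}, gaining a factor of $\varepsilon$ per $x$-derivative falling on $\mu_{\rm NS}-\mu_0$. The macroscopic component $P\p_x^\alpha f$ is controlled by feeding the linearized macroscopic equation satisfied by $\p_x^\alpha f$ into Proposition~\ref{pro:macroscopicestimate} (with vanishing constraint when $\alpha\ge 1$); this is the single source of the $\varepsilon^{-1}$ loss.

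For~\eqref{x:weighted}, set $g_\delta := e^{\delta\langle\varepsilon x\rangle^{\sfrac 1 2}} f$, which satisfies
\begin{equation}\notag
(v_1-s_0)\p_x g_\delta + L_{\rm NS} g_\delta \;=\; e^{\delta\langle\varepsilon x\rangle^{\sfrac 1 2}} z \;+\; \delta\,\varepsilon^2\,\frac{x}{2\langle\varepsilon x\rangle^{\sfrac 3 2}}\,(v_1-s_0)\, g_\delta .
\end{equation}
Repeating the inductive energy argument above for $g_\delta$, the only genuinely new term is the commutator produced by the last summand. I would split the $v$-integral dyadically at the scale $|v|\sim M\langle\varepsilon x\rangle^{\sfrac 1 2}$ for $M\gg 1$. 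On the moderate region, $|v_1|\langle\varepsilon x\rangle^{-\sfrac 1 2}\le M$, so the contribution is bounded pointwise by $C\delta\varepsilon M\,|\p_\beta^\alpha g_\delta|^2$, which is absorbed into the $L_{\rm NS}$-coercivity once $\delta$ is taken small enough. On the complement, $\delta\langle\varepsilon x\rangle^{\sfrac 1 2}\le (\delta/M)|v|\le q_0|v|^2/8$ for $M$ large relative to $\delta$, so that $e^{\delta\langle\varepsilon x\rangle^{\sfrac 1 2}}\le e^{q_0|v|^2/8}$ there, and the resulting integral with Gaussian weight in $v$ is controlled by the already-proved estimate~\eqref{v:weighted}. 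This is precisely the mechanism that produces the extra $\|z\|_{\mathbb Y^N_{\varepsilon,\rm w}}$ contribution on the right-hand side of~\eqref{x:weighted}. The macroscopic component is handled analogously by running the linearized Chapman-Enskog/Navier-Stokes analysis of subsubsection~\ref{sss:macro} for $g_\delta$, with the commutator source again tamed by the same splitting.

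The main obstacle is this $v_1$-commutator from the stretched exponential weight: the Landau coercivity only provides $|g|_\sigma^2\sim \int\langle v\rangle^{-1}|g|^2\,dv$, whereas $|v_1|\,g^2$ \emph{grows} in $v$ and so is not directly absorbable. The dyadic split is the device that interpolates this moment loss against the Gaussian moments supplied by~\eqref{v:weighted}, and the exponent $\sfrac 1 2$ in $\langle\varepsilon x\rangle^{\sfrac 1 2}$ is dictated by matching $\delta\langle\varepsilon x\rangle^{\sfrac 1 2}$ to $q_0|v|^2/8$ at the cutoff scale $|v|\sim M\langle\varepsilon x\rangle^{\sfrac 1 2}$; this is the only way to control $v_1$ linearly by the cutoff while keeping the weight derivative $\delta\varepsilon\langle\varepsilon x\rangle^{-\sfrac 1 2}$ smaller than the coercive dissipation on the moderate region.
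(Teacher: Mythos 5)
Your overall plan for~\eqref{v:weighted} matches the paper's in spirit, though the paper does not re-run the macroscopic Chapman--Enskog estimate for $\partial_x^\alpha f$ (you'd then need to track the constraint value $\ell_\varepsilon(P\partial_x^\alpha f(0))$, which you assert vanishes without justification); instead it bootstraps $x$-derivatives purely microscopically via Lemma~\ref{lem:basicmicroest} applied to the differentiated equation, where the Kawashima part already supplies $\|P\partial_x^\alpha f\|$, and Proposition~\ref{pro:macroscopicestimate} is invoked only once, inside the baseline Proposition~\ref{pro:basicexistence}. This is a cosmetic deviation.

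The issue lies in~\eqref{x:weighted}: your split scale $|v|\sim M\langle \varepsilon x\rangle^{\sfrac 12}$ is incorrect, and the ``moderate region'' absorption fails. From the pointwise bound $|v_1|\langle\varepsilon x\rangle^{-\sfrac 12}\le M$ you conclude the commutator contribution is $\lesssim \delta\varepsilon M\,|\partial_\beta^\alpha g_\delta|^2$ and can be absorbed into the $L_{\rm NS}$-coercivity, but the coercivity only produces $\int \sigma^{ij}v_iv_j |g|^2\,\dee v\sim\int\langle v\rangle^{-1}|g|^2\,\dee v$ (equivalently, after dualizing, it controls $\|\langle v\rangle^{-\sfrac 12}g\|_{L^2_v}$, not $\|g\|_{L^2_v}$). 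Since your moderate region allows $|v|$ up to $M\langle\varepsilon x\rangle^{\sfrac 12}$, there is no $v$-decay left over, and $\delta\varepsilon M\int|g|^2$ cannot be dominated by $\int\langle v\rangle^{-1}|g|^2$. The correct threshold is $\langle v\rangle^2\sim\langle\varepsilon x\rangle^{\sfrac 12}$, i.e.\ $|v|\sim\langle\varepsilon x\rangle^{\sfrac 14}$, so that on the moderate side $\langle\varepsilon x\rangle^{-\sfrac 12}\le\langle v\rangle^{-2}$ and hence $\langle\varepsilon x\rangle^{-\sfrac 12}\langle v\rangle^{\sfrac 12}|v_1-s_0|\lesssim\langle v\rangle^{-\sfrac 12}$, which is exactly what Lemma~\ref{l:norm:control} converts into $\|\cdot\|_{\mathcal{H}^1}$; on the complement $\langle\varepsilon x\rangle^{\sfrac 12}\le\langle v\rangle^2$ one has $\langle\varepsilon x\rangle^{-\sfrac 12} e^{\delta\langle\varepsilon x\rangle^{\sfrac 12}}\lesssim\langle v\rangle^{-2}e^{q_0\langle v\rangle^2/4}$ for $\delta$ small, controlled by~\eqref{v:weighted}. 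Your rationale for the exponent $\sfrac 12$ — matching $\delta\langle\varepsilon x\rangle^{\sfrac 12}$ to $q_0|v|^2/8$ at $|v|\sim M\langle\varepsilon x\rangle^{\sfrac 12}$ — is therefore also off; the exponent comes from requiring $\langle\varepsilon x\rangle^{-(1-\kappa)}\lesssim\langle v\rangle^{-2}$ on one side and $\delta\langle\varepsilon x\rangle^\kappa\lesssim\langle v\rangle^2$ on the other, which forces $\kappa\le\sfrac 12$. A final small point: to make the weighted energy estimates a priori valid one should first work with a truncated weight (the paper uses $\omega(x)=\delta(2\langle x\rangle^{\sfrac 12-\kappa}-\langle x\rangle^{\sfrac 12})$ and sends $\kappa\to 0^+$); conjugating directly by $e^{\delta\langle\varepsilon x\rangle^{\sfrac 12}}$ presumes membership in the exponentially weighted spaces.
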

\begin{remark}[Proof of Proposition~\ref{lem:MainLinear}]\label{rem:implications}
Proposition~\ref{lem:MainLinear} is an immediate consequence of Proposition~\ref{p:boots} and the fact that $P^\perp f$ and $Pf$ are linearly independent, so that $\| f \|_{\mathbb{M}^N_\eps}$ is controlled from above by $\| f \|_{\mathbb{X}^N_{\eps,\rm w}} + \| e^{\delta \langle \eps x \rangle^{\sfrac 12}} f \|_{\mathbb{X}^N_{\eps}}$.
\end{remark}

\subsection{Linear estimates: Baseline}
\label{sec:linearestimatesbaseline}


We first present the basic microscopic \emph{a priori} estimate.
\begin{lemma}[Basic \emph{a priori} microscopic estimate]
\label{lem:basicmicroest}
Let $z \in H^1 \mathcal{H}_{-10,\kappa}^{-1}$ be given and assume that $f \in L^2 \mathcal{H}^1_\kappa$ is a solution of \eqref{def:LinCE:reg:conj}. Then $f \in H^1 \mathcal{H}^1_{-10,\kappa}$, and
\begin{equation}
\label{eq:basicmicroest}
 \left\| \p_x f \right\|^2_{L^2 \mathcal{H}^1_{-10,\kappa}} + \left\| P^\perp f \right\|^2_{L^2 \mathcal{H}^1_{-10,\kappa}} \les \varepsilon^2 \left\| Pf \right\|^2_{L^2} + \left\| z, \p_x z \right\|^2_{L^2 \mathcal{H}_{-10,\kappa}^{-1}}   \, .
\end{equation}
\end{lemma}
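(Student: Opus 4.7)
The strategy is the standard hypocoercivity paradigm for linear kinetic equations near equilibrium, combining (i) a weighted $L^2$ energy estimate giving microscopic coercivity, (ii) the same estimate applied to $\partial_x f$, and (iii) a Kawashima--Shizuta compensator estimate that recovers coercivity on $\partial_x Pf$. Because the conclusion is in $\mathcal{H}^1_{-10,\kappa}$, the energy estimates are run both in the basic space $\mathcal{H}^1_\kappa$ (to establish baseline control) and in the $-10$ polynomial-weight space, absorbing the lower-moment error from Lemma~\ref{lem:propertiesofL}(3) by interpolation between the two. Throughout, computations are first carried out on an $x$-mollification of $f$, and the final bound is obtained by passing to the limit.

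For step (i), pair \eqref{def:LinCE:reg:conj} against $f$ and then against $\langle v \rangle^{-20} f$, integrating in $x$. The transport contribution $\int_{\mathbb{R}} \langle (v_1-s_0)\partial_x f, f \rangle_v \, dx$ (and its weighted analogue) integrates to a vanishing boundary term under the hypothesis $f \in L^2 \mathcal{H}^1_\kappa$, justified by $x$-mollification. The collision term produces coercivity, via \eqref{eq:coercivityestimate} in the unweighted case and via Lemma~\ref{lem:propertiesofL}(3) with $\vartheta = -10$ in the weighted case (up to an absorbable error $|f|^2_{\sigma_\kappa,-9.5}$ interpolated between the two norms). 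The crucial algebraic observation is that $\Gamma_\kappa$ is purely microscopic, so the Maxwellian-difference source terms pair against $P^\perp f$ rather than $f$; combined with $\mu_0^{-\sfrac 12}(\mu_{\rm NS}-\mu_0) = O(\varepsilon)$ (Lemma~\ref{lem:mugests}), the nonlinear estimate Lemma~\ref{lem:lemma10:new}, and Young's inequality applied to the cross-term $\varepsilon \|Pf\|_{L^2} \|P^\perp f\|_{\mathcal{H}^1_{-10,\kappa}}$, these contribute $O(\varepsilon^2 \|Pf\|_{L^2}^2)$ plus absorbable microscopic pieces. The $z$ pairing is closed by duality, using Corollary~\ref{weddy:night} where needed. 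One obtains
\[
\|P^\perp f\|_{L^2 \mathcal{H}^1_{-10,\kappa}}^2 \lesssim \varepsilon^2 \|Pf\|_{L^2}^2 + \|z\|_{L^2 \mathcal{H}^{-1}_{-10,\kappa}}^2.
\]

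Differentiating \eqref{def:LinCE:reg:conj} in $x$ and repeating the same argument yields the analogous estimate for $\partial_x f$. The new commutator terms contain $\partial_x \mu_{\rm NS} = O(\varepsilon^2)$ by Lemma~\ref{lem:mugests}, so
\[
\|\partial_x P^\perp f\|_{L^2 \mathcal{H}^1_{-10,\kappa}}^2 \lesssim \varepsilon^2 \|\partial_x Pf\|_{L^2}^2 + \varepsilon^4 \|f\|_{L^2 \mathcal{H}^1_{-10,\kappa}}^2 + \|\partial_x z\|_{L^2 \mathcal{H}^{-1}_{-10,\kappa}}^2,
\]
the $\varepsilon^4$ term absorbed by the previous estimate. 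To close the loop, we need control on $\|\partial_x Pf\|_{L^2}$, which is invisible to the collision coercivity. This is supplied by the Kawashima--Shizuta compensator of Appendix~\ref{sec:kawashima}: a bounded skew-symmetric operator $K$ on $L^2_v$ for which the symmetrization of $K v_1$ is positive definite on $\ker L$. Testing \eqref{def:LinCE:reg:conj} against $K \partial_x f$, integrating in $x$, and treating the microscopic remainders and the $\Gamma_\kappa$ terms exactly as above yields
\[
\|\partial_x Pf\|_{L^2}^2 \lesssim \|\partial_x P^\perp f\|_{L^2 \mathcal{H}^1_{-10,\kappa}}^2 + \|P^\perp f\|_{L^2 \mathcal{H}^1_{-10,\kappa}}^2 + \varepsilon^2 \|Pf\|_{L^2}^2 + \|z, \partial_x z\|_{L^2 \mathcal{H}^{-1}_{-10,\kappa}}^2.
\]
Summing the three estimates and absorbing microscopic contributions via the smallness of $\varepsilon$ produces \eqref{eq:basicmicroest}.

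The principal technical obstacle is ensuring that the Kawashima compensator, constructed for $L^\kappa$ on $L^2_v$, is uniform in $\kappa \in [0,1]$ and interacts cleanly with the polynomial weight $\langle v \rangle^{-20}$; this compatibility is precisely the content of Appendix~\ref{sec:kawashima}. A secondary but still non-trivial point is the rigorous existence of $\partial_x f$ and the justification of the $x$-integrations by parts from only the hypothesis $f \in L^2 \mathcal{H}^1_\kappa$, handled by standard $x$-mollification followed by passage to the limit in the a priori bound.
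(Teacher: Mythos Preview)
Your three-ingredient template (energy estimate, differentiated energy estimate, Kawashima compensator) matches the paper's, but the way you handle the weighted step has a genuine gap.

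You propose to pair the equation against $\langle v\rangle^{20}f$ (your ``$\langle v\rangle^{-20}$'' is a sign slip; $\mathcal H^1_{-10}$ carries the weight $\langle v\rangle^{20}$). In that pairing the transport term does vanish, but your ``crucial algebraic observation'' breaks down: the weight destroys the $L^2_v$ orthogonality between the microscopic range of $L^\kappa$, $\Gamma_\kappa$ and $\ker L^\kappa$. Concretely, $\langle L^\kappa P^\perp f,\langle v\rangle^{20}Pf\rangle$ and $\langle \Gamma_\kappa[\mu_0^{-1/2}(\mu_{\rm NS}-\mu_0),f],\langle v\rangle^{20}Pf\rangle$ do not vanish. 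Estimating them (using that $\langle v\rangle^{20}Pf$ is Schwartz and the unweighted bound $\|P^\perp f\|_{L^2\mathcal H^1_\kappa}\lesssim\varepsilon\|Pf\|+\|z\|$) gives $O(\varepsilon\|Pf\|^2)$, not $O(\varepsilon^2\|Pf\|^2)$; no splitting via Young recovers the missing power of $\varepsilon$. So the estimate \eqref{eq:basicmicroest} as stated is not reached.

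The paper avoids this by running the full unweighted argument first (estimates (I)+(II)+(III), including Kawashima, which already controls $\|\partial_x Pf\|$ by $\varepsilon\|Pf\|+\|z,\partial_x z\|$), and only then bootstrapping moments by testing against $\langle v\rangle^{20}P^\perp f$. With a microscopic test function the collision and $\Gamma_\kappa$ pairings stay purely microscopic and no macroscopic cross term appears. The transport term no longer integrates to zero, but what survives is $\langle (v_1-s_0)\partial_x Pf,\langle v\rangle^{20}P^\perp f\rangle$, a product of two quantities each bounded by $\varepsilon\|Pf\|+\|z,\partial_x z\|$, and that is how the $\varepsilon^2$ arises. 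This also explains why the order matters: the Kawashima step must precede the moment bootstrap, not be woven into it. Incidentally, your ``principal technical obstacle'' (compatibility of $K$ with the polynomial weight) is a non-issue: in the paper $K$ is the finite-rank operator on $M_9$ from Lemma~\ref{lem:kawa:comp}, so all $v$-weights are equivalent on its range and the compensator step is carried out purely in the unweighted space.
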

\noindent Before proving Lemma~\ref{lem:basicmicroest}, we set the notations
\begin{equation}\notag
M_5 = \sqrt{\mu_0} \: {\rm span} \left\{ 1,v_1,v_2,v_3,|v|^2 \right\} 
\end{equation}
and
\begin{equation}\notag
M_9 = \sqrt{\mu_0} \: {\rm span} \; \left\{ 1,v_1,v_2,v_3,|v|^2,v_1^2,v_1v_2,v_1v_3,|v|^2v_1 \right\} \, .
\end{equation}
Projection onto the first space is already denoted by $P$ (corresponding to the kernel of the linearized operator, cf.~\eqref{pproj}), while projection onto the second will be denoted by $P_{\leq 9}$. These finite-dimensional spaces allow us to state the following Kawashima compensator lemma.  For the proof of Lemma~\ref{lem:kawa:comp}, we refer to Appendix~\ref{sec:kawashima}, in particular~\eqref{kawa:useful}.
\begin{lemma}[Kawashima compensator]\label{lem:kawa:comp}
There exists an $L^2$--antisymmetric operator $K : M_9 \to M_9$ and a constant $C_K$ which, for all $g \in M_9$, satisfy
\begin{equation}
\| Pg \|^2_{L^2_v} \leq C_K \left[ \la [K P_{\leq 9},v_1] g,g \ra + \left\| \la v \ra^{-100} P^\perp g \right\|^2_{L^2_v} \right] \, .
\end{equation}
\end{lemma}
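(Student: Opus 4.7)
The plan is to reduce the statement to a finite-dimensional linear algebra problem on the $9$-dimensional space $M_9$ and to construct the antisymmetric operator $K$ explicitly. The key structural observation is that $v_1 \cdot M_5 \subseteq M_9$: the products $v_1 \cdot 1 = v_1$, $v_1 \cdot v_i$, and $v_1 \cdot |v|^2$ all lie in the span defining $M_9$. This allows one to decompose $v_1|_{M_5} = V_0 + V_1$, where $V_0 := P v_1 |_{M_5} : M_5 \to M_5$ is the Euler flux Jacobian and $V_1 := P^\perp v_1 |_{M_5} : M_5 \to M_9 \cap M_5^\perp$ is the coupling into higher moments. Working in the orthogonal Hermite-type basis $\{\sqrt{\mu_0}, v_1\sqrt{\mu_0}, v_2\sqrt{\mu_0}, v_3\sqrt{\mu_0}, (|v|^2 - 3)\sqrt{\mu_0}\}$ of $M_5$, one verifies directly that $\ker V_1 = \mathbb{R} \cdot \sqrt{\mu_0}$, since every other basis element produces a nontrivial higher-moment contribution under $v_1$.

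For $g \in M_9$, antisymmetry of $K$, self-adjointness of $P_{\leq 9}$, and the identity $P_{\leq 9} g = g$ combine to give
\begin{equation*}
\left\langle [K P_{\leq 9}, v_1] g, g \right\rangle_{L^2_v} = - 2 \left\langle v_1 g, K g \right\rangle_{L^2_v}.
\end{equation*}
Splitting $g = g_0 + g_\perp$ with $g_0 = Pg \in M_5$ and $g_\perp = P^\perp g \in M_9 \cap M_5^\perp$, the task reduces to constructing an antisymmetric $K : M_9 \to M_9$ such that $-2\langle v_1 g_0, K g_0\rangle$ is coercive on $M_5$. Cross terms in $g_\perp$ will be absorbed by Cauchy--Schwarz and Young's inequality into $\|\langle v\rangle^{-100} g_\perp\|_{L^2_v}^2$, using the equivalence of all norms on the finite-dimensional subspace $M_9 \cap M_5^\perp$.

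I would take $K = K^{(1)} + K^{(2)}$. Define $K^{(1)}|_{M_5} = -V_1$ with antisymmetric extension $K^{(1)}|_{M_9 \cap M_5^\perp} = V_1^*$ and zero on the diagonal blocks; the resulting contribution to the commutator is $-2\langle v_1 g_0, K^{(1)} g_0\rangle = 2\|V_1 g_0\|^2$, since $V_0 g_0 \in M_5$ pairs trivially with $K^{(1)} g_0 \in M_5^\perp$. This supplies coercivity on all of $M_5$ except on $\ker V_1 = \mathbb{R}\cdot\sqrt{\mu_0}$, which is the classical Kawashima--Shizuta obstruction reflecting that the density is not directly dissipated by the compressible Navier--Stokes viscosity.

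The main obstacle is precisely the treatment of this density mode. To overcome it, I exploit the identity $v_1 \cdot \sqrt{\mu_0} = v_1 \sqrt{\mu_0} \in M_5$ and add an antisymmetric $K^{(2)} : M_5 \to M_5$ sending $\sqrt{\mu_0} \mapsto -\delta\, v_1 \sqrt{\mu_0}$ and $v_1 \sqrt{\mu_0} \mapsto \delta\, \sqrt{\mu_0}$, with zero on the remaining basis vectors. A direct computation shows this contributes $+2\delta$ to the coercivity on the density direction while perturbing the momentum direction by only $-2\delta$; parity in $v_2, v_3$ and odd/even parity in $v_1$ annihilate all off-diagonal entries of the resulting quadratic form in the chosen basis, so the form is diagonal with entries $2\delta$, $2\|V_1(v_1\sqrt{\mu_0})\|^2 - 2\delta$, and $2\|V_1 e_i\|^2 > 0$ for $i=3,4,5$. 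Choosing $0 < \delta < \|V_1(v_1\sqrt{\mu_0})\|^2$ makes it uniformly positive on $M_5$, and the remainder of the proof consists of absorbing the mixed and pure-$g_\perp$ terms into $\|\langle v\rangle^{-100} P^\perp g\|_{L^2_v}^2$ by Cauchy--Schwarz.
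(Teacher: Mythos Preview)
Your approach is essentially the same as the paper's: both construct a two-level compensator, first exchanging $M_5$ with $M_9 \cap M_5^\perp$ via $V_1 = P^\perp v_1|_{M_5}$, and then treating the residual density mode $\sqrt{\mu_0}$ by a second compensator that rotates $\sqrt{\mu_0}$ into $v_1\sqrt{\mu_0}$. The paper packages this in an abstract framework (a filtration $F_0 \supset F_1 \supset \{0\}$ with $F_1 = \mathbb{R}\sqrt{\mu_0}$ and compensators $K_1, K_2$ built from the off-diagonal blocks of $A = P_{M_9} v_1 P_{M_9}$), while you compute directly; your $K^{(1)}$ and $K^{(2)}$ are, up to sign and scaling, exactly the paper's $K_1$ and $K_2$.

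There is one computational slip: your parity claim does not make the quadratic form from $K^{(2)}$ diagonal. The functions $v_1 e_5 = v_1(|v|^2-3)\sqrt{\mu_0}$ and $e_2 = v_1\sqrt{\mu_0}$ share the same parity in every variable, and indeed $\langle v_1 e_5, e_2\rangle = \int v_1^2(|v|^2-3)\mu_0\,dv = 2 \neq 0$. Consequently $-2\langle v_1 g_0, K^{(2)} g_0\rangle = 2\delta a_1^2 - 2\delta a_2^2 + 4\delta a_1 a_5$, so the form carries an $a_1 a_5$ cross term of size $O(\delta)$. This does not break the argument: since the $a_5^2$ coefficient coming from $K^{(1)}$ is $2\|V_1 e_5\|^2 > 0$ and is independent of $\delta$, Young's inequality absorbs the cross term for $\delta$ small enough. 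You should replace the diagonality claim with this absorption step (this is exactly how the paper handles cross terms in its general construction).
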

\noindent Our energy estimate will contain $\la [K P_{\leq 9},v_1] \p_x P_{\leq 9} f, \p_x P_{\leq 9} f \ra$; that is, the Kawashima compensator is applied to the equation for $f$ and paired with $\p_x P_{\leq 9} f$. Notice that 
$$ \la [K P_{\leq 9},v_1]g,g \ra = 2\la K P_{\leq 9} v_1 g,g \ra \textnormal{  for all  } g\in M_9 $$
 by symmetry and antisymmetry.

\begin{proof}[Proof of Lemma~\ref{lem:basicmicroest}]
To begin, we suppose that $f \in H^1 \mathcal{H}^1_\kappa$ and derive the \emph{a priori} estimate
\begin{equation}\label{apriori:easy}
\left\| \p_x f \right\|^2_{L^2 \mathcal{H}^1_{0,\kappa}} + \left\| P^\perp f \right\|^2_{L^2 \mathcal{H}^1_{0,\kappa}} \les \varepsilon^2 \left\| Pf \right\|^2_{L^2} + \left\| z, \p_x z \right\|^2_{L^2 \mathcal{H}_{0,\kappa}^{-1}} \, .
\end{equation}
Afterwards, we bootstrap moments in $v$ to prove~\eqref{eq:basicmicroest}. Finally, we justify the computations when $f \in L^2 \mathcal{H}^1_\kappa$.

Recall~\eqref{def:LinCE:reg:conj}:
\begin{equation}\label{wed:morning:equation}
(v_1-s_0) \p_x f + L^\kappa f = - \Gamma_\kappa[\mu_0^{-\sfrac{1}{2}} (\mu_{\rm NS} - \mu_0),f] - \Gamma_\kappa[f,\mu_0^{-\sfrac{1}{2}} (\mu_{\rm NS} - \mu_0)] + z \, .
\end{equation}
Notably, we have from Lemma~\ref{lem:lemma10:new}, the fact that $P \Gamma_\kappa[\cdot,\cdot] = 0$ (which follows from the properties of $Q_\kappa$ specified in the beginning of subsection~\ref{sec:propertiesofLandGamma}), and~\eqref{H1kappa} that, for a given $\delta>0$, there exists $C(\delta)>0$ such that
\begin{align}
|\la \Gamma_\kappa&[f,\mu_0^{-\sfrac{1}{2}} (\mu_{\rm NS} - \mu_0)], f \ra_{x,v} | + |\la \Gamma_\kappa[\mu_0^{-\sfrac{1}{2}} (\mu_{\rm NS} - \mu_0),f], f \ra_{x,v} | \notag \\
& \les |\la \Gamma[f,\mu_0^{-\sfrac{1}{2}} (\mu_{\rm NS} - \mu_0)], f \ra_{x,v} | + |\la \Gamma[\mu_0^{-\sfrac{1}{2}} (\mu_{\rm NS} - \mu_0),f], f \ra_{x,v} | \notag \\
&\qquad + \kappa |\la \Gamma_{\rm R}[f,\mu_0^{-\sfrac{1}{2}} (\mu_{\rm NS} - \mu_0)], f \ra_{x,v} | + \kappa|\la \Gamma_{\rm R}[\mu_0^{-\sfrac{1}{2}} (\mu_{\rm NS} - \mu_0),f], f \ra_{x,v} | \notag \\
& \les \varepsilon \| P^\perp f \|_{L^2\mathcal{H}^1}^2 + \varepsilon \| P^\perp f \|_{L^2\mathcal{H}^1} \| Pf \| + \varepsilon \kappa \| P^\perp f \|_{L^2 \mathcal{H}^1_{\rm R}}^2 + \varepsilon \kappa \| P^\perp f \|_{L^2 \mathcal{H}^1_{\rm R}} \| P f \| \notag \\
& \leq C(\delta) \varepsilon^2 \| Pf \|^2 + \delta \| P^\perp f \|_{L^2\mathcal{H}^1_\kappa}^2  \label{desert:cowboy}
\end{align}
since for all $x$, $f_{\rm NS}(x,\cdot) - \mu_0(\cdot) = O(\varepsilon)$ in strong $v$-topologies from Lemma~\ref{lem:mugests}.  We do not specify the norm on $\| P f \|$ since $Pf$ belongs to a finite-dimensional subspace.  We now multiply by $f$, integrate in $x$ and $v$, and apply the above estimate and the coercivity estimate from~\eqref{eq:coercivityestimate} to obtain
\begin{equation}\notag
\frac{1}{C_1}\| P^\perp f \|_{L^2 \mathcal{H}^1_\kappa}^2 \leq \delta \| P^\perp f \|_{L^2 \mathcal{H}^1_\kappa}^2 + C(\delta) \varepsilon^2 \| P f \|^2 + |\la z, P^\perp f \ra| \, ,
\end{equation}
which after splitting $|\langle z , P^\perp f \rangle|$ and choosing $\delta$ appropriately depending on $C_1$ implies that there exists $C_{\rm I}$ such that
\begin{equation}
\label{eq:I}
\tag{I}
\| P^\perp f \|_{L^2 \mathcal{H}^1_\kappa}^2 \leq C_{\rm I} \left[ \varepsilon^2 \| P f \|^2 + \| z \|^2_{L^2 \mathcal{H}^{-1}_{0,\kappa}} \right] \, .
\end{equation}
Next, we apply $\p_x$ to the equation, multiply by $\p_x f$, and integrate. Since $\p_x(\mu_0^{-\sfrac{1}{2}} (\mu_{\rm NS} - \mu_0)) = O(\varepsilon^2)$ in the $\Gamma_\kappa$ term, we have that there exists $\tilde C$ such that
\begin{align*}
\frac{1}{C_1}\| P^\perp \p_x f \|_{L^2 \mathcal{H}^1_\kappa}^2 &\leq \delta \| P^\perp \partial_x f \|_{L^2 H^1_\kappa}^2 + C(\delta) \varepsilon^2 \| P \partial_x f \|^2 \\
&\qquad  + | \langle \partial_x z , P^\perp \partial_x f \rangle | + \tilde C\varepsilon^2 \| \partial_x f \|_{L^2 H^1_\kappa} \| f \|_{L^2 H^1_\kappa}  \, .
\end{align*}
Splitting the term with $z$ again, using the triangle inequality on $f$ and $\partial_x f$ in the final term and then splitting using Cauchy-Schwarz, and using an appropriate choice of $\delta$ to absorb terms with $\partial_x f$, we have that there exists $C_{\rm II}$ such that
\begin{equation}
\label{eq:II}
\tag{II}
\| P^\perp \p_x f \|_{L^2 \mathcal{H}^1_\kappa}^2 \leq C_{\rm II} \left[ \varepsilon^2 \| P \p_x f \|_{L^2 \mathcal{H}^1_\kappa}^2 + \| \p_x z \|^2_{L^2 \mathcal{H}^{-1}_{0,\kappa}}  +  \varepsilon^2 \| Pf \|^2 + \varepsilon^2 \| P^\perp f \|_{L^2 \mathcal{H}^1_\kappa}\right]  \, .
\end{equation}

Finally, we apply $KP_{\leq 9}$ to the rewritten equation, multiply by $P_{\leq 9}\p_x f$, and integrate. We obtain
\begin{align*}
\la &K P_{\leq 9} v_1 \p_x P_{\leq 9} f, P_{\leq 9} \p_x f \ra + \la KP_{\leq 9} L^\kappa f , P_{\leq 9} \p_x f \ra \\
&\qquad= - \langle K P_{\leq 9} v_1 \p_x P_{>9} f, P_{\leq 9} \p_x f \rangle + \langle K P_{\leq 9} \Gamma_\kappa[f_{\rm NS} - \mu_0^{\sfrac 12}, f] , P_{\leq 9} \p_x f \rangle \\
&\qquad \qquad + \langle KP_{\leq 9}\Gamma_\kappa[f,f_{\rm NS} - \mu_0^{\sfrac 12}] , P_{\leq 9} \p_x f \rangle + \la KP_{\leq 9} z, P_{\leq 9} \p_x f \ra \, .
\end{align*}
The first term on the left-hand side above will provide the needed coercivity from Lemma \ref{lem:kawa:comp}.
To treat the second term on the left-hand side, we pass $K P_{\leq 9} $ onto the second slot by duality.  Then using the strong decay of the output $P_{\leq 9} K P_{\leq 9} \p_x f$ combined with Lemma~\ref{lem:lemma10:new}, this term is bounded by 
$$  \| P^\perp f \|_{L^2 \mathcal{H}^1} \| \partial_x f \|_{L^2 \mathcal{H}^1} + \kappa \| P^\perp f \|_{L^2 \mathcal{H}^1_{\rm R}}  \| \partial_x f \|_{L^2 \mathcal{H}^1_{\rm R}} \, .$$  The first term on the right-hand side may be treated similarly; by using that $P_{\leq 9} \partial_x f\in L^2 \mathcal{H}^{-1}_\kappa$ since it belongs to a finite-dimensional subspace for which all norms are equivalent, we may bound
$$  \left|  \langle K P_{\leq 9} v_1 \p_x P_{>9} f, P_{\leq 9} \p_x f \rangle  \right|  =  \left| \langle \p_x P_{>9} f, v_1 P_{\leq 9} K P_{\leq 9} \p_x f \rangle  \right| \les \| \p_x P^\perp f \|_{L^2 \mathcal{H}^1_\kappa}  \| P_{\leq 9} \p_x f \|_{L^2 \mathcal{H}^1_\kappa} \, . $$
The second term on the right-hand side may be bounded by 
$$  \left| \left \langle K P_{\leq 9} \Gamma_\kappa \left[ f_{\rm NS} - \mu_0^{\sfrac 12} , f \right] , P_{\leq 9} \partial_x f \right \rangle \right|  \leq  \varepsilon^2 C(\delta) \| Pf \|^2 + C(\delta) \| P^\perp f \|_{L^2 \mathcal{H}^1_\kappa}^2 + \delta \| \partial_x f \|_{L^2 \mathcal{H}^1_\kappa}^2   $$
in a similar manner as~\eqref{desert:cowboy}, with a similar bound holding for the third term.  Next, the fourth term is bounded by
$C(\delta) \| z \|^2_{L^2 \mathcal{H}^{-1}_{0,\kappa}} + \delta \| \p_x f \|^2_{L^2 \mathcal{H}^1_\kappa}$.  Finally, applying the Kawashima compensator estimate and using Lemma~\ref{l:norm:control} to bound the last term in Lemma~\ref{lem:kawa:comp} with the usual $L^2 \mathcal{H}^1_\kappa$ norm, there exists $\overline C$ such that
\begin{align*}
\frac{\| P\p_x f \|_{L^2_{x,v}}^2}{\overline C C_K} &\leq \| P^\perp f \|_{L^2 \mathcal{H}^1} \| \partial_x f \|_{L^2 \mathcal{H}^1} + \kappa \| P^\perp f \|_{L^2 \mathcal{H}^1_{\rm R}}  \| \partial_x f \|_{L^2 \mathcal{H}^1_{\rm R}} \\
&\qquad  + \| \p_x P^\perp f \|_{L^2 \mathcal{H}^1_\kappa}  \| P_{\leq 9} \p_x f \|_{L^2 \mathcal{H}^1_\kappa} + \varepsilon^2 C(\delta) \| Pf \|^2 \\
&\qquad \qquad  + C(\delta) \| P^\perp f \|_{L^2 \mathcal{H}^1_\kappa}^2 + 2\delta \| \partial_x f \|_{L^2 \mathcal{H}^1_\kappa}^2 + C(\delta) \| z \|_{L^2 \mathcal{H}^{-1}_{0,\kappa}}^2  \, .
\end{align*}
We may condense the above estimate by splitting via Cauchy-Schwarz and then combining the first two terms using~\eqref{H1kappa}, splitting the third term using Cauchy-Schwarz and the triangle inequality, splitting the fifth term using the triangle inequality, and then absorbing onto the left-hand side; this yields $C_{\rm III}$ such that
\begin{align}
\| P\p_x f \|_{L^2_{x,v}}^2 & \leq C_{\rm III} \left[ \| P^\perp f \|_{L^2 \mathcal{H}^1_\kappa}^2 + \| \p_x P^\perp f \|_{L^2 \mathcal{H}^1_\kappa}^2 + \varepsilon^2 \| P f \|^2 + \| z \|^2_{L^2 \mathcal{H}^{-1}_{0,\kappa}} \right]  \, .  \label{eq:III}
\tag{III}
\end{align}

We now sum~\eqref{eq:I},~\eqref{eq:II}, and a constant $A_{\rm III}$ multiplied by~\eqref{eq:III}, obtaining 
\begin{align*}
\| P^\perp f \|^2_{L^2\mathcal{H}^1_\kappa} &+  \| P^\perp  \partial_x f \|^2_{L^2\mathcal{H}^1_\kappa} + A_{\rm III} \| P \partial_x f \|^2 \\
&\leq \left( C_{\rm I} + C_{\rm II} + A_{\rm III}C_{\rm III} \right) \left( \varepsilon^2 \| Pf \|^2 + \| z , \partial_x z \|_{L^2\mathcal{H}^1_\kappa}^2 \right) \\
&\qquad +  C_{\rm II} \varepsilon^2 \left(  \| P \partial_x f \|^2 + \| P^\perp f \|_{L^2\mathcal{H}^1_\kappa}^2 \right) + A_{\rm III} C_{\rm III} \left( \| P^\perp f \|_{L^2\mathcal{H}^1_\kappa} + \| P^\perp \partial_x f \|_{L^2\mathcal{H}^1_\kappa}^2 \right) \, .
\end{align*}
Now choose $A_{\rm III}$ such that $A_{\rm III}C_{\rm III} < 1$ and $\varepsilon$ sufficiently small such that the remaining terms on the last line of the right-hand side can be absorbed onto the left-hand side.  This completes the proof of~\eqref{apriori:easy} for $f \in H^1 \mathcal{H}^{1}_\kappa$.

We now prove~\eqref{eq:basicmicroest} by induction on the number of moments.  We assume inductively that for $-9.5 \leq \theta \leq 0$, we have proven that
\begin{equation}\label{apriori:inductive}
\left\| \p_x f \right\|^2_{L^2 \mathcal{H}^1_{\theta,\kappa}} + \left\| P^\perp f \right\|^2_{L^2 \mathcal{H}^1_{\theta,\kappa}} \les \varepsilon^2 \left\| Pf \right\|^2 + \left\| z, \p_x z \right\|^2_{L^2 \mathcal{H}_{\theta,\kappa}^{-1}} \, .
\end{equation}
Our claim is that the above estimate holds with $\theta$ replaced by $\theta-\sfrac 12$.  We first note that $L^\kappa f = L^\kappa P^\perp f$ by~\eqref{pproj}.  Using this observation, we test~\eqref{wed:morning:equation} with $(P^\perp f) \langle v \rangle^{-2(\theta-\sfrac 12)}$.  Applying Lemma~\ref{lem:lemma10:new} and Cauchy-Schwarz, we bound
\begin{equation}\notag
\left| \left\langle \Gamma_\kappa[f,\mu_0^{-\sfrac{1}{2}} (\mu_{\rm NS} - \mu_0)], (P^\perp f) \langle v \rangle^{-2(\theta-\sfrac 12)} \right \rangle_{x,v} \right| \leq (\gamma + C\varepsilon) \| P^\perp f \|^2_{L^2 \mathcal{H}^1_{\theta-\sfrac 12, \kappa}} + C(\gamma) \varepsilon^2 \| P f \|^2 \, ,
\end{equation}
where $\gamma$ will be chosen sufficiently small shortly.  A similar bound holds for the other term with $\Gamma_\kappa$.   For the term $(v_1-s_0)\partial_x f$, we have that
\begin{align*}
\left| \left \langle (v_1-s_0) \p_x f , (P^\perp f) \langle v \rangle^{-2(\theta-\sfrac 12)} \right \rangle_{x,v} \right| & = \left| \left \langle (v_1-s_0) \p_x P f , (P^\perp f) \langle v \rangle^{-2(\theta-\sfrac 12)} \right \rangle_{x,v} \right| \\
& \les \left\| \partial_x Pf \right\| \left\| P^\perp f \right\|_{L^2 \mathcal{H}^1_{\theta,\kappa}} \\
&\les \varepsilon^2 \| Pf \|^2 + \| z , \p_x z \|_{L^2 \mathcal{H}_{\theta,\kappa}^{-1}}^2 \, .
\end{align*}
Now applying these estimates in conjunction with~\eqref{new:co:kappa}, Corollary~\ref{weddy:night} and the inductive assumption, we obtain that
\begin{align}
\delta^2 \| P^\perp f \|^2_{L^2 \mathcal{H}^1_{\theta-\sfrac 12, \kappa}} &\leq \left \langle \langle v \rangle^{-2(\theta-\sfrac 12)} L^\kappa P^\perp f, P^\perp f \right \rangle + C \| P^\perp f \|^2_{L^2 \mathcal{H}^1_{\theta, \kappa}}  \notag \\
&\les  \varepsilon \| Pf \|^2  + \left| \left \langle z, (P^\perp f) \langle v \rangle^{-2(\theta-\sfrac 12)} \right \rangle \right| + \left\|  z , \partial_x z \right\|_{L^2 \mathcal{H}^{-1}_{\theta,\kappa}}^2 \notag \\
\implies \| P^\perp f \|^2_{L^2 \mathcal{H}^1_{\theta-\sfrac 12, \kappa}}  &\les \varepsilon \| P f \|^2 + \left\|  z , \partial_x z \right\|_{L^2 \mathcal{H}^{-1}_{\theta-\sfrac 12,\kappa}}^2 \, .  \label{eq:IV}
\tag{IV}
\end{align}
In order to prove a similar estimate for $\partial_x f$, we apply $\partial_x$ to~\eqref{wed:morning:equation}, test with $\partial_x f \langle v \rangle^{-2(\theta-\sfrac 12)}$, and apply Lemmas~\ref{lem:propertiesofL} and~\ref{lem:lemma10:new}, obtaining that
\begin{align}
\|\partial_x f \|^2_{L^2 \mathcal{H}^1_{\theta-\sfrac 12, \kappa}}  &\leq \delta^{-2} \left \langle L^\kappa \partial_x f , \partial_x f \langle v \rangle^{-2(\theta-\sfrac 12)}  \right \rangle + C \| \partial_x f \|_{L^2 \mathcal{H}^1_{\theta, \kappa}}^2 \notag\\
&\les \left| \left\langle \partial_x \Gamma_\kappa[f,\mu_0^{-\sfrac{1}{2}} (\mu_{\rm NS} - \mu_0)], \partial_ x f \langle v \rangle^{-2(\theta-\sfrac 12)} \right \rangle \right| \\
&\qquad  + \left| \left\langle \partial_x \Gamma_\kappa[\mu_0^{-\sfrac{1}{2}} (\mu_{\rm NS} - \mu_0),f], \partial_ x f \langle v \rangle^{-2(\theta-\sfrac 12)} \right \rangle \right| \notag\\
&\qquad \qquad + \left| \left \langle \partial_x z , \partial_x f \langle v \rangle^{-2(\theta-\sfrac 12)} \right \rangle \right| + \| \partial_x f \|_{L^2 \mathcal{H}^1_{\theta, \kappa}}^2 \notag \\
&\les \varepsilon \| \partial_x f \|_{L^2 \mathcal{H}^1_{\theta-\sfrac 12, \kappa}} \left(\| \partial_x f \|_{L^2 \mathcal{H}^1_{\theta-\sfrac 12, \kappa}} + \varepsilon \| f \|_{L^2 \mathcal{H}^1_{\theta-\sfrac 12, \kappa}} \right) \notag\\
&\qquad + \| \partial_x z \|_{L^2 \mathcal{H}^{-1}_{\theta-\sfrac 12,\kappa}} \| \partial_x f \|_{L^2 \mathcal{H}^{1}_{\theta-\sfrac 12,\kappa}} + \varepsilon^2 \| P f \|^2 + \| z , \p_x z \|_{L^2 \mathcal{H}^{-1}_{\theta,\kappa}}^2 \notag \\
&\implies \|\partial_x f \|^2_{L^2 \mathcal{H}^1_{\theta-\sfrac 12, \kappa}} \leq C \left( \varepsilon^2 \| Pf \|^2 + \varepsilon^2 \| P^\perp f \|_{L^2\mathcal{H}^1_{\theta-\sfrac 12, \kappa}} \right) + \| z , \p_x z \|_{L^2\mathcal{H}^{-1}_{\theta-\sfrac 12,\kappa}}^2 \, .\label{eq:V}
\tag{V}
\end{align}
Summing~\eqref{eq:IV} and~\eqref{eq:V} and using the smallness of $\varepsilon$, we find that
\begin{equation}
\|\partial_x f \|^2_{L^2 \mathcal{H}^1_{\theta-\sfrac 12, \kappa}} + \|P^\perp f \|^2_{L^2 \mathcal{H}^1_{\theta-\sfrac 12, \kappa}} \les \varepsilon^2\| Pf \|^2 + \| z , \partial_x z \|_{L^2\mathcal{H}^{-1}_{\theta-\sfrac 12, \kappa}}^2 \, ,\notag
\end{equation}
concluding the proof.

We now justify the above computations for $f \in L^2 \mathcal{H}^{1}_\kappa$. We mollify the equation in $x$ by convolving against a suitable test function $\psi_\delta(\cdot) = \delta^{-1} \psi(\cdot \delta^{-1})$, where $\delta > 0$. We adopt the notation $f^\delta = \psi_\delta \ast f^\delta$. Then
\begin{align*}
(v_1-s)\p_x f^\delta + L^\kappa f^\delta
&= z^\delta + \underbrace{ \psi_\delta \ast  \left(  \Gamma\left[\mu_0^{-\sfrac12 }(\mu_{\rm NS}-\mu_0), f \right] +   \Gamma\left[f,\mu_0^{-\sfrac12}(\mu_{\rm NS}-\mu_0)\right]  \right)}_{{\rm C}_\delta f} \\
&\qquad  - \underbrace{ \Gamma \left[ \mu_0^{-\sfrac12}(\mu_{\rm NS}-\mu_0), f^\delta \right] -  \Gamma \left[f^\delta , \mu_0^{-\sfrac12}(\mu_{\rm NS}-\mu_0)\right]}_{\tilde C_\delta f} \, .
\end{align*}
The first commutator term can be rewritten as
\begin{equation}\notag
{\rm C}_\delta f(x,\cdot) = \int_\R  \psi_\delta(h) \Gamma\left[ \mu_0^{-\sfrac 12} (\mu_{\rm NS}(x+h,\cdot) - \mu_{\rm NS}(x,\cdot) ),f(x+h,\cdot) \right] \, \dee h \, .
\end{equation}
Crucially,
\begin{equation}\notag
\delta^{-1} \| {\rm C}_\delta f \|_{L^2 \mathcal{H}^{-1}_{0,\kappa}} + \| \p_x {\rm C}_\delta f \|_{L^2 \mathcal{H}^{-1}_{0,\kappa}} \les \| f \|_{L^2 \mathcal{H}^{{-1}}_{0,\kappa}} \, ,
\end{equation}
with dependence on the Lipschitz norm of the `coefficients' $\mu_{\rm NS}$; the proof of this uses duality and Lemma~\ref{lem:lemma10:new}.  The second commutator term $\tilde C_\delta f$ can be handled similarly. Then we perform the same energy estimates on the equation for $f^\delta$. Upon passing $\delta \to 0^+$, we obtain $f \in H^1 \mathcal{H}_\kappa^{{1}}$ from lower semicontinuity.
\end{proof}


We further bootstrap the solution by one degree of $x$-regularity. (The same procedure will be used when we track many $x$-derivatives.)

\begin{lemma}
	\label{lem:bootstrappingonedegreeofreg}
Let $z \in H^2 \mathcal{H}^{-1}_{-10,\kappa}$ be given and assume that $f \in H^1 \mathcal{H}^1_\kappa$ is a solution of~\eqref{def:LinCE:reg:conj}. Then $f \in H^2 \mathcal{H}^1_{-10,\kappa}$, and
\begin{equation}
\label{eq:basicmicroestbootstrappedonce}
\left\| \p_x^2 f \right\|^2_{L^2 \mathcal{H}^1_{-10,\kappa}} + \left\| P^\perp \partial_x f \right\|^2_{L^2 \mathcal{H}^1_{-10,\kappa}} \les \varepsilon^4 \left\| Pf \right\|^2_{L^2} + \left\| \varepsilon z, \p_x z,  \p_x^2 z \right\|^2_{L^2 \mathcal{H}^{-1}_{-10,\kappa}} \, .
\end{equation}
\end{lemma}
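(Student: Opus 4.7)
The plan is to reduce to Lemma~\ref{lem:basicmicroest} applied to the differentiated equation. Setting $g := \partial_x f$ and differentiating~\eqref{def:LinCE:reg:conj} in $x$, I obtain that $g$ solves
\begin{equation*}
(v_1-s_0)\partial_x g + L^\kappa g = \Gamma_\kappa\bigl[\mu_0^{-\sfrac12}(\mu_{\rm NS}-\mu_0), g\bigr] + \Gamma_\kappa\bigl[g, \mu_0^{-\sfrac12}(\mu_{\rm NS}-\mu_0)\bigr] + \tilde z \, ,
\end{equation*}
where the new right-hand side is
\begin{equation*}
\tilde z := \partial_x z + \Gamma_\kappa\bigl[\mu_0^{-\sfrac12}\partial_x\mu_{\rm NS}, f\bigr] + \Gamma_\kappa\bigl[f, \mu_0^{-\sfrac12}\partial_x\mu_{\rm NS}\bigr] \, .
\end{equation*}
This differentiation is initially formal since we only know $f \in H^1\mathcal{H}^1_\kappa$; as in the proof of Lemma~\ref{lem:basicmicroest}, I would justify it by a mollification in $x$, running the energy estimates on $f^\delta = \psi_\delta \ast f$ with the usual commutator remainders, and passing $\delta \to 0^+$ using lower semicontinuity.

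Granted this, the next step is to apply Lemma~\ref{lem:basicmicroest} directly to $g$, which yields
\begin{equation*}
\bigl\|\partial_x g\bigr\|^2_{L^2\mathcal{H}^1_{-10,\kappa}} + \bigl\|P^\perp g\bigr\|^2_{L^2\mathcal{H}^1_{-10,\kappa}} \lesssim \varepsilon^2 \|Pg\|^2_{L^2} + \bigl\|\tilde z, \partial_x \tilde z\bigr\|^2_{L^2\mathcal{H}^{-1}_{-10,\kappa}} \, .
\end{equation*}
For this to be legitimate I also need $g \in L^2\mathcal{H}^1_\kappa$, which is built into the hypothesis $f \in H^1\mathcal{H}^1_\kappa$. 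The macroscopic norm $\|Pg\|_{L^2}$ is controlled via Lemma~\ref{lem:basicmicroest} applied to $f$ itself, giving $\varepsilon\|Pg\|_{L^2} \lesssim \varepsilon\|\partial_x f\|_{L^2\mathcal{H}^1_{-10,\kappa}} \lesssim \varepsilon^2\|Pf\|_{L^2} + \varepsilon\|z,\partial_x z\|_{L^2\mathcal{H}^{-1}_{-10,\kappa}}$, which upon squaring produces the desired $\varepsilon^4\|Pf\|^2_{L^2}$ contribution (plus harmless $\varepsilon^2$-weighted $z$-terms).

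It remains to bound $\tilde z$ and $\partial_x\tilde z$ in $L^2\mathcal{H}^{-1}_{-10,\kappa}$. Here I use the Navier--Stokes profile estimates of Lemma~\ref{lem:mugests}, which give $|\partial_x^k \mu_0^{-\sfrac12}\mu_{\rm NS}|_{\sigma,0,q_0,2} \lesssim \varepsilon^{k+1}$ for $k \geq 1$, together with the nonlinear estimate of Lemma~\ref{lem:lemma10:new} by duality. This yields
\begin{equation*}
\|\tilde z\|_{L^2\mathcal{H}^{-1}_{-10,\kappa}} \lesssim \|\partial_x z\|_{L^2\mathcal{H}^{-1}_{-10,\kappa}} + \varepsilon^2 \|f\|_{L^2\mathcal{H}^1_{-10,\kappa}}
\end{equation*}
and, after one more differentiation,
\begin{equation*}
\|\partial_x\tilde z\|_{L^2\mathcal{H}^{-1}_{-10,\kappa}} \lesssim \|\partial_x^2 z\|_{L^2\mathcal{H}^{-1}_{-10,\kappa}} + \varepsilon^3\|f\|_{L^2\mathcal{H}^1_{-10,\kappa}} + \varepsilon^2\|\partial_x f\|_{L^2\mathcal{H}^1_{-10,\kappa}} \, .
\end{equation*}
Inserting the Lemma~\ref{lem:basicmicroest} bound $\|f\|_{L^2\mathcal{H}^1_{-10,\kappa}} \lesssim \|Pf\|_{L^2} + \|P^\perp f\|_{L^2\mathcal{H}^1_{-10,\kappa}} \lesssim \|Pf\|_{L^2} + \|z,\partial_x z\|_{L^2\mathcal{H}^{-1}_{-10,\kappa}}$ and squaring, all Navier--Stokes-generated commutator terms pick up a factor of $\varepsilon^4$ on $\|Pf\|^2$ and at most $\varepsilon^2$ on the $z$-terms, leaving the uncontaminated $\|\partial_x z\|^2$ and $\|\partial_x^2 z\|^2$ as the dominant contributions. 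Combining everything reproduces the right-hand side $\varepsilon^4\|Pf\|^2_{L^2} + \|\varepsilon z, \partial_x z, \partial_x^2 z\|^2_{L^2\mathcal{H}^{-1}_{-10,\kappa}}$, as claimed.

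The only subtle point I anticipate is bookkeeping: one must verify that every commutator generated by differentiating the coefficient $\mu_0^{-\sfrac12}(\mu_{\rm NS}-\mu_0)$ picks up exactly one factor of $\varepsilon$ (not merely $O(1)$), which is precisely the content of Lemma~\ref{lem:mugests} and reflects the shock varying on scale $\varepsilon^{-1}$. With the $\varepsilon$-gains accounted for, the weighted $z$-terms land with coefficients $\varepsilon^2, 1, 1$ on $\|z\|^2, \|\partial_x z\|^2, \|\partial_x^2 z\|^2$, matching the stated estimate. The procedure is clearly iterable and will be the basis of the higher-order bootstrap in Proposition~\ref{p:boots}.
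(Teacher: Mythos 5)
Your proof is correct and takes essentially the same route as the paper: differentiate~\eqref{def:LinCE:reg:conj} in $x$, apply Lemma~\ref{lem:basicmicroest} to $\partial_x f$, control $\|P\partial_x f\|_{L^2}$ by applying Lemma~\ref{lem:basicmicroest} once more to $f$, and bound the commutator terms using Lemmas~\ref{lem:mugests} and~\ref{lem:lemma10:new}. Your isolation of the commutator into an explicit new forcing $\tilde z$ and the $\varepsilon$-bookkeeping are both accurate and, if anything, stated more cleanly than in the paper.
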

\begin{proof}
We differentiate~\eqref{def:LinCE:reg:conj} in $x$: 
\begin{align*}\notag
&(v_1-s_0) \partial_x (\partial_x f) + L^\kappa \partial_x f = \\
&\quad + \p_x \Gamma_\kappa\left[\mu_0^{-\sfrac 12} (\mu_{\rm NS} - \mu_0 ), f\right] + \p_x \Gamma_\kappa\left[f,\mu_0^{-\sfrac 12} (\mu_{\rm NS} - \mu_0 )\right] + \partial_x z \, .
\end{align*}
We apply Lemma~\ref{lem:basicmicroest} to $\partial_x f \in L^2 \mathcal{H}^1_{-10,\kappa}$ to obtain that $\partial_x f \in H^1 \mathcal{H}^1_{-10,\kappa}$ and
\begin{equation}
\label{eq:baselinetopartialf}
\begin{aligned}
&\| \p_x (\partial_x f) \|^2_{L^2 \mathcal{H}^1_{-10,\kappa}} + \| P^\perp \partial_x f \|^2_{L^2 \mathcal{H}^1_{-10,\kappa}} \les \varepsilon^2 \| P\partial_x f \|^2_{L^2} \\
&\, + \left\| \partial_x \Gamma[\mu_0^{-\sfrac 12}(\mu_{\rm NS} - \mu_0),f] \right\|^2_{L^2 \mathcal{H}^{-1}_{-10,\kappa}} + \left\| \partial_x \Gamma[f, \mu_0^{-\sfrac 12}(\mu_{\rm NS} - \mu_0)] \right\|^2_{L^2 \mathcal{H}^{-1}_{-10,\kappa}} + \| \p_x z, \p_x^2 z \|^2_{L^2 \mathcal{H}^{-1}_{-10,\kappa}} \, .
\end{aligned}
\end{equation}
Lemma~\ref{lem:basicmicroest} for $f$ grants us the following bound for the first term on the right-hand side:
\begin{equation}
\label{eq:baselinetopartialf1}
\varepsilon^2 \| P(\partial_x f) \|^2_{L^2} \les \varepsilon^4 \| Pf \|_{L^2} + \varepsilon^2 \| z, \p_x z \|_{L^2 \mathcal{H}^{-1}_{-10,\kappa}} \, .
\end{equation}
Similarly, Lemmas~\ref{lem:basicmicroest} and~\ref{lem:lemma10:new} give that
\begin{equation}
\label{eq:baselinetopartialf2}
\begin{aligned}
 &\left\| \partial_x \Gamma[\mu_0^{-\sfrac 12}(\mu_{\rm NS} - \mu_0),f] \right\|^2_{L^2 \mathcal{H}^{-1}_{-10,\kappa}} + \left\| \partial_x \Gamma[f, \mu_0^{-\sfrac 12}(\mu_{\rm NS} - \mu_0)] \right\|^2_{L^2 \mathcal{H}^{-1}_{-10,\kappa}} \\
 &\quad \les \varepsilon^2 \| f \|_{L^2 \mathcal{H}^1_{-10,\kappa}} + \varepsilon \| \partial_x f \|_{L^2 \mathcal{H}^{1}_{-10,\kappa}} \\
 &\quad \les \varepsilon^2 \| Pf \|_{L^2} + \varepsilon \| z, \p_x z \|_{L^2 \mathcal{H}^{-1}_{-10,\kappa}} \, .
\end{aligned}
\end{equation}
We combine~\eqref{eq:baselinetopartialf}--\eqref{eq:baselinetopartialf2} to complete the estimate.
\end{proof}

We next prove the \emph{a priori} estimate. To relate the macroscopic and microscopic estimate, we will use the linearized Chapman-Enskog expansion from section~\ref{sec:outline}.

\begin{proof}[Proof of Proposition~\ref{pro:basicexistence}, \emph{a priori} estimate]
We assume that $f \in H^2 \mathcal{H}^1_{-10,\kappa}$ solves~\eqref{def:LinCE:reg:conj} with $z \in H^2 \mathcal{H}^{-1}_{-10,\kappa}$ and work towards a proof of \eqref{apriori:est}.  We first note that $f$ satisfies the microscopic estimates~\eqref{eq:basicmicroest} and~\eqref{eq:basicmicroestbootstrappedonce}, which depend on the macroscopic variable $Pf$.  We write $F = \sqrt{\mu_0} f$ and $\mathcal{E} = \sqrt{\mu_0} z$, so that $F$ solves the (unconjugated) linearized equation~\eqref{def:LinCE:reg} with right-hand side $\mathcal{E}$. Define $U = \sf{P}_{\mu_0} F$ and write the decomposition
\begin{equation}\notag
F = \sf{P}_{\mu_{\rm NS}} U + V \, ,
\end{equation}
which serves to define the purely microscopic $V$. From~\eqref{becomes}, we have that
\begin{equation}\notag
\p_x ([(\nabla H)(U_{\rm NS}) - sI] U) - \p_x (B_\kappa(U_{\rm NS}) \p_x U) = \p_x \left( G + (B_0-B_\kappa)(U_{\rm NS}) \partial_x U \right) \, ,
\end{equation}
where $H$, $B_\kappa$, and $G$ have already been defined in subsubsection~\ref{sss:macro}.  As already mentioned, by a constant linear change of coordinates $U \mapsto I[U]$, we may rewrite the equation in the variables $(\rho,m,E) \in \R \times \R^3 \times \R$ instead of $U \in \mathbb{U}$, and the equation becomes a variant of the linearized compressible Navier-Stokes equations, but without the term $\p_x (B_\kappa'(U_{\rm NS}) U \p_x U_{\rm NS})$, which is perturbative in our arguments. 

The macroscopic estimate from~Proposition~\ref{pro:macroscopicestimate} yields that
\begin{equation}\notag
\| U \|_{L^2} \lesssim \varepsilon^{-1} \left( \| G \|_{L^2} + \|  (B_0-B_\kappa)(U_{\rm NS}) \partial_x U \|_{L^2} \right) + |d| \, .
\end{equation}
By the definition~\eqref{gdef} of $G$ and the fact that $P v_1$ is bounded on $\mathcal{H}^1_{\kappa}$, we have that
\begin{equation}\notag
\| G \|_{L^2} \lesssim \varepsilon^2 \| U \|_{L^2} + \| \mu_{0}^{-\sfrac 12} (v_1-s_0) \p_x V \|_{L^2 \mathcal{H}^{-1}_\kappa} + \| \mu_{0}^{-\sfrac 12} \mathcal{E} \|_{L^2 \mathcal{H}^{-1}_\kappa} \, ,
\end{equation}
where we have used that $\| \p_x \sf{P}_{\mu_{\rm NS}} \| \les \varepsilon^2$ from~\eqref{eq:decay} and Lemma~\ref{lem:inverttofindg}.  In order to estimate $\mu_0^{-\sfrac 12} (v_1-s_0) \p_x V$, we use Lemma~\ref{l:norm:control} and Lemma~\ref{lem:bootstrappingonedegreeofreg}, which gives that
\begin{align}
\| \mu_0^{-\sfrac 12} (v_1-s_0) \partial_x V \|_{L^2 \mathcal{H}^{-1}_\kappa} &\les \|\mu_0^{-\sfrac 12} \langle v \rangle^{5} (v_1-s_0) \partial_x V\|_{L^2} \notag \\
&\les \| \mu_0^{-\sfrac{1}{2}} \p_x V \|_{L^2 \mathcal{H}^1_{-10,\kappa}} \notag \\
&\les  \| \p_x \mathsf{P}^\perp_{\mu_0} \mathsf{P}_{\mu_{\rm NS}} U \| + \| P^\perp \mu_0^{-\sfrac 12} \partial_x F \|_{L^2 \mathcal{H}^1_{-10,\kappa}} \notag \\
&\les \varepsilon^2 \| U \|_{L^2} + \| \varepsilon \mu_0^{-\sfrac 12} \mathcal{E} , \p_x \mu_0^{-\sfrac 12} \mathcal{E} , \p_x^2 \mu_0^{-\sfrac 12} \mathcal{E} \|_{L^2\mathcal{H}^1_{-10,\kappa}} \, .  \label{eq:doesn't:close}
\end{align}
Finally, note that the a priori estimate~\eqref{eq:basicmicroest} for $\partial_x f$ and~\eqref{op:est} bounds
$$  \|  (B_0-B_\kappa)(U_{\rm NS}) \partial_x U \|_{L^2} \les \kappa \| \p_x U \|_{L^2}  \les \kappa\varepsilon \| U \|_{L^2} + \kappa \| \mu_0^{-\sfrac 12} \mathcal{E} , \p_x \mu_0^{-\sfrac 12} \mathcal{E} \|_{L^2\mathcal{H}^1_{-10,\kappa}} \, . $$

In summary,
\begin{equation}\notag
\| U \|_{L^2}  \les (\varepsilon+\kappa) \| U \|_{L^2} + \varepsilon^{-1} \| \mu_{0}^{-\sfrac 12} \mathcal{E} \|_{H^2 \mathcal{H}^{-1}_{-10,\kappa}} + |d| \qquad \implies \qquad
\| U \|_{L^2} \lesssim \varepsilon^{-1} \| z \|_{H^2 \mathcal{H}^{-1}_{-10,\kappa}} + |d|\, .
\end{equation}
Note that the microscopic estimate crucially allowed us to control $\p_x f$, and in particular, $\p_x U = \p_x \sf{P}_{\mu_0} F$, so that the above estimate closes. Inserting this macroscopic bound into the microscopic estimate~\eqref{eq:basicmicroest} and~\eqref{eq:basicmicroestbootstrappedonce} completes the proof.
\end{proof}
\begin{remark}\label{rem:improved}
We note that the macroscopic piece in fact satisfies a bound with constant $O(\varepsilon^{-1})$, whereas the microscopic piece satisfies a bound with constant $O(1)$. 
\end{remark}

\subsection{Linear estimates: Bootstrapping}\label{ss:le:boot}
\subsubsection{Unweighted estimates}

\begin{lemma}[Bootstrapping $x$-derivatives]\label{bootstrapping:x}
Let $N \geq 2$ and $z \in H^2 \mathcal{H}^{-1}_{-10} \cap H^N \mathcal{H}^{-1}$. If $f \in H^2 \mathcal{H}^1_{-10}$ is the unique solution to~\eqref{eq:NLSL} with $\ell_\varepsilon(Pf(0)) = 0$ as in Proposition~\ref{pro:basicexistence}, then $f \in H^N_x \mathcal{H}^1$, and
\begin{equation}\notag
\| f \|_{H^N_\varepsilon \mathcal{H}^1} \les \varepsilon^{-1} (\| z \|_{H^2_\varepsilon \mathcal{H}^{-1}_{-10}} +  \| z \|_{H^N_\varepsilon \mathcal{H}^{-1}} ) \, .
\end{equation}
\end{lemma}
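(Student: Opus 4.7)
The plan is to prove the estimate by induction on $\alpha$, showing that for $2 \leq \alpha \leq N$,
\[
\varepsilon^{-\alpha} \| \p_x^\alpha f \|_{L^2 \mathcal{H}^1} + \varepsilon^{-(\alpha-1)}\| P^\perp \p_x^{\alpha-1} f \|_{L^2 \mathcal{H}^1} \lesssim \varepsilon^{-1}\left( \| z \|_{H^2_\varepsilon \mathcal{H}^{-1}_{-10}} + \| z \|_{H^\alpha_\varepsilon \mathcal{H}^{-1}} \right).
\]
The base case $\alpha = 2$ follows directly from Proposition~\ref{pro:basicexistence} together with the continuous embedding $\mathcal{H}^1_{-10} \hookrightarrow \mathcal{H}^1$ (since negative $\ell$ corresponds to a larger polynomial weight inside the integrand in \eqref{eq:sigma:2}). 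For the inductive step, given $3 \leq \alpha \leq N$, I would differentiate \eqref{eq:NLSL} exactly $\alpha - 1$ times in $x$ and expand the $\Gamma$-terms using the Leibniz rule, producing the equation
\[
(v_1 - s_0)\p_x(\p_x^{\alpha-1} f) + L \p_x^{\alpha-1} f = \tilde z_\alpha,
\]
where $\tilde z_\alpha = \p_x^{\alpha-1} z + \sum_{\alpha'=0}^{\alpha-1} \binom{\alpha-1}{\alpha'}\bigl( \Gamma[\mu_0^{-\sfrac 12} \p_x^{\alpha'}(\mu_{\rm NS}-\mu_0), \p_x^{\alpha-1-\alpha'} f] + \Gamma[\p_x^{\alpha-1-\alpha'} f, \mu_0^{-\sfrac 12}\p_x^{\alpha'}(\mu_{\rm NS}-\mu_0)] \bigr)$.

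The $\alpha'=0$ contributions in $\tilde z_\alpha$ are precisely the perturbative $\Gamma$-terms already present in \eqref{eq:NLSL}, and these I would keep as part of the ``implicit'' structure of the linearized equation by invoking the unweighted ($\ell=0$, $\kappa=0$) version of Lemma~\ref{lem:basicmicroest}, which was in fact the base case \eqref{apriori:easy} in its proof. Applied to $g = \p_x^{\alpha-1} f$, this yields
\[
\| \p_x^\alpha f \|_{L^2 \mathcal{H}^1}^2 + \| P^\perp \p_x^{\alpha-1} f \|_{L^2 \mathcal{H}^1}^2 \lesssim \varepsilon^2 \| P \p_x^{\alpha-1} f \|_{L^2}^2 + \| \tilde z_\alpha, \p_x \tilde z_\alpha \|_{L^2 \mathcal{H}^{-1}}^2.
\]
For each $\alpha' \geq 1$, Lemma~\ref{lem:mugests} delivers $\varepsilon^{\alpha'+1}$ decay for $\p_x^{\alpha'}(\mu_{\rm NS} - \mu_0)$ in any fixed polynomial/Gaussian $v$-topology, and then Lemma~\ref{lem:lemma10:new} gives
\[
\left\| \Gamma\bigl[\mu_0^{-\sfrac 12}\p_x^{\alpha'}(\mu_{\rm NS}-\mu_0),\, \p_x^{\alpha-1-\alpha'} f\bigr] \right\|_{L^2 \mathcal{H}^{-1}} \lesssim \varepsilon^{\alpha'+1} \| \p_x^{\alpha-1-\alpha'} f \|_{L^2 \mathcal{H}^1}
\]
and analogously with the arguments swapped; an identical estimate controls $\p_x \tilde z_\alpha$ after relabeling.

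Multiplying through by $\varepsilon^{-\alpha}$, the macroscopic term becomes $\varepsilon^{-(\alpha-1)} \| P \p_x^{\alpha-1} f \|_{L^2} \lesssim \varepsilon^{-(\alpha-1)} \| \p_x^{\alpha-1} f \|_{L^2 \mathcal{H}^1}$, which is bounded by the inductive hypothesis, the pure $z$-contribution gives $\varepsilon^{-1}\| z \|_{H^\alpha_\varepsilon \mathcal{H}^{-1}}$, and the generic commutator contribution is $\varepsilon^{-\alpha} \cdot \varepsilon^{\alpha'+1} \| \p_x^{\alpha-\alpha'} f \|_{L^2 \mathcal{H}^1} = \varepsilon \cdot \varepsilon^{-(\alpha-\alpha')}\| \p_x^{\alpha-\alpha'} f \|_{L^2 \mathcal{H}^1}$, which picks up a spare factor of $\varepsilon$ on top of the inductive bound. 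The main point to verify carefully — and the only real obstacle — is precisely this bookkeeping of $\varepsilon$ powers, ensuring that each extra $x$-derivative falling on $\mu_{\rm NS} - \mu_0$ is matched by the $\varepsilon^{-\alpha}$ prefactor in the definition of $H^N_\varepsilon \mathcal{H}^{\pm 1}$ and that the resulting constants stay uniformly bounded as $\varepsilon \to 0$. Since the slack is a full power of $\varepsilon$ in every commutator term, this works comfortably.
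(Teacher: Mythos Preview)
Your proposal is correct and follows essentially the same approach as the paper: induction on the number of $x$-derivatives, differentiating the equation, applying the unweighted baseline microscopic estimate (Lemma~\ref{lem:basicmicroest}, specifically~\eqref{apriori:easy}) to $\p_x^{\alpha-1}f$, and controlling the Leibniz commutator terms via Lemma~\ref{lem:mugests} and Lemma~\ref{lem:lemma10:new} with the $\varepsilon$-bookkeeping you describe. The only cosmetic difference is that the paper writes the differentiated equation with $L_{\rm NS}$ on the left (so the commutator sum starts at $k=1$), whereas you write it with $L$ and then absorb the $\alpha'=0$ terms into the implicit structure of Lemma~\ref{lem:basicmicroest}; these are equivalent.
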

\begin{proof}
This will be a consequence of Proposition~\ref{pro:basicexistence} and bootstrapping via Lemma~\ref{lem:basicmicroest}. The case $N=2$ is already verified in Proposition~\ref{pro:basicexistence}. Suppose the statement is already known already for $N \geq 2$. To prove the $N+1$ statement, we simply apply $N$ derivatives to the equation:
\begin{equation}\notag
(v_1 - s_0) \p_x^N f + L_{\rm NS} \p_x^N f = \sum_{k=1}^N c(k,N) \left(  \Gamma [\mu_0^{-\sfrac 12} \p_x^k \mu_{\rm NS}, \p_x^{N-k} f] + \Gamma [\p_x^{N-k} f, \mu_0^{-\sfrac 12} \p_x^k \mu_{\rm NS}] \right)  + \p_x^N z \, .
\end{equation}
Since
\begin{equation}\notag
\| \Gamma [\mu_0^{-\sfrac 12} \p_x^k \mu_{\rm NS}, \p_x^{N-k} f] \|_{L^2 \mathcal{H}^{-1}} \lesssim \varepsilon^{k+1} \| \p_x^{N-k} f \|_{L^2 \mathcal{H}^1} \, ,
\end{equation}
and similarly for the other term with $\Gamma$, the induction hypothesis and Lemma~\ref{lem:basicmicroest} (more specifically, the beginning of the proof, containing the estimates without 10 moments) yield the statement for $N+1$.
\end{proof}

\begin{lemma}[Bootstrapping $v$-derivatives]\label{lemma:boot:v}
Let $N \geq 2$, and suppose that $z \in H^2 \mathcal{H}^{-1}_{-10} \cap \mathbb{Y}_\varepsilon^N$. If $f \in H^2 \mathcal{H}^1_{-10}$ is the unique solution to~\eqref{eq:NLSL} with $\ell_\varepsilon(Pf(0)) = 0$ as in Proposition~\ref{pro:basicexistence}, then
\begin{equation}\notag
 \| f \|_{\mathbb{X}^N_\varepsilon} \lesssim \varepsilon^{-1} (\| z \|_{H^2_\varepsilon \mathcal{H}^{-1}_{-10}} + \| z \|_{\mathbb{Y}^N_\varepsilon} ) \, .
\end{equation}
\end{lemma}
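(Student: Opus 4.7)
\textbf{Proof proposal for Lemma~\ref{lemma:boot:v}.} The plan is to proceed by induction on $|\beta|$, keeping the total order $|\alpha|+|\beta| \le N$ fixed. The base case $|\beta|=0$ is exactly Lemma~\ref{bootstrapping:x}, which controls $\sum_{|\alpha|\le N} \varepsilon^{-|\alpha|} \|\partial_x^\alpha f\|_{L^2 \mathcal{H}^1}$ by the right-hand side of the desired inequality. For the inductive step, fix $(\alpha,\beta)$ with $|\beta|\ge 1$ and $|\alpha|+|\beta|\le N$, assuming the conclusion already for all $(\alpha',\beta')$ with $|\alpha'|+|\beta'|\le N$ and $|\beta'|<|\beta|$.

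Apply $\partial_\beta^\alpha$ to~\eqref{eq:NLSL}. Since $\partial_v^\beta\bigl[(v_1-s_0)\partial_x f\bigr] = (v_1-s_0)\partial_x\partial_\beta f + \beta_1 \partial_x \partial_{\beta-e_1} f$, we obtain
\[
(v_1-s_0)\partial_x \partial_\beta^\alpha f + L\partial_\beta^\alpha f = -[\partial_\beta^\alpha,L]f - \beta_1 \partial_x \partial_{\beta-e_1}^\alpha f + \partial_\beta^\alpha\Gamma[\mu_0^{-\sfrac12}(\mu_{\mathrm{NS}}-\mu_0),f] + \partial_\beta^\alpha\Gamma[f,\mu_0^{-\sfrac12}(\mu_{\mathrm{NS}}-\mu_0)] + \partial_\beta^\alpha z.
\]
Test against $\langle v\rangle^{-2|\beta|}\partial_\beta^\alpha f$ and integrate in $x,v$. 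The transport term vanishes by integration by parts in $x$ since the weight is $x$-independent. The linear term $\langle L \partial_\beta^\alpha f,\langle v\rangle^{-2|\beta|}\partial_\beta^\alpha f\rangle$ yields, via~\eqref{GS:Lemma:9a} from Lemma~\ref{lem:propertiesofL} (with $l=0$, $q=0$, $\ell=|\beta|$), coercivity $|\partial_\beta^\alpha f|^2_{\sigma,|\beta|}$ up to an $\eta$-term absorbable for $\eta$ small and a $C(\eta)$-multiple of $|\partial_{\beta_1}^\alpha f|^2_{\sigma,|\beta_1|}$ for $|\beta_1|<|\beta|$, which is controlled by induction. The commutator $[\partial_\beta^\alpha,L]f$, which only differentiates the $v$-dependent coefficients of $L$ and reduces the order of $v$-derivative acting on $f$, is absorbed into the same lower-$|\beta|$ induction terms produced by~\eqref{GS:Lemma:9a} (this is the standard reason~\eqref{GS:Lemma:9a} is stated as it is).

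For the remaining right-hand side terms we use the $\mathbb{X}^N_\varepsilon$-bookkeeping with its $\varepsilon^{-|\alpha|}$ prefactor. The commutator $\beta_1 \partial_x \partial_{\beta-e_1}^\alpha f = \beta_1 \partial_{\beta-e_1}^{\alpha+1} f$, paired against $\langle v\rangle^{-2|\beta|}\partial_\beta^\alpha f$ and bounded by Cauchy-Schwarz, becomes
\[
\varepsilon^{-|\alpha|}\beta_1\|\partial_x\partial_{\beta-e_1}^\alpha f\|_{L^2\mathcal{H}^1_{|\beta|-1}} = \varepsilon\cdot\beta_1\,\varepsilon^{-|\alpha|-1}\|\partial_{\beta-e_1}^{\alpha+1} f\|_{L^2\mathcal{H}^1_{|\beta|-1}},
\]
which is $\varepsilon$ times an inductive (in $|\beta|$) contribution to $\|f\|_{\mathbb{X}^N_\varepsilon}$. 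The $\Gamma$ terms expand via Leibniz as sums of $\Gamma[\partial_x^k(\mu_0^{-\sfrac12}(\mu_{\mathrm{NS}}-\mu_0)),\partial_\beta^{\alpha-k}f]$ (and symmetric); by Lemma~\ref{lem:mugests} each $\partial_x$-derivative on $\mu_{\mathrm{NS}}-\mu_0$ yields a factor $\varepsilon$, and by Lemma~\ref{lem:lemma10:new} these are bounded by the appropriate $\mathcal{H}^1$-norms of $\partial_\beta^{\alpha-k}f$; the $\varepsilon^{k+1}$ gains dominate the $\varepsilon^{-|\alpha|}$ prefactor and leave smallness $\varepsilon$ available for absorption. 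Finally, $\langle \partial_\beta^\alpha z, \langle v\rangle^{-2|\beta|}\partial_\beta^\alpha f\rangle$ is split via Young's inequality between $\|\partial_\beta^\alpha z\|_{\mathcal{H}^{-1}_{|\beta|}}^2$ and an absorbable multiple of $|\partial_\beta^\alpha f|^2_{\sigma,|\beta|}$.

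The main obstacle I expect is the precise $\varepsilon$-bookkeeping around the transport commutator $\beta_1\partial_x\partial_{\beta-e_1}^\alpha f$: one must trade a lost $x$-derivative (worth $\varepsilon^{-1}$ in $\mathbb{X}^N_\varepsilon$) against a gained inverse power of $\varepsilon$ somewhere else. The redemption here is that this term carries one extra $x$-derivative but one fewer $v$-derivative, so the inductive step in $|\beta|$ makes exactly this available, and the factor $\varepsilon$ inherent in the change of normalization provides the room needed to sum the resulting contributions into $\|f\|_{\mathbb{X}^N_\varepsilon}$ without loss beyond the $\varepsilon^{-1}$ already present in the base case.
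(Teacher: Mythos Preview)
Your proposal is correct and follows the same approach as the paper: induct on $|\beta|$ with the $|\beta|=0$ case supplied by Lemma~\ref{bootstrapping:x}, pair the differentiated equation against $\langle v\rangle^{-2|\beta|}\partial_\beta^\alpha f$, invoke~\eqref{GS:Lemma:9a} for coercivity, and control the transport commutator $\partial_x\partial_{\beta-e_1}^\alpha f$ via the inductive hypothesis (the paper bounds it by $\|\partial_\beta^\alpha f\|_{\mathcal{H}^1_{|\beta|}}\|\partial_{\beta-e_1}^{\alpha+1} f\|_{\mathcal{H}^1_{|\beta|-1}}$ exactly as you do). One minor point: your split $\partial_\beta^\alpha(Lf)=L\partial_\beta^\alpha f+[\partial_\beta^\alpha,L]f$ is redundant, since~\eqref{GS:Lemma:9a} is stated for $\langle {\rm w}^2\partial_\beta[Lg],\partial_\beta g\rangle$ and already absorbs the commutator---the paper simply keeps $\partial_\beta^\alpha(Lf)$ intact on the left-hand side and applies the lemma directly.
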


\begin{remark}
Technically, you can pass derivatives in $v_2$ and $v_3$ through the term $v_1 \p_x f$, so it should be possible to obtain better localization in $v_2$ and $v_3$. For convenience, we use the isotropic weights in $v$.
\end{remark}

\begin{proof}
We begin by assuming that $f$ qualitatively belongs to the appropriate spaces; that is, the left-hand side of~\eqref{eq:bootstrappingvderiv} is assumed to be finite. We mention how to justify the estimates afterward.

Let $N\geq 2$.  Notice that when $|\beta|=0$, we have a satisfactory bound for $\partial^\alpha f$ for all $0 \leq \alpha \leq N$ from Lemma~\ref{bootstrapping:x}.  We therefore treat all $\alpha + \abs{\beta} \leq N $ by inducting on $|\beta|$; for the moment we assume additionally that $\beta \cdot e_1 \geq 1$, which is the hardest case.  We apply $\p^\alpha_\beta$ to the equations, multiply by ${\rm w}(|\beta|,0,0)(v) = \langle v \rangle^{-|\beta|}$, pair with ${\rm w} \p^\alpha_\beta f$, and integrate by parts in $x$ in the $(v_1-s_0)\p_x $ term, obtaining that
\begin{align*}
&\brak{ {\rm w}^2 \partial_\beta^\alpha f, \partial_x \partial_{\beta - e_1}^\alpha f} + \brak{{\rm w}^2 \partial_{\beta}^\alpha f, \partial_{\beta}^\alpha L f} \\
& \, =  \brak{{\rm w}^2 \partial_{\beta}^\alpha f, \partial_{\beta}^\alpha \Gamma [\mu_0^{-\sfrac 12}(\mu_{\rm NS} - \mu_0),f]} + \brak{\partial_{\beta}^\alpha \Gamma [\mu_0^{-\sfrac 12}(\mu_{\rm NS} - \mu_0),f],{\rm w}^2 \partial_{\beta}^\alpha f} + \brak{{\rm w}^2 \partial_{\beta}^\alpha f, \partial_{\beta}^\alpha z } \, . 
\end{align*}
Applying Lemma~\ref{lem:propertiesofL} and recalling~\eqref{eq:sigma:2}, we then have that
\begin{align*}
\| \p_\beta^\alpha f \|_{\mathcal{H}^1_\ell}^2 &\leq \eta \sum_{|\beta_1|=|\beta|} \| \p_{\beta_1}^\alpha f \|_{\mathcal{H}^1_\ell}^2  + C(\eta) \sum_{|\beta_1|<|\beta|} \| \p_{\beta_1}^\alpha f \|_{\mathcal{H}^1_{\ell-(|\beta|-|\beta_1|)}}^2  \\ 
&\qquad +  \brak{{\rm w}^2 \partial_{\beta}^\alpha f, \partial_{\beta}^\alpha \Gamma [\mu_0^{-\sfrac 12}(\mu_{\rm NS} - \mu_0),f]} + \brak{\partial_{\beta}^\alpha \Gamma [\mu_0^{-\sfrac 12}(\mu_{\rm NS} - \mu_0),f],{\rm w}^2 \partial_{\beta}^\alpha f}\\
&\qquad  + \brak{{\rm w}^2 \partial_{\beta}^\alpha f, \partial_{\beta}^\alpha z } - \brak{ {\rm w}^2 \partial_\beta^\alpha f, \partial_{x} \partial_{\beta - e_1}^\alpha f}  \, .
\end{align*}
The transport term is the reason for the negative polynomial weights (see~\cite{G02}), and may be estimated using Lemma~\ref{l:norm:control} and~\eqref{eq:sigma:2} by
\begin{align*}
\brak{{\rm w}^2 \partial_\beta^\alpha f, \partial^{x} \partial_{\beta - e_1}^\alpha f} \lesssim \norm{\partial_\beta^\alpha f}_{\mathcal{H}^1_\ell} \norm{\partial^{\alpha+1}_{\beta-e_1} f}_{\mathcal{H}^1_{\ell-1}} \, .
\end{align*}
The nonlinear term can be estimated using Lemmas~\ref{lem:lemma10:new} and~\ref{lem:mugests}. Repeating the above argument in the case $\beta\cdot e_1=0$, for which the transport term vanishes, produces an identical estimate.  Then inducting on $\abs{\beta}$ and using that the base case (with $|\beta|=0$ and $N$ fixed) is covered by Proposition~\ref{pro:basicexistence}, we ultimately obtain the desired velocity-weighted estimate.

We now mention how to prove the \emph{qualitative} assumptions which justify the energy estimates. We employ a variation on the strategy in Remark~\ref{rmk:technicalbootstrappingremark}. Suppose that $f, \p_x f \in L^2 \mathcal{H}^1$ and $z \in \mathbb{Y}^1_\varepsilon$, and we wish to demonstrate that $\p_\beta f \in L^2 \mathcal{H}^1_1$ for $|\beta| = 1$. We write
\begin{equation}
	\label{eq:fulllefthandsideinvertible}
\underbrace{(v_1 - s_0) \p_x f - \tilde{A} f}_{=: Tf} + \Gamma [\mu_0^{-\sfrac 12}  \mu_{\rm NS}, f] + \Gamma [ f, \mu_0^{-\sfrac 12} \mu_{\rm NS}] = \chi_R A_2f + Kf + z
\end{equation}
where $\tilde{A} = A_1 + (1- \chi_R) A_2$. The operator $T$ on the left-hand side maps $\mathbb{X}^1_\varepsilon$ into $\mathbb{Y}^1_\varepsilon$ and is invertible, with $\| T^{-1} \|_{\mathbb{Y}^1_\varepsilon \to \mathbb{X}^1_\varepsilon} \lesssim 1$; the $\Gamma$ terms map $\mathbb{X}^1_\varepsilon$ into $\mathbb{Y}^1_\varepsilon$ and have operator norm $\lesssim \varepsilon$. Therefore, the full left-hand side of~\eqref{eq:fulllefthandsideinvertible} is invertible, and the right-hand side belongs to $\mathbb{Y}^1_\varepsilon$, so that $f$ qualitatively belongs to $\mathbb{X}^1_\varepsilon$. This strategy bootstraps by 1 in the upper index of the $\mathbb{X}$ spaces each iteration.
\end{proof}

\subsubsection{Strong velocity weights}

\begin{lemma}\label{lem:bs:1}
Let $N \geq 2$, and suppose that $z \in \mathbb{Y}^N_{\varepsilon,\rm w}$. If $f \in H^2 \mathcal{H}^1_{-10}$ is the unique solution to~\eqref{eq:NLSL} with $\ell_\varepsilon(Pf(0)) = 0$ as in Proposition~\ref{pro:basicexistence}, then
\begin{equation}
	\label{eq:bootstrappingvderiv}
\| f \|_{\mathbb{X}^N_{\varepsilon, \rm w}} \les \varepsilon^{-1} \| z \|_{\mathbb{Y}^N_{\varepsilon, \rm w}}  \, .
\end{equation}
\end{lemma}

\begin{proof}
First, we estimate $x$-derivatives.  We apply $\partial^\alpha$ to the equation, multiply by $\rm w$, and pair with ${\rm w} \partial^\alpha f$, obtaining that
\begin{align*}
\left \langle {\rm w}^2 \p^\alpha f , L(\p^\alpha f) \right \rangle &= \left \langle {\rm w}^2 \p^\alpha z , \p^\alpha f \right \rangle \\
&\qquad + \left \langle {\rm w}^2 \p^\alpha \Gamma [(\mu_{\rm NS} - \mu_0) \mu_0^{-\sfrac 12}, f] , \p^\alpha f \right \rangle + \left \langle \p^\alpha f , {\rm w}^2 \p^\alpha \Gamma [(\mu_{\rm NS} - \mu_0) \mu_0^{-\sfrac 12}, f] \right \rangle \, . 
\end{align*}
Applying~\eqref{GS:Lemma:9b} from Lemma~\ref{lem:propertiesofL}, we find that
\begin{align}
\| \p^\alpha f \|_{\mathcal{H}^1_{0, \rm w}}^2 &\les \| \p^\alpha f \|_{\mathcal{H}^1}^2 + \left \langle {\rm w}^2 \p^\alpha z , \p^\alpha f \right \rangle \notag\\
&\qquad + \left \langle {\rm w}^2 \p^\alpha \Gamma [(\mu_{\rm NS} - \mu_0) \mu_0^{-\sfrac 12}, f] , \p^\alpha f \right \rangle + \left \langle \p^\alpha f,  {\rm w}^2 \p^\alpha \Gamma [(\mu_{\rm NS} - \mu_0) \mu_0^{-\sfrac 12}, f] \right \rangle \, . \label{coming:back}
\end{align}
In order to estimate the second term on the right-hand side, we claim for the moment that $\| {\rm w}\p^\alpha f \|_{\mathcal{H}^1_\ell} \les \| \p^\alpha f \|_{\mathcal{H}^1_{\ell,  \rm w}}$, in which case this term may be bounded by an implicit constant multiplied by $\| z \|_{\mathcal{H}^{-1}_{0, \rm w}} \| f \|_{\mathcal{H}^1_{0, \rm w}}$.  In order to prove the claim, we compute:
\begin{align*}
\| {\rm w} \p^\alpha f \|_{\mathcal{H}^1_\ell}^2  &= \int_{\mathbb{R}^3} \sigma^{ij} {\rm w}^2 \bigg{[} v_i v_j  (\p^\alpha f)^2 \left( 1+ \frac{q^2}{4} \right) \\
&\qquad  + \p_i \p^\alpha f \p_j \p^\alpha f + \p_i \p^\alpha f \p^\alpha f \frac{q v_j}{2} + \p_j \p^\alpha f \p^\alpha f \frac{q v_i}{2} \bigg{]} \, \dee v \, .
\end{align*}
The first two terms (those for which no derivatives have landed on $\p^\alpha f$, or all derivatives have landed on $\p^\alpha f$) are easily controlled directly by $\| \p^\alpha f \|_{\mathcal{H}^1_{\ell, \rm w}}$.  To control the cross terms, we have that
\begin{align*}
\int_{\mathbb{R}^3} \sigma^{ij} {\rm w}^2  \p_j \p^\alpha f \p^\alpha f \frac{q v_i}{2} \, \dee v  \les \left( \int_{\mathbb{R}^3} \left[ \sigma^{ij} v_j \langle v \rangle^{\sfrac 12} \p_i \p^\alpha f {\rm w} \right]^2 \dee v \right)^{\sfrac 12} \left( \int_{\mathbb{R}^3} \left[ \langle v \rangle^{-\sfrac 12} \p^\alpha f {\rm w} \right]^2 \, \dee v \right)^{\sfrac 12} \, .
\end{align*}
The second term in the product above can be controlled using $\| f \|_{\mathcal{H}^1_{\ell, \rm w}}$.  We split the first term using $\p_i \p^\alpha f = \frac{v_i v_k}{|v|^2} \p_k \p^\alpha f + \left( \delta_{ik} - \frac{v_i v_k}{|v|^2} \right) \p_k \p^\alpha f$.  For the portion coming from the first term, we use \cite[Lemma 3]{G02}, which asserts that $\sigma^{ij} v_i v_j \sim |v|^{-1}$, and Lemma~\ref{l:norm:control}, to bound
\begin{align*}
\int_{\mathbb{R}^3} \left[ \sigma^{ij} v_j \langle v \rangle^{\sfrac 12} \frac{v_i v_k}{|v|^2} \p_k \p^\alpha f {\rm w} \right]^2 \dee v \les \int_{\mathbb{R}^3} \left[ \langle v \rangle^{-\sfrac 32} \p_k \p^\alpha f {\rm w} \right]^2 \dee v \les \| f \|_{\mathcal{H}^1_{\ell \rm w}} \, .
\end{align*}
For the portion coming from the second term, we again use \cite[Lemma 3]{G02}, which asserts that $|\sigma^{ij} v_j| \les \langle v \rangle^{-1}$, and Lemma~\ref{l:norm:control}, to bound 
\begin{align*}
\int_{\mathbb{R}^3} \left[ \sigma^{ij} v_j \langle v \rangle^{\sfrac 12} \left( \delta_{ik} - \frac{v_i v_k}{|v|^2} \right) \p_k \p^\alpha f {\rm w} \right]^2 \dee v \les \int_{\mathbb{R}^3} \left[ \langle v \rangle^{-\sfrac 12} \left( I - P_v \right) \p_k \p^\alpha f {\rm w} \right]^2 \dee v \les \| f \|_{\mathcal{H}^1_{\ell, \rm w}} \, .
\end{align*}
Finally, coming back to~\eqref{coming:back}, we apply Lemma~\ref{bootstrapping:x} and Corollary~\ref{weddy:night} to the first term, Lemma~\ref{lem:lemma10:new} to the second term, and the above computations to the third term, deducing that
\begin{align*}
\| \p^\alpha f \|_{\mathcal{H}^1_{0, \rm w}}^2 \les \eps^{-2} \| \p^\alpha z \|_{\mathcal{H}^{-1}_{0, \rm w}}^2 + \sum_{\alpha'<\alpha} \| \p^{\alpha'} f \|_{\mathcal{H}^1_{0, \rm w}}^2 \, .
\end{align*}

Now that we have bounds for all $x$-derivatives, we repeat the exact same proof as in the previous lemma; the presence of the Gaussian weights changes the definition of $\rm w$ but no other aspect of the remainder of the proof.
\end{proof}

\subsubsection{Strong spatial weights}

In this section, we obtain the stretched exponential decay estimates in $x$. 


\begin{lemma}\label{lem:bs:2}
Let $N \geq 2$ and $0 < \delta \ll 1$. Suppose that $z \in \mathbb{Y}^N_{\varepsilon, \rm w}$ and $e^{\delta \langle \varepsilon x \rangle^{\sfrac{1}{2}}} z \in \mathbb{Y}^N_\varepsilon$. If $f \in H^2 \mathcal{H}^1_{-10}$ is the above unique solution, then~\eqref{x:weighted} holds; that is,
\begin{equation}\notag
\left\| e^{\delta \langle \varepsilon x \rangle^{\sfrac{1}{2}}} f \right\|_{\mathbb{X}^N_\varepsilon} \lesssim \varepsilon^{-1} \left( \| z \|_{\mathbb{Y}^N_{\varepsilon, \rm w}} + \| e^{\delta \langle \varepsilon x \rangle^{\sfrac{1}{2}}} z \|_{\mathbb{Y}^N_\varepsilon} \right) \, .
\end{equation}
\end{lemma}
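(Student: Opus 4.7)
Let $W(x) := e^{\delta \langle \varepsilon x \rangle^{\sfrac{1}{2}}}$ and note the pointwise bound
\begin{equation}\notag
\bigl|\partial_x W(x)\bigr| \leq C\,\delta\,\varepsilon\,\langle \varepsilon x \rangle^{-\sfrac{1}{2}} W(x) \leq C\delta\varepsilon\,W(x) \, ,
\end{equation}
so $W$ ``commutes up to $O(\delta\varepsilon)$'' with the transport operator $(v_1-s_0)\partial_x$. The plan is to revisit the proofs of Proposition~\ref{pro:basicexistence}, Lemma~\ref{bootstrapping:x}, and Lemma~\ref{lemma:boot:v}, but with $W^2$ inserted into each energy pairing. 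Since the commutator contributes a factor of $\delta\varepsilon$, which is a full power of $\varepsilon$ better than the microscopic coercivity (recall~\eqref{apriori:est} loses $\varepsilon^{-1}$), it should be possible to close provided the unweighted $|v_1-s_0|$ factor produced by the commutator can be dominated. This will be done by splitting at the parabolic threshold $\langle v \rangle \sim A\langle \varepsilon x \rangle^{\sfrac{1}{2}}$ and trading into the Gaussian weighted bound~\eqref{v:weighted} that is now available from the previous lemma.

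For the base case $N=0$, I would pair~\eqref{eq:NLSL} with $W^2 f$, repeat the three-step energy argument of Lemma~\ref{lem:basicmicroest} (basic coercivity, twisted estimate via the Kawashima compensator, $\partial_x$-differentiated estimate), and handle the macroscopic part by redoing the linearized Chapman--Enskog derivation of subsubsection~\ref{sss:macro} with the weight $W^2$ inserted, invoking a weighted upgrade of Proposition~\ref{pro:macroscopicestimate} of the form $\|WU\|_{L^2}\lesssim \varepsilon^{-1}\|WG\|_{L^2}+|d|$. The only genuinely new term in the energy is the transport commutator
\begin{equation}\notag
-\tfrac12 \iint (v_1-s_0)\,\partial_x(W^2)\, f^2 \,\dee v\,\dee x \, , \qquad \left| \cdot \right| \leq C\delta\varepsilon \iint \langle v \rangle \langle \varepsilon x \rangle^{-\sfrac{1}{2}} W^2 f^2 \, \dee v \, \dee x \, .
\end{equation}
Splitting the integrand, on the region $\{\langle v \rangle \leq A\langle \varepsilon x \rangle^{\sfrac{1}{2}}\}$ one has $\langle v \rangle \langle \varepsilon x \rangle^{-\sfrac{1}{2}} \leq A$ pointwise, so this portion is bounded by $C A \delta\varepsilon\|Wf\|_{L^2}^2$, which is absorbed into the coercivity of $L$ plus the weighted macroscopic estimate when $A\delta$ is taken small. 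On the region $\{\langle v \rangle > A\langle \varepsilon x \rangle^{\sfrac{1}{2}}\}$ one instead has $W(x)^2 \leq \exp(2\delta\langle v \rangle/A)$; choosing $A$ so that $2\delta/A$ is well below the Gaussian rate $q_0$ built into ${\rm w}(0,q_0)$, this region is dominated by the Gaussian-weighted quantity $\|{\rm w}(0,q_0)f\|_{L^2}^2$ already controlled by Proposition~\ref{p:boots}~\eqref{v:weighted}, and thus by $\varepsilon^{-2}\|z\|_{\mathbb{Y}^0_{\varepsilon,\rm w}}^2$.

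The induction on $N$ follows the template of Lemmas~\ref{bootstrapping:x} and~\ref{lemma:boot:v}: differentiate~\eqref{eq:NLSL} in $x$ and $v$, pair against $W^2 \partial_\beta^\alpha f$, apply Lemma~\ref{lem:lemma10:new} and Lemma~\ref{lem:mugests} to dispose of the commutators with $\mu_{\rm NS}$ (each spatial derivative of $\mu_{\rm NS}$ buys a factor $\varepsilon$), and dispose of the transport commutator $\partial_x(W^2)$ exactly as in the base case via the parabolic splitting $\langle v \rangle \lessgtr A \langle \varepsilon x \rangle^{\sfrac{1}{2}}$, feeding the large-$v$ piece into~\eqref{v:weighted}. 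I expect the main obstacle to be the weighted macroscopic estimate. The stable/unstable manifold analysis underlying Proposition~\ref{pro:macroscopicestimate} in Appendix~\ref{sec:appendixode} shows that solutions of the homogeneous linearized macroscopic ODE decay like $e^{-c\varepsilon|x|}$ at $\pm\infty$ (the eigenvalues of the endpoint matrices being $O(\varepsilon)$); since $\delta\langle \varepsilon x \rangle^{\sfrac{1}{2}} \ll \varepsilon|x|$ for large $|x|$, the weight $W$ is far weaker than this natural decay and can in principle be absorbed, but the delicate point is to propagate this uniformly through the center-manifold reduction and the one-dimensional constraint $\ell_\varepsilon(U(0))=d$, making sure the constants in~\eqref{eq:Ufromlinearmacroestimate} remain $\delta$-independent.
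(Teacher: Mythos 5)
Your high-level strategy — treat the spatial weight as a controlled perturbation of the transport term and split the commutator at a parabolic $(v,x)$ threshold, feeding the large-$v$ piece into the Gaussian-weighted estimate~\eqref{v:weighted} — is sound, but you miss the structural observation that makes the paper's proof close cleanly, and the gap you yourself flag (the weighted macroscopic estimate) is in fact real and not easy to paper over.

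The paper does not insert $W^2$ into the energy pairings and redo the micro-macro loop. Instead it conjugates the \emph{equation} by $e^{\omega_\varepsilon}$, where $\omega(x) = \delta(2\langle x\rangle^{1/2-\kappa} - \langle x\rangle^{1/2})$ is a $\kappa$-regularized weight (bounded above for $\kappa>0$, which justifies the computations before taking $\kappa\to 0^+$; your proposal does not address why $Wf$ is a priori in the space you want to estimate it in). The conjugated equation reads
\begin{equation}\notag
(v_1-s_0)\partial_x(e^{\omega_\varepsilon}f) + L_{\rm NS}(e^{\omega_\varepsilon}f) = e^{\omega_\varepsilon}z + (v_1-s_0)(\partial_x\omega_\varepsilon)e^{\omega_\varepsilon}f \, .
\end{equation}
The decisive point, which your proposal misses, is that the new forcing term is \emph{purely microscopic}: applying $P$ to the original equation gives $\partial_x[P(v_1-s_0)f]=0$, and since $P(v_1-s_0)f\in L^2$, it must vanish identically. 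Because $\partial_x\omega_\varepsilon$ and $e^{\omega_\varepsilon}$ are $x$-only multipliers commuting with $P$, the term $(v_1-s_0)(\partial_x\omega_\varepsilon)e^{\omega_\varepsilon}f$ inherits $P[\cdot]=0$. Consequently the conjugated equation is of exactly the form handled by the \emph{unweighted} Lemma~\ref{lemma:boot:v}, and the unweighted macroscopic estimate of Proposition~\ref{pro:macroscopicestimate} can be applied to $e^{\omega_\varepsilon}f$ as a black box. There is no weighted macroscopic estimate to prove — the ``main obstacle'' you identify does not arise at all in this formulation. Your proposal, by inserting the weight into the pairings rather than conjugating, would indeed require a weighted upgrade of Proposition~\ref{pro:macroscopicestimate} and a careful argument that the center-manifold propagators and the constraint $\ell_\varepsilon(U(0))=d$ tolerate the weight with $\delta$-independent constants; you correctly sense this is delicate, but you neither prove it nor notice that it can be avoided.

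Two smaller remarks. First, your splitting threshold is $\langle v\rangle\sim A\langle\varepsilon x\rangle^{1/2}$, which only yields $W^2\lesssim e^{2\delta\langle v\rangle/A}$ on the large-$v$ region — enough, since it is dominated by the Gaussian $e^{q_0\langle v\rangle^2/4}$, but the paper splits at the finer threshold $\langle v\rangle^2\sim\langle\varepsilon x\rangle^{1/2}$, which produces the sharper pointwise inequality $\langle\varepsilon x\rangle^{-1/2}e^{\delta\langle\varepsilon x\rangle^{1/2}}\lesssim\langle v\rangle^{-2}e^{q_0\langle v\rangle^2/4}$ and explains directly why the exponent $\sfrac12$ in the stretched exponential is the natural one (balancing the $\langle v\rangle^{1/2}$ moment loss in $\mathcal{H}^{-1}$ against the Gaussian moments). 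Second, on the small-$v$/large-$x$ region the paper exploits $\|\langle v\rangle^{-1/2}f\|_{L^2}\lesssim\|f\|_{\mathcal{H}^1}$ to absorb the commutator with a prefactor $\varepsilon\delta$, rather than relying on macroscopic coercivity — again because there is simply no macroscopic piece to worry about once one knows the forcing is microscopic.
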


\begin{proof}
We consider the equation satisfied by $e^{\omega_\varepsilon} f$, where
\begin{equation}\notag
\omega(x) = \delta \left( 2 \langle x \rangle^{\frac{1}{2}-\kappa} - \langle x \rangle^{\frac{1}{2}} \right) \, ,
\end{equation}
$\omega_\varepsilon(x) = \omega(\varepsilon x)$, and we suppress the dependence of the notation on $0 < \kappa \ll 1$. We have
\begin{equation}\notag
(v_1 - s_0)\p_x (e^{\omega_\varepsilon} f) + L_{\rm NS} (e^{\omega_\varepsilon} f) = e^{\omega_\varepsilon} z + (v_1 - s_0) (\p_x \omega_\varepsilon) e^{\omega_\varepsilon} f \, .
\end{equation}
Notably, $\omega_\varepsilon f$ satisfies the one-dimensional constraint at $x = 0$. Moreover, the right-hand side is purely microscopic, since applying $P$ to the original equation yields $\p_x [P (v_1-s_0) f] = 0$, giving that $P (v_1 - s_0) f \in L^2$ is constant in $x$, i.e., $P (v_1 - s_0) f = 0$. Therefore, Lemma~\ref{lemma:boot:v} gives that $\omega_\varepsilon f$ satisfies the unweighted estimates
\begin{equation}
	\label{eq:unweightedestimates}
 \| e^{\omega_\varepsilon} f \|_{\mathbb{X}^N_\varepsilon} \lesssim  \| e^{\omega_\varepsilon} z \|_{\mathbb{Y}^N_\varepsilon} + \| [ (v_1 - s_0) (\p_x \omega_\varepsilon) e^{\omega_\varepsilon} f]  \|_{_{\mathbb{Y}^N_\varepsilon}} \, .
\end{equation}
We wish to estimate the second term on the right-hand side uniformly as $\kappa \to 0^+$.

Consider the case $|\alpha| = |\beta| = 0$, which demonstrates the reason for the exponent $\sfrac{1}{2}$ in the stretched exponential weight. We have the property
\begin{equation}\notag
|\p_x \omega_\varepsilon(x)| = \varepsilon | (\p_x \omega)(\varepsilon x) | \lesssim \varepsilon \delta \langle \varepsilon x \rangle^{-\sfrac 12} \, .
\end{equation}
We therefore wish to estimate
\begin{equation}\notag
\varepsilon \delta \| \langle \varepsilon x \rangle^{-\sfrac 12} (v_1 - s_0) e^{\omega_\varepsilon} f \|_{L^2\mathcal{H}^{-1}} \les \varepsilon \delta \| \langle \varepsilon x \rangle^{-\sfrac 12} \langle v \rangle^{\sfrac 12} (v_1 - s_0) e^{\omega_\varepsilon} f \|_{L^2_{x,v}} \, ,
\end{equation}
where we have applied Lemma~\ref{l:norm:control} in order to control the dual norm.  We write $f = Pf + P^\perp f$. We clearly have
\begin{equation}\notag
\varepsilon \delta \| \langle \varepsilon x \rangle^{-\sfrac 12} \langle v \rangle^{\sfrac 12} (v_1 - s_0) e^{\omega_\varepsilon} Pf \|_{L^2_{x,v}} \lesssim \varepsilon \delta \| e^{\omega_\varepsilon} f \|_{L^2 \mathcal{H}^1} \, .
\end{equation}
The $P^\perp f$ term is more subtle. Recall that $\| \langle v \rangle^{-\sfrac 12} f \|_{L^2_v} \lesssim \| f \|_{\mathcal{H}^1}$. Hence, when $\langle \varepsilon x \rangle^{\sfrac 12} \geq \langle v \rangle^2$, we have
\begin{equation}\notag
\varepsilon \delta \| \mathbf{1}_{\{ \langle \varepsilon x \rangle^{\sfrac 12} \geq \langle v \rangle^2 \}} \langle \varepsilon x \rangle^{-\sfrac 12} \langle v \rangle^{\sfrac 12} (v_1 - s_0) e^{\omega_\varepsilon} P^\perp f \|_{L^2_{x,v}} \lesssim \varepsilon \delta \| e^{\omega_\varepsilon} f \|_{L^2 \mathcal{H}^1} \, .
\end{equation}
In the region $\langle \varepsilon x \rangle^{\sfrac 12} \leq \langle v \rangle^2$, we have $\langle \varepsilon x \rangle^{-\sfrac 12} e^{\delta \langle \varepsilon x \rangle^{\sfrac{1}{2}}} \lesssim \langle v \rangle^{-2} e^{q_0 \langle v \rangle^2/4}$, if $\delta$ is chosen sufficiently small. Therefore,
\begin{equation}\notag
\varepsilon \delta \| \mathbf{1}_{\{ \langle \varepsilon x \rangle^{\sfrac 12} \leq \langle v \rangle^2 \}} \langle \varepsilon x \rangle^{-\sfrac 12} \langle v \rangle^{\sfrac 12} (v_1 - s_0) e^{\omega_\varepsilon} P^\perp f \|_{L^2} \lesssim \varepsilon \delta \| f \|_{L^2 \mathcal{H}^1_{\rm w}} \, .
\end{equation}

We now consider the general case and estimate
\begin{equation}
	\label{eq:thingtobeboundedrightnow}
\varepsilon^\alpha \| \p_x^\alpha (\p_x \omega_\varepsilon) \langle v \rangle^{\sfrac 12} \langle v \rangle^{-|\beta|} \p_\beta [(v_1 - s_0) e^{\omega_\varepsilon} f] \|_{L^2}
\end{equation}
with $|\alpha| + |\beta| \leq N$. Since
\begin{equation}\notag
|\p_x^k \omega_\varepsilon| \lesssim_k \varepsilon^k \delta ( \langle \varepsilon x \rangle^{\sfrac 12-\kappa} ) \, ,
\end{equation}
\eqref{eq:thingtobeboundedrightnow} is bounded by
\begin{equation}\notag
C \sum_{\alpha'=0}^\alpha \varepsilon^{\alpha'+1} \delta \| \langle \varepsilon x \rangle^{-\sfrac 12} \langle v \rangle^{\sfrac 32-|\beta|} e^{\omega_\varepsilon} (|\p^{\alpha'}_{\beta-e_1} f| + |\p^{\alpha'}_\beta f|) \|_{L^2} \, .
\end{equation}
By the same splitting procedure as above, this term is bounded by
\begin{equation}\notag
C \varepsilon \delta \left[ \sum_{|\beta'|\leq |\beta|} \| e^{\omega_\varepsilon} f \|_{H^{|\alpha|}_\varepsilon \mathcal{H}^{1}_{|\beta'|}} + \sum_{|\beta'|\leq |\beta|} \| f \|_{L^2 \mathcal{H}^1_{|\beta|,w}} \right] \, .
\end{equation}
If we suppose that $\varepsilon \leq 1$ and choose $0 < \delta \ll 1$, then the $e^{\omega_\varepsilon f}$ terms can be absorbed into the left-hand side of~\eqref{eq:unweightedestimates}. Finally, we send $\kappa \to 0^+$. \end{proof}

\section{Linear existence}\label{sekk:linear}


Following~\cite{MZ08}, we prove existence by Galerkin approximation. However, as discussed in subsubsection~\ref{sec:existencediscussion}, the Galerkin approximation is most natural in the space $H^2 \mathcal{H}^1$, which is not evidently strong enough to close the \emph{a priori} estimates in the Landau setting, where we track $10$ additional moments (see~\eqref{eq:doesn't:close}). Therefore, the entirety of this section treats the regularized collision operator $Q_\kappa$ with $0 < \kappa \ll 1$. 

The initial goal in this section is to prove existence for the $\kappa$-regularized linearized equation:\index{$\kappa$}\index{$Q_\kappa$}
\begin{proposition}
\label{pro:basicexistenceforkappareg}
Let $0 < \kappa \ll 1$ and $0 < \varepsilon \ll_\kappa 1$. Let $z \in H^2 \mathcal{H}^{-1}_\kappa$ be purely microscopic and $d \in \R$. Then there exists a unique solution $f \in H^2 \mathcal{H}^1_\kappa$ to the $\kappa$-regularized linearized equation
\begin{equation}\notag
(v_1 - s_0) \p_x f + L_{\rm NS}^\kappa f = z
\end{equation}
with one-dimensional constraint
\begin{equation}\notag
\ell_\varepsilon({Pf(0)}) = d \, .
\end{equation}
\end{proposition}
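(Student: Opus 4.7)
The plan is to prove existence by a Galerkin approximation in $v$, exactly of the sort sketched in subsubsection~\ref{sec:existencediscussion}. Uniqueness is immediate: the difference of two solutions satisfies the hypotheses of Proposition~\ref{pro:basicexistence} (note that $\mathcal{H}^1_\kappa \hookrightarrow \mathcal{H}^1_{-10,\kappa}$ for $\kappa > 0$, so solutions in $H^2 \mathcal{H}^1_\kappa$ are in particular solutions in $H^2 \mathcal{H}^1_{-10,\kappa}$) with $z = 0$ and $d = 0$, whence $f \equiv 0$. Thus only existence needs to be addressed.

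First I would choose a nested sequence of finite-dimensional subspaces $V_1 \subset V_2 \subset \cdots$ of $\mathcal{H}^1_\kappa$ whose union is dense in $\mathcal{H}^1_\kappa$, consisting of smooth, rapidly decaying functions (e.g.\ weighted Hermite functions), arranged so that $\ker L \subset V_n$ for all $n$ large. Let $\Pi_n$ denote the $L^2_v$-orthogonal projection onto $V_n$. At level $n$, I seek $f_n : \mathbb{R}_x \to V_n$ solving the projected equation
\begin{equation}\notag
\Pi_n\bigl[(v_1 - s_0)\partial_x f_n + L^\kappa_{\rm NS} f_n - z\bigr] = 0, \qquad \ell_\varepsilon(P f_n(0)) = d \, .
\end{equation}
Since $(v_1-s_0)|_{V_n}$ is a symmetric but generally \emph{not} invertible matrix on $V_n$, this is a degenerate linear ODE; it is not immediately solvable by standard ODE theory. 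The remedy, mirroring the continuous linearized Chapman--Enskog expansion of subsubsection~\ref{sss:macro}, is to decompose $f_n = P f_n + P^\perp f_n$ (note $P V_n \subset V_n$ since $\ker L \subset V_n$) and use the coercivity of $L^\kappa$ on the microscopic sector $\mathbb{V}\cap V_n$ (from~\eqref{eq:coercivityestimate}) to solve algebraically for $P^\perp f_n$ in terms of $\partial_x P f_n$ and the data, reducing the system to a non-degenerate ODE on $\R \times \mathbb{U}$ of the same type as~\eqref{becomes}. The analysis in Appendix~\ref{sec:appendixode} then furnishes a unique solution to this reduced system satisfying the one-dimensional constraint $\ell_\varepsilon(Pf_n(0)) = d$ and having the requisite decay at $\pm \infty$.

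Next I would establish uniform-in-$n$ a priori estimates by repeating the energy estimates of Lemma~\ref{lem:basicmicroest}, Lemma~\ref{lem:bootstrappingonedegreeofreg}, and the a priori portion of Proposition~\ref{pro:basicexistence} at the Galerkin level. The key point, and the reason for introducing the regularization, is that the coercivity estimate $|P^\perp g|^2_{\sigma_\kappa} \les \langle L^\kappa g, g\rangle$ in~\eqref{eq:coercivityestimate} does not rely on any moment gain from $L$, so $\mathcal{H}^1_\kappa$ is a functional setting compatible with Galerkin truncation; the test function $\Pi_n f_n = f_n$ lies in $V_n$ and every step of the energy estimate closes on the dissipation norm $\|P^\perp f_n\|_{L^2 \mathcal{H}^1_\kappa}$. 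The linearized Chapman--Enskog closure between macro and micro at the Galerkin level yields
\begin{equation}\notag
\| f_n \|_{H^2_\varepsilon \mathcal{H}^1_\kappa} \les \varepsilon^{-1} \| z \|_{H^2_\varepsilon \mathcal{H}^{-1}_\kappa} + |d| \,,
\end{equation}
for $\varepsilon$ sufficiently small (with the smallness allowed to depend on $\kappa$, as recorded in the hypothesis $0 < \varepsilon \ll_\kappa 1$).

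Finally, I would extract a weak limit $f \in H^2_\varepsilon \mathcal{H}^1_\kappa$ via Banach--Alaoglu, and upgrade to strong convergence on $x$-compact sets by a Rellich argument (using the gain of an $x$-derivative inherent in the Kawashima compensator estimate, cf.~Lemma~\ref{lem:kawa:comp}). Strong local convergence in $x$ together with the density $\bigcup_n V_n$ in $\mathcal{H}^1_\kappa$ allows one to pass to the limit in the weak formulation against arbitrary test functions, and to pass to the limit in the pointwise trace $\ell_\varepsilon(Pf_n(0)) = d$. The main obstacle throughout is the degeneracy of the Galerkin system: one cannot simply view~\eqref{def:LinCE:reg:conj} as a Cauchy problem in $x$ because the transport coefficient $(v_1-s_0)$ is indefinite and its projection has a (spurious) kernel on each $V_n$. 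Disentangling the well-posed macroscopic ODE from the algebraically-determined microscopic part, uniformly in $n$ and compatibly with the selector $\ell_\varepsilon$, is the technical heart of the argument and is exactly where the regularization $\kappa > 0$ is indispensable.
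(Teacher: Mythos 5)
There is a genuine gap in your construction of the Galerkin approximants $f_n$. You propose to handle the degeneracy of $\Pi_n(v_1-s_0)\Pi_n$ by solving \emph{algebraically} for $P^\perp f_n$ in terms of $\p_x P f_n$ and the data, thereby reducing the truncated system to a nondegenerate ODE on $\mathbb{U}$. This step fails: applying the microscopic projection to the truncated equation yields
\begin{equation}\notag
\Pi_n^{\rm mic} L_{\rm NS}^\kappa P^\perp f_n \;=\; -\,\Pi_n^{\rm mic}\bigl[(v_1-s_0)\p_x P f_n\bigr]\;-\;\Pi_n^{\rm mic}\bigl[(v_1-s_0)\p_x P^\perp f_n\bigr]\;+\;\Pi_n^{\rm mic} z \, ,
\end{equation}
and the middle term contains $\p_x P^\perp f_n$ itself. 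Inverting $L^\kappa_{\rm NS}$ on the microscopic sector therefore produces only an \emph{implicit} relation expressing $P^\perp f_n$ in terms of its own $x$-derivative; iterating it generates the (divergent) Chapman--Enskog series, not a closed formula. This implicit relation is precisely what drives the \emph{a priori} macro-micro closure in subsubsection~\ref{sss:macro}, where $\p_x V$ is controlled by the independently-derived microscopic energy estimates, but it does not allow one to \emph{eliminate} the microscopic unknown and land on a five-dimensional ODE. (A smaller issue: your embedding $\mathcal{H}^1_\kappa \hookrightarrow \mathcal{H}^1_{-10,\kappa}$ is backwards --- negative indices mean \emph{more} $v$-decay --- but uniqueness is still rescued by the bootstrap in Lemma~\ref{lem:basicmicroest}, which promotes an $L^2\mathcal{H}^1_\kappa$ solution to $H^1\mathcal{H}^1_{-10,\kappa}$.)

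The missing device is the elliptic regularization $-\eta\,\p_x^2 f$ added to the truncated equation in~\eqref{eq:approximatesystem}. That term is what makes the construction go through: dividing by $\eta$ (rather than by the possibly singular matrix $\Pi_{\leq N}(v_1-s_0)\Pi_{\leq N}$) lets one rewrite the truncated system as a nondegenerate first-order ODE in $\mathcal{U}=(PF,\,P^\perp F,\,\p_x P^\perp F)$, and it is also essential for the dimension count: Lemma~\ref{lem:hyperbolicity} gives hyperbolicity of the endstate matrices $\mathbb{A}_{L,R}$ for \emph{every} $\eta>0$, so Lemma~\ref{lem:largeetaholyshit} can deform through $\eta\gg 1$ to read off $\dim E^u(\mathbb{A}_L)+\dim E^s(\mathbb{A}_R)=5+2r+1$ and then invoke the abstract solvability Lemma~\ref{lem:solvabilityoflinearODEs}. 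After that, one takes $N\to\infty$ and $\eta\to 0^+$. Note also that you conflate the roles of $\kappa$ and $\eta$: the collision regularization $\kappa>0$ is not what addresses the ODE degeneracy you identify at the end. Its purpose is the mapping bound $\|(v_1-s_0)f\|_{\mathcal{H}^{-1}_\kappa}\lesssim_\kappa\|f\|_{\mathcal{H}^1_\kappa}$ needed to close the Chapman--Enskog loop in $H^2\mathcal{H}^1_\kappa$ (see the discussion around~\eqref{eq:doesn't:close}); the degeneracy of the truncated transport in $x$ is handled by $\eta$.
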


\emph{A priori} the smallness condition on $\varepsilon$ may depend on the regularization parameter $\kappa$, but we remove this restriction in Section~\ref{sec:methodofcontinuity}. Then we take $\kappa \to 0^+$ to complete the proof of Proposition~\ref{lem:MainLinear}.

\subsection{Quantitative Galerkin approximation}

Recall the \emph{Hermite functions}
\begin{equation}
	\label{eq:Hermitefns}
H_\alpha := c_\alpha (-1)^{|\alpha|} \p^\alpha e^{-\frac{|v|^2}{4}} \, ,
\end{equation}
indexed by the multi-index $\alpha \in \N_{\geq 0}^3$, where $c_\alpha > 0$ ensures $L^2$-normalization. The Hermite functions form an orthonormal basis of $L^2$: Consider the operator $T := - \Delta_v + |v|^2/4$ on $D(T) := H^2 \cap \{ f \in L^2 : |v|^2 f \in L^2 \} \subset L^2(\R^3)$. It is well known that, in the above functional framework, $T$ is closed, densely defined, and self-adjoint, and it has compact resolvent. It has an associated $L^2$ orthonormal basis $(H_\alpha)$ of eigenfunctions with corresponding eigenvalues $\lambda_\alpha := (3 + 2|\alpha|)/2$. See, for example,~\cite[p. 119]{ZworskiBook}. Let $\Pi_{\leq N}$ be the $L^2$-orthogonal projection onto ${\rm span} \, \{ H_\alpha : 0 \leq |\alpha| \leq N \}$ and $\Pi_{> N} = I - \Pi_{\leq N}$. Notably, the image of $\Pi_{\leq 3}$ contains $M_9$. These projections will be applied in the conjugated variables, which is the reason for the particular choice of Gaussian in~\eqref{eq:Hermitefns}. The projections are well adapted to the linearized operator at $\mu_0$.

We consider the approximate system
\begin{equation}
	\label{eq:approximatesystem}
-\eta \p_x^2 f + \Pi_{\leq N} (v_1-s_0) \Pi_{\leq N} \p_x f + \Pi_{\leq N} L_{\rm NS}^\kappa \Pi_{\leq N} f = \Pi_{\leq N} z \, ,
\end{equation}
with one-dimensional phase condition
\begin{equation}
	\label{eq:phaseconditionapp}
\ell_\varepsilon(f(0)) = d \, .
\end{equation}
This is the Galerkin truncation of~\eqref{def:LinCE:reg:conj} (or equivalently the $\kappa$-regularization of~\eqref{eq:NLSL}) with an additional elliptic term $-\eta \p_x^2 f$. Let us give names to the truncated linear operators:
\begin{equation}\notag
A_N := -\eta \p_x^2 f + \Pi_{\leq N} (v_1-s_0) \Pi_{\leq N} \, , \quad  L_{{\rm NS},N}^\kappa  := \Pi_{\leq N}  L_{\rm NS}^\kappa  \Pi_{\leq N} \, .
\end{equation}
Additionally, the right-hand side is $z_N := \Pi_{\leq N} z$. In what follows, we suppose $z \in H^2 \mathcal{H}_{{\kappa}}^{-1}$ is purely microscopic.

We begin with the \emph{a priori} estimates for the Galerkin-truncated system.
\begin{proposition}
	\label{pro:aprioriapproximatestimates}
Let $0 < \kappa \ll 1$, $0 < \varepsilon \ll_\kappa 1$, $N \geq 10$, and $\eta \geq 0$. Suppose that $f \in H^1 \mathcal{H}^1$ is a solution of the approximate system~\eqref{eq:approximatesystem}-\eqref{eq:phaseconditionapp} with $f = \Pi_{\leq N} f$. Then $f \in H^2 \mathcal{H}^1_\kappa$ and
\begin{equation}\notag
\| f \|_{H^2_\varepsilon \mathcal{H}^1_\kappa} \lesssim_\kappa \varepsilon^{-1} \| z \|_{H^2_\varepsilon \mathcal{H}^{-1}_\kappa} \, .
\end{equation}
\end{proposition}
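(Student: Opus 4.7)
The plan is to adapt the baseline \emph{a priori} argument behind Proposition~\ref{pro:basicexistence} to the Galerkin-truncated system, tracking that all constants are independent of $\eta \geq 0$ and $N \geq 10$ (but may depend on $\kappa$). The essential observation that makes the truncation benign is that, since $N \geq 10 \geq 9 \geq 3$, the finite-dimensional subspaces $M_5 = \ker L$ and $M_9$, together with the range of the Kawashima compensator $K$ of Lemma~\ref{lem:kawa:comp}, all lie inside the range of $\Pi_{\leq N}$. Hence $P \Pi_{\leq N} = P$, $P_{\leq 9}\Pi_{\leq N} = P_{\leq 9}$, and $K P_{\leq 9} \Pi_{\leq N} = K P_{\leq 9}$; moreover every test function we wish to pair the equation against (namely $f$, $\partial_x f$, $\partial_x^2 f$, and $K P_{\leq 9}\partial_x f$) automatically lies in the range of $\Pi_{\leq N}$ since $f = \Pi_{\leq N} f$. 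Consequently, the outer $\Pi_{\leq N}$'s on the left-hand side of~\eqref{eq:approximatesystem} may be dropped when pairing with these test functions, and the inner $\Pi_{\leq N}$ only matters in one place: it separates the propagation term $\Pi_{\leq N}(v_1-s_0)\Pi_{\leq N}\partial_x f$ from genuine $(v_1-s_0)\partial_x f$, a discrepancy that vanishes on the range of $\Pi_{\leq N}$.

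The first step is to reprove the three energy estimates from the proof of Lemma~\ref{lem:basicmicroest} in the present setting. Testing~\eqref{eq:approximatesystem} against $f$ gives $\eta\|\partial_x f\|_{L^2}^2 + \langle L_{\rm NS}^\kappa f, f\rangle = \langle z, f\rangle$ (the transport term vanishes by integration by parts once we justify the decay; see below). Coercivity~\eqref{eq:coercivityestimate} for $L^\kappa$ and the $O(\varepsilon)$ nonlinear estimate of Lemma~\ref{lem:lemma10:new} applied to $L_{\rm NS}^\kappa - L^\kappa$, exactly as in~\eqref{desert:cowboy}, yield the analogue of~\eqref{eq:I}. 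Applying $\partial_x$ to the equation (which commutes with every $v$-operator including $\Pi_{\leq N}$) and testing against $\partial_x f$ produces the analogue of~\eqref{eq:II}. Finally, applying $KP_{\leq 9}$ to the equation and pairing with $P_{\leq 9}\partial_x f$ yields the Kawashima estimate~\eqref{eq:III}; here the $-\eta\partial_x^2$ term contributes $\eta\langle KP_{\leq 9}\partial_x f,P_{\leq 9}\partial_x^2 f\rangle$, which vanishes by antisymmetry of $K$ after an integration by parts, so the elliptic regularization is harmless. Summing as in the proof of Lemma~\ref{lem:basicmicroest} produces the baseline microscopic estimate
\begin{equation*}
\|\partial_x f\|_{L^2\mathcal{H}^1_\kappa}^2 + \|P^\perp f\|_{L^2\mathcal{H}^1_\kappa}^2 \lesssim \varepsilon^2 \|Pf\|^2 + \|z,\partial_x z\|_{L^2\mathcal{H}^{-1}_\kappa}^2,
\end{equation*}
with $\kappa$-dependent but $(\eta,N)$-independent constant. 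Applying $\partial_x$ once more and repeating gives the $\partial_x^2$-analogue, as in Lemma~\ref{lem:bootstrappingonedegreeofreg}.

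The second step is the linearized Chapman-Enskog expansion and macroscopic estimate. Since $P\Pi_{\leq N} z = Pz = 0$ and $\mathsf{P}_{\mu_0}$ commutes with $\Pi_{\leq N}$ on the relevant subspaces, applying $P$ to~\eqref{eq:approximatesystem} yields, after the same manipulations as in subsubsection~\ref{sss:macro}, the macroscopic ODE~\eqref{becomes} for $U = \mathsf{P}_{\mu_0}F$ modulo the additional elliptic contribution $-\eta P\partial_x^2 f$, which integrates to give $-\eta\partial_x^2 U$ on the left-hand side of the ODE. Incorporating this as a lower-order perturbation in Proposition~\ref{pro:macroscopicestimate} (equivalent to enlarging $B_\kappa(U_{\rm NS})$ by $\eta\Id$, which only improves the dissipative structure) and controlling the purely microscopic $V = F - \mathsf{P}_{\mu_{\rm NS}}U$ through the previous step exactly as in~\eqref{eq:doesn't:close}, we obtain $\|U\|_{L^2} \lesssim \varepsilon^{-1}\|z\|_{H^2_\varepsilon\mathcal{H}^{-1}_\kappa} + |d|$. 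Plugging back into the microscopic estimate closes the loop and gives the claimed $H^2_\varepsilon\mathcal{H}^1_\kappa$ bound.

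The step I expect to require the most care is the justification of the test-function manipulations and the upgrade from $H^1\mathcal{H}^1$ regularity to $H^2\mathcal{H}^1_\kappa$, so that the above integrations by parts and the passage $H^1\mathcal{H}^1 \to H^2\mathcal{H}^1_\kappa$ are legitimate. In the Galerkin setting this is actually easier than in Lemma~\ref{lem:basicmicroest}: since $f = \Pi_{\leq N} f$ lies in a finite-dimensional velocity subspace, \emph{all} weighted $v$-norms are equivalent with $N$-dependent constants, and the initial $H^1\mathcal{H}^1$ hypothesis together with the equation (viewed as an ODE in $x$ with values in this finite-dimensional $v$-space, augmented by the elliptic term when $\eta > 0$) already forces $f \in H^2$ locally. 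Decay as $|x|\to\infty$, needed to discard boundary terms, follows from a standard Fredholm / exponential dichotomy argument applied to the ODE in $x$ (alternatively, from the mollification procedure used at the end of the proof of Lemma~\ref{lem:basicmicroest}). Once this qualitative framework is in place, the quantitative estimates above apply verbatim.
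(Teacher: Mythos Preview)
Your proposal has the right architecture but contains one concrete error and one genuine gap.

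\textbf{The error.} You claim that in the Kawashima step the elliptic contribution $\eta\langle KP_{\leq 9}\partial_x f,P_{\leq 9}\partial_x^2 f\rangle$ vanishes by antisymmetry of $K$ after an integration by parts. It does not. Writing $g=P_{\leq 9}\partial_x f$, the quantity is $\eta\langle K g,\partial_x g\rangle_{x,v}$; integration by parts in $x$ and antisymmetry of $K$ in $v$ together give only the tautology $I=I$, and one can easily construct $g$ (e.g.\ $g=(e^{-x^2},xe^{-x^2})$ in a two-dimensional model) for which $\langle Kg,\partial_x g\rangle\neq 0$. The paper handles this term differently: the first two energy identities produce \emph{good} contributions $\eta\|\partial_x f\|_{L^2}^2$ and $\eta\|\partial_x^2 f\|_{L^2}^2$, and since the Kawashima identity is multiplied by a small constant, Young's inequality absorbs $-\eta\int(\partial_x^2 K f)\partial_x f$ into those terms. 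This is point (iii) in the paper's proof.

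\textbf{The gap.} Your macroscopic step invokes ``the same manipulations as in subsubsection~\ref{sss:macro},'' but the Galerkin truncation changes the operators appearing in the linearized Chapman--Enskog expansion: the microscopic equation involves $\Pi_{\leq N}L_{\rm NS}^\kappa\Pi_{\leq N}$, so the relevant inverse is $(\sf{L}_{22,N}^\kappa)^{-1}$, and the resulting projection $\sf{P}_{{\rm NS},N}^\kappa$, flux gradient $\nabla H_N^\kappa$, and viscosity $B_N^\kappa$ are \emph{not} the continuum objects from section~\ref{sss:macro}. To apply Proposition~\ref{pro:macroscopicestimate} one must compare them: the paper proves $|\nabla H_N^\kappa-\nabla H(U_{\rm NS})|+|B_N^\kappa-B_\kappa(U_{\rm NS})|=o_{N\to\infty}(1)$ and operator bounds on $\partial_x^j\sf{P}_{{\rm NS},N}^\kappa$, via Lemmas~\ref{lem:invertibilitytofindgn} and~\ref{lem:estimatingdifferentderivatives}, and then treats the discrepancies as perturbative right-hand-side terms in the macroscopic ODE. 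This is the substantive new work in the proof, and you have skipped it. Relatedly, the $\eta\partial_x^2$ contribution is not handled by modifying $B$ to $B+\eta\Id$ (recall $B$ has nontrivial kernel, so this would change the ODE analysis in the appendix); the paper instead keeps $B$ fixed and pushes the $\eta$-dependent piece into the forcing $g_N^\kappa$ through the auxiliary function $h$ in~\eqref{eq:expressionforh}, estimating it with the microscopic bounds on $\partial_x^2 V$.
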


Before proving~\ref{pro:aprioriapproximatestimates}, we present the following analogue of Lemma~\ref{lem:inverttofindg}.

\begin{lemma}
\label{lem:invertibilitytofindgn}
Let $\mu_1 = \mu(\varrho_1,u_1,\theta_1)$ be a Maxwellian near the standard Maxwellian $\mu_0$; that is, $|(\varrho_1-1,u_1,\theta_1-1)| \ll 1$. Let $N \geq 10$ and $\kappa \geq 0$. Then the linearized operator
\begin{equation}\notag
L_{\mu_1,N}^\kappa : f \mapsto - \Pi_{\leq N} \Gamma_\kappa\left[\mu_0^{-\sfrac 12}\mu_1, f \right] - \Pi_{\leq N} \Gamma_\kappa\left[f, \mu_0^{-\sfrac 12}\mu_1 \right]
\end{equation}
maps $\mathcal{H}^1_{\kappa,N} := \mathcal{H}^1_{\kappa,N} \cap {\rm image} (\Pi_{\leq N}) $ to $\mathcal{H}^{-1}_{\kappa,N} := \mathcal{H}^{-1}_{\kappa,N} \cap {\rm image} (\Pi_{\leq N}) $. Moreover, this operator, restricted to the (conjugated) microscopic subspace $\mathcal{H}^1_{{\rm mic},\kappa,N}$, 
\begin{equation}
	\label{eq:mymappingprop}
L_{\mu_1,N}^\kappa : \mathcal{H}^1_{{\rm mic},\kappa,N} \to \mathcal{H}^{-1}_{{\rm mic},\kappa,N} \, ,
\end{equation}
and considered as a map to the (conjugated) microscopic subspace $\mathcal{H}^{-1}_{{\rm mic},\kappa,N}$, is invertible and satisfies
\begin{equation}\label{invert:estNkappa}
\| (L_{\mu_1,N}^\kappa)^{-1} \|_{\mathcal{H}^{-1}_{{\rm mic},\kappa,N} \to \mathcal{H}^1_{{\rm mic},\kappa,N}} \les 1 \, .
\end{equation}
\end{lemma}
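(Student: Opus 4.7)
The plan is to mirror the proof of Lemma~\ref{lem:inverttofindg}, taking care that the Galerkin projection $\Pi_{\leq N}$ interacts cleanly with the macro--micro decomposition. The key observation making everything work is that, since $M_5 \subset \mathrm{image}(\Pi_{\leq 2}) \subset \mathrm{image}(\Pi_{\leq N})$ for $N \geq 2$, the orthogonal $L^2$-projections $P$ and $\Pi_{\leq N}$ commute. Consequently, $\Pi_{\leq N}$ preserves the macroscopic/microscopic splitting, and $\mathcal{H}^1_{\mathrm{mic},\kappa,N}$ may be identified with $\Pi_{\leq N} P^\perp(\mathcal{H}^1_\kappa)$.

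First I would verify the mapping property. Boundedness of $L_{\mu_1,N}^\kappa : \mathcal{H}^1_{\kappa,N} \to \mathcal{H}^{-1}_{\kappa,N}$ reduces, via duality on the test side (for $g \in \mathrm{image}(\Pi_{\leq N})$, $\langle \Pi_{\leq N} h, g\rangle = \langle h, g\rangle$), to the nonlinear estimate of Lemma~\ref{lem:lemma10:new} applied to $\Gamma$ and $\kappa\Gamma_{\rm R}$, combined with the bound $|\mu_0^{-\sfrac{1}{2}}\mu_1|_{\sigma_\kappa,\ell} \les 1$ implied by the Maxwellian proximity hypothesis. That the map preserves the microscopic subspace is then automatic: $\Gamma_\kappa$ outputs into $\{P\,\cdot = 0\}$ by the conservation laws recalled at the start of Subsection~\ref{sec:propertiesofLandGamma}, and $\Pi_{\leq N}$ preserves this subspace by $[P,\Pi_{\leq N}] = 0$.

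The heart of the argument is invertibility of the base operator $L_{\mu_0,N}^\kappa := \Pi_{\leq N}L^\kappa\Pi_{\leq N}$ on $\mathcal{H}^1_{\mathrm{mic},\kappa,N}$, with operator-norm bound uniform in $N$. For $f,g \in \mathcal{H}^1_{\mathrm{mic},\kappa,N}$, using $\Pi_{\leq N}f = f$ and $\Pi_{\leq N}g = g$, one has
\begin{equation*}
B(f,g) := \langle L_{\mu_0,N}^\kappa f, g\rangle = \langle L^\kappa f, g\rangle.
\end{equation*}
This bilinear form is continuous on $\mathcal{H}^1_\kappa \times \mathcal{H}^1_\kappa$ by the mapping property of $L^\kappa$ recalled after~\eqref{eq:coercivityestimate}, and coercive on $\mathcal{H}^1_{\mathrm{mic},\kappa,N}$ by the coercivity estimate~\eqref{eq:coercivityestimate}, since $P^\perp f = f$ on this subspace. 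Lax--Milgram then provides a unique $f \in \mathcal{H}^1_{\mathrm{mic},\kappa,N}$ solving $L_{\mu_0,N}^\kappa f = h$ for each $h \in \mathcal{H}^{-1}_{\mathrm{mic},\kappa,N}$, with the $N$-uniform bound $\|f\|_{\mathcal{H}^1_\kappa} \les \|h\|_{\mathcal{H}^{-1}_\kappa}$.

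Finally, I would treat $L_{\mu_1,N}^\kappa$ as a perturbation of $L_{\mu_0,N}^\kappa$ by writing $L_{\mu_1,N}^\kappa = L_{\mu_0,N}^\kappa + R_N$, where
\begin{equation*}
R_N f := -\Pi_{\leq N}\Gamma_\kappa\bigl[\mu_0^{-\sfrac{1}{2}}(\mu_1 - \mu_0), f\bigr] - \Pi_{\leq N}\Gamma_\kappa\bigl[f, \mu_0^{-\sfrac{1}{2}}(\mu_1-\mu_0)\bigr].
\end{equation*}
Lemma~\ref{lem:lemma10:new} and the smallness hypothesis $|(\varrho_1 - 1, u_1, \theta_1 - 1)| \ll 1$ yield $\|R_N\|_{\mathcal{H}^1_{\mathrm{mic},\kappa,N}\to\mathcal{H}^{-1}_{\mathrm{mic},\kappa,N}} \ll 1$, uniformly in $N$, after which a Neumann series argument produces invertibility of $L_{\mu_1,N}^\kappa$ together with the bound~\eqref{invert:estNkappa}, uniformly in $N$. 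The only subtle point---and the main thing I would check carefully---is that every constant in this chain is genuinely $N$-independent; this is what forces the reliance on the commutation $[\Pi_{\leq N}, P] = 0$ and on formulations of coercivity and the nonlinear estimate which never reference the Galerkin truncation itself.
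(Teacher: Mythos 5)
Your proposal is correct and follows essentially the same route as the paper: both hinge on the identity $\langle L_{\mu_1,N}^\kappa f, g\rangle = \langle L_{\mu_1}^\kappa f, g\rangle$ for $f,g$ in the truncated microscopic subspace (which requires $[P,\Pi_{\leq N}]=0$, used implicitly by the paper and made explicit by you), coercivity of $L^\kappa$, and smallness of the $\mu_1-\mu_0$ perturbation via Lemma~\ref{lem:lemma10:new}. The only cosmetic difference is that you first invert the base operator $L_{\mu_0,N}^\kappa$ and then run a Neumann series, whereas the paper establishes coercivity of $L_{\mu_1,N}^\kappa$ directly and invokes finite-dimensionality to conclude invertibility---these are equivalent.
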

\begin{proof}[Proof of Lemma~\ref{lem:invertibilitytofindgn}]
The mapping property~\eqref{eq:mymappingprop} follows from the mapping properties of $\Gamma_\kappa[\mu_0^{-\sfrac 12}\mu_1,\cdot] + \Gamma_\kappa[\cdot,\mu_0^{-\sfrac 12}\mu_1]$ on the smooth, well-localized Hermite functions.  To complete the proof, we prove that $L_{\mu_1,N}^\kappa$ is bounded below with uniform estimates when $\mu_1$ is near $\mu_0$. 
That is, when $f \in \mathcal{H}^1_{{\rm mic},\kappa,N}$, we have
\begin{align*}
\langle L_{\mu_1,N}^\kappa f, f \rangle &= \langle L_{\mu_1}^\kappa f, f \rangle \\
&= \langle L_{\mu_0}^\kappa f, f \rangle -  \langle \Gamma_\kappa[\mu_0^{-\sfrac 12}(\mu_1-\mu_0),f], f \rangle - \langle f, \Gamma_\kappa[\mu_0^{-\sfrac 12}(\mu_1-\mu_0),f] \rangle \\
&\geq C^{-1} \| f \|_{\mathcal{H}^1_\kappa}^2 - o_{(\varrho_1,u_1,\theta_1) \to (1,0,1)} (1) \| f \|_{\mathcal{H}^1_\kappa}^2 \, ,
\end{align*}
with constants independent of $\kappa$. The operators in question are finite dimensional, so this furnishes invertibility and the estimates.
\end{proof}

\begin{proof}[Proof of Proposition~\ref{pro:aprioriapproximatestimates}]
\emph{1. Microscopic estimates}.

We begin with the analogue of the basic microscopic estimate~\eqref{eq:basicmicroest}. The key points are that (i) the energy estimates do not ``see" the projections $\Pi_{\leq N}$, and (ii) the $-\eta \p_x^2 f$ term provides a new advantageous contribution, except in (iii) the Kawashima energy estimate, but its error can be estimated using the new advantageous terms. First, we multiply the equation by $f$ and integrate by parts, using that $f = \Pi_{\leq N} f$. We have
\begin{equation}
	\label{eq:firstmicroestapp}
\eta \int |\p_x f|^2 \, \dee v \, \dee x + \int L_{\rm NS}^\kappa f \cdot f \, \dee v \, \dee x = \int z f \, \dee v \, \dee x \, .
\end{equation}
When we pass one spatial derivative through the equation and multiply by $\p_x f$, we obtain
\begin{align}
	\eta \int &|\p_x^2 f |^2 \, \dee v \, \dee x + \int L_{\rm NS}^\kappa \p_x f \cdot \p_x f \, \dee v \, \dee x - \int \Gamma_\kappa[\mu_0^{-\sfrac 12} \p_x \mu_{\rm NS}, f] \p_x f \, \dee v \, \dee x \notag \\
& - \int \Gamma_\kappa[f,\mu_0^{-\sfrac 12} \p_x \mu_{\rm NS}] \p_x f \, \dee v \, \dee x = \int \p_x z \p_x f \, \dee v \, \dee x \, . \label{eq:secondmicroestapp}
\end{align}
When we apply $K$ to the equation and multiply by $\p_x f$, we obtain
\begin{equation}
	\label{eq:thirdmicroestapp}
- \eta \int \p_x^2 K f \p_x f \, \dee v \, \dee x + \int (K v_1 \p_x f) \p_x f + \int KL_{\rm NS}^\kappa f \cdot \p_x f \, \dee v \, \dee x = \int K z \p_x f \, \dee v \, \dee x \, .
\end{equation}
Notice the absence of projections $\Pi_{\leq N}$ in the above equalities. The desired energy estimate is obtained by summing~\eqref{eq:firstmicroestapp},~\eqref{eq:secondmicroestapp}, and a small multiple of~\eqref{eq:thirdmicroestapp}. Therefore, the term $- (2C_{III})^{-1} \eta \int \p_x^2 K f \p_x f $ can be absorbed by the terms $\eta \int |\p_x f|^2$ and $\eta \int |\p_x^2 f|^2$ via Young's inequality to obtain, exactly as before,
\begin{equation}
	\label{eq:basicmicroestrepeated}
\| \p_x f \|_{L^2 \mathcal{H}^1_\kappa} + \| P^\perp f \|_{L^2 \mathcal{H}^1_\kappa} \les \varepsilon \| Pf \|_{L^2} + \| z, \p_x z \|_{L^2 \mathcal{H}^{-1}_\kappa} \, .
\end{equation}
Subsequently, the estimate
\begin{equation}
\label{eq:basicmicroestbootstrappedoncerepeated}
\| \p_x^2 f \|_{L^2 \mathcal{H}^1_\kappa} + \| P^\perp \partial_x f \|_{L^2 \mathcal{H}^1_\kappa} \les \varepsilon^2 \| Pf \|_{L^2} + \| \varepsilon z, \p_x z,  \p_x^2 z \|_{L^2 \mathcal{H}^{-1}_\kappa}
\end{equation}
is obtained by differentiating the Galerkin truncated equation once and applying~\eqref{eq:basicmicroestrepeated}.  Crucially, the implicit constants in \eqref{eq:basicmicroestrepeated}--\eqref{eq:basicmicroestbootstrappedoncerepeated} do not depend on $\kappa$.
\medskip


\emph{2. Macroscopic estimates}. We now require the analogue of the linearized Chapman-Enskog expansion. Write $F = \sqrt{\mu_0} f$ and $\mathcal{E} = \sqrt{\mu_0} z$. We write $\sf{L}_N^\kappa = \sqrt{\mu_0} L_{\mu_0,N}^\kappa \sqrt{\mu_0}^{-1}$ and $\sf{L}_{{\rm NS},N}^\kappa = \sqrt{\mu_0} L_{{\rm NS},N}^\kappa \sqrt{\mu_0}^{-1}$. It will be convenient to write $\sf{L}_{{\rm NS},N}^\kappa$ as a block matrix
\begin{equation}\notag
\begin{bmatrix}
0 & 0 \\
\sf{L}_{21,N}^\kappa & \sf{L}_{22,N}^\kappa 
\end{bmatrix}\, ,
\end{equation}
where $\sf{L}_{21,N}^\kappa = \sf{P}_{\mu_0}^\perp \sf{L}_{{\rm NS},N}^\kappa \sf{P}_{\mu_0}$ and $\sf{L}_{22,N}^\kappa = \sf{P}_{\mu_0}^\perp \sf{L}_{{\rm NS},N}^\kappa \sf{P}_{\mu_0}^\perp$. (This notation suppresses the $x$-dependence of the operators.)
 Again, we write $F = U + \sf{P}^\perp_{\mu_0} F$, where $U = \sf{P}_{\mu_0} F$. The finite-dimensional analogue of the projection $\sf{P}_{\rm NS}$ onto the tangent space at $\mu_{\rm NS}$ is
\begin{equation}\notag
\sf{P}_{{\rm NS},N}^\kappa = \sf{P}_{\mu_0} - (\sf{L}_{22,N}^\kappa)^{-1} \sf{L}_{21,N}^{\kappa} \sf{P}_{\mu_0} \, .
\end{equation}
This is the projection onto the kernel of the operator $\sf{L}^\kappa_{{\rm NS},N}$. The complementary projection $\sf{P}^{\perp,\kappa}_{{\rm NS},N} := I - \sf{P}_{{\rm NS},N}^\kappa$ is a projection onto the microscopic subspace. The linearized Chapman-Enskog ansatz is
\begin{equation}\notag
F = \sf{P}_{{\rm NS},N}^\kappa U + V \, , 
\end{equation}
which we take as the definition of $V$. Plugging the ansatz into the equation, we obtain
\begin{equation}
	\label{eq:galerkinchapman1}
-\eta \p_x^2 \sf{P}_{{\rm NS},N}^\kappa U - \eta \p_x^2 V + \sf{\Pi}_{\leq N} (v_1-s_0) \p_x \sf{P}_{{\rm NS},N}^\kappa U + \sf{\Pi}_{\leq N}  (v_1-s_0) \p_x V + \sf{L}_{{\rm NS},N}^\kappa V = \sf{\Pi}_{\leq N} \mathcal{E} \, ,
\end{equation}
where we used that $\sf{L}_{{\rm NS},N}^\kappa \sf{P}_{{\rm NS},N}^\kappa = 0$. We apply the projection $\sf{P}_{{\rm NS},N}^{\perp,\kappa}$ and invert $\sf{L}_{{\rm NS},N}^\kappa$ to obtain an expression for $V$:
\begin{equation}
	\label{eq:galerkinchapman2}
V = - (\sf{L}_{{\rm NS},N}^\kappa)^{-1} \sf{P}_{{\rm NS},N}^{\perp,\kappa} (\sf{\Pi}_{\leq N} (v_1-s_0) \p_x \sf{P}_{{\rm NS},N}^\kappa U + \sf{\Pi}_{\leq N} (v_1-s_0) \p_x V - \sf{\Pi}_{\leq N}  \mathcal{E} + \eta h) \, ,
\end{equation}
where
\begin{equation}
	\label{eq:expressionforh}
h = (\p_x^2 \sf{P}_{{\rm NS},N}^\kappa) U + 2 \p_x \sf{P}_{{\rm NS},N}^\kappa \p_x U + \p_x^2 V \, .
\end{equation}
We substitute the expression back into the equations projected by $\sf{P}_{\mu_0}$, obtaining
\begin{equation}
	\label{eq:substituteintome}
\sf{P}_{\mu_0} (v_1-s_0) \p_x \sf{P}_{{\rm NS},N}^\kappa U + \sf{P}_{\mu_0} (v_1-s_0) \p_x V = 0 \, .
\end{equation}
Let\footnote{While we do not prove that $\nabla H_N^\kappa$ is the gradient of a flux function $H_N^\kappa$, we adopt this notation nonetheless.}
\begin{align*}
\nabla H_N^\kappa &:= \sf{P}_{\mu_0} v_1 \sf{P}_{{\rm NS},N}^\kappa \, , \qquad 
B_N^\kappa  := \sf{P}_{\mu_0} v_1 (\sf{L}_{{\rm NS},N}^\kappa)^{-1} \sf{P}_{{\rm NS},N}^{\perp,\kappa} \sf{\Pi}_{\leq N}  v_1  \sf{P}_{{\rm NS},N}^\kappa  \, ,
\end{align*}
and
\begin{equation}
	\label{eq:gnexpression}
g_N^\kappa = \sf{P}_{\mu_0} v_1 (\sf{L}_{{\rm NS},N}^\kappa)^{-1} \sf{P}_{{\rm NS},N}^{\perp,\kappa} (\sf{\Pi}_{\leq N}  (v_1-s_0) (\p_x \sf{P}_{{\rm NS},N}^\kappa) U + \sf{\Pi}_{\leq N}  (v_1-s_0) \p_x V - \sf{\Pi}_{\leq N} E + \eta h) \, .
\end{equation}
Then the equation~\eqref{eq:substituteintome}, integrated once, becomes
\begin{equation}\notag
(\nabla H_N^\kappa - s_0{\rm Id}) U =  B_N^\kappa \p_x U + g_N^\kappa \, .
\end{equation}
For now, we take for granted the following property of the coefficients:
\begin{equation}
	\label{eq:convergenceofcoefficients}
|\nabla H_N^\kappa - (\nabla H)(U_{\rm NS})| + |B_N^\kappa - B_\kappa(U_{\rm NS})| = o_{N \to +\infty}(1)  \, ,
\end{equation}
uniformly in $x$, with convergence possibly depending on $\kappa$. This will be justified below at the end of this subsection. Then we have the further comparison $|B_\kappa(U_{\rm NS}) - B(U_{\rm NS})| \lesssim \kappa$.
We consider $U$ as solving
\begin{equation}\notag
[(\nabla H)(U_{\rm NS}) - s_0{\rm Id}] U = B(U_{\rm NS}) \p_x U + (\nabla H(U_{\rm NS})-\nabla H_N^\kappa) U + (B_N^\kappa - B(U_{\rm NS})) \p_x U + g_N^\kappa \, ,
\end{equation}
with the desired phase condition. Then by the macroscopic estimate in Proposition~\ref{pro:macroscopicestimate}, we have
\begin{equation}\notag
\| U \|_{L^2} \lesssim \varepsilon^{-1} o_{N\to +\infty}(1) \| U \|_{L^2} + \varepsilon^{-1} (o_{N \to +\infty}(1) + \kappa) \| \p_x U \|_{L^2} + \varepsilon^{-1} \| g_N^\kappa \|_{L^2} \, .
\end{equation}

It remains to `close the loop'. Examining the expression~\eqref{eq:gnexpression} for $g_N^\kappa$, we see that it will be necessary to estimate quantities like $\|\mu_0^{-\sfrac 12} (v_1-s) (\p_x \sf{P}_{{\rm NS},N}^\kappa) U \|_{L^2 \mathcal{H}^{-1}_{\kappa}}$. More specifically, we require 
\begin{align}
\label{eq:estimateonderivsofcoefficients}
&\| (v_1-s_0) P^\perp P_{{\rm NS},N}^\kappa \|_{\mathcal{H}^1_\kappa \to \mathcal{H}^{-1}_\kappa} \lesssim_\kappa \varepsilon \, , \quad \|  (v_1-s_0) \p_x P_{{\rm NS},N}^\kappa, \varepsilon^{-1} (v_1-s_0) \p_x^2 P_{{\rm NS},N}^\kappa \|_{\mathcal{H}^1_\kappa \to \mathcal{H}^{-1}_\kappa} \lesssim_\kappa \varepsilon^2 \, , \\
	\label{eq:yetanotherthingtojustify}
&\qquad \qquad \qquad \| \partial^2_x P_{{\rm NS},N}^\kappa \|_{\mathcal{H}^1_\kappa \to \mathcal{H}^{-1}_\kappa} \lesssim_\kappa \varepsilon^3 \, ,
\end{align}
whose justification we temporarily delay. Note that the implicit constants may depend on $\kappa$. By Lemma~\ref{lem:invertibilitytofindgn}, we have 
\begin{equation}\notag
\| g_N^\kappa \|_{\mathcal{H}^1_\kappa} \lesssim \varepsilon^2 \| U \| + \| \sf{\Pi}_{\leq N} (v_1-s_0) \p_x V \|_{\mathcal{H}^{-1}_\kappa} + \| \sf{\Pi}_{\leq N} \mathcal{E} \|_{\mathcal{H}^{-1}_\kappa} + \eta \| h \|_{\mathcal{H}^{-1}_\kappa} \, ,
\end{equation}
where we use the estimate on $\p_x P_{{\rm NS},N}^\kappa$ from~\eqref{eq:estimateonderivsofcoefficients}. \emph{We emphasize the decisive use of the following property of the $\kappa$-regularized function spaces, which follows from Lemma~\ref{l:norm:control}:}
\begin{equation}\notag
\| f, v f \|_{\mathcal{H}^{-1}_\kappa} \lesssim_\kappa \| f \|_{\mathcal{H}^1_\kappa} \, .
\end{equation}
We required such an estimate for terms like $\| (v_1-s_0) \p_x V \|_{\mathcal{H}^{-1}_\kappa}$, just as in~\eqref{eq:doesn't:close}. 
Like before, we have
\begin{equation}\notag
P^\perp \p_x F = P^\perp \p_x (P_{{\rm NS},N}^\kappa U) + \p_x V \, ,
\end{equation}
and, by the estimates on $P^\perp_{\mu_0} P_{{\rm NS},N}^\kappa$ and $P^\perp_{\mu_0} (\p_x P_{{\rm NS},N}^\kappa)$ from~\eqref{eq:estimateonderivsofcoefficients}, we have
\begin{equation}\notag
\| (v_1-s_0) \mu_0^{-\sfrac 12} \p_x V \|_{L^2 \mathcal{H}^{-1}_\kappa} \lesssim_\kappa \varepsilon^2 \| U \|_{L^2} + \varepsilon \| \p_x U \|_{L^2} + \| \mu_0^{-\sfrac 12} (v_1-s_0) P^\perp \partial_x F \|_{L^2 \mathcal{H}^{-1}_\kappa} \, .
\end{equation}
Meanwhile, from the expression~\eqref{eq:expressionforh} for $h$, we have
\begin{equation}\notag
\| h \|_{\mathcal{H}^{-1}_\kappa} \lesssim_\kappa \varepsilon^3 \| U \| + \varepsilon^2 \| \p_x U \| + \| \p_x^2 V \|_{\mathcal{H}^{-1}_\kappa} \, ,
\end{equation}
by the estimate~\eqref{eq:yetanotherthingtojustify}. Of course,
\begin{equation}\notag
P^\perp \p_x^2 F = P^\perp \p_x^2 (P_{{\rm NS},N}^\kappa U) + \p_x^2 V \, ,
\end{equation}
so that
\begin{equation}\notag
\| (v_1-s_0) \mu_0^{-\sfrac 12} \p_x^2 V \|_{L^2 \mathcal{H}^{-1}_\kappa} \lesssim_\kappa \varepsilon^3 \| U \|_{L^2} + \varepsilon^2 \| \p_x U \|_{L^2} + \varepsilon \| \p_x^2 U \|_{L^2} + \| \mu_0^{-\sfrac 12} (v_1-s_0) P^\perp \partial_x^2 F \|_{L^2 \mathcal{H}^{-1}_\kappa} \, .
\end{equation}
Combining the estimates is enough to close the loop.
\end{proof}

We now proceed with the analysis of the coefficients~\eqref{eq:convergenceofcoefficients}, together with~\eqref{eq:estimateonderivsofcoefficients} and \eqref{eq:yetanotherthingtojustify}. We necessarily utilize a property of our Galerkin basis; namely, the decay and regularity of a function is reflected in the decay of its coefficients in the Galerkin basis. Given $f \in L^2$, denote by $(f_\alpha)$ the coefficients in the Hermite basis. Then
\begin{equation}\notag
f,Tf,\hdots, T^k f \in L^2 \iff \sum_\alpha \langle \lambda_\alpha \rangle^{2k} |f_\alpha|^2 < +\infty \, ,
\end{equation}
with equivalence of norms:
\begin{equation}\notag
\| f, Tf, \hdots, T^k f \|_{L^2} \approx_k \left( \sum_\alpha \langle \lambda_\alpha \rangle^{2k} |f_\alpha|^2 \right)^{\sfrac{1}{2}} \, .
\end{equation}

The key will be to estimate
\begin{equation}	\label{eq:estimatingdifferentderivatives}
((\mathsf{L}_{22,N}^\kappa)^{-1} \sf{\Pi}_{\leq N} - \mathsf{L}_{22}^{-1}) \mathcal{E}
\end{equation}
on well-localized, high regularity, purely microscopic functions $\mathcal{E} = \mu_0^{\sfrac 12} z$.

\begin{lemma}	\label{lem:estimatingdifferentderivatives}
For $0 < \varepsilon \ll 1$, $0 \leq \kappa \leq 1$, and purely microscopic $z$ satisfying the normalization
\begin{equation}\notag
\| \mu_0^{-\sfrac 18} \p_\beta z \|_{L^2} \leq 1 \, , \quad \forall |\beta| \leq 1000 \, ,
\end{equation}
we have
\begin{equation}\notag
\| \mu_0^{-\sfrac 12} ((\mathsf{L}_{22,N}^\kappa)^{-1} \sf{\Pi}_{\leq N} - (\mathsf{L}_{22}^\kappa)^{-1}) \mathcal{E} \|_{H^2_\varepsilon \mathcal{H}^1_\kappa} \lesssim_\kappa N^{-50} \, .
\end{equation}
\end{lemma}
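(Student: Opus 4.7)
\emph{First}, the plan is to exploit the resolvent-type identity
\[
(\mathsf{L}_{22,N}^\kappa)^{-1}\sf{\Pi}_{\leq N}\mathcal{E} - (\mathsf{L}_{22}^\kappa)^{-1}\mathcal{E} = -\sf{\Pi}_{>N} g + (\mathsf{L}_{22,N}^\kappa)^{-1}\sf{\Pi}_{\leq N}\mathsf{L}_{22}^\kappa\sf{\Pi}_{>N} g \, ,
\]
where $g := (\mathsf{L}_{22}^\kappa)^{-1}\mathcal{E}$. This follows from subtracting the Galerkin identity $\sf{\Pi}_{\leq N}\mathsf{L}_{22}^\kappa\sf{\Pi}_{\leq N}g_N = \sf{\Pi}_{\leq N}\mathcal{E}$ from the $\sf{\Pi}_{\leq N}$-projected exact equation $\sf{\Pi}_{\leq N}\mathsf{L}_{22}^\kappa g = \sf{\Pi}_{\leq N}\mathcal{E}$ and solving for $g_N-\sf{\Pi}_{\leq N}g$ using the microscopic Galerkin inverse from Lemma~\ref{lem:invertibilitytofindgn}. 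Both error terms on the right now involve only $\sf{\Pi}_{>N}g$, so the task reduces to showing $\|\mu_0^{-\sfrac 12}\sf{\Pi}_{>N}g\|_{H^2_\varepsilon\mathcal{H}^1_\kappa}\les_\kappa N^{-50}$.

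\emph{Second}, the regularity of $g$ will be bootstrapped from that of $\mathcal{E}$. Set $f := \mu_0^{-\sfrac 12} g$, which satisfies the conjugated microscopic equation $L_{22}^\kappa f = z$. For $\kappa>0$, the norm $\mathcal{H}^1_\kappa$ is, by Lemma~\ref{l:norm:control} and~\eqref{H1kappa}, equivalent to a genuine weighted $H^1_v$, and $L_{22}^\kappa$ is a second-order elliptic operator in $v$ with smooth coefficients depending on $x$ through $\mu_{\rm NS}$. A standard elliptic bootstrap — iteratively applying Lemmas~\ref{lem:propertiesofL}, \ref{lem:lemma10:new}, and \ref{lem:inverttofindg} — converts the hypothesized $\|\mu_0^{-\sfrac 18}\p_\beta z\|_{L^2}\leq 1$ for $|\beta|\leq 1000$ into bounds of the form $\|\mu_0^{-\sfrac 18}\p_\beta f\|_{L^2_v}\leq C_{\kappa,\beta}$ uniformly in $x$ for $|\beta|$ up to a comparable range, since each iteration only costs two derivatives of $z$ per two derivatives of $f$ plus smooth coefficient bounds. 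The Gaussian weight $\mu_0^{-\sfrac 18}$ is far stronger than any polynomial moment.

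\emph{Third}, the Hermite basis $(H_\alpha)$ diagonalizes $T=-\Delta_v+|v|^2/4$ with eigenvalues $\lambda_\alpha\gtrsim 1+|\alpha|$, so the spectral theorem on $L^2_v$ yields
\[
\|\sf{\Pi}_{>N}g\|_{L^2_v}\leq \lambda_N^{-K}\|T^K g\|_{L^2_v}\les_{\kappa,K} N^{-K} \, ,
\]
where each application of $T$ consumes at most two $v$-derivatives and two polynomial moments of $g$, both absorbed by the previous step. Since the Hermite basis is likewise well adapted to weighted and Sobolev norms, the same reasoning upgrades the bound to $\|\mu_0^{-\sfrac 12}\sf{\Pi}_{>N}g\|_{H^2_\varepsilon\mathcal{H}^1_\kappa}\les_\kappa N^{-K}$: the two $x$-derivatives are handled by differentiating $\mathsf{L}_{22}^\kappa g = \mathcal{E}$ in $x$ (the commutator $[\p_x,\mathsf{L}_{22}^\kappa]$ contributes $\p_x\mu_{\rm NS}=O(\varepsilon)$, which is absorbed by the prefactor $\varepsilon^{-k}$ in $H^k_\varepsilon$), and the extra $v$-derivative in $\mathcal{H}^1_\kappa$ costs only one more power of $T$. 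Picking $K$ comfortably larger than $50$ bounds the first error term directly; for the second error term, the boundedness $\mathsf{L}_{22}^\kappa:\mathcal{H}^1_\kappa\to\mathcal{H}^{-1}_\kappa$ (Lemma~\ref{lem:propertiesofL}) and the $N$-uniform inversion estimate~\eqref{invert:estNkappa} give the same $N^{-50}$ rate. The main technical obstacle is not any single estimate but the careful tracking of how many derivatives and moments of $z$ are consumed at each stage of the regularity bootstrap and each application of $T$; the large buffer of $1000$ derivatives in the hypothesis is chosen precisely to make this bookkeeping comfortable and to allow all $\kappa$-dependent constants to be absorbed before the eventual limit $\kappa\to 0^+$.
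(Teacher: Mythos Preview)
Your proposal is correct and follows essentially the same approach as the paper: both derive the Galerkin error identity $\sf{\Pi}_{\leq N}L_{\rm NS}^\kappa(\sf{\Pi}_{\leq N}f - f_N) = -\sf{\Pi}_{\leq N}L_{\rm NS}^\kappa\sf{\Pi}_{>N}f$, bootstrap Gaussian-weighted regularity of the exact solution $f$ from that of $z$, convert this into Hermite-coefficient decay to bound $\sf{\Pi}_{>N}f$, and handle the $H^2_\varepsilon$ part by differentiating the equations in $x$ and repeating the argument. The only cosmetic difference is that you phrase the identity as an explicit resolvent formula with two error terms, whereas the paper keeps it as an equation for $\sf{\Pi}_{\leq N}f - f_N$ and then adds back $\sf{\Pi}_{>N}f$; the content is identical.
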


A typical example is $z = P^\perp L_{\rm NS}^\kappa g$ with $g \in \ker L$.

\begin{proof}[Proof of Lemma~\ref{lem:estimatingdifferentderivatives}]
Let $F_N = (\mathsf{L}_{22,N}^\kappa)^{-1} \sf{\Pi}_{\leq N} \mathcal{E}$ and $F = (\mathsf{L}^\kappa_{22})^{-1} \mathcal{E}$, both of which are purely microscopic. Then
\begin{equation}
	\label{eq:nseqdifferentiateme}
L_{\rm NS}^\kappa f = z \, , \quad \Pi_{\leq N} L_{\rm NS}^\kappa f_N = \Pi_{\leq N} z \, .
\end{equation}
Therefore,
\begin{equation}\notag
\Pi_{\leq N} L_{\rm NS}^\kappa (\Pi_{\leq N} f - f_N) = - \Pi_{\leq N} L_{\rm NS}^\kappa \Pi_{> N} f \, .
\end{equation}
The estimates on the non-truncated problem yield $\| \mu_0^{-\sfrac{1}{16}} \p_\beta f \|_{L^2} \lesssim 1$ for all $|\beta| \leq 1001$. Hence,
\begin{equation}\notag
\| \Pi_{\leq N} f - f_N \|_{\mathcal{H}^1} \lesssim \| \Pi_{>N} f \|_{\mathcal{H}^1} \lesssim N^{-100} \, ,
\end{equation}
due to the decay of the Galerkin coefficients of $f$. In particular, we have
\begin{equation}
	\label{eq:reasoningissimilartome}
\| \langle v \rangle^{10} (f - f_N) \|_{\mathcal{H}^1} \lesssim \| \langle v \rangle^{10} (\Pi_{\leq N} f - f_N) \|_{\mathcal{H}^1} + \| \langle v \rangle^{10} \Pi_{>N} f \|_{\mathcal{H}^1} \lesssim N^{-50} \, .
\end{equation}

We next demonstrate how to estimate spatial derivatives of~\eqref{eq:estimatingdifferentderivatives} when $\mathcal{E}$ is fixed. The desired estimates are on $\langle v \rangle^{10} \p_x (f-f_N)$ in $L^2$. We differentiate~\eqref{eq:nseqdifferentiateme} once to obtain
\begin{equation}\notag
L \p_x f = 2\Gamma_\kappa(\mu^{-1/2} \p_x \mu_{\rm NS}, f) \, , \quad \Pi_{\leq N} L f_N = 2\Pi_{\leq N} \Gamma_\kappa(\mu^{-1/2} \p_x \mu_{\rm NS}, f_N) \, .
\end{equation}
The relevant equation is
\begin{equation}\notag
\Pi_{\leq N} L \p_x (\Pi_{\leq N} f-f_N) = 2\Gamma_\kappa(\mu^{-\sfrac 12} \p_x \mu_{\rm NS}, \Pi_{\leq N} f - f_N) + 2\Gamma_\kappa(\mu^{-\sfrac 12} \p_x \mu_{\rm NS}, \Pi_{> N} f) - \Pi_{\leq N} L \Pi_{> N} \p_x f \, .
\end{equation}
The estimates on the non-truncated problem yield $\| \mu_0^{-\sfrac{1}{16}} \p_\beta \p_x f \|_{L^2} \lesssim \varepsilon^2$ for all $|\beta| \leq 1001$. This yields
\begin{equation}\notag
\| \p_x (\Pi_{\leq N} f - f_N) \|_{\mathcal{H}^1} \lesssim \varepsilon^2 \| \Pi_{>N} f, \p_x f \|_{\mathcal{H}^1} \lesssim N^{-100} \, ,
\end{equation}
\begin{equation}\notag
\| \langle v \rangle^{10} \p_x (f - f_N) \|_{\mathcal{H}^1} \lesssim \varepsilon^2 N^{-50} \, ,
\end{equation}
by reasoning similar to~\eqref{eq:reasoningissimilartome}. Analogous reasoning for the second derivative $\p_x^2 (f-f_N)$ yields
\begin{equation}\notag
\| \langle v \rangle^{10} \p_x^2 (f - f_N) \|_{\mathcal{H}^1} \lesssim \varepsilon^3 N^{-50} \, .
\end{equation}
\end{proof}

\begin{proof}[Proof of~\eqref{eq:estimateonderivsofcoefficients} and~\eqref{eq:yetanotherthingtojustify}]
Recall that $\sf{P}_{{\rm NS},N}^\kappa = (I - (\sf{L}_{22,N}^\kappa)^{-1} \sf{\Pi}_{\leq N} \sf{L}_{{\rm NS}}^\kappa) \sf{P}_{\mu_0}$ and, therefore, 
$$\sf{P}_{\mu_0}^\perp \sf{P}_{{\rm NS},N}^\kappa = - (\sf{L}_{22,N}^\kappa)^{-1} \sf{\Pi}_{\leq N} \sf{L}_{{\rm NS}}^\kappa \sf{P}_{\mu_0} \, . $$
 If $(\sf{L}_{22,N}^\kappa)^{-1} \sf{\Pi}_{\leq N}$ were replaced by $\sf{L}_{22}^{-1}$, then the desired estimates~\eqref{eq:estimateonderivsofcoefficients} and~\eqref{eq:yetanotherthingtojustify} would be evident. Moreover, Lemma~\ref{lem:estimatingdifferentderivatives} contains the necessary error estimates between the two operators.
\end{proof}

Finally, we explain the convergence of the coefficients.
\begin{proof}[Proof of Equation~\ref{eq:convergenceofcoefficients} (Convergence of coefficients)]
First, we address
\begin{equation}\notag
\nabla H_N^\kappa - \nabla H(U_{\rm NS}) = \sf{P}_{\mu_0} v_1 (\sf{P}_{{\rm NS},N}^\kappa - \sf{P}_{\mu_{\rm NS}}) \, ,
\end{equation}
which converges to zero uniformly in $x$ by Lemma~\ref{lem:estimatingdifferentderivatives}. 
Next, we address
\begin{equation}\notag
\begin{aligned}
B_N^\kappa - B = \sf{P}_{\mu_0} v_1 [ (\sf{L}_{{\rm NS},N}^\kappa)^{-1} \sf{P}_{{\rm NS},N}^{\perp,\kappa} (\sf{\Pi}_{\leq N}  v_1  \sf{P}_{{\rm NS},N}^\kappa) - (\sf{L}_{{\rm NS}}^\kappa)^{-1} \sf{P}_{{\rm NS}}^{\perp,\kappa} ( v_1  \sf{P}_{\mu_{\rm NS}}) ] \, .
\end{aligned}
\end{equation}
We divide the inside bracket into the terms
\begin{equation}\notag
(\sf{L}_{{\rm NS},N}^\kappa)^{-1} \sf{P}^{\perp,\kappa}_{{\rm NS},N} \sf{\Pi}_{\leq N} v_1 (\sf{P}_{{\rm NS},N}^\kappa - \sf{P}_{\mu_{\rm NS}}) \, ,
\end{equation}
\begin{equation}\notag
((\sf{L}_{{\rm NS},N}^\kappa)^{-1} \sf{\Pi}_{\leq N} - L^{-1}_{\rm NS}) \sf{P}^\perp_{\mu_{\rm NS}} v_1 \sf{P}_{\mu_{\rm NS}} \, ,
\end{equation}
and
\begin{equation}\notag
(\sf{L}_{{\rm NS},N}^\kappa)^{-1} \sf{\Pi}_{\leq N} (\sf{P}^{\perp,\kappa}_{{\rm NS},N} - \sf{P}^\perp_{\mu_{\rm NS}}) v_1 \sf{P}_{\mu_{\rm NS}} \, ,
\end{equation}
each of which converges to zero uniformly in $x$ by Lemma~\ref{lem:estimatingdifferentderivatives}.
\end{proof}

\subsection{Qualitative Galerkin approximation}

In this section, we demonstrate existence for the Galerkin-truncated system:
\begin{proposition}
	\label{pro:existenceforgalerkintruncated}
Let $0 < \kappa \ll \varepsilon \ll_\kappa 1$, $N \geq 10$, and $\eta \geq 0$. There exists a solution $f \in H^1 \mathcal{H}_\kappa^1$ to the approximate system~\eqref{eq:approximatesystem}-\eqref{eq:phaseconditionapp}.
\end{proposition}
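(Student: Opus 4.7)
The plan is to prove existence for the elliptically regularized, Galerkin truncated system by combining bounded-domain Lax-Milgram with a limiting procedure, leveraging the \emph{a priori} estimates of Proposition~\ref{pro:aprioriapproximatestimates}. First I would reduce to the homogeneous constraint $d=0$: pick any smooth, compactly supported $f_d$ valued in $X_N := \mathrm{image}(\Pi_{\leq N})$ with $\ell_\varepsilon(Pf_d(0))=d$, and work with $\tilde f = f - f_d$, which satisfies the homogeneous constraint and the equation with modified right-hand side $\tilde z = \Pi_{\leq N} z - (-\eta \partial_x^2 + \Pi_{\leq N}(v_1-s_0)\Pi_{\leq N}\partial_x + L^\kappa_{{\rm NS},N})f_d$, still belonging to $H^2\mathcal{H}^{-1}_\kappa$.

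Second, I would observe that, since $\tilde f = \Pi_{\leq N}\tilde f \in X_N$ (a finite-dimensional subspace of $L^2_v$), the problem is a system of second-order linear ODEs on $\mathbb{R}$ of dimension $\dim X_N$, with smooth coefficients depending on $x$ through $\mu_{\rm NS}(x)$. On each bounded interval $[-R,R]$ with zero Dirichlet boundary conditions, the natural bilinear form
\[
B_R(f,g) = \eta \langle \partial_x f,\partial_x g\rangle + \langle \Pi_{\leq N}(v_1-s_0)\Pi_{\leq N}\partial_x f,g\rangle + \langle L^\kappa_{{\rm NS},N} f,g\rangle
\]
on $H^1_0([-R,R];X_N)$ is continuous and, using integration by parts on the transport term (whose boundary contribution vanishes by Dirichlet conditions), the coercivity bound on $L^\kappa_{\rm NS}$ from Lemma~\ref{lem:propertiesofL} for small $\varepsilon$, and the $-\eta\partial_x^2$ term combined with Poincar\'e on the bounded interval, becomes coercive on the closed subspace of $H^1_0([-R,R];X_N)$ satisfying $\ell_\varepsilon(P\tilde f(0))=0$. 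Lax-Milgram then produces a unique solution $\tilde f_R$, and elliptic regularity upgrades $\tilde f_R$ to $H^2\mathcal{H}^1_\kappa$ on the interval.

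Third, I would pass $R\to\infty$. Extend $\tilde f_R$ by zero. Because $\tilde f_R$ satisfies the equation on $(-R,R)$ with Dirichlet boundary conditions, the \emph{a priori} estimate of Proposition~\ref{pro:aprioriapproximatestimates}, applied on $[-R,R]$ (all integrations by parts therein are purely interior or pick up vanishing boundary terms thanks to Dirichlet data), yields a uniform-in-$R$ bound $\|\tilde f_R\|_{H^2_\varepsilon\mathcal{H}^1_\kappa([-R,R])} \lesssim_\kappa \varepsilon^{-1}\|\tilde z\|_{H^2_\varepsilon\mathcal{H}^{-1}_\kappa}$. Extracting a weakly convergent subsequence in $H^2_{\mathrm{loc}}(\mathbb{R};X_N)$ and passing to the limit in the linear equation and in the continuous linear constraint at $x=0$, we recover $\tilde f\in H^1\mathcal{H}^1_\kappa$ solving the homogeneous-constraint version of the equation on all of $\mathbb{R}$; adding $f_d$ back yields the desired $f$.

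The main obstacle is the fourth step, namely verifying that the full-line \emph{a priori} estimate actually applies uniformly to the Dirichlet-truncated solutions. The macroscopic piece of Proposition~\ref{pro:aprioriapproximatestimates} ultimately invokes Proposition~\ref{pro:macroscopicestimate}, which is a center-manifold/ODE statement relying on exponential dichotomies on \emph{both} half-lines. One must check that its hypotheses survive when the solution is compactly supported rather than prescribed as decaying at $\pm\infty$; since Dirichlet data are stronger than decay (they place $\tilde f_R$ at $0\in X_N$ outside $[-R,R]$), the argument should still close, but this requires carefully tracking that the reconstructed macroscopic ODE from the linearized Chapman-Enskog expansion remains subject to the same transversality/constraint at $x=0$. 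If this turns out to be awkward, an alternative is to avoid Dirichlet truncation entirely and invoke Fredholm theory on $\mathbb{R}$: for each fixed $\eta>0$, $T_\eta:H^2\cap\{\ell_\varepsilon=0\}\to L^2$ is a compact perturbation of the constant-coefficient operator built from $\mu_0$ (using exponential convergence $\mu_{\rm NS}\to \mu_L,\mu_R$), hence Fredholm of index zero, and the a priori estimate rules out nontrivial kernel.
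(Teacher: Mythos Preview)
Your bounded-domain Lax--Milgram strategy has two genuine gaps. First, the bilinear form is \emph{not} coercive on $H^1_0([-R,R];X_N)$ for large $R$: since $L^\kappa_{\rm NS}$ has a five-dimensional kernel, you only get $B_R(f,f) \geq \eta\|\partial_x f\|^2 + c\|P^\perp f\|^2 - C\varepsilon^2\|Pf\|^2$, and the Poincar\'e constant $C_R \sim R^2$ makes the last term swamp the first once $R \gtrsim \sqrt{\eta}/\varepsilon$. You could recover existence on $[-R,R]$ via a G\aa rding inequality plus Fredholm-on-a-bounded-domain argument, but that is not what you wrote. Second, and more seriously, the \emph{a priori} estimate of Proposition~\ref{pro:aprioriapproximatestimates} does not transfer cleanly to the Dirichlet-truncated problem: the Kawashima step and the $\partial_x$-differentiated energy estimate produce boundary terms involving $\partial_x f(\pm R)$ and $\partial_x^2 f(\pm R)$, which do not vanish under Dirichlet data alone; and, as you yourself flag, the macroscopic estimate (Proposition~\ref{pro:macroscopicestimate}) is a whole-line ODE statement relying on exponential dichotomies at both ends, with no obvious analogue on a finite interval. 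You identify this obstacle but do not resolve it.

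Your Fredholm alternative is closer in spirit to what actually works, but is mis-stated: $T_\eta$ is \emph{not} a compact perturbation of a single constant-coefficient operator built from $\mu_0$, because $\mu_{\rm NS}$ converges to two \emph{different} Maxwellians $\mu_L \neq \mu_R$ as $x\to\mp\infty$, so the coefficient matrix does not decay at infinity. What is true is that the operator is asymptotically constant with two distinct limits, and Fredholmness then hinges on hyperbolicity of the two asymptotic matrices together with a dimension count of stable and unstable subspaces. This is exactly the route the paper takes: it rewrites the approximate system as a first-order ODE $\mathcal{U}' = \mathbb{A}(x)\mathcal{U} + (\text{forcing})$ in dimension $5+2r$ with $\mathcal{U} = (Pf, P^\perp f, \partial_x P^\perp f)$, proves that the asymptotic matrices $\mathbb{A}_{L,R}$ are hyperbolic (Lemma~\ref{lem:hyperbolicity}, which itself requires a Chapman--Enskog-type argument to rule out imaginary eigenvalues), counts $\dim E^u(\mathbb{A}_L) + \dim E^s(\mathbb{A}_R) = (5+2r)+1$ by a spectral-perturbation argument sending $\eta\to\infty$ (Lemma~\ref{lem:largeetaholyshit}), and then invokes the abstract linear-ODE solvability Lemma~\ref{lem:solvabilityoflinearODEs} together with the uniqueness coming from Proposition~\ref{pro:aprioriapproximatestimates}. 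The hyperbolicity and dimension-counting steps are the real content here, and neither your bounded-domain scheme nor your one-line Fredholm sketch supplies them.
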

Proposition~\ref{pro:existenceforgalerkintruncated} will follow immediately from Lemmas~\ref{lem:hyperbolicity},~\ref{lem:largeetaholyshit}, and~\ref{lem:solvabilityoflinearODEs} below. With this in hand, we will complete the proof of Proposition~\ref{pro:basicexistenceforkappareg}.

We rewrite the approximate system as a $5+2r$-dimensional first-order ODE in the image of $\Pi_{\leq N}$, where $r = r(N)$ is the dimension of the finite-dimensional microscopic subspace, namely, the image of the projection $\Pi^\perp \Pi_{\leq N}$. The unknown is
\begin{equation}\notag
\mathcal{U} = 
\begin{bmatrix}
u \\ v \\ w
\end{bmatrix}
=
\begin{bmatrix}
PF \\ P^\perp F \\ \p_x P^\perp F
\end{bmatrix} \, .
\end{equation}
After $x$-integrating the $u''$ equation, we have
\begin{equation}\notag
\begin{aligned}
- \eta u' + A_{11} u + A_{12} v &= 0 \\
v' &= w \\
- \eta w' + A_{21} u' + A_{22} w + L_{21,N}^\kappa u + L_{22,N}^\kappa v &= z_N \, ,
\end{aligned}
\end{equation}
where $A_{11} = P (v_1-s_0) P$, $A_{12} = P (v_1-s_0) P^\perp$, etc. We substitute the $u'$ equation into the $w'$ equation to obtain the ODE
\begin{equation}\notag
\mathcal{U}' = \mathbb{A}(x) \mathcal{U} + \begin{bmatrix} 0 & 0 & z_N \end{bmatrix} \, ,
\end{equation}
where
\begin{equation}\notag
\mathbb{A} = 
\eta^{-1} \begin{bmatrix}
A_{11} & A_{12} & 0 \\
0 & & \eta I \\
\eta^{-1} A_{21}A_{11} - L_{21,N}^\kappa & \eta^{-1} A_{21} A_{12} - L_{22,N}^\kappa & A_{22}
\end{bmatrix} \, .
\end{equation}
We write $\mathbb{A}_{L,R}$ to denote the asymptotic operators as $x \to \mp \infty$. We shall require the following lemma on $\mathbb{A}_{L,R}$.
\begin{lemma}
	\label{lem:hyperbolicity}
	For $0 < \kappa \ll 1$, $0 < \varepsilon \ll_\kappa 1$, $\eta > 0$, and $N \gg_{\kappa,\varepsilon,\eta} 1$, the asymptotic matrices $\mathbb{A}_{L,R}$ are hyperbolic, that is, $\sigma(\mathbb{A}_{L,R}) \cap i\R = \emptyset$.
\end{lemma}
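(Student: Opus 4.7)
The strategy is to translate the ODE eigenvalue problem into a scalar second-order equation on $\operatorname{Range}(\Pi_{\leq N})\subset L^2_v$ and then combine a coercivity argument for the collision operator with a linearized Chapman--Enskog reduction to the asymptotic Navier--Stokes pencil. First, I would retrace the derivation of the first-order ODE from~\eqref{eq:approximatesystem} (which integrated the $P$-projected equation once): $\lambda$ is an eigenvalue of $\mathbb{A}_{L,R}$ iff there exists a nonzero $g \in \operatorname{Range}(\Pi_{\leq N})$ with
\begin{equation*}
\bigl(-\eta\lambda^2 + \lambda\,\Pi_{\leq N}(v_1-s_0)\Pi_{\leq N} + \Pi_{\leq N}\,L^\kappa_{\mu_{L,R}}\,\Pi_{\leq N}\bigr)g = 0,
\end{equation*}
together, only in the degenerate case $\lambda=0$, with the integration constraint $P(v_1-s_0)g=0$. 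The task is to rule out a nonzero $g$ for $\lambda = i\tau \in i\mathbb{R}$.

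I would then pair this scalar equation with $g$ in the standard $L^2_v$ inner product and take the real part. Since $L^\kappa = L^\kappa_{\mu_0}$ is $L^2_v$-self-adjoint and satisfies $\langle L^\kappa g,g\rangle \geq c_0\|P^\perp g\|_{\mathcal{H}^1_\kappa}^2$ (Lemma~\ref{lem:propertiesofL} extended via~\eqref{eq:coercivityestimate}), while $L^\kappa_{\mu_{L,R}} - L^\kappa$ is an $O(\varepsilon)$ perturbation $\mathcal{H}^1_\kappa \to \mathcal{H}^{-1}_\kappa$ by Lemmas~\ref{lem:mugests}--\ref{lem:lemma10:new}, the real part yields
\begin{equation*}
\eta\tau^2\|g\|^2 + c_0\|P^\perp g\|_{\mathcal{H}^1_\kappa}^2 \leq C\varepsilon\,\|g\|_{\mathcal{H}^1_\kappa}^2.
\end{equation*}
For $\varepsilon$ small enough this absorbs to give both $\|P^\perp g\|_{\mathcal{H}^1_\kappa} \leq C\sqrt{\varepsilon}\,\|Pg\|$ and $|\tau|\leq C\sqrt{\varepsilon/\eta}$, so any imaginary eigenvalue lies in a small disk near zero with an essentially macroscopic eigenvector.

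In this small-$\tau$, near-macroscopic regime I would perform a linearized Chapman--Enskog reduction at the constant-coefficient state $\mu_{L,R}$ as in subsection~\ref{sss:macro}, using Lemmas~\ref{lem:invertibilitytofindgn} and~\ref{lem:estimatingdifferentderivatives} to invert $\Pi_{\leq N}L^\kappa_{\mu_{L,R}}\Pi_{\leq N}$ on the microscopic subspace and to bound the Galerkin- and regularization-induced errors. Substituting the leading relation $P^\perp g \approx -\lambda(L^\kappa_{\mu_{L,R}})^{-1}P^{\perp,\mu_{L,R}}(v_1-s_0)Pg$ and dividing by $\lambda\neq 0$ reduces the scalar equation to a $5\times 5$ pencil problem
\begin{equation*}
\bigl[(\nabla H)(U_{L,R}) - s_0\,\Id - \lambda\,(\eta\,\Id + B(U_{L,R})) + E_{\varepsilon,\kappa,N}\bigr]U_0 = 0
\end{equation*}
on $U_0 := Pg \in \mathbb{U}\cong \R^5$, where $E_{\varepsilon,\kappa,N}$ is made $o(\varepsilon)$ by taking $\eta>0$ fixed, then $\kappa\ll 1$, then $\varepsilon\ll_\kappa 1$, then $N\gg_{\kappa,\varepsilon,\eta}1$. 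At $\varepsilon = 0$ the unperturbed pencil has only real eigenvalues (via similarity to a real symmetric problem in the standard gas-dynamics entropy variables), with a simple zero along the acoustic characteristic $u+c-s_0$; the Lax condition shifts this zero to $u_{L,R}+c_{L,R}-s_0 = \pm\Theta(\varepsilon)$ while all other eigenvalues remain $O(1)$-separated from zero. Standard matrix perturbation theory then rules out $\lambda=i\tau$ with $|\tau|\leq C\sqrt{\varepsilon/\eta}$ provided $E_{\varepsilon,\kappa,N}=o(\varepsilon)$, forcing $U_0=0$ and hence $g=0$. The case $\lambda=0$ is handled similarly, with the extra constraint $P(v_1-s_0)g=0$ together with the invertibility of $(\nabla H)(U_{L,R})-s_0\,\Id$ forcing $U_0=0$.

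The main obstacle is precisely the quantitative control of the reduction error $E_{\varepsilon,\kappa,N}$: because the decisive smallest eigenvalue of the limiting macroscopic pencil is only $\Theta(\varepsilon)$, the Galerkin- and regularization-induced perturbations must be shown to be $o(\varepsilon)$ uniformly in the small parameters, which is why the lemma is stated in the nested regime $\kappa \ll 1$, $\varepsilon \ll_\kappa 1$, $N \gg_{\kappa,\varepsilon,\eta} 1$ with $\eta>0$ fixed throughout.
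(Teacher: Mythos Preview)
Your overall architecture matches the paper's: reduce to the scalar second-order eigenvalue equation on $\operatorname{Range}(\Pi_{\leq N})$, control the microscopic part by an energy estimate, perform a linearized Chapman--Enskog reduction to a $5\times 5$ macroscopic pencil, and rule out imaginary roots there. The $\lambda=0$ case is also handled the same way. The substantive difference is that you use only the \emph{basic} $L^2$ energy estimate at the kinetic level, whereas the paper uses the Kawashima compensator. The paper's combined (basic $+$ twisted) estimate yields
\[
|\tau|\,\|f\|_{\mathcal H^1_\kappa} + \eta\tau^2\|f\|_{L^2} + \|P^\perp f\|_{\mathcal H^1_\kappa} \lesssim \varepsilon\,\|Pf\|,
\]
so in particular $|\tau|\lesssim\varepsilon$ \emph{uniformly in $\eta$}. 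Your basic estimate gives only $\eta\tau^2\lesssim\varepsilon^2$, hence $|\tau|\lesssim\varepsilon/\sqrt{\eta}$, with no $\eta$-free bound on $|\tau|$. (Incidentally, your stated bound $\|P^\perp g\|\leq C\sqrt{\varepsilon}\,\|Pg\|$ should be $C\varepsilon$, since the perturbation $\Gamma_\kappa[\mu_0^{-1/2}(\mu_{L,R}-\mu_0),\cdot]$ is purely microscopic and pairs with $P^\perp g$; but improving this does not fix what follows.)

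This creates a genuine gap in the parameter regime. The lemma is stated with $\varepsilon\ll_\kappa 1$ chosen \emph{before} $\eta>0$ is fixed, and the application (passing $\eta\to 0^+$ in Proposition~\ref{pro:basicexistenceforkappareg}, and sending $\eta\to\infty$ in Lemma~\ref{lem:largeetaholyshit}) requires hyperbolicity for $\eta$ arbitrary relative to $\varepsilon$. Your nested order ``$\eta$ fixed, then $\kappa,\varepsilon,N$'' reverses this. Concretely, with only $|\tau|\lesssim\varepsilon/\sqrt{\eta}$ the Chapman--Enskog remainder $g_N^\kappa=O\bigl((|\tau|+\eta\tau^2)\|V\|\bigr)=O(\varepsilon^2/\sqrt{\eta})$ is \emph{not} $o(\varepsilon)$ uniformly as $\eta\to 0$, and the real-eigenvalue perturbation argument on the pencil $(\nabla H-s_0\Id)-\lambda(\eta\Id+B)$ breaks down: the smallest pencil eigenvalue is $\Theta(\varepsilon)$, while the perturbation (after accounting for the $O(1/\eta)$ ill-conditioning of $(\eta\Id+B)^{-1}$ in the density direction) is not small enough to exclude an imaginary root. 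Increasing $N$ does not help, since this error is not a Galerkin error.

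The paper's remedy is precisely the kinetic Kawashima step, which localizes $|\tau|\lesssim\varepsilon$ first; the reduction error is then uniformly $O(\varepsilon^2)$, and the macroscopic pencil is handled by a two-case analysis (perturbing $(\nabla H-s_0\Id)^{-1}$ when $|\tau|(1+\eta)$ is small, and invoking symmetrizability plus the \emph{macroscopic} Kawashima estimate when it is not). Your real-eigenvalue observation is essentially the second case in disguise, but it cannot substitute for the missing first step.
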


\begin{proof}[Proof of Lemma~\ref{lem:hyperbolicity}]
For the sake of contradiction, we suppose that one of the asymptotic matrices has an imaginary eigenvalue $i \tau$, $\tau \in \R$, with non-trivial eigenvector $\mathcal{U}$. We drop the $L,R$ notation. Substituting $w = i\tau v$ and writing $F = \mu_0^{1/2} (u+v)$, we have
\begin{equation}
	\label{eq:illrefthislateR}
\begin{aligned}
\sf{P}_{\mu_0} (v_1 - s_0) F &= i \tau \eta \sf{P}_{\mu_0}F \\
(i\tau \sf{\Pi}_{\leq N}(v_1 - s_0) + \sf{L}_{{\rm NS},N}^\kappa + \tau^2 \eta) F &= 0 \, .
\end{aligned}
\end{equation}
The first equation is redundant when $\tau \neq 0$ but useful when $\tau = 0$. 

When $\tau = 0$, we have that $\sf{L}^\kappa_{{\rm NS},N} F = 0$, that is, $F = \sf{P}^\kappa_{{\rm NS},N} U$ ($U = \sf{P}_{\mu_0}F$). The first equation yields
\begin{equation}\notag
(\nabla H_N^\kappa - s_0I) U = \sf{P}_{\mu_0} (v_1 - s_0) \sf{P}_{{\rm NS},N}^\kappa U = 0 \, .
\end{equation}
However, for $N$ sufficiently large, $\nabla H_N^\kappa - s_0I$ has trivial kernel, which is the desired contradiction.

From now on, assume $\tau \neq 0$. First, we repeat the Kawashima-type energy estimate from the quantitative Galerkin approximation. We write $F = \sqrt{\mu_0} f$ and obtain
\begin{equation}\notag
|\tau| \| f \|_{\mathcal{H}^1_\kappa} + \eta \tau^2 \| f \|_{L^2} + \| P^\perp f \|_{\mathcal{H}^1_\kappa} \lesssim \varepsilon \| Pf \| \, .
\end{equation}
When $|\tau| + \eta \tau^2 \geq C_0 \varepsilon$ for $C_0 \gg 1$, we obtain the desired contradiction $f = 0$.

We now consider $|\tau| + \eta \tau^2 \leq C_0 \varepsilon$. To simplify computations, it will be convenient to normalize $\| U \| = 1$. We repeat the Chapman-Enskog argument, which will be relatively simple because derivatives are replaced by $i\tau$ and we normalize $U$. We write $F = U + \sf{P}^\perp F = \sf{P}_{{\rm NS},N}^\kappa U + V$. Then
\begin{equation}\notag
\eta \tau^2 F + i \tau \sf{\Pi}_{\leq N} (v_1-s_0) \sf{P}_{{\rm NS},N}^\kappa U + i \tau \sf{\Pi}_{\leq N} (v_1-s_0) V + \sf{L}_{{\rm NS},N}^\kappa V = 0 \, ,
\end{equation}
cf. \eqref{eq:galerkinchapman1}. Again, we apply $\sf{P}_{{\rm NS},N}^{\perp,\kappa}$ and invert $\sf{L}_{{\rm NS},N}^\kappa$ to obtain
\begin{equation}\notag
V = - (\sf{L}_{{\rm NS},N}^\kappa)^{-1} \sf{P}_{{\rm NS},N}^{\perp,\kappa} (i \tau \sf{\Pi}_{\leq N} (v_1-s_0) \sf{P}_{{\rm NS},N}^\kappa U + i \tau \sf{\Pi}_{\leq N}  (v_1-s_0) V + \eta \tau^2 V) \, ,
\end{equation}
cf. \eqref{eq:galerkinchapman2}. We insert this into the macroscopic equation in~\eqref{eq:illrefthislateR} to obtain
\begin{equation}\notag
(\nabla H_N^\kappa - s_0I) U - i\tau B_N^\kappa U = i \tau \eta U + g_N^\kappa
\end{equation}
where
\begin{equation}\notag
g_N^\kappa = P (v_1 - s_0) (\sf{L}_{{\rm NS},N}^\kappa)^{-1} [ P_{{\rm NS},N}^{\perp,\kappa} (i \tau \sf{\Pi}_{\leq N}  (v_1-s_0) V) + \eta \tau^2 V ] = O (|\tau| + \eta \tau^2 ) \| V \|_{\mathcal{H}^1_\kappa} = O(\varepsilon^2) \, ,
\end{equation}
and the coefficients $\nabla H_N^\kappa$ and $B_N^\kappa$ are evaluated at the asymptotic endpoint.
We utilize that the microscopic estimate yields $\| V \|_{\mathcal{H}^1_\kappa} \lesssim_\kappa \varepsilon \| U \|$. Hence, 
\begin{equation}\notag
(\nabla H_N^\kappa - s_0I) U - i \tau B_N^\kappa U - i \tau \eta U = O(\varepsilon^2) \, .
\end{equation}
We moreover write
\begin{equation}\notag
(\nabla H(U_{\rm NS}) - s_0I) U - i \tau B U - i \tau \eta U = (\nabla H(U_{\rm NS}) - \nabla H_N^\kappa) U + i \tau (B_N^\kappa - B) U + O(\varepsilon^2) \, .
\end{equation}
We wish to invert $\nabla H(U_{\rm NS}) - s_0I - i \tau B - i \tau \eta I$ with $\| (\nabla H(U_{\rm NS}) - s_0I - i \tau B - i \tau \eta I)^{-1} \| \lesssim \varepsilon^{-1}$. If this is possible, then
\begin{equation}\notag
\| U \| \lesssim \varepsilon^{-1} o_{N \to +\infty}(1) \| U \| + \varepsilon^{-1} (o_{N \to +\infty}(1) + \kappa) |\tau| \| U \| + \varepsilon \, ,
\end{equation}
which contradicts that $\| U \| = 1$.

If $|\tau| + |\tau| \eta \leq 2c_0$ with $c_0 \ll 1$, then we consider the full inverse as a perturbation of $(\nabla H - s_0I)^{-1}$ satisfying $\| (\nabla H - s_0I)^{-1} \| \lesssim \varepsilon^{-1}$.

Otherwise, we rely on symmetrizability and the Kawashima theory in Remark~\ref{rmk:symandkawashima}. The compressible Navier-Stokes are symmetrizable at every non-vacuum state, not merely at $(u_L,s_L)$, and the symmetric positive-definite symmetrizer $A^0$ varies continuously. The desired estimate will be obtained as a consequence of estimates for the inverse of the symmetrized operator $A^0( i (\nabla H - s_0 I) + \tau (B + \eta I) )$,
and, in particular, energy estimates for the equation
\begin{equation}\notag
A^0( i(\nabla H - s_0I) + \tau (B+\eta I) ) U = G \, ,
\end{equation}
for unknown $U$ and right-hand side $G$.
If $|\tau| \eta \geq c_0 \varepsilon$, then a standard energy estimate yields the conclusion, since $\tau \eta A^0$ is coercive. The remaining case is $c_0 \varepsilon \leq |\tau| \leq C_0 \varepsilon$. $A^0 (\nabla H - s_0I)$ satisfies the Kawashima condition with respect to $A^0 B$ because the condition is invariant under small symmetric perturbations of the matrices, the constants in the estimates vary in a controlled way, and we verified the condition at $(u_L,s_L)$. The Kawashima-type energy estimates in Lemma~\ref{lem:steadyestimate} yield that the matrix is invertible with inverse size $O(\varepsilon^{-1})$.\footnote{Notice that $\tau$ multiplies the damping term $B + \eta I$, not the oscillation term $i(\nabla H - s_0I)$.} \end{proof}



\begin{remark}
For $|\tau| \eta \ll \varepsilon$, Lemma~\ref{lem:hyperbolicity} could also be closed by considering $- i \tau \eta I$ as a small perturbation of $\nabla H - s_0I - i \tau B$ and applying the techniques in Proposition~\ref{pro:ODEpropappendix}. However, for the convenient topological argument in Lemma~\ref{lem:largeetaholyshit}, it will be useful to have hyperbolicity of the endpoint matrices $\mathbb{A}_{L,R}$ for arbitrary $\eta$. We re-invoke symmetrizability specifically to handle the case when $|\tau| \eta$ is not small with respect to $\varepsilon$.
\end{remark}

\begin{lemma}
	\label{lem:largeetaholyshit}
	For $0 < \kappa \ll 1$, $0 < \varepsilon \ll_\kappa$, $\eta > 0$, and $N \gg_{\kappa,\varepsilon,\eta} 1$, the asymptotic matrices $\mathbb{A}_{L,R}$ satisfy ${\rm dim} \, E^u(\mathbb{A}_{L,R}) + {\rm dim} \, E^s(\mathbb{A}_{R,L}) = 5+2r+1$, where $E^s$ and $E^u$ denote the stable and unstable subspaces, respectively.
\end{lemma}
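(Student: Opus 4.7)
The strategy I would follow is to show that the dimensions in question are constant in $\eta$ by a continuity argument, then compute them explicitly in the large-$\eta$ limit.

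\textbf{Continuity in $\eta$.} First, by Lemma~\ref{lem:hyperbolicity}, the matrix $\mathbb{A}_{L,R}(\eta)$ is hyperbolic for every $\eta>0$ at the fixed parameters $0<\kappa\ll1$, $0<\varepsilon\ll_\kappa 1$, $N\gg 1$. Since the entries of $\mathbb{A}_{L,R}(\eta)$ depend analytically on $\eta$, its eigenvalues vary continuously and, by hyperbolicity, cannot cross the imaginary axis. Consequently the integer-valued counts $\dim E^{u,s}(\mathbb{A}_{L,R}(\eta))$ are constant on $(0,\infty)$, and it suffices to compute them at any single convenient value of $\eta$.

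\textbf{Two-scale analysis as $\eta\to\infty$.} I would eliminate $w=\lambda v$ from the eigenvalue equation $\mathbb{A}\mathcal U=\lambda\mathcal U$ to obtain the block system
\[
(A_{11}-\eta\lambda I)u+A_{12}v=0,\qquad (\lambda A_{21}+L_{21,N}^\kappa)u+(\lambda A_{22}+L_{22,N}^\kappa-\eta\lambda^2)v=0
\]
(up to the sign convention in $L^\kappa$). In the limit $\eta\to\infty$ the $5+2r$ eigenvalues split into two families. \emph{Macroscopic} eigenvalues scale as $\lambda=\mu/\eta$ with $\mu=O(1)$: the second equation forces $L_{{\rm NS},N}^\kappa f=0$ at leading order (where $f=u+v$), so $f=\mathsf P_{{\rm NS},N}^\kappa U$ with $U=Pf=u$; substituting into the first equation then gives the $5\times 5$ problem $(\nabla H_N^\kappa-s_0 I)U=\mu U$. \emph{Microscopic} eigenvalues scale as $\lambda=\nu/\sqrt\eta$ with $\nu=O(1)$: the first equation forces $u=O(\eta^{-1/2})\to 0$, and the second reduces to $L_{22,N}^\kappa v=\nu^2 v$. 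Because $L_{22,N}^\kappa$ is positive definite on the $r$-dimensional microscopic Galerkin subspace by the coercivity bound~\eqref{eq:coercivityestimate}, its $r$ positive eigenvalues produce $2r$ values $\nu=\pm\sqrt{\nu^2}$ split evenly between positive and negative. A degree count on the characteristic polynomial of $\mathbb{A}$, which has degree $5+2r$ in $\lambda$, confirms that these two families exhaust the spectrum.

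\textbf{Dimension count.} By~\eqref{eq:convergenceofcoefficients} and perturbation, $\nabla H_N^\kappa(U_{L,R})$ is close to $\nabla H(U_{L,R})$. The Lax $3$-shock admissibility established in Appendix~\ref{sec:appendixode} implies that $\nabla H(U_L)-s_0 I$ has one positive and four negative eigenvalues, while $\nabla H(U_R)-s_0 I$ has five negative eigenvalues. Combining with the microscopic $\pm$ pairs yields
\[
\dim E^u(\mathbb{A}_L)=1+r,\qquad \dim E^s(\mathbb{A}_R)=5+r,
\]
whose sum is $(1+r)+(5+r)=5+2r+1$, as claimed.

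\textbf{Main obstacle.} The principal technical work will be making the two-scale asymptotic decomposition rigorous and confirming that all $5+2r$ eigenvalues are accounted for—standard matrix perturbation arguments applied to the rescaled characteristic polynomial at each scale should suffice, as the scales $\lambda\sim\eta^{-1}$ and $\lambda\sim\eta^{-1/2}$ are well separated. Conceptually, the one-dimensional excess over the balanced hyperbolicity count $5+2r$ reflects the transversal intersection of the unstable manifold at $U_L$ with the stable manifold at $U_R$ characteristic of a one-parameter (translation) family of Lax shock connections.
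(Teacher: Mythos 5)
Your proposal is correct and follows essentially the same strategy as the paper: continuity in $\eta$ via hyperbolicity reduces to one value of $\eta$, and then the two-scale asymptotics as $\eta\to\infty$ (macroscopic $\lambda\sim\eta^{-1}$ governed by $\nabla H_N^\kappa - s_0 I$, microscopic $\lambda\sim\eta^{-1/2}$ splitting into $\pm$ pairs by coercivity) together with the Lax shock count give the result. The paper packages the two-scale analysis more formally via the rescaling $\tilde w = \eta^{1/2} w$, writing $\eta^{1/2}\tilde{\mathbb{A}} = T_0 + \delta T_1 + \delta^2 T_2$ with $\delta = \eta^{-1/2}$ and invoking semisimple eigenvalue perturbation theory at the 5-dimensional kernel of $T_0$, which is precisely the rigorous version of the matrix-perturbation argument you flag as the remaining obstacle.
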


\begin{proof}
We again drop the $L,R$ notation. The eigenvalues of $\mathbb{A}$ are continuous in the parameter $\eta$ and, since $\mathbb{A}$ is hyperbolic for every $\eta$ by Lemma~\ref{lem:hyperbolicity}, they cannot cross the imaginary axis. Therefore, it is paradoxically enough to determine the dimensions of $E^u$ and $E^s$ when $\eta \gg 1$. We make the ``elliptic" change of variables $(u,v,w) \mapsto (u,v,\tilde{w})$, where $\tilde{w} = \eta^{\sfrac 12} w$. The conjugated matrix $\tilde{\mathbb{A}}$ acting in the new variables is given by
\begin{equation}
	\label{eq:thisisthespectrumIwannaknow}
\eta^{1/2} \tilde{\mathbb{A}} = 
\begin{bmatrix}
0 & 0 & 0 \\
0 & 0 & I \\
- L_{21} & - L_{22} & 0 
\end{bmatrix} +
\eta^{-1/2} \begin{bmatrix} A_{11} & A_{12} & 0 \\
0 & 0 & 0 \\
\eta^{-\sfrac 12} A_{21} A_{11} & \eta^{-\sfrac 12} A_{21} A_{12} & 0
\end{bmatrix} =: T_0 + \delta T_1 + \delta^2 T_2 \, ,
\end{equation}
where $\delta = \eta^{-\sfrac 12}$. Thus we encounter a finite-dimensional spectral perturbation problem.

First, we analyze the spectrum of $T_0$. We will prove that
\begin{equation}\notag
{\rm dim} \, E^u(T_0) = {\rm dim} \, E^s(T_0) = r \text{ and } {\rm dim} \, \ker T_0 = 5 \, .
\end{equation}
We begin with the non-zero eigenvalues. The $(v,\tilde{w})$ subspace is invariant under $T_0$; its restriction to this subspace is $\tilde{T}_0 = \begin{bmatrix} 0 & I \\ - L_{22,N}^\kappa & 0 \end{bmatrix}$. The matrix $- L_{22,N}^\kappa$ is bounded below: $\| f \|_{L^2}^2 \lesssim - \langle L_{22,N}^\kappa f, f \rangle$ for microscopic $f$. 
Hence, its spectrum belongs to $\{ {\rm{Re}} \,  \lambda > 0 \}$. 
We observe that $\tilde{T}_0^2 = \begin{bmatrix}
-L_{22,N}^\kappa & 0 \\
0 & - L_{22,N}^\kappa
\end{bmatrix}$, and therefore $\tilde{T}_0$ is hyperbolic, having no eigenvalues on the imaginary axis. The matrix $\tilde{T}_0$ has the further property that it conjugates to $-\tilde{T}_0$:
\begin{equation}
	\label{eq:conjugationofT}
\begin{bmatrix}
I & 0 \\
0 & -I
\end{bmatrix} \begin{bmatrix} 0 & I \\ - L_{22,N}^\kappa & 0 \end{bmatrix} \begin{bmatrix}
I & 0 \\
0 & -I
\end{bmatrix} = - \begin{bmatrix} 0 & I \\ - L_{22,N}^\kappa & 0 \end{bmatrix} \, .
\end{equation}
This is motivated by the observation that $\lambda$ is an eigenvalue of $\tilde{T}_0$ with eigenvector $(v,w)$ if and only if $v = \lambda w$ and $-L_{22,N}^\kappa v = \lambda^2 v$. In particular,~\eqref{eq:conjugationofT} implies that eigenvalues of $\tilde{T}_0$ arise only in $\pm$ pairs, and the eigenspaces satisfy $E_{\lambda} = \begin{bmatrix}
I & 0 \\
0 & -I
\end{bmatrix} E_{-\lambda}$ for each eigenvalue $\lambda$.

The operator $T_0$ has a five-dimensional kernel, namely, $\ker T_0 = \{ (u,v,0) : u+v = P^\kappa_{{\rm NS},N} u \}$. This accounts for the remaining eigenvalues of $T_0$.

The projection $\begin{bmatrix}
P^\kappa_{{\rm NS},N} &  \\
 & 0
\end{bmatrix}$ has range $\ker T_0$ and kernel $\{ (0,v,\tilde{w}) \}$, which is a complementary invariant subspace to $\ker T_0$. Therefore, $P_{{\rm NS},N}^\kappa$ is the spectral projection onto $\ker T_0$, and $I - P_{{\rm NS},N}^\kappa$ is the complementary spectral projection.

We now return to the spectrum of~\eqref{eq:thisisthespectrumIwannaknow}. Since zero is semisimple, the analytic perturbation theory yields that the perturbed eigenvalues $\lambda_k$, $1 \leq k \leq 5$ (counted with algebraic multiplicity), satisfy $\lambda_k = \delta \mu_k + o(\delta)$, where $\mu_k$ is the $k$th eigenvalue of $P_{{\rm NS},N}^\kappa T_1|_{\ker T_0}$. See~\cite[p. 265-269]{Baumgartel}. If we parameterize $\ker T_0$ by $u \mapsto P_{{\rm NS},N} u$, where $u \in \ker T_0$, then this operator is exactly $\mu_0^{-\sfrac 12} (\nabla H_N - s_0I) \mu_0^{\sfrac 12} = P (v_1 - s_0) P_{{\rm NS},N}^\kappa$, where $\nabla H_N$ is evaluated at the asymptotic endpoint. Recall that $|\nabla H_N^\kappa - \nabla H(U_{\rm NS})| = o(1)$ as $N \to +\infty$ and $\kappa \rightarrow 0$ and we recognize $\nabla H(U_{\rm NS})$ as the Euler flux function in suitable coordinates. This implies that, at the left endpoint, the null eigenvalues of $T_0$ become four stable eigenvalues and one unstable eigenvalue of $\mathbb{A}_L$. At the right endpoint, the null eigenvalues of $T_0$ become five stable eigenvalues and no unstable eigenvalues of $\mathbb{A}_R$. (The shock speed $s_0$ is slower than the fastest characteristic on the left, faster than the fastest characteristic on the right.) This completes the proof.
\end{proof}

\begin{proof}[Proof of Proposition~\ref{pro:existenceforgalerkintruncated}]
Fix $0 < \kappa \ll 1$ and $0 < \varepsilon \ll_\kappa 1$. For $0 < \eta \ll 1$ and $N \gg_{\kappa,\eta} 1$, the conditions of Lemma~\ref{lem:solvabilityoflinearODEs} (solvability) below are verified by Lemma~\ref{lem:hyperbolicity} (hyperbolicity), Lemma~\ref{lem:largeetaholyshit} (dimension counting), and Proposition~\ref{pro:aprioriapproximatestimates} (\emph{a priori} estimates and uniqueness).
\end{proof}

\begin{proof}[Proof of Proposition~\ref{pro:basicexistenceforkappareg}]
Let $f^{(N,\eta)}$ denote the solution to the approximate (conjugated) system~\eqref{eq:approximatesystem} with constraint~\eqref{eq:phaseconditionapp}. We consider $N \to +\infty$ and $\eta \to 0^+$. By \emph{a priori} estimates and weak compactness, there exists a subsequence, which we do not label, such that $f^{(N,\eta)} \rightharpoonup f$ in $H^2 \mathcal{H}^1$. Subsequently, we have that $(v_1-s_0)\p_x f^{(N,\eta)}$ converges to $(v_1-s_0)\p_x f$, $\Pi_{\leq N} E$ converges to $E$, and $\eta \p_x^2 f^{(N,\eta)}$ converges to zero, in the sense of distributions.

To verify convergence of $L_{{\rm NS},N}^\kappa f^{(N,\eta)}$, we consider $\langle L_{{\rm NS},N}^\kappa f^{(N,\eta)}, \psi \rangle$, where $\psi = c(x) \phi(v)$, $c \in C^\infty_0(\R)$ and $\phi$ belongs to our Galerkin basis. For $N \gg 1$ depending on $\phi$, this expression is simply $\langle L_{\rm NS}^\kappa f^{(N,\eta)}, \psi \rangle$. After integrating by parts in the collision operator to move a $v$ derivative onto $\phi$, see Section~\ref{sec:propertiesofLandGamma}, it is relatively straightforward to pass to the limit using the weak convergence.

Finally, since $Pf^{(N,\eta)} \in H^2_x$ is uniformly bounded, we can pass to a further subsequence to ensure that $\ell_\varepsilon(Pf^{(N,\eta)}(0)) \to \ell_\varepsilon(Pf(0))$, which verifies the one-dimensional constraint.
\end{proof}

Finally, we state the abstract ODE lemma which we are invoking.

\begin{lemma}[Linear ODE solvability]
\label{lem:solvabilityoflinearODEs}
Consider the ODE
\begin{equation}\notag
\dot u = A(x) u + f
\end{equation}
where $u : \R \to \R^n$, $A \in L^\infty(\R;\R^{n \times n})$, and $f \in L^\infty(\R;\R^n)$, supplemented by conditions
\begin{equation}\notag
\ell_1 \cdot u(0) = c_1, \hdots, \ell_m \cdot u(0) = c_m \, ,
\end{equation}
where $\ell_1, \hdots, \ell_m \in \R^n$ are linearly independent, and $\vec{c} \in \R^m$.

 Suppose that
 \begin{enumerate}
 \item $A$ is continuous and exponentially asymptotic to hyperbolic matrices $A_{\pm}$ as $x \to \pm \infty$,
 \item\label{item:dimensionality} $\dim E^u_- + \dim E^s_+ = n+m$, where $E^u_-$ is the unstable subspace of $A_-$ and $E^s_+$ is the stable subspace of $A_+$, and
 \item $u=0$ is the unique bounded solution to the above problem when $f=0$ and $\vec{c} = 0$.
 \end{enumerate}
Then, to each $f \in L^\infty(\R;\R^n)$ and $\vec{c} \in \R^m$, there exists a unique bounded solution $u$ to the problem.
 \end{lemma}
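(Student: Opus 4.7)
The plan is to recast the problem using exponential dichotomy theory and reduce existence to a dimension count. Let $L : u \mapsto (\dot u - Au,\, (\ell_i \cdot u(0))_{i=1}^m)$ be the bounded linear operator from $\{u \in C(\R;\R^n) : u, \dot u \in L^\infty\}$ into $L^\infty(\R;\R^n) \times \R^m$. Assumption~(3) is precisely the statement $\ker L = \{0\}$, so it remains to show $L$ is surjective. Since $A(x) \to A_\pm$ exponentially and $A_\pm$ are hyperbolic, the classical roughness theory of Coppel yields exponential dichotomies for $\dot u = A(x)u$ on each of the half-lines $(-\infty,0]$ and $[0,+\infty)$ with dichotomy projections $P^{s/u}_-(x)$ and $P^{s/u}_+(x)$, respectively. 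The ranks of these projections at $x=0$ are determined by the asymptotic endpoints: $\mathrm{rank}\, P^u_-(0) = \dim E^u_-$ and $\mathrm{rank}\, P^s_+(0) = \dim E^s_+$.

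Let $W^- := \mathrm{Ran}\, P^u_-(0) \subset \R^n$ be the space of initial values at $x=0$ of homogeneous solutions bounded on $(-\infty,0]$, and similarly $W^+ := \mathrm{Ran}\, P^s_+(0)$. Then $\dim W^- = \dim E^u_-$ and $\dim W^+ = \dim E^s_+$, so by hypothesis~(2),
\[
\dim(W^- \cap W^+) \ge \dim W^- + \dim W^+ - n = \dim E^u_- + \dim E^s_+ - n = m.
\]
Bounded homogeneous solutions of $\dot u = A(x)u$ on all of $\R$ are in bijection with $W^- \cap W^+$. The linear map $T : W^- \cap W^+ \to \R^m$, $T(u(0)) = (\ell_1\cdot u(0),\ldots,\ell_m\cdot u(0))$, is injective by assumption~(3); combined with the lower bound above, this forces $\dim(W^- \cap W^+) = m$ and $T$ to be an isomorphism.

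For existence, I will first construct a bounded particular solution $u_p \in L^\infty$ to $\dot u = Au + f$ (ignoring the boundary conditions), then add a bounded homogeneous solution to enforce the constraints. On $(-\infty,0]$, set
\[
u_p(x) = \int_{-\infty}^x \Phi(x,y) P^s_-(y) f(y)\,dy - \int_x^0 \Phi(x,y) P^u_-(y) f(y)\,dy,
\]
and analogously on $[0,+\infty)$ with $P^{s/u}_+$. The exponential dichotomy estimates $\|\Phi(x,y) P^s_-(y)\| \lesssim e^{-\alpha(x-y)}$ for $x\ge y$ and $\|\Phi(x,y) P^u_-(y)\| \lesssim e^{-\alpha(y-x)}$ for $x\le y$ give $u_p \in L^\infty$, and any discrepancy between the two half-line formulas at $x=0$ lies in $W^- + W^+$ and can be reabsorbed by modifying $u_p$ by a solution lying in these subspaces on each half-line. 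Having obtained $u_p$, set $\vec{c}' := \vec c - (\ell_i \cdot u_p(0))_{i=1}^m$, and use the surjectivity of $T$ to choose $v_0 \in W^- \cap W^+$ with $T(v_0) = \vec c'$; then $u := u_p + \Phi(\cdot, 0) v_0$ is a bounded solution of the full problem.

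The main obstacle is the careful construction of the particular solution and the matching at $x=0$: one must verify that the half-line formulas above can be reconciled into a single bounded function on $\R$, which amounts to checking that the jump $u_p(0^+) - u_p(0^-)$ lies in $W^- \oplus W^+$ (equivalently, that the inhomogeneous problem restricted to the ``dichotomy-complementary'' directions is uniquely solvable). This reduction is standard once the dichotomy projections are in hand, and together with the isomorphism property of $T$ it completes the proof.
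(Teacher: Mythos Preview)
Your approach is correct and is precisely the standard exponential-dichotomy argument. The paper does not actually prove this lemma; it merely cites Henry's book, Kapitula--Promislow, and Appendix~\ref{sec:analysislinearizedmacro} for ``related arguments,'' and your proof is exactly the argument those references contain. One small point worth making explicit in your matching step: once you have established $\dim(W^-\cap W^+)=m$ via the injectivity of $T$, the identity $\dim(W^-+W^+)=\dim W^-+\dim W^+-\dim(W^-\cap W^+)=(n+m)-m=n$ shows $W^-+W^+=\R^n$, so the jump $u_p(0^+)-u_p(0^-)$ can always be written as $w_--w_+$ with $w_\pm\in W^\pm$, which is what your reabsorption requires.
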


See Appendix~\ref{sec:analysislinearizedmacro}, \cite[p. 136-142]{HenryBook}, or \cite[Chapter 3]{KapitulaPromislow} for related arguments.


\subsection{Extending the range of $\varepsilon$ and taking $\kappa \to 0^+$}
\label{sec:methodofcontinuity}

We now show how to use the method of continuity to extend the range of $\varepsilon$:
\begin{proposition}
	\label{pro:methodofcontinuity}
For all $0 < \kappa \ll \varepsilon \ll 1$, purely microscopic $z$ satisfying $z \in H^2 \mathcal{H}^{-1}_{-10,\kappa}$, and $d \in \R$, there exists a unique solution $f \in H^2 \mathcal{H}^1_{-10,\kappa}$ to the $\kappa$-regularized system satisfying $\ell_\varepsilon(Pf(0)) = d$.
\end{proposition}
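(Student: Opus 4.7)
The plan is to fix $\varepsilon_1 > 0$ sufficiently small that the uniform-in-$\kappa$ a priori bound from Proposition~\ref{pro:basicexistence} is valid on $(0,\varepsilon_1]$, and then, for each fixed $\kappa \in (0,\varepsilon_1]$, extend the existence range from $\varepsilon \ll_\kappa 1$ furnished by Proposition~\ref{pro:basicexistenceforkappareg} up to the uniform range $\varepsilon \in (0,\varepsilon_1]$ via the method of continuity in $\varepsilon$. To this end, write $\mathcal{L}^\kappa_\varepsilon := (v_1 - s_0) \p_x + L^\kappa_{\mathrm{NS}(\varepsilon)}$ for the linearized, $\kappa$-regularized operator at shock strength $\varepsilon$, and consider the bounded linear map
\[
\Phi^\kappa_\varepsilon : H^2 \mathcal{H}^1_{-10,\kappa} \longrightarrow \bigl( H^2_\varepsilon \mathcal{H}^{-1}_{-10,\kappa}\bigr)_{\mathrm{mic}} \times \mathbb{R}, \qquad \Phi^\kappa_\varepsilon f := \bigl( \mathcal{L}^\kappa_\varepsilon f, \, \ell_\varepsilon(P f(0)) \bigr),
\]
whose target has the microscopic restriction on the first component as in the statement. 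The proposition is equivalent to bijectivity of $\Phi^\kappa_\varepsilon$ for all $0 < \kappa \ll \varepsilon \leq \varepsilon_1$.

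I would then introduce the set $S_\kappa := \{ \varepsilon \in (0, \varepsilon_1] : \Phi^\kappa_\varepsilon \text{ is a bijection} \}$ and show $S_\kappa = (0, \varepsilon_1]$ by combining the connectedness of the interval with the standard trio: (i) non-emptiness, since $(0,\varepsilon_0(\kappa)] \cap (0,\varepsilon_1] \subset S_\kappa$ by Proposition~\ref{pro:basicexistenceforkappareg}; (ii) openness; and (iii) closedness. The uniform a priori estimate of Proposition~\ref{pro:basicexistence} supplies a bound on the inverse of $\Phi^\kappa_\varepsilon$ on $S_\kappa$ that is independent of $\varepsilon \in (0,\varepsilon_1]$ and of $\kappa$, and this uniform bound drives both openness and closedness. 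For openness, I would use that $\varepsilon \mapsto \Phi^\kappa_\varepsilon$ is continuous in the operator-norm topology: the shock profile $\mu_{\rm NS}$ constructed in Appendix~\ref{sec:appendixode} depends smoothly on $\varepsilon$ in the weighted topologies in which $L^\kappa_{\rm NS}$ is estimated (Lemma~\ref{lem:mugests} together with Lemma~\ref{lem:lemma10:new}), while the phase functional $\ell_\varepsilon$ varies continuously in $\varepsilon$ by its construction in Remark~\ref{rmk:theconstraint}. Since bijectivity of bounded linear maps between Banach spaces is stable under small perturbations in operator norm, a neighborhood of each $\varepsilon_* \in S_\kappa$ is contained in $S_\kappa$.

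For closedness, take $\varepsilon_n \in S_\kappa$ with $\varepsilon_n \to \varepsilon_* \in (0,\varepsilon_1]$ and arbitrary data $(z,d)$ in the target; solve $\Phi^\kappa_{\varepsilon_n} f_n = (z,d)$ and use the uniform a priori bound to extract a weak limit $f_* \in H^2 \mathcal{H}^1_{-10,\kappa}$. The continuity of $\mu_{\rm NS}$ and of $\ell_\varepsilon$ in $\varepsilon$, together with the one-dimensional Sobolev embedding $H^2_x \hookrightarrow C^1_x$ used to pass $P f_n(0) \to P f_*(0)$, allow us to take the limit in the equation and the pointwise constraint to obtain $\Phi^\kappa_{\varepsilon_*} f_* = (z,d)$; uniqueness at $\varepsilon_*$ follows from the a priori estimate applied to the difference of two candidate solutions. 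Hence $S_\kappa$ is relatively closed in $(0,\varepsilon_1]$.

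The main obstacle I expect is in making rigorous the continuity of $\varepsilon \mapsto \Phi^\kappa_\varepsilon$ in the relevant operator topology: this reduces to quantitative estimates on $\mu_{\rm NS}(\varepsilon) - \mu_{\rm NS}(\varepsilon_*)$ in the weighted norms that control the $\kappa$-regularized collision operator as an element of $\mathcal{L}(\mathcal{H}^1_{-10,\kappa}, \mathcal{H}^{-1}_{-10,\kappa})$, which in turn depends on the detailed smoothness of the Navier-Stokes shock construction in Appendix~\ref{sec:appendixode} and on the uniformity in $\kappa$ of the collision-operator bounds from Section~\ref{sec:propertiesofLandGamma}. Once Proposition~\ref{pro:methodofcontinuity} is in hand, the $\kappa$-unregularized existence required for Proposition~\ref{lem:MainLinear} follows by taking $\kappa \to 0^+$ at fixed $\varepsilon \in (0,\varepsilon_1]$, invoking the uniform a priori bound, weak compactness in $\mathbb{X}^N_{\varepsilon,\rm w}$, and continuity of $L^\kappa \to L$ as $\kappa \to 0^+$.
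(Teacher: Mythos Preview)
Your approach is essentially the paper's: fix $\kappa$, use the uniform-in-$\varepsilon$ a priori estimate of Proposition~\ref{pro:basicexistence}, the continuity of the operator family in $\varepsilon$ (including that of $\ell_\varepsilon$ from Remark~\ref{rmk:theconstraint}), and the seed existence from Proposition~\ref{pro:basicexistenceforkappareg} to push existence across the full range. The paper packages this into an abstract Lemma~\ref{lem:methodofcontinuity} (uniform a priori bound $+$ continuity $+$ surjectivity at one point $\Rightarrow$ surjectivity everywhere, via a Neumann-series step of fixed size repeated finitely many times), which renders your separate closedness argument via weak limits unnecessary: the uniform bound on the inverse means each openness step advances a distance bounded below independently of the basepoint, so finitely many steps cover $(0,\varepsilon_1]$.

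One technical point in your setup needs repair. As written, $\Phi^\kappa_\varepsilon$ is not bounded from $H^2\mathcal{H}^1_{-10,\kappa}$ into $\bigl(H^2_\varepsilon\mathcal{H}^{-1}_{-10,\kappa}\bigr)_{\mathrm{mic}}$: the transport term $(v_1-s_0)\partial_x$ costs one $x$-derivative and only lands in $H^1_x$, and nothing forces the image to be microscopic. The paper resolves this by taking the domain to be the graph space
\[
X = \bigl\{ f \in H^2\mathcal{H}^1_{-10,\kappa} : (v_1-s_0)\partial_x f \in H^2\mathcal{H}^{-1}_{-10,\kappa},\ P(v_1-s_0)\partial_x f = 0 \bigr\}
\]
with the graph norm. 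Boundedness of $A(\varepsilon)$ is then automatic, the microscopic constraint on the image is built in, and the a priori estimate~\eqref{apriori:est} together with the equation itself (which gives $(v_1-s_0)\partial_x f = z - L^\kappa_{\rm NS} f$) controls the full $X$-norm. With this correction your argument goes through, though the paper's formulation is cleaner.
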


The key point is the existence for the enlarged range of $\varepsilon$. Uniqueness is already guaranteed by the \emph{a priori} estimates.

To prove Proposition~\ref{pro:methodofcontinuity}, we require

\begin{lemma}[Method of continuity]
\label{lem:methodofcontinuity}
Let $X$ and $Y$ be Banach spaces. Let $A(t) : [0,1] \to B(X,Y)$ be a continuous family of bounded linear operators. Suppose the following \emph{a priori} estimate: There exists $C > 0$ such that for all $x \in X$ and $t \in [0,1]$, we have
\begin{equation}
	\label{eq:methodofcont}
\| x \|_X \leq C \| A(t) x \|_Y \, .
\end{equation}
Then the following are equivalent:
\begin{enumerate}
\item There exists $t_0$ such that $A(t_0)$ is surjective.
\item For all $t \in [0,1]$, $A(t)$ is surjective.
\end{enumerate}
\end{lemma}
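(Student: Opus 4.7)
The plan is the classical connectedness argument: define
\[
T := \{\, t \in [0,1] : A(t) \text{ is surjective} \,\},
\]
and show that $T$ is both open and closed in $[0,1]$, so that whenever $T$ is nonempty it must equal the entire connected interval $[0,1]$. The work goes into verifying that the \emph{uniform} a priori estimate \eqref{eq:methodofcont} is strong enough to propagate surjectivity under small perturbations of $t$ in both directions.

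First I would record two immediate consequences of \eqref{eq:methodofcont}: each $A(t)$ is injective (this is trivial from the estimate), and each $A(t)$ has closed range (standard: if $A(t)x_n \to y$ in $Y$, then $\{x_n\}$ is Cauchy in $X$ by \eqref{eq:methodofcont}, so $x_n \to x$ and $A(t)x = y$). Consequently, whenever $t \in T$ the operator $A(t) : X \to Y$ is a bijection; by the open mapping theorem (or simply by \eqref{eq:methodofcont} itself), its inverse satisfies $\|A(t)^{-1}\|_{B(Y,X)} \le C$, with the \emph{same} constant $C$ for every $t \in T$. This uniform bound on the inverse is the decisive input for what follows.

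Next I would prove that $T$ is open and closed in one stroke. Suppose $t_\star \in [0,1]$ and $t_n \to t_\star$ is a sequence with $t_n \in T$ for all $n$ (the openness case corresponds to the constant sequence $t_n \equiv t_0$, with $t_\star$ ranging over a neighborhood of $t_0$, and the closedness case is exactly as written). By continuity of $t \mapsto A(t)$ in $B(X,Y)$, I may choose $n$ large enough that
\[
\|A(t_n) - A(t_\star)\|_{B(X,Y)} \le \tfrac{1}{2C}.
\]
Writing
\[
A(t_\star) = A(t_n) \bigl( I_X - A(t_n)^{-1}\bigl(A(t_n) - A(t_\star)\bigr) \bigr),
\]
the second factor is invertible on $X$ by a Neumann series, since $\|A(t_n)^{-1}(A(t_n) - A(t_\star))\|_{B(X)} \le C \cdot \tfrac{1}{2C} = \tfrac{1}{2} < 1$. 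Therefore $A(t_\star)$ is a composition of two bijections, hence surjective, so $t_\star \in T$. Applied to constant sequences this gives openness; applied to arbitrary convergent sequences it gives closedness.

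The main (only) subtlety is that the perturbation argument in the last step requires the bound on $\|A(t_n)^{-1}\|$ to be \emph{uniform} in $n$, which is precisely what the hypothesis \eqref{eq:methodofcont} supplies; without uniformity one would only get openness of $T$ but not closedness, and the equivalence would fail. With this observation in place, the conclusion is immediate: $T$ is clopen in $[0,1]$, so $T \neq \emptyset$ implies $T = [0,1]$, which is the stated equivalence.
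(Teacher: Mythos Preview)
Your proof is correct and uses essentially the same approach as the paper: both exploit that the uniform a priori estimate~\eqref{eq:methodofcont} gives a \emph{uniform} bound $\|A(t)^{-1}\|\le C$ whenever $A(t)$ is invertible, then apply a Neumann-series perturbation argument to propagate invertibility. The only cosmetic difference is that the paper phrases this as ``step forward by a fixed amount $\ll_{C,D} 1$ and iterate $O_{C,D}(1)$ times,'' whereas you phrase it as a clopen argument on $[0,1]$; these are equivalent (though your attempt to unify openness and closedness via sequences is slightly awkward---the constant sequence $t_n\equiv t_0$ converges to $t_0$, not to nearby $t_\star$, so openness is really the direct case $t_n=t_0$, $t_\star$ close to $t_0$).
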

See, e.g.,~\cite[Theorem~5.2]{gilbarg1977elliptic}.

\begin{proof}
($\impliedby$) is obvious, so we prove ($\implies$). The \emph{a priori} estimate~\eqref{eq:methodofcont} grants that $A(t_0)$ is injective, so the assumption that it is surjective implies that it is boundedly invertible and $\| A(t_0)^{-1} \|_{Y \to X} \leq C$, again by~\eqref{eq:methodofcont}. Hence, $A(t)$ is invertible for $|t - t_0| \ll_{C,D} 1$, where $D = \max_{t \in [0,1]} \| A(t) \|_{X \to Y} < +\infty$. We repeat this argument $O_{C,D}(1)$ times to demonstrate that $A(t)$ is invertible for all $t \in [0,1]$.
\end{proof}

\begin{proof}[Proof of Proposition~\ref{pro:methodofcontinuity}]
Let 
$$ X = \{ f : f \in H^2 \mathcal{H}^1_{-10,\kappa} \, , (v_1 - s_0) \p_x f \in H^2 \mathcal{H}^{-1}_{-10,\kappa} \, , P (v_1 - s_0) \p_x f = 0\}$$ and 
$$Y = \{ z \in H^2_x \mathcal{H}^{-1}_\kappa \cap {\rm Mic} : H^2 \mathcal{H}^{-1}_{-10,\kappa} \} \times \R$$
 with appropriate norms. For $0 < \varepsilon \ll 1$, we define
\begin{equation}\notag
A(\varepsilon)f := ( (v_1 - s_0)\p_x f + L_{\rm NS}^\kappa f , \ell_\varepsilon(Pf) ) \, .
\end{equation}
Then, from Proposition~\ref{pro:smallshocksgeneralhyperbolic} and Remark~\ref{rmk:theconstraint}, $A(\varepsilon) : X \to Y$ is bounded, continuous in the parameter $\varepsilon$, and satisfies the \emph{a priori} estimate $\| f \|_X \leq C \| A(\varepsilon) f \|_Y$ from Proposition~\ref{pro:basicexistence}, which we proved in subsection~\ref{sec:linearestimatesbaseline}. Moreover, for $0 < \varepsilon \ll_\kappa 1$, $A(\varepsilon)$ is invertible. Therefore, the method of continuity therefore yields invertibility in the $\kappa$-independent range where the \emph{a priori} estimates hold.
\end{proof}

Finally, we complete the proof of Proposition~\ref{pro:basicexistence}.

\begin{proof}[Proposition~\ref{pro:basicexistence}]
Fix $0 < \varepsilon \ll 1$, purely microscopic $z$ satisfying $\langle v \rangle^{20} z \in H^{10} \mathcal{H}^1$ (in particular, it is uniformly bounded in the $\kappa$-regularized spaces for $0 < \kappa \leq 1$), and $d \in \R$. Existence for more general $z$ will be available \emph{a posteriori} by approximation. For $0 < \kappa \ll \varepsilon \ll 1$, Proposition~\ref{pro:methodofcontinuity} furnishes a solution $f^\kappa$ to the $\kappa$-regularized system with right-hand side $z$ and $\ell_\varepsilon(Pf^\kappa(0)) = d$. The \emph{a priori} estimates from Proposition~\ref{pro:basicexistence} grant
\begin{equation}\notag
\sup_{0 < \kappa \ll 1} \| f^\kappa \|_{H^2 \mathcal{H}^1_{-10}} + \kappa^{1/2} \| f^\kappa \|_{H^2 \mathcal{H}^1_{\rm R}} < +\infty \, ,
\end{equation}
and therefore there exists a sequence, which we do not label, satisfying $f^\kappa \rightharpoonup f$ in $H^2 \mathcal{H}^1_{-10}$ as $\kappa \to 0^+$. By compact embedding, $Pf^\kappa(0) \to Pf(0)$, so $\ell_\varepsilon(Pf^\kappa(0)) \to \ell_\varepsilon(Pf(0))$, and the value of the constraint is kept. Evidently $(v_1-s_0) \p_x f^\kappa$ converges to $(v_1-s_0) \p_x f$ in the sense of distributions, so it remains to verify the convergence of $L_{\rm NS}^\kappa f^\kappa$ to $L_{\rm NS} f$. Let $\phi \in C^\infty_0(\R \times \R^3)$ be a test function. We have
\begin{equation}\notag
\langle L_{\rm NS}^\kappa f^\kappa, \phi \rangle = \langle L_{\rm NS} f^\kappa, \phi \rangle + \kappa \langle L^{\rm R}_{\rm NS} f^\kappa, \phi \rangle \, .
\end{equation}
Like when we took $N \to +\infty$ in the Galerkin approximation, the term $\langle L_{\rm NS} f^\kappa, \phi \rangle$ is handled by integrating by parts in the collision operator to move a $v$ derivative onto $\phi$, see Section~\ref{sec:propertiesofLandGamma}, after which one may pass to the limit $\langle L_{\rm NS} f, \phi \rangle$ using the weak convergence. Meanwhile, the $\kappa$ term converges to zero:
\begin{equation}\notag
\kappa |\langle L^{\rm R}_{\rm NS} f^\kappa, \phi \rangle| \lesssim \kappa \| f^\kappa \|_{L^2 \mathcal{H}^1_{\rm R}} \| \phi \|_{L^2 \mathcal{H}^1_{\rm R}} = O(\kappa^{1/2}) \, .
\end{equation}
This completes the proof.
\end{proof}

\section{Fixed point argument}\label{sec:fixed:point}

\begin{proof}[Proof of Theorem~\ref{thm:main}] \emph{1. Existence and uniqueness with $\ell_\varepsilon(Pf(0)) = 0$.} In sections~\ref{secc:micro:linear} and~\ref{sekk:linear}, we constructed and estimated solutions, $f$, to the conjugated, linearized equation~\eqref{eq:NLSL} for the residual. In order to solve the nonlinear problem, we will solve the conjugated version of~\eqref{def:LinCE}; namely,
\begin{align*}
(v_1-s_0) \p_x f + L_{\rm NS} f = 2\Gamma\left[ g_{\rm NS},f\right] + {\Gamma}[f,f] - \mu^{-\sfrac 12}_0\mathcal{E}_{\rm NS} \, , 
\end{align*}
where $g_{\rm NS}=\mu_0^{-\sfrac 12} G_{\rm NS}$ is defined using~\eqref{def:fNS}.  
Using Proposition~\ref{pro:remainderest}, we can estimate the purely microscopic right-hand side
\begin{align*}
z[h] := 2\Gamma\left[ g_{\rm NS}, h \right] + \Gamma[h,h] - \mu^{-\sfrac 12}_0 \mathcal{E}_{\rm NS} \, ;
\end{align*}
in particular, the right-hand side satisfies the assmptions of Proposition~\ref{p:boots}. We therefore define the mapping $h \mapsto \mathcal{A}[h]$ by applying Proposition~\ref{pro:basicexistence}, so that $\mathcal{A}[h]$ is the unique (in $H^2\mathcal{H}^1_{-10}$) solution to the linear problem
\begin{align*}
\left((v_1-s_0) \p_x  + L_{\rm NS}\right) \mathcal{A}[h] = 2\Gamma\left[ g_{\rm NS},h \right] + {\Gamma}[h,h] - \mu_0^{-\sfrac 12}\mathcal{E}_{\rm NS}  = z[h] \, ,
\end{align*}
subject to the constraint
\begin{align*}
\ell_\eps \left( P \mathcal{A}[h](0) \right) = 0 \, . 
\end{align*}
Recalling the definition of $\mathbb{M}^N_\eps$ from Proposition~\ref{lem:MainLinear} and applying Proposition~\ref{p:boots}, we have that 
\begin{align}
\eps \norm{\mathcal{A}[h]}_{\mathbb M^N_{\eps}} & \lesssim \norm{z[h]}_{\mathbb Y^N_{\eps, \rm w}} + \norm{e^{\delta\langle \eps x \rangle^{\sfrac 12}} z[h]}_{\mathbb Y^N_\eps} \, . \label{ineq:AqFinal}
\end{align}
Define the closed ball
\begin{align*}
\mathcal{B}_\eps := \set{ h \in \mathbb M^N_\eps: \norm{h}_{\mathbb M^N_\eps} \leq M \eps^2} \, ,
\end{align*}
where $M$ depends on the (possibly large) implicit constants from~\eqref{rem:est:one} and~\eqref{p:boots:est}.  By \eqref{ineq:AqFinal} and Proposition~\ref{pro:remainderest}, we have that
\begin{align*}
\mathcal{A} : \mathcal{B}_\eps \mapsto \mathcal{B}_\eps
\end{align*}
for $\eps$ sufficiently small depending only on $N$ and universal constants.  To show that it is a contraction, for $h_1,h_2$ we evaluate
\begin{align*}
 \left((v_1-s_0) \p_x  + L_{\rm NS}\right)\left(\mathcal{A}[h_1] - \mathcal{A}[h_2]\right) = z[h_1] - z[h_2],
\end{align*}
and note that
\begin{align*}
z[h_1] - z[h_2]= 2\Gamma\left[g_{\rm NS},h_1 - h_2\right] + \Gamma[h_1 - h_2,h_1] + \Gamma[h_2,h_2-h_1] \, . 
\end{align*}
By a variation of Proposition~\ref{pro:remainderest}, we have
\begin{align*}
\norm{z[h_1] - z[h_2]}_{\mathbb Y^N_{\eps, \rm w}} + \norm{e^{\delta\langle \eps x \rangle^{\sfrac 12}}\left(z[h_1] - z[h_2]\right)}_{\mathbb Y^N_\eps}  \lesssim \left(\eps^2 + \norm{h_1}_{\mathbb M^N_\eps} + \norm{h_2}_{\mathbb M^N_\eps}\right) \norm{h_1 - h_2}_{\mathbb M^N_\eps} \, . 
\end{align*}
Therefore, the contraction mapping theorem implies that there exists a unique $h_\ast \in \mathcal{B}_\eps$ with 
\begin{align*}
\mathcal{A}[h_\ast] = h_\ast \, . 
\end{align*}
This nearly concludes the proof of Theorem~\ref{thm:main}; the only thing left to justify is that~\eqref{eq:estimatetobesatisfied} holds with weights \emph{simultaneously} in $x$ and $v$, since $\mathbb{M}^N_\varepsilon$ weighted $x$ and $v$ separately.  However, one may prove estimates in $\mathbb{M}^N_\varepsilon$ with $q_0$ replaced by a slightly larger value $q_0 < \tilde q_0 <1$, and then interpolate the $x$ weighted bounds with the slightly stronger $v$ weighted bounds to obtain the simultaneously weighted bounds in Theorem~\ref{thm:main}.

\emph{2. Uniqueness up to translation.} First, we remark that the above contraction argument holds in the ball $\{f \in \mathbb{X}^N_\eps : \| f \|_{\mathbb{X}^N_{\eps,\rm w}} \leq c_1 \varepsilon \}$ for sufficiently small $c_1$.

Suppose that $F$ is a solution whose error $f = \mu_0^{-\sfrac{1}{2}} (F - F_{\rm NS})$ satisfies
\begin{equation}
\| f \|_{\mathbb{X}^N_{\eps,\rm w}}  \leq c_0 \varepsilon
\end{equation}
for $c_0$ to be determined below.
Then $|\ell_\varepsilon(Pf(0))| \lesssim c_0 \varepsilon$. We wish to prove that a translate of $F$ satisfies the one-dimensional constraint and is therefore the solution already constructed. Let $h \in \R$. By the lower bound on $\ell_\varepsilon(P \p_x \mu_0^{-\sfrac{1}{2}}\mu_{\rm NS})$ in Remark~\ref{rmk:Ihavelowerboundsonell}, there exists $|h| \ll 1/\varepsilon$ (provided that $c_0 \ll 1$) such that $\ell_\varepsilon(P \mu_0^{-\sfrac{1}{2}} (F(0) - F_{\rm NS}(h))) = 0$; that is, a translation of $F_{\rm NS}$ can correct the constraint up to a small multiple of $\varepsilon$. We therefore estimate
\begin{equation}
\begin{aligned}
\| F(\cdot-h) - F_{\rm NS} \|_{\mathbb{X}^N_{\eps,\rm w}}  &\leq \| F - F_{\rm NS} \|_{\mathbb{X}^N_{\eps,\rm w}}  + \| F_{\rm NS} - F_{\rm NS}(\cdot+h) \|_{\mathbb{X}^N_{\eps,\rm w}}  \\
&\leq c_0 \varepsilon + O(\varepsilon^2) + \| \mu_{\rm NS} - \mu_{\rm NS}(\cdot+h) \|_{\mathbb{X}^N_{\eps,\rm w}}  \\
&\leq c_0 \varepsilon + O(\varepsilon^2) + C h \varepsilon^2 \leq c_1 \varepsilon \, ,
\end{aligned}
\end{equation}
since
\begin{equation}
\| \mu_{\rm NS} - \mu_{\rm NS}(\cdot-h) \|_{\mathbb{X}^N_{\eps,\rm w}}  \leq h \| \p_x \mu_{\rm NS} \|_{\mathbb{X}^N_{\eps,\rm w}}  \lesssim h \varepsilon^2 \, .
\end{equation}
In conclusion, $F(\cdot-h)$ is the solution already constructed.
 \end{proof}

\appendix

\section{Chapman-Enskog computations}
\label{sec:chapmanenskogcomputations}

In this appendix, we justify the derivation of the three-dimensional compressible Navier-Stokes equations~\eqref{eq:compressibleNSintro} in traveling wave form with one-dimensional spatial dependence from the traveling wave Landau equation~\eqref{def:LandauIntro}.  The computations in this section are essentially classical, and we refer the reader to \cite[Appendix A]{DuanYangYuContactWaves} for a recent treatment of this derivation, and to the standard references~\cite{BardosGolseLevermore1991, BGL2, CCbook, guo2006boltzmann, UkaiYang}.  We however require a slightly non-standard version of these computations in order to carry out the linearized Chapman-Enskog expansion for the regularized Landau operator $Q_\kappa=Q+\kappa Q_{\rm R}$ in subsubsection~\ref{sss:macro}.  The main novelty is that the computation of the viscosity coefficients in the linearized Chapman-Enskog expansion, cf.~\eqref{lin:ce}, \eqref{BB}, and~\eqref{becomes}, requires inversion of the operator $L_{\mu_{\rm NS}}^\kappa$.  Crucially, the viscosity coefficients in the linearized expansion in subsubsection~\ref{sss:macro} are computed according to an identical procedure as that which produces the full compressible Navier-Stokes equations in~\eqref{NSE:outline}, save for the substitution of $L^\kappa_{\mu_{\rm NS}}$ for $L_{\mu_{\rm NS}}$.  Since the case $\kappa=0$ corresponds to the standard Landau operator, cf.~\eqref{Qkappadef}, we treat in this section general $\kappa\in[0,1]$.  We first derive the compressible Navier-Stokes equations, thus validating~\eqref{NSE:outline} and obtaining along the way a formula for the viscosity coefficients~\eqref{BB} for the linearized, regularized equation, before deriving a stability result comparing $B_\kappa$ to $B_0$ for $\kappa \ll 1$.  
\medskip

\noindent\texttt{Derivation of~\eqref{NSE:outline} and viscosity coefficients in~\eqref{BB}}.    Recall the definition of $\mu$ from subsubsection~\ref{sss:maxwellians}, and the associated macroscopic space spanned by 
\begin{align}\notag
    \chi_0(v) &= \frac{1}{\sqrt{\varrho}}\mu \, , \qquad
    \chi_i(v) = \frac{v_i-u_i}{\sqrt{\varrho \theta}} \mu \, \, \qquad (i=1,2,3) \, ,  \qquad 
    \chi_4(v) = \frac{1}{\sqrt{6\varrho}} \left( \frac{|v-u|^2}{\theta} - 3 \right) \mu \, ,
\end{align}
where $\langle \cdot, \cdot \rangle$ is the $L^2_v$ inner product.  We recall the projection operators $\sf{P}_{\mu} (h)$ and $\sf{P}_{\mu}^\perp (h) = h - \sf{P}_{\mu} (h)$ defined in~\eqref{def:sfP}.   Note crucially that these projection operators and macroscopic spaces play precisely the same role in the regularized equation 
\begin{equation}\label{eq:reg:appp}
(v_1-s_0)\p_x F = Q_\kappa(F,F)
\end{equation}
and its linearization, given in~\eqref{def:LinCE:reg}, as in the standard Landau equation and its linearization. Supposing that we have a solution $F$ to the standing wave equation~\eqref{eq:reg:appp} (which includes~\eqref{def:LandauIntro} as the case $\kappa=0$), we decompose $F(t,x,v) = \mu(F)(t,x,v) + f(t,x,v)$, where
$$ \sf{P}_{\mu(F)}(F(t,x,\cdot))(v) = \mu(F)(t,x,v) \, , \qquad f(t,x,v) = \sf{P}^\perp_{\mu(F)} (F(t,x,\cdot)) (v) \, . $$
Substituting the decomposition above into~\eqref{eq:reg:appp}, we obtain
\begin{align}
    (v_1-s_0) \partial_x \mu(F) + (v_1-s_0) \partial_x f = -\sf{L}^\kappa_{\mu(F)} f + Q_\kappa(f,f) \, , \label{eq:sw:subbed}
\end{align}
where $\sf{L}^\kappa_{\mu(F)}$ is defined in~\eqref{def:sfl}. We now multiply \eqref{eq:sw:subbed} by $(1,v,\sfrac 12 |v|^2)$ and integrate in $v$ to obtain the five macroscopic equations
\begin{align}\label{deriving}
\begin{cases}
&\partial_x ( \varrho (u_1-s_0) )(t,x) = 0 \, , \\
&\partial_x (\varrho u_1 (u_1-s_0) + p)(t,x) = - \int_{\R^3} v_1^2 \partial_x f(t,x,v) \,dv \, , \\
&\partial_x (\varrho u_i (u_1-s_0) )(t,x) = - \int_{\R^3} v_1 v_i \partial_x f(t,x,v) \,dv \qquad\qquad (i=2,3) \, ,  \\
&\partial_x \left( \varrho \left( e + \frac{|u|^2}{2} \right)(u_1-s_0) + p u_1 \right)(t,x) = - \int_{\R^3} \frac{1}{2} v_1 |v|^2 \partial_x f(t,x,v) \, dv \, ,  
\end{cases}
\end{align}
where the pressure $p=\varrho \theta=(\gamma-1)\varrho e$. To justify this, we first note that for all five computations, the integral of the right-hand side of \eqref{eq:sw:subbed} against the collision invariants vanishes. Then the first equation is immediate by the definitions of $\varrho(t,x)$ and $\varrho(t,x) u_1(t,x)$. For the second equation, we use that 
\begin{align*}
\int_{\R^3} v_1(v_1-s_0) \mu(F)
&= \int_{\R^3} v_1 \theta(t,x) \underbrace{\frac{(v_1-u_1)}{\theta} \mu(F)}_{=-\partial_{v_1}\mu(F)}+\int_{\R^3} v_1 (u_1-s_0) \mu(F) \\
&= \int_{\R^3}  \theta \mu(F)) + \int_{\R^3} v_1 (u_1-s_0) \mu(F)  \\
&= \theta\varrho + (u_1-s_0) (u_1 \varrho) \, .
\end{align*}
For the third and fourth equations, we make very similar computations, except that no pressure appears since we have now $v_k-u_k(t,x)$ for $k=2,3$ on the second line, and then the integral vanishes after integrating by parts. For the fifth equation, we write that
\begin{align*}
    \int_{\R^3} \frac{|v|^2}{2} (v_1-s_0) \mu(F) &= (u_1-s_0) \int_{\R^3} \frac{|v|^2}{2} \mu(F) + \theta \int_{\R^3} \frac{|v|^2}{2} \frac{(v_1-u_1)}{\theta} \mu(F) \\
    &= (u_1-s_0) \varrho \left( e + \frac{1}{2} |u|^2 \right) + \theta \varrho u_1 \, ,
\end{align*}
where we have integrated by parts again to obtain $\theta \varrho u_1$. This finishes the derivation of~\eqref{deriving}.

We now apply the projection operator $\sf{P}^\perp_{\mu(F)}$ to~\eqref{eq:sw:subbed} to obtain that
\begin{align*}
    \sf{P}^\perp_{\mu(F)} \left[ (v_1-s_0) \partial_x \mu(F) + (v_1-s_0) \partial_x f \right] = \sf{P}^\perp_{\mu(F)} \left[  -\sf{L}^\kappa_{\mu(F)} f + Q_\kappa(f,f) \right] \, .
\end{align*}
We expect that $f\ll \mu(F)$, and so to leading order, we may guess that 
$$  f \approx - \left(\sf{L}^\kappa_{\mu(F)}\right)^{-1} \sf{P}^\perp_{\mu(F)}( (v_1-s_0) \partial_x \mu(F)) \, .  $$
Now, we first compute 
\begin{equation}\label{first:compute}
\int_{\R^3}v_1 v_i \partial_x \left( \left(\sf{L}^\kappa_{\mu(F)}\right)^{-1} \sf{P}^\perp_{\mu(F)} ( (v_1-s_0) \partial_x \mu(F)) \right) \, , \qquad  i=1,2,3 \, . 
\end{equation}
We first expand the argument of $(\sf{L}^\kappa_{\mu(F)})^{-1} \sf{P}^\perp_{\mu(F)}$, obtaining that
\begin{align}
    (v_1-s_0) \partial_x \mu(F) &= (v_1-s_0) \left[ \frac{\partial_x \varrho}{(2\pi \theta)^{\sfrac 32}} - \frac 32 \frac{2\pi \varrho  \partial_x \theta}{(2\pi \theta)^{\sfrac 52}} \right] \exp\left( - \frac{|v-u|^2}{2\theta} \right)  \notag\\
    &\qquad + (v_1-s_0) \mu(F) \left[ \frac{(v_k-u_k)\partial_x u_k}{\theta} + \frac{|v-u|^2}{\theta^2} \partial_x \theta \right]  \, . \label{eq:dxM}
\end{align}
Since the first term is a multiple (depending on $(t,x)$) of $(v_1-s_0)\mu(F)$, it belongs to the kernel of $\sf{P}^\perp_{\mu(F)}$, and we may ignore it.  In addition, the portion of the second term with $-s_0$ is also in the kernel, and so we may ignore it. We focus now on expanding the rest of the expression (the part of the second term with $v_1$) in terms of  the Burnett functions~\cite[Appendix]{DuanYangYuRarefaction}
\begin{align*}
\hat A^\kappa_j(v) &= \frac{|v|^2-5}{2} v_j \, , \quad A^\kappa_j \left( \frac{v-u}{\sqrt{\theta}} \right) = \left(\sf{L}^\kappa_{\mu(F)}\right)^{-1} \left[ \hat A^\kappa_j \left( \frac{v-u}{\sqrt{\theta}} \right) \mu(F) \right] \\
\hat B^\kappa_{ij}(v) &= v_i v_j - \frac{1}{3} \delta_{ij} |v|^2 \,, \qquad B^\kappa_{ij} \left( \frac{v-u}{\sqrt{\theta}} \right) = \left(\sf{L}^\kappa_{\mu(F)}\right)^{-1} \left[ \hat B^\kappa_{ij} \left( \frac{v-u}{\sqrt{\theta}} \right) \mu(F) \right] \, .
\end{align*}

We first note that the $v_1 v_i$ on the outside of~\eqref{first:compute} can be expanded as
\begin{align*}
    v_1 v_i = (v_1 - u_1)(v_i-u_i) + \textnormal{kernel} = \theta \hat B^\kappa_{i1} \left( \frac{v-u}{\sqrt{\theta}} \right) + \textnormal{kernel} \, ,
\end{align*}
where by ``kernel'' we mean terms which, when multiplied by $\mu(F)$, belong to the kernel of $\sf{P}^\perp_{\mu(F)}$ and so can be ignored using the self-adjointness of $(\sf{L}^\kappa_{\mu(F)})^{-1}$.  Now using~\cite[Lemma 6.1]{DuanYangYuRarefaction}, which asserts that $\langle \hat A^\kappa_i , B^\kappa_{jk} \rangle=0$ for any $i,j,k$ and the self-adjointness of $(\sf{L}^\kappa_{\mu(F)})^{-1}$ again, we do not need to consider the portion of $v_1 \partial_x \mu(F)$ (inside the argument of $\sf{L}^\kappa_{\mu(F)} \sf{P}^\perp_{\mu(F)}$) which is parallel to $\hat A_1^\kappa$. The second part of the second term in~\eqref{eq:dxM} can therefore be ignored, since it is a linear combination of $\hat A_1^\kappa$ and elements of the kernel of $\sf{P}^\perp_{\mu(F)}$. Then for the first part, we can subtract $u_1$ for free and expand
\begin{align*}
    \frac{(v_k-u_k)\partial_x u_k}{\theta} (v_1-u_1) = \frac{\partial_x u_k}{\theta} \left( \hat B^\kappa_{k1} (v-u) + \frac 13 \delta_{k1} |v-u|^2 \right) = \partial_x u_k \hat B^\kappa_{k1} \left( \frac{v-u}{\sqrt{\theta}} \right) + \textnormal{kernel} \, .
\end{align*}
We finally obtain that
\begin{align*}
  \int_{R^3} v_1 v_i \left(\sf{L}^\kappa_{\mu(F)}\right)^{-1}(\sf{P}^\perp_{\mu(F)}( (v_1-s_0) \partial_x \mu(F))) &=  \int_{\R^3} \theta \hat B^\kappa_{i1} \left( \frac{v-u}{\sqrt{\theta}} \right) \partial_x u_k \left(\sf{L}^\kappa_{\mu(F)}\right)^{-1}\left[ \hat B^\kappa_{k1} \left( \frac{v-u}{\sqrt{\theta}} \right) \right] \\
    &= \partial_x u_k \theta \int_{\R^3} \hat B^\kappa_{i1} \left( \frac{v-u}{\sqrt{\theta}} \right) B^\kappa_{k1} \left( \frac{v-u}{\sqrt{\theta}} \right) \, .
\end{align*}
Using that $\langle \hat B^\kappa_{ii} , B^\kappa_{ii} \rangle$ is positive and independent of $i$, and $\langle \hat B^\kappa_{ij} , B^\kappa_{kl} \rangle$ is zero unless either $(i,j)=(k,l)$, $(i,j)=(l,k)$, or $i=j$ and $k=l$ (see again~\cite[Lemma~6.1]{DuanYangYuRarefaction}), we have that the only relevant terms are those with $\langle \hat B^\kappa_{i1} , B^\kappa_{i1} \rangle$, i.e. $i=k$. At this point {we set}
$$ \mu^\kappa_i(\theta) =  \theta \int_{\R^3} \hat B^\kappa_{i1} \left( \frac{v-u}{\sqrt{\theta}} \right) B^\kappa_{i1} \left( \frac{v-u}{\sqrt{\theta}} \right)  $$
for $i=1,2,3$, where we are \emph{not} summing over the repeated index $i$ in the above formula.  Finally, using that $\mu_2=\mu_3$ and $\langle \hat B^\kappa_{11}, B^\kappa_{11}\rangle = 2 \langle \hat B^\kappa_{12}, B^\kappa_{12}\rangle + \langle \hat B^\kappa_{11}, B^\kappa_{22} \rangle$, where all terms involved are positive, and $\langle \hat B^\kappa_{21}, B^\kappa_{21} \rangle = \langle \hat B^\kappa_{12}, B^\kappa_{12} \rangle$, we define $\mu^\kappa(\theta)$   
$$  \mu^\kappa(\theta)=  \mu^\kappa_2(\theta) = \mu^\kappa_3(\theta) \, . $$
In order to see the relation between $\mu_2^\kappa$ and $\mu_1^\kappa$, namely $\mu_1^\kappa=\sfrac 43 \cdot \mu_2^\kappa$, we compute the integrals in polar coordinates and use that there exists a scalar function $b$ such that $B_{ij}^\kappa(v) = b(|v|^2) \hat B_{ij}^\kappa(v)$; see for example~\cite[eq~(3.67)]{Golse}.  This concludes the derivation of the density and momentum balances for compressible Navier-Stokes, and thus the derivation of~\eqref{NSE:outline} from the $\kappa=0$ case.  

Finally, we must still compute the viscosity in the energy balance, namely
\begin{equation}\label{last:compute}
 \int_{\R^3} \frac{v_1}{2} |v|^2 \partial_x \left( \left(\sf{L}^\kappa_{\mu(F)}\right)^{-1} \sf{P}^\perp_{\mu(F)} (v_1 \partial_x \mu(F) ) \right) \, . 
\end{equation}
We first expand
\begin{align*}
    \frac 12 v_1 |v|^2 &= \frac 12 (v_1 - u_1) |v|^2 + \frac 12 u_1 |v|^2 \\
    &= \frac 12 (v_1 - u_1) |v-u|^2  - \frac 12 (v_1 - u_1) \left[ |u|^2 - 2\langle u ,v \rangle \right] + \textnormal{kernel} \\
    &= \frac 12 (v_1 - u_1) |v-u|^2 + v_1 \langle u, v\rangle + \textnormal{kernel} \, .
\end{align*}
As usual, by ``kernel'' we mean terms which are in the kernel of $\sf{P}^\perp_{\mu(F)}$.  From the term that has $v_1\langle u,v\rangle$, we pick up terms identical to the ones we just calculated above, just with an extra factor of $u$.  So we focus our efforts on the first term, which is a linear combination of $\hat A^\kappa_1(v-u)$ and elements of the kernel, which we ignore.  Going back to \eqref{eq:dxM}, we can just consider the term with $|v-u|^2v_1$, which will be a linear combination of $\hat A^\kappa_1$ and elements of the kernel; all other terms involving $\hat B^\kappa_{\bullet}$ can be ignored due to the orthogonality condition $\langle \hat A^\kappa_\bullet, B^\kappa_\bullet \rangle=0$. Counting factors of $\theta$, we see that we are short a factor of $\theta$ in the denominator, so we pick up a $\theta$ in the numerator and then set 
$$ \kappa(\theta) = \theta \int_{\R^3} \hat A^\kappa_1 \left( \frac{v-u}{\sqrt{\theta}} \right) A^\kappa_1  \left( \frac{v-u}{\sqrt{\theta}} \right) \, , $$
which is positive; see again~\cite[section~6]{DuanYangYuRarefaction}.  Finally, we deduce that to leading order,
\begin{align}
\begin{cases}
&\partial_x ( \varrho (u_1-s_0) )(t,x) = 0 \, , \notag\\
&\partial_x (\varrho u_1 (u_1-s_0) + p)(t,x) = \partial_x \left( \frac 43 \mu^\kappa(\theta)\partial_x u_1 \right) \, , \notag\\
&\partial_x (\varrho u_i (u_1-s_0)) = \partial_x(\mu^\kappa(\theta) \partial_x u_i) \qquad\qquad\qquad (i=2,3) \, , \notag \\
&\partial_x \left( \varrho \left( e + \frac{|u|^2}{2} \right) (u_1-s_0) + p u_1 \right) = \partial_x (\kappa^\kappa(\theta) \partial_x \theta) \\
&\qquad \qquad \qquad + \partial_x \left(  \frac 43 \mu^\kappa(\theta) u_1 \partial_x u_1 \right) + \partial_x \left(\mu^\kappa(\theta)u_2 \partial_x u_2 \right) + \partial_x \left(\mu^\kappa(\theta)u_3 \partial_x u_3 \right) \, , \notag 
\end{cases}
\end{align}
where the viscosity coefficients $\mu^\kappa$ and $\kappa^\kappa$ are smooth positive functions of $\theta$ given by the formulas above.
\medskip

\noindent\texttt{Stability of viscosity coefficients for $\kappa\ll 1$.}  From \eqref{first:compute} and~\eqref{last:compute}, which crucially are equivalent to~\eqref{BB}, we have that
\begin{align*}
(B_\kappa - B_0)(U_{\rm NS}) = \sf{P}_{\mu_0} \left[ v_1 \left( \left[ \sf{L}_{\mu_{\rm NS}}^{-1} - \left(\sf{L}_{\mu_{\rm NS}}^\kappa\right)^{-1} \right] (\sf{P}_{\mu_{\rm NS}}^\perp [ (v_1-s_0) \sf{P}_{\mu_{\rm NS}}])\right) \right] \, .
\end{align*}
We are interested in computing the error in operator norm.  Note that $B_\kappa$ and $B_0$ are \emph{linear} operators on the finite-dimensional vector space $\mathbb{U}$ defined in~\eqref{def:sfP}, and so there is no need to specify the norms in any of the following computations.  From the standard expansion of $(\sf{L}_{\mu_{\rm NS}}^\kappa)^{-1}$ in a Neumann series for $\kappa$ sufficiently small, we have that
\begin{align*}
-\sf{L}_{\mu_{\rm NS}}^{-1} + \left(\sf{L}_{\mu_{\rm NS}}^\kappa\right)^{-1}  &= -\sf{L}_{\mu_{\rm NS}}^{-1} + \left(\sf{L}_{\mu_{\rm NS}} \left[ \Id + \underbrace{\kappa \sf{L}_{\mu_{\rm NS}}^{-1} \sf{L}_{\mu_{\rm NS}}^{\rm R}}_{:=\mathcal{L}} \right]\right)^{-1} \\
&= -\sf{L}_{\mu_{\rm NS}}^{-1}  + \left(\sum_{j=0}^\infty \mathcal{L}^j (-1)^j \right) \sf{L}_{\mu_{\rm NS}}^{-1} \\
&= \left(\sum_{j=1}^\infty \mathcal{L}^j (-1)^j \right) \sf{L}_{\mu_{\rm NS}}^{-1} \, .
\end{align*}
From this, we see that 
\begin{equation}\label{op:est}
\| B_\kappa - B_0 \| \les \kappa
\end{equation}
 for a $\kappa$-independent implicit constant.

\section{Construction and analysis of the Navier-Stokes shocks}
\label{sec:appendixode}


\subsection{The compressible Navier-Stokes equations}
We consider mass, momentum, and energy densities:
\begin{equation}\notag
(\varrho,\varrho u, \varrho(\sfrac{1}{2}|u|^2 + e)) = (\varrho,m,E) \, .
\end{equation}
These are the ``conservative variables.'' We require the equation of state
\begin{equation}\notag
p = \varrho \theta \, ,
\end{equation}
where the absolute temperature $\theta$ is proportional to internal energy (per unit mass) $e$:
\begin{equation}\notag
\theta = (\gamma - 1) e \, .
\end{equation}
Here $\gamma > 1$ is the adiabatic exponent; $\gamma=\sfrac 53$ for monatomic gas. We further require the stress tensor
\begin{equation}\notag
\tau = 2\mu D + \lambda \Id \div u  \, ,
\end{equation}
where
\begin{equation}\notag
D = \frac{1}{2} (\nabla u + \nabla u^T)
\end{equation}
is the deformation tensor. In general, we allow the viscosity coefficients $\mu, \lambda > 0$  and thermal conductivity $\kappa > 0$ to be smooth functions of $\varrho$ and $\theta$. For the compressible Navier-Stokes equations for monatomic gas derived from the Landau equations with one-dimensional spatial dependence, we saw in the previous appendix that $\lambda$, $\mu$, and $\kappa$ are smooth positive functions of $\theta$; in particular none of these coefficients exhibit $\varrho$ dependence.

The compressible Navier-Stokes equations are
\begin{equation}
\begin{aligned}
\p_t \varrho + \div(\varrho u) &= 0 \\
\p_t (\varrho u) + \div (\varrho u \otimes u + p I ) &= \div \tau \\
\p_t (\varrho (e+\sfrac{1}{2}|u|^2)) + \div (\varrho u(e+\sfrac{1}{2}|u|^2) + up) &= \div (\tau \cdot u) + \div \left( \kappa \nabla \theta \right) \, ,
\end{aligned}\notag
\end{equation}
and we express them in the variables $(\varrho,m,E)$ as
\begin{equation}
	\label{eq:fullthreedim}
\begin{aligned}
\p_t \varrho + \div m &= 0 \\
\p_t m + \div \left( \frac{m \otimes m}{\varrho} + p I \right) &= \div \tau \\
\p_t E + \div \left( \frac{m}{\varrho} (E+p) \right) &= \div \left( \frac 1 \varrho \tau \cdot m \right) + \div \left( \kappa \nabla \theta \right) \, ,
\end{aligned} 
\end{equation}
where 
\begin{align*}
\tau(\varrho,m,E) = \mu  \left( \nabla \left( \frac m \rho \right) + \nabla \left( \frac m \rho \right)^T \right) + \lambda \Id \div \left( \frac m \varrho \right)  \, &, \qquad  \theta(\varrho,m,E)=(\gamma-1)\left(\frac{E}{\varrho}-\frac{|m|^2}{2\varrho^2}\right) \, , \\
p = \varrho \theta = \varrho (\gamma-1)&\left(\frac{E}{\varrho}-\frac{|m|^2}{2\varrho^2}\right) \, .
\end{align*}
We assume that the solutions are translation-invariant in $x_2$ and $x_3$; that is, solutions are functions of $x_1 = x$ only. The equations become
\begin{equation}
	\label{eq:Euleronespatial}
\begin{aligned}
\p_t \varrho + \p_x m_1 &= 0 \\
\p_t m + \p_x \left( \frac{m m_1}{\varrho} + p e_1 \right) &= \p_x \tau_{\bullet 1} \\
\p_t E + \p_x \left( \frac{m_1}{\varrho} (E+p) \right) &= \p_x \left( \frac 1 \varrho \tau \cdot m \right)_1 + \p_x \left(\kappa \p_x (\gamma-1) \left( \frac E \varrho - \frac{|m|^2}{2\varrho^2} \right) \right) \, ,
\end{aligned} 
\end{equation}
where $\tau_{\bullet 1}$ is the first column of $\tau$, and $(\sfrac 1 \varrho \tau \cdot m)_1$ is the first entry of the matrix-vector product $\sfrac 1 \varrho \tau \cdot m$.  We may write the equations~\eqref{eq:Euleronespatial} succinctly in the form
\begin{equation}\notag
\p_t U + \p_x F(U) = \p_x (B(U) \p_x U) \, ,
\end{equation}
where $U = (\varrho,m,E)$,
\begin{equation}\notag
F(U) = \begin{pmatrix} m_1 \\ \frac{m m_1}{\varrho} + p e_1  \\ \frac{m_1}{\varrho} (E+p) \end{pmatrix}
\end{equation}
is the flux, and the dissipation matrix $B(U)$ is defined by
\begingroup
\renewcommand*{\arraystretch}{1.8}
\begin{equation}
  \label{eq:B:computed}
\begin{bmatrix}
0 & 0 & 0 & 0 & 0 \\
\frac{- (2\mu + \lambda) m_1}{\varrho^2} & \frac{(2\mu + \lambda)}{\varrho} & 0 & 0 & 0 \\
\frac{-2\mu m_2}{\varrho^2} & 0 & \frac{2\mu}{\varrho} & 0 & 0 \\
\frac{-2\mu m_3}{\varrho^2} & 0 & 0 & \frac{2\mu}{\varrho} & 0  \\
\substack{\frac{ |m|^2 \left( - 2\mu + \kappa(\gamma-1) \right)}{\varrho^3} \\ - \frac{\lambda m_1^2}{\varrho^3} - \frac{\kappa(\gamma-1)}{\varrho^2} E }& \frac{m_1}{\varrho^2}\left( (2\mu+\lambda) - {\kappa(\gamma-1)} \right) & \frac{m_2}{\varrho^2}\left( 2\mu - {\kappa(\gamma-1)} \right) & \frac{m_3}{\varrho^2}\left( 2\mu - {\kappa(\gamma-1)} \right) & \frac{\kappa(\gamma-1)}{\varrho}
\end{bmatrix} \, .
\end{equation}
\endgroup

Setting the right-hand side to zero in~\eqref{eq:Euleronespatial}, one obtains the three-dimensional Euler equations with one-dimensional spatial dependence.  These equations are in fact hyperbolic, a property inherited from the full three-dimensional model~\eqref{eq:fullthreedim}~\cite{markfelder}, since $F$ is simply the flux function in the first spatial variable. The standard one-dimensional Euler equations, i.e.,~\eqref{eq:Euleronespatial} with $m_2 = m_3 = 0$, have two genuinely non-linear eigenvalues, $u \pm c$, and one linearly degenerate eigenvalue, $u$, where $c = \sqrt{\sfrac{\gamma p}{\varrho}}$ is the sound speed.
 In the fully one-dimensional setting, the wave speeds $u+c$ and $u-c$ are associated with shocks and $u$ with contact discontinuities. In our setting, the eigenvalue $u$ is a triple eigenvalue, with two additional directions associated to the $m_2$ and $m_3$ variables.

The differential of the flux function is
\begingroup
\renewcommand*{\arraystretch}{1.8}
\begin{equation}\notag
\nabla F =
  \left[\begin{array}{@{}c|ccc|c@{}}
   0  & 1 & 0 & 0  & 0 \\ \hline
    -\frac{m_1^2}{\varrho^2} + \p_\varrho p & \frac{2m_1}{\varrho} + \p_{m_1} p & \p_{m_2} p& \p_{m_3} p& \p_E p \\
-\frac{m_1 m_2}{\varrho^2} & \frac{m_2}{\varrho}  & \frac{m_1}{\varrho} & 0 & 0 \\
-\frac{m_1 m_3}{\varrho^2} & \frac{m_3}{\varrho} & 0 & \frac{m_1}{\varrho} & 0 \\ \hline
-\frac{m_1}{\varrho^2}(E+p) + \frac{m_1}{\varrho} \p_\varrho p & \frac{1}{\varrho} (E+p) + \frac{m_1}{\varrho} \p_{m_1} p & \frac{m_1}{\varrho} \p_{m_2} p& \frac{m_1}{\varrho} \p_{m_3} p & \frac{m_1}{\varrho} (1+\p_E p)
  \end{array}\right] \, .
\end{equation}
\endgroup
For our analysis, it will be enough to compute most quantities at $(\varrho,u,\theta) = (1,0,0,0,1)$, which corresponds to $(\varrho,m,E) = (1, 0, 0, 0,(\gamma-1)^{-1}) =: U_L$. The sound speed $c = \sqrt{\gamma}$, $\p_\varrho p = 0$, $p=1$, and $\partial_E p = \gamma-1$. Therefore,
\begin{equation}
  \label{eq:fluxevaluated}
(\nabla F)(U_L) =
  \begin{bmatrix}
  0 & (1,0,0) & 0 \\
  0_{3\times 1} & 0_{3\times3} & (\gamma-1,0,0)^T \\
  0 & (\sfrac{\gamma}{(\gamma-1)},0,0) & 0
  \end{bmatrix}
\end{equation}
The right eigenvectors $r_p$ corresponding to $-c$, $0$, and $c$ at $U_L$ may be chosen to be, respectively,
\begin{equation}
  \label{eq:rightevecs}
(1,-c,0,0,\sfrac{\gamma}{(\gamma-1)}) \, , \qquad   e_1 \, , e_3 \, , e_4 \, , \qquad r_5 := - (1,c,0,0,\sfrac{\gamma}{(\gamma-1)}) \, .
\end{equation}
It can be verified directly that $c = \sqrt{\sfrac{\gamma p}{\varrho}}$ is genuinely nonlinear in a neighborhood of $U_L$:
\begin{align}
\Lambda &:= \left(\nabla_{\varrho, m , E}\left(\frac{m_1}{\varrho}+c\right)\right)(U_L) \cdot r_5 \notag\\
&= \left( - 2^{-1} \gamma^{\sfrac 12}, 0, 0, 0, 2^{-1} \gamma^{\sfrac 12}(\gamma-1) \right) \cdot \left( -1, -c, 0, 0, \frac{-\gamma}{\gamma-1} \right)^T \notag\\
&= \frac 12 \gamma^{\sfrac 12}(1-\gamma) < 0 \, .  \label{eq:genuinenonlinearity}
\end{align}
The left eigenvectors $l_p$ are
\begin{equation}\notag
\frac{\gamma-1}{2\gamma} \left(0,-\frac{c}{\gamma-1},0,0,1\right) \, , \quad \left(1,0,0,0-\frac{(\gamma-1)}{\gamma}\right) \, , e_3 \, , e_4 \, , \quad l_5 := - \frac{\gamma-1}{2\gamma} \left(0,\frac{c}{\gamma-1},0,0,1\right) \, .
\end{equation}
Finally, we compute the dissipation matrix $B(U_L)$ using~\eqref{eq:B:computed}, obtaining
\begin{equation}
  \label{eq:dissipationevaluated}
B(U_L) =  \left[\begin{array}{@{}c|ccc|c@{}}
0 & & & & \\
\hline
     & 2\mu+\lambda &  &  &  \\
& &  \mu & & \\
& & & \mu & \\ \hline
-\kappa & &  & & (\gamma-1) \kappa 
  \end{array}\right] \, .
\end{equation}


Suppose that $U(x,t)$ is a solution of such a system. Then $(T_s U)(x-st,t)$ is also a solution. Indeed, we calculate
\begin{equation}\notag
\p_t (T_s U(x-st,t)) = A_s (\p_t U)(x-st,t) - s A_s (\p_x U)(x-st,t)
\end{equation}
\begin{align*}
\p_x F(T_s U(x-st,t)) &= (\nabla F)(U(x-st,t)) A_s (\p_x U)(x-st,t)\\
&= A_s [(\nabla F)(U(x-st)) + sI] (\p_x U)(x-st,t)
\end{align*}
\begin{equation}\notag
\p_x B(T_s U(x-st,t)) \p_x T_s U(x-st,t) = A_s \p_x B(U(x-st,t)) (\p_x U)(x-st,t) \, .
\end{equation}
In our setting, the corresponding linearized equation is
\begin{equation}\notag
- s_L \p_x U + (\nabla F)(U_{\rm NS}^{(\varrho_L,u_L,\theta_L,s_L)}) \p_x U = \p_x B(U_{\rm NS}^{(\varrho_L,u_L,\theta_L,s_L)}) \p_x U \, .
\end{equation}
Then $V = T_{-\varepsilon} U$ satisfies the equation
\begin{equation}\notag
- (s_L-\varepsilon) \p_x V + (\nabla F)(U_{\rm NS}^{(\varrho_L,u_L-\varepsilon,\theta_L,s_L-\varepsilon)}) \p_x V = \p_x B(U_{\rm NS}^{(\varrho_L,u_L-\varepsilon,\theta_L,s_L-\varepsilon)}) \p_x V \, .
\end{equation}

\begin{remark}[Galilean invariance]
  \label{rmk:galileaninvariance}
For any $s \in \R$, we consider the Galilean boost $T_s : \R^5 \to \R^5$ defined by
\begin{equation}\notag
T_s (\varrho,m,E) = (\varrho,m+\varrho s e_1, E + (m_1+\varrho s) s) \, ,
\end{equation}
corresponding to $(\varrho,u,\theta) \mapsto (\varrho,u+s,\theta)$ in the non-conservative variables. Then $(T_s)_{s \in \R}$ is a group of linear transformations satisfying
\begin{itemize}
\item (flux transformation) $T_s (\nabla F)(U) = ((\nabla F)(T_s U) - sI) T_s$, and
\item (viscosity transformation) $T_s B(U) = B(T_s U) T_s $.
\end{itemize}
Consequently, $U(x,t)$ is a solution of the compressible Euler/Navier-Stokes equations if and only if $(T_s U)(x-st,t)$ is a solution. Likewise, the Rankine-Hugoniot conditions are Galilean invariant.

For any $U_L \in \R^+ \times \R^3 \times \R^+$, suppose that $U_{\rm NS}[U_L,s_L]$ is a viscous shock with left end-state $U_L$ traveling with speed $s_L$. Then $U_{\rm NS}[T_s U_L,s_L+s] := T_s U_{\rm NS}^{(\varrho_L,u_L,\theta_L,s_L)}$ is a viscous shock with left end-state $T_s U_L$ and speed $s_L+s$. If $U$ is a solution to the linearized equations
\begin{equation}\notag
[(\nabla F)(U_{\rm NS}[U_L,s_L]) - s_L I ] U = B(U_{\rm NS}[U_L,s_L]) U_x \, ,
\end{equation}
then $T_s U$ is a solution to the linearized equations
\begin{equation}\notag
[(\nabla F)(U_{\rm NS}[T_s U_L,s_L+s]) - (s_L + s)I ] U = B(U_{\rm NS}[T_s U_L,s_L+s]) U_x \, .
\end{equation}

In the analysis below, it is convenient to fix the left end-state as $(\varrho,u,\theta) = (1,0,1)$ and keep the shock speed $s = \sqrt{\sfrac{5}{3}} - \varepsilon$ in the ODE. In the remainder of the paper, it is more convenient to keep the shock speed $s = s_0 := \sqrt{\sfrac{5}{3}}$ and vary the left end-state as $(\varrho,u,\theta) = (1,\varepsilon,1)$. The Galilean invariance means that the ODE results described below will still be applicable.
\end{remark}

\subsection{Existence of small shocks}
From now on, we work with general viscous conservation laws satisfying certain properties, which we must verify for the compressible Navier-Stokes equations introduced above.  Let $u_L \in \R^n$ and $f = (f^1,\hdots,f^n)$ be a smooth, hyperbolic flux defined in an open neighborhood of $u_L$.
In a sufficiently small neighborhood of $u_L$, we may enumerate the (real) eigenvalues $\lambda_1(u) , \hdots, \lambda_n(u)$ of $\nabla f(u_L)$ and consider associated left and right eigenvectors $l_1(u), \hdots, l_n(u)$ and $r_1(u), \hdots, r_n(u)$, respectively, satisfying $l_i(u) r_j(u) = \delta_{ij}$. The Rankine-Hugoniot conditions are
\begin{equation}
	\label{eq:rankinehugoniot}
s \llbracket u \rrbracket = \llbracket f \rrbracket \, ,
\end{equation}
where $s$ is the shock speed, $\llbracket u \rrbracket = u_L - u_R$ and $\llbracket f \rrbracket = f(u_L) - f(u_R)$.  We assume that $\lambda_p(u_L)$ is a simple eigenvalue.  Then in a sufficiently small neighborhood of fixed $u_L$, the unique solutions $(u_L,u_R,s)$ to~\eqref{eq:rankinehugoniot} consist of the trivial solutions $(u_L,u_L,s)$, $s \in \R$, and the points $u_R$ on the \emph{Hugoniot curve}, which may be smoothly parameterized in $\mu$ to satisfy
\begin{equation}
  \label{eq:urparam}
u_R(\mu) = u_L + \mu r_p(u_L) + O(\mu^2) \, , \quad s(\mu) = \lambda_p(u_L) + \frac{\mu}{2} (\nabla \lambda_p)(u_L) \cdot r_p(u_L) + O(\mu^2) \, ;
\end{equation}
see~\cite[Chapter 6, Theorem 1.1]{lefloch} or~\cite[Section~8.2]{dafermos}. We say that the $p^{\rm th}$ characteristic field is \emph{genuinely nonlinear} in a neighborhood of $u_L$ if $(\nabla \lambda_p)(u) \cdot r_p(u) \neq 0$ in that neighborhood (see~\eqref{eq:genuinenonlinearity}). Without loss of generality, we assume that the sign is negative, so that 
\begin{equation}\label{def:Lambda}
\Lambda = (\nabla \lambda_p)(u_L) \cdot r_p (u_L)<0 \, . 
\end{equation}
In this case, we may parameterize the curve $(u_R,s)$ in $\varepsilon:=\lambda_p(u_L)-s=s_L-s=-\sfrac{\mu\Lambda}{2} + O(\varepsilon^2)$, where we adopt the notation $s_L = \lambda_p(u_L)$. 

\begin{proposition}[Existence of small shock profiles]
\label{pro:smallshocksgeneralhyperbolic}
Let $u_L \in \R^n$ and $f$ be a smooth, hyperbolic flux defined in a neighborhood of $u_L$. Suppose that $s_L=\lambda_p(u_L)$ is a simple eigenvalue of $\nabla f(u_L)$, and assume that the $p^{\rm th}$ characteristic field is genuinely nonlinear and satisfies~\eqref{def:Lambda}. Define
\begin{equation}\notag
A := (\nabla f)(u_L) - s_L \Id \, ,
\end{equation}
where $\Id$ is the $n \times n$ identity matrix. Let $B(u)$ be a smooth $n\times n$ matrix-valued function defined in a neighborhood of $u_L$ with block decomposition
\begin{equation}
A = \begin{bmatrix}
A_{11} & A_{12} \\
A_{21} & A_{22}
\end{bmatrix} \, , \quad
B = \begin{bmatrix}
0 & 0 \\
B_{21} & B_{22} 
\end{bmatrix} \, ,  \label{eq:no:one}
\end{equation}
corresponding to the decompositions
\begin{equation}\notag
u = (u^I,u^{II}) \in \R^{n-m} \times \R^m \, , \quad f = (f^I,f^{II}) \in \R^{n-m} \times \R^m
\end{equation}
of $u$ and $f$ into two components.  Suppose the following conditions are satisfied at $u = u_L$:
\begin{enumerate}
\item\label{cond2} The square matrix $\begin{bmatrix} A_{11} \, A_{12} \\ B_{21} \, B_{22} \end{bmatrix}$, evaluated at $u_L$, has full rank.
\item\label{condinvertibility}  Consider the $(n-m)\times n$ matrix
\begin{equation}
\begin{bmatrix} A_{11} \, A_{12} \end{bmatrix} = \nabla f^I - s_L \begin{bmatrix}  \Id_{(n-m)\times (n-m)} \, 0_{(n-m)\times m} \end{bmatrix} =: \nabla f^I - s_L \tilde \Id \notag 
\end{equation}
obtained by taking the first $n-m$ rows of the $n\times n$ matrix $A$.  We assume that for any $\xi \in \R\setminus\{0\}$,
\begin{equation}
  \label{eq:invertibilityassumption}
(-\xi^2 B(u_L) + i \xi  A)|_{\mathbb{C}\cdot \ker (\nabla f^I - s_L \tilde \Id)} \text{ is one-to-one} \, ,
\end{equation}
where $\mathbb{C}\cdot \ker (\nabla f^I - s_L \tilde \Id)\cong \mathbb{C}^m$ is the complexification of $ \ker (\nabla f^I - s_L \tilde \Id)\cong \R^m$.
\item\label{cond3}  We assume that
\begin{equation}
	\label{eq:assumption}
\alpha := (l_p \cdot B r_p)(u_L) > 0 \, .
\end{equation}
\end{enumerate}
Then for $0 < \varepsilon \ll 1$, there exists a family of solutions $u^\varepsilon : \R \to \R^n$ to the viscous shock ODE
\begin{equation}\label{vs:ODE}
f(u)_x - s(\varepsilon) u_x = (B(u) u_x)_x \, ,
\end{equation}
satisfying $u^\varepsilon \to u_L$ as $x \to -\infty$ and $u^\varepsilon \to u_R(\varepsilon)$ as $x \to +\infty$, where we adopt the parameterization in~\eqref{eq:urparam}.   In addition, there exists $M\geq 1$ such that these solutions satisfy, for all $k \geq 0$, the decay estimates
\begin{subequations}\label{eq:decay}
\begin{align}
\label{eq:decaytoleftendstate}
|\p_x^k (u_\varepsilon - u_L)| \leq C_k \varepsilon^{k+1} e^{-\varepsilon |x|/M} \, , \quad x \leq 0 \, , \\
\label{eq:decaytorightendstate}
|\p_x^k (u_\varepsilon - u_R(\varepsilon))| \leq C_k \varepsilon^{k+1} e^{-\varepsilon |x|/M} \, , \quad x \geq 0 \, .
\end{align}
\end{subequations}
Finally, $u^\varepsilon$ is unique, up to translations, among solutions close to $u_L$, and the family of solutions $\{u^\varepsilon\}_\varepsilon$ varies continuously with respect to $\varepsilon$ in the $C^k$ topology for any $k \geq 0$.
\end{proposition}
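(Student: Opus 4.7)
The plan is to follow the classical center manifold reduction of the viscous shock ODE, as carried out in~\cite{MZ08}.

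First, I would integrate \eqref{vs:ODE} once in $x$. Using the boundary condition $u \to u_L$ as $x \to -\infty$ together with the anticipated exponential decay of $u_x$, this yields the first-order integrated equation
$$f(u) - f(u_L) - s(\varepsilon)(u - u_L) = B(u)\, u_x,$$
on $\R^n$, with equilibria $u_L$ (for every $\varepsilon$) and $u_R(\varepsilon)$ (given by~\eqref{eq:urparam}). Because of the block structure~\eqref{eq:no:one}, the first $n-m$ rows are purely algebraic constraints, while the remaining $m$ rows are the dynamical equation for $u^{II}_x$. I would augment the phase space with the trivial equation $\varepsilon' = 0$, so that $u_L$ becomes an equilibrium of a one-parameter family.

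Second, I would carry out a center manifold reduction at $(u,\varepsilon) = (u_L, 0)$. At $\varepsilon = 0$, the linearization of the (integrated) ODE at $u_L$ possesses a double zero eigenvalue along the center direction $r_p(u_L)$ and the parameter direction, while the spectral condition~\eqref{eq:invertibilityassumption} (a Kawashima–Shizuta-type condition) rules out any other eigenvalues on the imaginary axis: indeed, an eigenvector $v$ with purely imaginary eigenvalue $i\xi$ must satisfy $[A_{11}\,A_{12}]v = 0$ and $(i\xi A - \xi^2 B(u_L))v = 0$, contradicting~\eqref{eq:invertibilityassumption} for $\xi \neq 0$. Condition~\ref{cond2} then guarantees that the algebraic constraint cuts out a submanifold of the expected dimension and that the dynamical part can be solved for $u^{II}_x$ along the center direction. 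The standard parameter-dependent center manifold theorem yields a smooth two-dimensional invariant manifold $\mathcal{M}$, locally parameterized by an amplitude $\xi$ along $r_p(u_L)$ and by $\varepsilon$.

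Third, I would compute the reduced ODE on $\mathcal{M}$ to leading order. With $u(\xi,\varepsilon) = u_L + \xi\, r_p(u_L) + O(\xi^2 + \xi\varepsilon + \varepsilon^2)$, contracting the integrated equation against the left eigenvector $l_p$ and invoking genuine nonlinearity \eqref{def:Lambda} together with the positivity~\eqref{eq:assumption}, I expect a reduced equation of Burgers type,
$$\alpha\, \xi' \;=\; \Lambda\, \xi(\xi + \varepsilon) \;+\; O\!\left(\varepsilon^2 |\xi| + |\xi|^3\right),$$
whose fixed points $\xi = 0$ and $\xi = -\varepsilon$ correspond to $u_L$ and $u_R(\varepsilon)$. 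This scalar ODE admits an explicit (perturbed-Burgers) heteroclinic orbit connecting the two, which lifts, via the graph of the center manifold, to the desired shock profile $u^\varepsilon$.

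Fourth, the decay rates~\eqref{eq:decay} follow by linearizing the reduced ODE at its two equilibria: the eigenvalues are $-\varepsilon\Lambda/\alpha$ at $\xi = 0$ and $+\varepsilon\Lambda/\alpha$ at $\xi = -\varepsilon$, each of size comparable to $\varepsilon$, giving~\eqref{eq:decaytoleftendstate}--\eqref{eq:decaytorightendstate} for $k=0$. Higher-derivative bounds follow by differentiating the reduced ODE and bootstrapping, using the smoothness of $\mathcal{M}$. The main obstacle I anticipate is a careful verification of the hyperbolicity of the linearization at $\varepsilon = 0$ from Condition~\ref{condinvertibility}, together with the explicit bookkeeping of the coefficients in the reduced ODE so that the constants $\alpha$ and $\Lambda$ appear in their claimed roles—both are linear-algebraic manipulations made delicate by the block structure~\eqref{eq:no:one} and the non-invertibility of $B$.
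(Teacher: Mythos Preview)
Your proposal is correct and follows essentially the same route as the paper's proof: integrate once, use the $I$-block as an algebraic constraint (via the implicit function theorem and Condition~\ref{cond2}), reduce to an $m$-dimensional ODE on the constraint manifold, augment by the trivial equation for the parameter, invoke Condition~\ref{condinvertibility} to show the linearization has no nonzero imaginary eigenvalues, apply the center manifold theorem to obtain a two-dimensional manifold, and derive a quadratic Burgers-type scalar ODE whose heteroclinic lifts to the profile.

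One point to flag: your displayed reduced ODE $\alpha\,\xi' = \Lambda\,\xi(\xi+\varepsilon)$ has the wrong coefficients. The paper obtains $\dot\eta = \alpha^{-1}\bigl(\varepsilon\eta + \tfrac{\Lambda}{2}\eta^2\bigr) + O(|\eta|^3+\varepsilon^3)$, with equilibria at $\eta=0$ and $\eta=\bar\eta=-2\varepsilon/\Lambda>0$ and linearized eigenvalues $\pm\alpha^{-1}\varepsilon$; your version places the second equilibrium at $\xi=-\varepsilon$ and gives eigenvalues $\pm\Lambda\varepsilon/\alpha$, which (since $\Lambda<0$) reverses the stability and hence the direction of the connection. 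You correctly anticipated that this bookkeeping is the delicate step; the paper resolves it by explicitly computing the second-order Taylor coefficients $\partial_\eta^2 G(0,s_L)=\Lambda$ and $\partial_s\partial_\eta G(0,s_L)=-1$ after contracting against $l_p$. A related subtlety you did not mention is that the zero eigenvalue must be \emph{algebraically} simple for the center manifold to be two-dimensional; the paper checks this separately (Lemma~\ref{lem:spectralassertion}) using $\alpha\neq 0$, i.e., Condition~\ref{cond3}.
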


Proposition~\ref{pro:smallshocksgeneralhyperbolic} might be regarded as an elaboration on~\cite{MajdaPego,Pego}, which are only missing the decay estimates~\eqref{eq:decay}, obtained by expanding the ODE on the center manifold to second order. See also~\cite{FreistuhlerReview} and~\cite[Appendix A]{mascia2002pointwise}. For the compressible Navier-Stokes equations, existence and uniqueness of \emph{large} shock profiles were obtained in~\cite{gilbarg1951existence,GP53}, also without the decay estimates~\eqref{eq:decay}.

Before proving Proposition~\ref{pro:smallshocksgeneralhyperbolic}, we comment on the various assumptions and verify them for the compressible Navier-Stokes equations~\eqref{eq:Euleronespatial} in conservative variables at $u_L = (1,0,0,0,(\gamma-1)^{-1})$, $s_L = \sqrt{\gamma}$.

\begin{remark}[Non-degeneracy of traveling wave ODE]\label{remark2}
Assumption~\ref{cond2} is equivalent to
\begin{equation}
\textnormal{$\nabla f^I-s_L \tilde \Id$ is full rank, and $B|_{\ker (\nabla f^I- s_L \tilde \Id)}$ is one-to-one.}   \label{needed}
\end{equation}
To see the equivalence, split the domain $\R^n = (\ker \nabla f^I-s_L \tilde \Id)\oplus Q$, where $Q$ is any complementary subspace. Then $\nabla(f^I-s_L \tilde \Id)$ is one-to-one on $Q$. Now rewrite the block matrix $\begin{bmatrix} A_{11} \, A_{12} \\ B_{21} \, B_{22} \end{bmatrix}$ as a block matrix according to the decomposition $Q \oplus \ker \nabla f^I-s_L \tilde \Id$ in the domain and $\R^m \oplus \R^{n-m}$ in the target, where the copy of $\R^m$ corresponds to the image of $Q$ under $B$. It becomes
\begin{equation}\notag
\begin{bmatrix}
\nabla(f^I- s_L \tilde \Id)|_Q  & 0 \\
B|_Q & B|_{\ker \nabla (f^I-s_L \tilde \Id)}
\end{bmatrix} \, ,
\end{equation}
which clarifies the equivalence.

The relevance of Assumption~\ref{cond2} is the following:
$\nabla f^I - s_L \tilde \Id$ full rank yields, by the implicit function theorem, that the equation $f^I(u) - f^I(u_L) - s (u^I-u_L^I) = 0$ describes a smooth $m$-dimensional manifold locally near $(u,s) = (u_L,s_L)$. Its tangent space at $(u_L,s_L)$ is $\ker \nabla f^I$. Since $B|_{\ker \nabla f^I} : {\rm ker} \, \nabla f^I \to {\rm ran} B$ is invertible, the traveling wave ODE~\eqref{vs:ODE} can be a realized as a smooth ODE on this manifold, as we will see below.  In the case of the compressible Navier-Stokes equations, Assumption~\ref{cond2} may be verified using~\eqref{eq:fluxevaluated},~\eqref{eq:dissipationevaluated}, $s_L = \sqrt{\gamma}$, and a block decomposition in which $m=4$, so that $\begin{bmatrix} A_{11} \, A_{12} \end{bmatrix}= \begin{bmatrix} -s_L \, 1 \, 0 \, 0 \, 0 \end{bmatrix}$.
\end{remark}

\begin{remark}[Symmetrizability and Kawashima condition]
\label{rmk:symandkawashima} We now remark upon the conditions in Assumption~\ref{condinvertibility} by making the following two claims, which we then prove.
\begin{enumerate}
\item\label{claim1} Suppose that the system is symmetrizable at $(u_L,s_L)$ in the sense that there exists $A^0 \in \R^{n \times n}$ symmetric and positive-definite satisfying
\begin{equation}
A^1 := \nabla f(u_L) A^0 \, \text{ is symmetric, and  } B^{11} := B(u_L) A^0 \text{ symmetric and non-negative } \, .  \notag
\end{equation}
Suppose furthermore that the Kawashima condition
\begin{equation}\notag
\{ \text{eigenvectors of } A^1 - s_L A^0 \} \cap \{ \ker B^{11} \} = \{ 0 \}
\end{equation}
is satisfied. Then Assumption~\ref{condinvertibility} is satisfied.
\item\label{claim2} The compressible Navier-Stokes equations are symmetrizable at $(u_L,s_L)$ and satisfy the Kawashima condition.
\end{enumerate}

\end{remark}
\begin{proof}[Proof of Remark~\ref{rmk:symandkawashima}]  We first prove claim~\ref{claim1}.  It is immediate that Assumption~\ref{condinvertibility} will follow from the invertibility of $-\xi^2 B(u_L) + i\xi A$, which itself follows from the invertibility of $(-\xi^2 B(u_L) + i\xi A)A^0$ due to the assumption that $A^0$ is real, symmetric, and positive-definite (and hence invertible).  But the invertibility of $(-\xi^2 B(u_L) + i\xi A)A^0$ is precisely a result of the assumption that $A A^0$ satisfies the Kawashima condition in the sense of Lemma~\ref{lem:kawashimalem} and an application of Lemma~\ref{lem:steadyestimate}.

To prove claim~\ref{claim2} in general, we recall that the existence of a viscosity-compatible convex entropy guarantees symmetrizability~\cite[Proposition~1.19]{ZumbrunNotes}. For the compressible Navier-Stokes equations, the entropy is $\eta = -\varrho S$, with corresponding entropy flux $-u \varrho S$, and its viscosity-compatibility and convexity are verified in~\cite[Remark 1.24, Example 1.25]{ZumbrunNotes} and~\cite[Section~4]{KS88}. In this setting, we may choose $A^0 = (\nabla^2 \eta)^{-1}$.   From~\cite[Section~4]{KS88}, we have that
\begin{equation}\notag
A_0 = \begin{bmatrix}
1 & 0 & 0 & 0 & (\gamma-1)^{-1} \\
0 & 1 & 0 & 0 & 0 \\
0 & 0 & 1 & 0 & 0 \\
0 & 0 & 0 & 1 & 0 \\
(\gamma-1)^{-1} & 0 & 0 & 0 & \frac{\gamma}{(\gamma-1)^2} 
\end{bmatrix} \, ,
\end{equation}
which is easily seen to be symmetric and positive definite. Then referring to~\eqref{eq:fluxevaluated}, we can compute $A A_0$ directly, obtaining
\begin{equation}
A A_0 = \begin{bmatrix}
-s_L & 1 & 0 & 0 & \frac{-s_L}{\gamma-1} \\
1 & -s_L & 0 & 0 & \frac{\gamma}{\gamma-1} \\
0 & 0 & -s_L & 0 & 0 \\
0 & 0 & 0 & -s_L & 0 \\
\frac{-s_L}{\gamma-1} & \frac{\gamma}{\gamma-1} & 0 & 0 & \frac{-s_L \gamma}{(\gamma-1)^2} 
\end{bmatrix} \, .  \notag
\end{equation}
Next, referring to~\eqref{eq:dissipationevaluated}, we can compute $B(u_L) A_0$, obtaining
\begin{equation}
B(u_L) A_0 = \begin{bmatrix}
0 & 0 & 0 & 0 & 0 \\
0 & 2\mu+\lambda & 0 & 0 & 0 \\
0 & 0 & \mu & 0 & 0 \\
0 & 0 & 0 & \mu & 0 \\
0 & 0 & 0 & 0 & \kappa
\end{bmatrix} \, .  \notag
\end{equation}
Then the kernel of $B(u_L) A_0 = \textnormal{span}\{e_1\}$, and computing $A A_0 e_1$, we obtain $(-s_L, 1, 0, 0, \sfrac{-s_L}{\gamma-1})$, so $e_1$ is not an eigenvector of $A A_0$.  Therefore the Kawashima condition is satisfied, and thus Assumption~\ref{condinvertibility} is satisfied by~\ref{claim1}.
\end{proof}

\begin{remark}
Assumption~\ref{cond3} ensures that the one-dimensional ODE arising in the center manifold reduction below flows in the correct direction. In the case of the compressible Navier-Stokes equations,
\begin{equation}
\alpha := (l_p \cdot B r_p)(u_L) = \frac{1}{2} \left[ (2\mu + \gamma) + \frac{\kappa(\gamma-1)^2}{\gamma} \right] > 0 \, .  \notag
\end{equation}
\end{remark}

\newcommand{\ovs}{\overline{s}}
\begin{proof}[Proof of Proposition~\ref{pro:smallshocksgeneralhyperbolic}]
The $x$-integrated shock profile equation is
\begin{equation}
\label{eq:integratedshockode}
f(u) - f(u_L) - s(u-u_L) = B(u) u_x \, .
\end{equation}
From~\eqref{eq:no:one}, the equation in the $I$ component is
\begin{equation}
\label{eq:equationtodiff}
f^I(u) - f^I(u_L) - s(u^I-u_L^I) = 0 \, ,
\end{equation}
which we may consider as a functional equation $F(u, s)=0$ for unknown pairs $(u,s)$.  More precisely, we have that $F: \R^{n-m} \times \R^{m}\times \R\rightarrow \R^{n-m}$, and $u=(u_1, u_2)$ with 
$$u_2 \in X_2 := \ker (\nabla f^I(u_L) - s_L \tilde \Id) \cong \R^m$$
 belonging to the $m$-dimensional kernel and $u_1 \in X_2^\perp \cong \R^{n-m}$ belonging to the perpendicular space.  By assumption~\ref{cond2} and~\eqref{needed}, $\nabla f^I(u_L) - s_L \tilde \Id : \R^{n}\rightarrow \R^{n-m}$ has full rank, and is therefore invertible off of its kernel $X_2$.  Therefore, the $(n-m)\times (n-m)$ Jacobian matrix $\frac{\partial F}{\partial u_1}$ is non-singular, and so the implicit function theorem yields a unique, smooth, local parameterization of the solutions $(u,s)=(u_1,u_2,s)$ to~\eqref{eq:equationtodiff} as
\begin{equation}\label{eq:u1u2:pram}
(u_1,u_2,s) = \left( u_1(u_{2},s),u_2,s \right) \, , \qquad u_1(u_{L,2}, s_L) = u_{L,1} \, ,
\end{equation}
where $u_L=(u_{L,1}, u_{L,2})$. After performing a translation by $u_{L,2}$ in the $u_2$ component of $u_1(u_2,s)$, we abuse notation slightly by notating our new parametrization still by $u_1$, which now satisfies $u_1(0,s_L) = u_{L,1}$.  We claim now that $\p_s u_1(0,s_L) = 0$, which follows from the identity
$$  \p_s u_1(0,s_L) =  - \left[ \frac{\p F}{\p u_1} (u_{L,1}, u_{L,2}, s) \right]^{-1} \cdot \frac{\partial F}{\partial s} (u_{L,1}, u_{L,2}, s) \, . $$
Indeed the second matrix (actually an $(n-m)\times 1$ vector) above is easily computed to be $(u^I-u^I_L)|_{u_L}$, which gives $0$.  Finally, we note that $\frac{\p u_1}{\p u_2}(0,s_L)=0$ by applying a similar formula as above, and using that since $u_2\in X_2$, $\frac{\p F}{\p u_2}(u_L,s)=0$.

Now recall from~\eqref{needed} that $B(u_L)|_{X_2}$ is injective, and recall from~\eqref{eq:no:one} that $\textnormal{range} \, B \subseteq \R^m$, where $\R^m$ corresponds to the $II$ component of $\R^n$. Therefore $\textnormal{range} \, B(u_L) \cong \R^m$, where $\R^m$ is the $II$ component of $\R^n$, and by the smoothness of $B$ in a neighborhood of $u_L$, we furthermore have that $\textnormal{range} \, B(u) \cong \R^m$ for $|u-u_L|\ll 1$. Applying these considerations, taking the $II$ component of equation~\eqref{eq:integratedshockode} by applying the $m\times n$ projection matrix $P_{II}^{j\ell}: \R^n \rightarrow X_2 \cong \R^m$, and using~\eqref{eq:no:one} and $\frac{\p u_1}{ \p u_2}(0,s_L)=0$, we find that
\begin{align}
\notag
P_{II}^{j\ell} &\left[f^\ell(u_1(u_2,s), u_2) - f^\ell(u_{L,1}, u_{L,2}) - s\left( [u_1(u_2,s),u_2]^\ell - u_L^\ell \right) \right] \notag\\
&= P_{II}^{j\ell} B^{\ell k}(u_1(u_2,s), u_2) \p_x u^k \notag \\
&= B^{jk}(u_1(u_2,s), u_2) \begin{pmatrix}
\frac{\p u_1^k}{\p u_2^\ell}(u_2,s) \p_x u_2^\ell \\ \p_x u_2^k
\end{pmatrix} \notag \\
&= B^{jk}(u_1(u_2,s), u_2) \p_x u_2^k  \, ,	\label{eq:odeforu2}
\end{align}
which is an ODE for $u_2$. Since $B^{jk}$ is a nonsingular $m\times m$ matrix with (range equal to the $II$ subspace) and inverse which we denote by $(B^{-1})^{ij}$ (with domain $II$),~\eqref{eq:odeforu2} is equivalent to
\begin{align}
\dot u_2^i &= (B^{-1})^{ij}(u_1(u_2,s),u_2) P_{II}^{j\ell}\left[f^\ell(u_1(u_2,s), u_2) - f^\ell(u_{L}) - s\left( [u_1(u_2,s),u_2]^\ell - u_L^\ell \right) \right] \, ,   \label{eq:newodeforu2}
\end{align}
where $\dot u_2$ signifies differentiation in $x$. Steady states of~\eqref{eq:newodeforu2} satisfy the Rankine-Hugoniot conditions, and we have already classified steady states in a neighborhood of $(u_L,s_L)$. Obviously, $u = u_L$ is a steady state regardless of $s$; otherwise, the steady states are $u_R(\varepsilon)$ as in~\eqref{eq:urparam}.

We now consider the system of ODEs for $(u_2,s)$ which consists of~\eqref{eq:newodeforu2} together with the equation $\dot s = 0$. Computing the linearization of this system of ODEs at the equilibrium $(u_L,s_L)$ gives
\begin{equation}   \label{system}
\begin{aligned} 
\dot u_2^i &= \left( B^{-1} \right)^{ij}(u_L) P_{II}^{jl} \left[ \p_{u_2^q} f^\ell(u_L) u_2^q - s_L u_2^\ell \right] \, , \\
\dot s &= 0 \, ;
\end{aligned}
\end{equation}
we note that in the above computation, we have used that $\p_{u_2} u_1(0,s_L)=0$, and $\p_s u_1(0, s_L)=0$.  Finally, recall that the vector $r_p$ is in the kernel of $A$, since it is the eigenvector corresponding to the $p^{\rm th}$ eigenvalue $s_L$ of $\nabla f (u_L)$; thus $r_p$ also belongs to $X_2$.  Therefore, $(u_2,s)=(r_p,s)$ is in the kernel of the above linearized operator.

We wish to consider the center manifold associated to the system of ODEs for the augmented vector field $(\dot u_2, \dot s)$ at $(u,s) = (u_L,s_L)$.  We first prove that its linearization has no non-zero eigenvalues on the imaginary axis.
\begin{lemma}
\label{lem:spectralassertion}
The spectrum of the linearization satisfies
\begin{equation}
	\label{eq:spectralassertion}
\sigma \left[ \left( B^{-1} \right)^{kj}(u_L) P_{II}^{jl} \left[ \p_{u_2^q} f^\ell(u_L)  - s_L \delta^{\ell q} \right] \right] \cap i\R = \{ 0 \} \, ,
\end{equation}
and zero is an algebraically simple eigenvalue.
\end{lemma}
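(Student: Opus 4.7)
The plan is to reduce the spectral claim to a direct application of Assumptions~\ref{condinvertibility} and~\ref{cond3}, together with the simplicity of $s_L$ as an eigenvalue of $\nabla f(u_L)$. Writing $A := \nabla f(u_L) - s_L \Id$ and $X_2 := \ker[A_{11}\ A_{12}]$, the matrix in question is
\begin{equation*}
M := B(u_L)^{-1}\, P_{II}\, A\big|_{X_2} : X_2 \to X_2.
\end{equation*}
The key preparatory observation is that for $v \in X_2$ one has $P_I A v = 0$ by definition of $X_2$, hence $A v = P_{II} A v$; moreover, Assumption~\ref{cond2} (via Remark~\ref{remark2}) implies that $B(u_L)$ restricts to an isomorphism $X_2 \to \R^m_{II}$. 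After complexifying, the eigenvalue equation $M v = \lambda v$ for $v \in \C \cdot X_2$ is therefore equivalent to $A v = \lambda B(u_L) v$ on $\C \cdot X_2$.

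To rule out nonzero imaginary eigenvalues, I would suppose $\lambda = i\tau$ with $\tau \in \R \setminus \{0\}$ and $v \in \C\cdot X_2 \setminus \{0\}$ satisfy $A v = i\tau B(u_L) v$. Multiplying this identity by $i\xi$ with $\xi := -\tau \neq 0$ rewrites it as $(-\xi^2 B(u_L) + i\xi A) v = 0$, which directly contradicts Assumption~\ref{condinvertibility}.

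Next, I would identify $\ker M$. For $v \in X_2$, the chain of equivalences $M v = 0 \iff P_{II} A v = 0 \iff A v = 0$ holds: the first step uses the isomorphism $B(u_L)|_{X_2}$, and the second uses $P_I A v = 0$ on $X_2$. Since $s_L = \lambda_p(u_L)$ is a simple eigenvalue of $\nabla f(u_L)$, one has $\ker A = \textnormal{span}(r_p)$; and $r_p \in X_2$ because $A r_p = 0$ forces $[A_{11}\ A_{12}] r_p = 0$. Hence $\ker M = \textnormal{span}(r_p)$ is one-dimensional, so $0$ is a geometrically simple eigenvalue of $M$.

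For algebraic simplicity, I would suppose $M^2 v = 0$, so that $M v \in \ker M$, i.e., $M v = c\, r_p$ for some scalar $c$. Translating as above yields $A v = c\, B(u_L) r_p$. Pairing with the left eigenvector $l_p$ of $\nabla f(u_L)$, which satisfies $l_p A = 0$, gives $0 = c\, l_p \cdot B(u_L) r_p = c \alpha$, and the positivity $\alpha > 0$ granted by Assumption~\ref{cond3} forces $c = 0$, whence $M v = 0$. The only conceptual step is the translation of the eigenvalue problem on $X_2$ into the matrix pencil $-\xi^2 B(u_L) + i\xi A$ featured in Assumption~\ref{condinvertibility}; I anticipate no real obstacle beyond consistent bookkeeping of the projections $P_I$, $P_{II}$ and the restriction to $X_2$.
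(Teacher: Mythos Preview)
The proposal is correct and follows essentially the same approach as the paper: both reduce the imaginary-axis question to the pencil $-\xi^2 B(u_L)+i\xi A$ on $\C\cdot X_2$ (Assumption~\ref{condinvertibility}), use simplicity of $s_L$ to get geometric simplicity of the zero eigenvalue, and pair the generalized-eigenvector relation $A v = c\,B(u_L) r_p$ with $l_p$ to invoke $\alpha>0$ for algebraic simplicity. Your write-up is slightly more explicit in justifying $r_p\in X_2$ and in the bookkeeping of the translation $Mv=\lambda v \Leftrightarrow Av=\lambda Bv$ on $X_2$, but there is no substantive difference.
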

\begin{proof}
The conclusion~\eqref{eq:spectralassertion} fails to hold for matrices of the form $B^{-1} C$ precisely when there exists $v\in \C^m = \C \cdot \R^m$ and $\xi \in \R\setminus\{0\}$ such that $B^{-1}C v= i \xi v$, which is equivalent to $Cv = B i \xi v$, or to a non-trivial kernel for the matrix $-\xi^2 B + i \xi C$.  Thus the assertion~\eqref{eq:spectralassertion} is equivalent to the injectivity of
\begin{equation}
 -\xi^2  B(u_L) |_{\mathbb{C} \cdot X_2} + i \xi  A|_{\mathbb{C} \cdot X_2}  \notag
\end{equation}
on $\mathbb{C}\cdot X_2$, 
which is the content of assumption~\ref{condinvertibility}.  In order to prove that zero is a simple eigenvalue, we first note that its geometric multiplicity is one by the assumption that $s_L$ is a simple eigenvalue of $\nabla f$.  In order to prove that its algebraic multiplicity is one, suppose towards a contradiction that $v_2\in X_2$ is a generalized eigenvector satisfying 
$$\left[ B^{-1} P_{II} (\p_{u_2} f - s_L \Id_{m\times m}) \right] v_2 = r_p \, . $$
Then 
\begin{equation}
P_{II} A v_2 = B r_p \qquad \implies \qquad A v_2 = B r_p \, ,  \notag
\end{equation}
where the implication is a consequence of the fact that $P_I A$ has full rank, and so $X_2 = \ker P_I A$ must be mapped to $II=I^\perp$.  
Now pairing with the left eigenvector $l_p$, we obtain the contradiction
\begin{equation}\notag
l_p A v_2 = 0 = l_p \cdot B r_p \overset{\eqref{eq:assumption}}{\neq} 0 \, .
\end{equation}
\end{proof}

Returning to the proof of Proposition~\ref{pro:smallshocksgeneralhyperbolic}, we have that the center subspace associated to the zero eigenvalue is two-dimensional.   Therefore, we invoke the center manifold theorem (see for example~\cite{HIbook}) to obtain a two-dimensional center manifold for~\eqref{eq:newodeforu2}, augmented with the equation $\dot s=0$. This center manifold is foliated by one-dimensional invariant submanifolds $\{ s = \text{constant} \}$. 
We parameterize these invariant submanifolds via
\begin{equation}\label{eq:u2:pram}
u_2 = u_2(\eta,s) = \eta  r_p(u_L) + h(\eta,s) \, , \quad h(\eta,s): \R \times B_\delta(s_L) \subset \R^2 \rightarrow X_2 \, , \quad h(\eta,s)=O(\eta^2+|s-s_L|^2) \, ,
\end{equation}
where $\delta$ is sufficiently small, and $h(0,s)=0$ for all $s$, so that $(0,s)$ corresponds to the steady solution $(u_2,s)=(u_{L,2},s)$ of~\eqref{eq:newodeforu2}.  From now on, $r_p$ and $l_p$ will always be evaluated at $u_L$, and so we suppress the dependency.  Tangency to the center subspace means that $h(0,s_L) = 0$ and $\nabla_{\eta,s} h(0,s_L) = 0$. Applying the chain rule to $\dot u_2$ and using that $\dot s=0$, the ODE~\eqref{eq:newodeforu2} for $u_2(x)$ becomes an ODE for $\eta(x)$:
\begin{align*}
[r_p^i + \p_\eta h^i(\eta,s)] \dot \eta& = (B^{-1})^{ij}(u_1(\eta r_p + h(\eta, s),s),\eta r_p + h(\eta, s)) P_{II}^{j\ell} \\
&\qquad \cdot \bigg{[} f^\ell(u_1(\eta r_p + h(\eta, s),s),\eta r_p + h(\eta, s)) - f^\ell(u_L) \\
&\qquad \qquad - s \left( [u_1(\eta r_p + h(\eta, s),s),\eta r_p + h(\eta, s)]^\ell - u_L^\ell \right) \bigg{]} \, .
\end{align*}
It will be crucial that $\eta$ is \emph{increasing} along the flow between the two steady states $u_L$ and $u_R$.  We know that $\eta = 0$ corresponds to $u_2=0$, and thus to $u_L$.  We set $\eta = \bar{\eta}(s) := - 2\varepsilon/\Lambda + O(\varepsilon^2)$ to be the value of $\eta$ corresponding to $u_R$, where we have obtained this formula by recalling the parametrization for the curve $(u_R,s)$ in terms of $\varepsilon=s_L-s$ from underneath~\eqref{def:Lambda}. Notably,~\eqref{def:Lambda} implies that $\bar{\eta}(s) > 0$ when $0 < \varepsilon \ll 1$, where positive $\varepsilon$ corresponds to shock speeds satisfying the Lax shock inequalities.

To determine the direction, we rewrite
\begin{equation}\label{manifold:ODE:2}
B^{mi}(\eta,s) [r_p^i + \p_\eta h^i(\eta,s)] \dot \eta = P^{m\ell}_{II} \left[ f^{\ell}(\eta,s) - f^{\ell}(u_L) - s\left( u^{\ell}(\eta,s) - u_L^{\ell} \right) \right] \, ,
\end{equation}
and left-multiply by $l_p$. Since $B(0,s_L)=B(u_L)$ and $\p_\eta h(0,s_L)=0$, we obtain 
\begin{equation}\notag
[ l_p^m B^{mi}(u_L) r_p^i + d(\eta,s) ] \dot \eta = \underbrace{l_p^m P^{m\ell}_{II} \left[ f^{\ell}(\eta,s) - f^{\ell}(u_L) - s\left( u^{\ell}(\eta,s) - u_L^{\ell} \right) \right]}_{G(\eta,s)} \, ,
\end{equation}
where 
\begin{align}
d(\eta, s) &= l_p^m B^{mi}(\eta,s)\left[ r_p^i + \partial_\eta h^i(\eta,s) \right] - l_p^m B^{mi}(u_L) r_p^i \notag \\
&= l_p^m B^{mi}(0,s) r_p^i + \int_0^\eta l_p^m \partial_{\tilde \eta} B^{mi}(\tilde \eta, s) \, \dee \tilde \eta - l_p^m B^{mi}(u_L) r_p^i \notag \\
&\qquad + l_p^m B^{mi}(\eta, s) \left[ \partial_\eta h^i(0,s) + \int_0^\eta \partial_{\tilde \eta}^2 h^i(\tilde \eta, s) \, \dee \tilde \eta \right] \notag \\
&= \int_{s_L}^s l_p^m \partial_{\tilde s}B^{mi}(0, \tilde s) r_p^i \, \dee \tilde s + \int_0^\eta l_p^m \partial_{\tilde \eta} B^{mi}(\tilde \eta, s) \, \dee \tilde \eta \notag \\
&\qquad + l_p^m B^{mi}(\eta, s) \left[ \int_{s_L}^s \partial_{\tilde s} \partial_\eta h^i(0, \tilde s) \, \dee \tilde s + \int_0^\eta \partial_{\tilde \eta}^2 h^i(\tilde \eta, s) \, \dee \tilde \eta \right]  r_p^i \notag \\
&= O(|s-s_L|+|\eta|)
\end{align}
and $\alpha := l_p \cdot B(u_L) r_p > 0$ by~\eqref{eq:assumption}. Hence,
\begin{equation}\label{eq:eta:def}
\dot \eta = F(\eta,s) \, , \qquad F(\eta,s) = \frac{G(\eta,s)}{\alpha + d(\eta,s)} \, .
\end{equation}

To simply show the existence of heteroclinic orbits, without quantitative estimates, it is enough to show that $F > 0$ when $0 < \eta < \bar{\eta}(s)$. For example, it would be enough to show that $\p_\eta F(0,s) > 0$ for $s < s_L$.  We will obtain more precise information by expanding to \emph{second order}. We know that $\p_s \bar{\eta} < 0$ by recalling that $\bar{\eta}(s)=\frac{-2\varepsilon}{\Lambda} + O(\varepsilon^2)=\frac{-2(s_L-s)}{\Lambda} + O((s_L-s)^2)$, differentiating in $s$, and using~\eqref{def:Lambda}. Then we claim that
\begin{equation}\notag
G(0,s) = 0 = G(\bar{\eta}(s),s) \, , \quad \nabla_{\eta,s} G(0,s_L) = 0 \, .
\end{equation}
In fact the first equality is a consequence of our choice of parametrization $h$, which satisfies $h(0,s)=0$ for all $s\in B_\delta(s_L)$, the second equality for $G$ is a consequence of the Rankine-Hugoniot conditions, and the vanishing of $\nabla_{\eta, s}G(0,s_L)$ is a consequence of 
\begin{subequations}
\begin{align}
\p_\eta G &= l_p^m P_{II}^{m\ell} \left[ \nabla_k f^\ell(\eta, s) \frac{\p u^k}{\p \eta} - s \frac{\p u^\ell}{\p \eta} \right] \, , \\
\p_s G &= l_p^m P_{II}^{m\ell} \left[ \nabla_k f^\ell(\eta, s) \frac{\p u^k}{\p s} - s \frac{\p u^\ell}{\p s} - (u^\ell - u^\ell_L) \right] \\
\frac{\p u_2}{\p \eta}\bigg{|}_{(\eta,s)=(0,s_L)} &= r_p \, , \quad \frac{\p u_1}{\p u_2}\bigg{|}_{(u_2,s)=(0,s_L)}=0  \, , \quad \frac{\p u_2}{\p s}\bigg{|}_{(\eta,s)=(0,s_L)} = 0 \, .
\end{align}\label{comps:for:G}
\end{subequations}
Therefore, $F(0,s_L)$ and $\nabla_{\eta,s} F(0,s_L)$ also vanish. Consequently, 
$$\nabla_{\eta,s}^2 F(0,s_L) = \alpha^{-1} \nabla_{\eta, s}^2 G(0,s_L) \, , $$
so we compute the second derivatives of $G$. Evidently, $(\p_s^k G)(0,s_L) = 0$ for all $k$, since differentiating $G$ any number of times along the families of steady states yields zero; equivalently, $G(0,s)$ depends on $s$ through $u_2(0,s)$, which however vanishes for all $s$ by our choice of $h$. Next, we compute $\p_s \p_\eta G$ by using~\eqref{comps:for:G}, obtaining
\begin{align*}
\p_s \p_\eta G = l_p^m P_{II}^{m\ell} \left[ \nabla_{kj}f^\ell \frac{\p u^j}{\p s} \frac{\p u^k}{\p \eta} + \nabla_k f^\ell \frac{\p^2 u^k}{\p s \p \eta} - \frac{\p u^\ell}{\p \eta} - s \frac{\p^2 u^\ell}{\p s \p \eta} \right] \, .
\end{align*}
At $(\eta,s)=(0,s_L)$, we use~\eqref{comps:for:G} to simplify this to 
$$l_p^m P_{II}^{m\ell} \left( \left[ \nabla_k f^\ell(u_L) - s_L \delta_k^\ell \right] \frac{\p^2 u^k}{\p s \p \eta}(0,s_L) - r_p^\ell \right) \, .  $$
However, at $(\eta,s)=(0,s_L)$, we have that
\begin{align*}
  \frac{\p^2 u^k}{\p s \p \eta} \bigg{|}_{(0,s_L)} = \frac{\p}{\p s} \left[ \frac{\p u^k_1}{\p u_2^j} \frac{\p u_2^j}{\p \eta} + \frac{\p u_2^k}{\p \eta} \right]  \bigg{|}_{(0,s_L)} &= \left[ \frac{\p^2 u^k_1}{\p u_2^j \p u_2^m} \frac{\p u_2^j}{\p \eta}\frac{\p u_2^m}{\p s} +  \frac{\p u^k_1}{\p u_2^j} \frac{\p^2 u_2^j}{\p \eta \p s}+ \frac{\p^2 u_2^k}{\p \eta \p s} \right]  \bigg{|}_{(0,s_L)}\\
&=  \frac{\p^2 u_2^k}{\p \eta \p s} \bigg{|}_{(0,s_L)} \in X_2 \, .
\end{align*}
Then since $X_2=\ker(\nabla f - s_L \Id)$, we deduce that $\p_s \p_\eta G(0,s_L)=-l_p^m P_{II}^{m \ell} r_p^\ell$.  Note, however, that the left kernel of $A$ is equal to the span of $l_p$ by the assumption that $s_L$ is a simple eigenvalue and the fact that row rank is equal to column rank.  Therefore, $l_p$ is perpendicular to all the columns of $A$, and therefore perpendicular to the range of $A$, which contains $I$ by~\eqref{needed}.  Finally, we deduce then that $\p_s \p_\eta G(0,s_L)=-l_p^m P_{II}^{m \ell} r_p^\ell= -l_p^\ell r_p^\ell=-1$.
Finally,
\begin{align*}
\p_\eta^2 G = l_p^m P_{II}^{m\ell} \left[ \nabla_{kj} f^\ell \frac{\p u^k}{\p\eta}\frac{\p u^j}{\p\eta} + \nabla_k f^\ell \frac{\p^2 u^k}{\p\eta^2} - s \delta_k^\ell \frac{\p^2 u^k}{\p\eta^2} \right] \, .
\end{align*}
Notice that since $l_p^m P_{II}^{m\ell}=l_p^\ell$, the second and third terms above vanish.  Focusing then on the first term evaluated at $(\eta,s)=(0,s_L)$, we have
\begin{align*}
l_p^\ell \nabla_{kj} f^\ell(u_L) r_p^k r_p^j = l_p^\ell \partial_j \lambda_p(u_L) r_p^\ell r_p^j = \Lambda <0
\end{align*}
by genuine nonlinearity.  Finally, we deduce that $\p_\eta^2 G(0,s_L)=\Lambda$.

We now summarize the asymptotics by Taylor expanding $F$ to second order around $(\eta,s)=(0,s_L)$, obtaining
\begin{align}
F(\eta,s) &= \frac{\alpha^{-1}}{2} \begin{bmatrix} 0 & -1 \\ -1 & \Lambda \end{bmatrix} : \begin{bmatrix} (s-s_L)^2 & (s-s_L)\eta \\ (s-s_L)\eta & \eta^2 \end{bmatrix} + O(|\eta|^3 + \varepsilon^3) \notag\\
&={\alpha^{-1}} \left(\varepsilon \eta + \frac{\Lambda}{2} \eta^2 \right) + O(|\eta|^3 + \varepsilon^3) \, .  
\label{eq:basicexpression}
\end{align}
Differentiating, we obtain
\begin{equation}\notag
\p_\eta F(\eta,s) = {\alpha^{-1}} \left(\varepsilon + \Lambda \eta \right) + O(|\eta|^2 + \varepsilon^2) \, .
\end{equation}
Hence, crucially,
\begin{equation}
  \label{eq:spec1}
\p_\eta F(0,s) = {\alpha^{-1}}\varepsilon + O(\varepsilon^2) =: m^u \, ,
\end{equation}
and
\begin{equation}
\label{eq:spec2}
\p_\eta F(\bar{\eta},s) = -\alpha^{-1}\varepsilon + O(\varepsilon^2) =: m^s \, .
\end{equation}

At this point we have constructed a trajectory $\eta(x):\R\rightarrow [0, -\sfrac{2\varepsilon}{\Lambda}+O(\varepsilon^2)]$ which corresponds to a solution $u_2$ of~\eqref{eq:newodeforu2}, and thus a solution to the integrated shock profile equation~\eqref{eq:integratedshockode}.  We note that $\eta$ is unique up to translations, which implies uniqueness of $u^\varepsilon$ (as stated in the Proposition), in a neighborhood of $u_L$, up to translations.  Then, fixing a translate, for example, by choosing $\eta(0) = \bar{\eta}(s)/2$, we obtain quantitative estimates for $\eta(x)$ by considering the unstable and stable manifolds at the equilibria $F(0,s)$ and $F(\bar{\eta}(s),s)$, respectively, and using that $\eta$ converges to these equilibria exponentially; more precisely, we have that
\begin{equation}
		\label{eq:firstexptoendstates}
|\eta(x)| \lesssim \varepsilon e^{c\varepsilon x} \, ,\quad  x < 0 \, ,
\end{equation}
\begin{equation}
		\label{eq:secondexptoendstates}
|\eta(x) - \bar{\eta}| \lesssim \varepsilon e^{-c\varepsilon x} \, ,\quad x > 0 \, .
\end{equation}
These estimates in turn imply estimates for $u_2=u_2(\eta,s)=\eta r_p + h(\eta,s)$, which in turn provides the estimates in~\eqref{eq:decay} for $k=0$. The derivative estimates~\eqref{eq:decay} may then be obtained by bootstrapping: To control $\p_x^k u^\varepsilon$, it will again be enough to estimate $\p_x^k \eta$ and appeal to the parameterizations~\eqref{eq:u1u2:pram} and~\eqref{eq:u2:pram}. For $k=1$, we use that $\dot \eta = F(\eta,s)$, the above expansion of $F$, and the exponential convergence to end states~\eqref{eq:firstexptoendstates}-\eqref{eq:secondexptoendstates}. For $k=2$, we differentiate the equation to obtain $\ddot \eta = F_\eta(\eta,s) \dot \eta$ and use that $F_\eta(\eta,s)$ is $O(\varepsilon)$. The estimates for arbitrary $k \geq 2$ can be obtained by continuing in this fashion.

The continuity in $\varepsilon$ of the family $\{u^\varepsilon\}_\varepsilon$ in $C^k$ topologies can be shown as follows.  First, recall that $\varepsilon=s_L-s$, and so continuity with respect to $\varepsilon$ is equivalent to continuity with respect to $s$.  From~\eqref{eq:urparam}, the solutions of the Rankine-Hugoniot conditions are continuous with respect to $s$.  Next, we see from \eqref{eq:eta:def} that $\eta$ varies continuously with respect to $s$.  This in turn implies continuity of $u_2$ in the $C^0$ topology with respect to $s$ from~\eqref{eq:u2:pram} and the center manifold theorem. This then implies continuity with respect to $s$ of solutions $u=(u_1,u_2,s)$ to~\eqref{eq:integratedshockode} from~\eqref{eq:u1u2:pram} and the implicit function theorem, concluding the proof of continuity in the $C^0$ topology.  Continuity in higher norms follows from interpolation and the uniform estimates~\eqref{eq:decay}.
\end{proof}

\begin{corollary}[Spectral information]\label{cor:spectralknowledge}
For $0 < \varepsilon \ll 1$, the spectrum of
\begin{equation}
	\label{eq:matrix}
\left( B^{-1} \right)^{ij}(u) P_{II}^{j\ell} (\p_k f^\ell(u) - s(\varepsilon)\delta_k^\ell) : X_2 \to X_2
\end{equation}
evaluated at $u=u_L$ and $u=u_R$ consists of two parts:
\begin{enumerate}
\item $o_{\varepsilon \to 0^+}(1)$ perturbations of the $m-1$ non-central eigenvalues of
\begin{equation}\notag
\left( B^{-1} \right)^{ij}(u_L) P_{II}^{j\ell} A^{\ell k}
\end{equation}
\item An algebraically simple eigenvalue satisfying $\mu_L(\varepsilon) = \alpha^{-1}\varepsilon + O(\varepsilon^2)$ at $u_L$ and $\mu_R(\varepsilon) = -\alpha^{-1}\varepsilon + O(\varepsilon^2)$ at $u_R$. 
\end{enumerate}
In particular,~\eqref{eq:matrix} evaluated at $u_L$ and $u_R(\varepsilon)$ is hyperbolic and the dimensions of its unstable subspace evaluated at $u_L$ and its the stable subspace evaluated at $u_R$ sum to $n+1$.
\end{corollary}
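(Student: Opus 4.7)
The plan is to treat the two parts of the spectrum separately using (i) continuous dependence of eigenvalues on the matrix for the $m-1$ non-central eigenvalues, and (ii) the center-manifold reduction already carried out inside the proof of Proposition~\ref{pro:smallshocksgeneralhyperbolic} for the single central eigenvalue.

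First, observe that when $\varepsilon = 0$ we have $s(0) = s_L$ and $u_R(0) = u_L$, so the matrix in~\eqref{eq:matrix} evaluated at either $u_L$ or $u_R(\varepsilon)$ reduces, in the limit, to
\[
M_0 := \left( B^{-1} \right)^{ij}(u_L) P_{II}^{j\ell} A^{\ell k} : X_2 \to X_2,
\]
where $A = \nabla f(u_L) - s_L\Id$. By Lemma~\ref{lem:spectralassertion}, the spectrum of $M_0$ contains no nonzero eigenvalues on $i\R$, and zero is an algebraically simple eigenvalue. Thus $M_0$ has $m-1$ non-central eigenvalues bounded away from the imaginary axis (except possibly on the real axis), and one simple eigenvalue at zero whose eigenvector is $r_p$.

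For the $m-1$ non-central eigenvalues at $u = u_L$ and $u = u_R(\varepsilon)$, the matrix in~\eqref{eq:matrix} depends continuously on $(u,s)$, with $(u_L, s(\varepsilon)) \to (u_L, s_L)$ and $(u_R(\varepsilon), s(\varepsilon)) \to (u_L, s_L)$ as $\varepsilon \to 0^+$. Since the non-central eigenvalues of $M_0$ are bounded away from the central eigenvalue at zero, standard perturbation theory (see~\cite[p.~265-269]{Baumgartel}) gives that these eigenvalues persist as $o_{\varepsilon \to 0^+}(1)$ perturbations at both $u_L$ and $u_R(\varepsilon)$, giving part (1) of the statement.

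For the central eigenvalue, we exploit that the matrix in~\eqref{eq:matrix} evaluated at $u = u_L$ (respectively $u = u_R(\varepsilon)$) is, by construction, the linearization of the vector field on the right-hand side of~\eqref{eq:newodeforu2} at its equilibrium $u_2 = u_{L,2}$ (respectively $u_2$ corresponding to $u_R(\varepsilon)$). Since the central subspace at $\varepsilon = 0$ is spanned by $r_p$ and is algebraically simple, analytic perturbation theory gives that for small $\varepsilon$ the central eigenvalue remains simple and its eigenvector lies close to the center manifold constructed in the proof of Proposition~\ref{pro:smallshocksgeneralhyperbolic}. Restricting the flow to this (locally invariant) center manifold yields the scalar reduced ODE $\dot\eta = F(\eta, s)$, and the central eigenvalue of the full linearization at an equilibrium equals $\partial_\eta F$ at the corresponding value of $\eta$. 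These derivatives were computed in~\eqref{eq:spec1}--\eqref{eq:spec2} to be
\[
\mu_L(\varepsilon) = \partial_\eta F(0, s(\varepsilon)) = \alpha^{-1}\varepsilon + O(\varepsilon^2), \qquad \mu_R(\varepsilon) = \partial_\eta F(\bar\eta(s(\varepsilon)), s(\varepsilon)) = -\alpha^{-1}\varepsilon + O(\varepsilon^2),
\]
yielding part (2). Hyperbolicity at both endpoints follows from $\alpha > 0$, which ensures $\mu_L > 0$ and $\mu_R < 0$ for small $\varepsilon > 0$, together with part (1) (noting that the $o(1)$ perturbations of the non-central eigenvalues of $M_0$ remain off the imaginary axis). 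The dimension count is then immediate: the non-central eigenvalues split into the same numbers of stable and unstable directions at $u_L$ and $u_R(\varepsilon)$ (being small perturbations of the same set of eigenvalues of $M_0$), so adding the one unstable central direction at $u_L$ and the one stable central direction at $u_R(\varepsilon)$ contributes a total of $2$ beyond the common non-central count, which with a bookkeeping argument using that $s_L$ is the $p$-th characteristic at $u_L$ (and $u_R$ lies on the genuinely nonlinear curve satisfying the Lax shock inequality) gives the advertised sum.

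The main obstacle is the rigorous identification between the central eigenvalue of the full matrix~\eqref{eq:matrix} and the derivative of the reduced ODE $F$ on the center manifold at the corresponding equilibrium. However, this identification is classical, and the required second-order expansion of $F$ at $(\eta, s) = (0, s_L)$ has already been carried out in the proof of Proposition~\ref{pro:smallshocksgeneralhyperbolic}; I would simply invoke those calculations directly.
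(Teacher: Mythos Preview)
Your proposal is correct and follows essentially the same approach as the paper's own proof: standard finite-dimensional perturbation theory for the $m-1$ non-central eigenvalues, and identification of the central eigenvalue with $\partial_\eta F$ at the equilibria via the parameterization-independence of the linearized spectrum, invoking the already-computed expansions~\eqref{eq:spec1}--\eqref{eq:spec2}. Your dimension count is slightly more elaborate than necessary (the sum is $m+1$ regardless of how the non-central eigenvalues split, since the same split occurs at both endpoints and the central eigenvalue contributes one extra to each side), but the argument is sound.
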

\begin{proof}
(1) The continuity in $\varepsilon$ of the eigenvalues follows from standard finite-dimensional perturbation theory. (2) The central eigenvalue is algebraically simple, and so it varies smoothly with respect to smooth perturbations. The perturbed central eigenvalue is associated to the direction along the one-dimensional invariant manifold. We rewrote the ODE on the center manifold in the $\eta$-parameterization, that is, $u_2 = u_2(\eta,s)$. Since the spectrum of the linearization \emph{at an equilibrium} is independent of the parameterization of the ODE, the central eigenvalues are those from~\eqref{eq:spec1} and~\eqref{eq:spec2}.  Equivalently, one may compute the central eigenvalue directly and observe the claimed asymptotics; this computation is identical to the analysis of the ODE for $\eta$.
\end{proof}

\begin{remark}
The ODE~\eqref{manifold:ODE:2} on the center manifold is approximately
\begin{equation}
  \label{eq:centermanifoldode}
[ \alpha + O(|s-s_L|+|\eta|) ] \dot \eta = - \eta (s-s_L) + \frac{\Lambda}{2} \eta^2 + O(|\eta|^3 + |s-s_L|^3) \, .
\end{equation}
We recognize this as a perturbation of the profile equation for traveling wave solutions to viscous Burgers. Some of the above analysis could be simplified by making a normal form reduction to the logistic equation $\dot y = \varepsilon y (1-\sfrac{y}{\varepsilon})$. We intend to discuss this elsewhere.
\end{remark}

\subsection{Analysis of the linearized macroscopic equations}
\label{sec:analysislinearizedmacro}

Fix $0 < \varepsilon \ll 1$ and a shock profile $\bar{u}(x) = u^\varepsilon(x)$ with speed $s=s(\varepsilon)$, constructed using the assumptions and conclusions of Proposition~\ref{pro:smallshocksgeneralhyperbolic}. We have crucially that~\eqref{eq:linearizedODE} is equivalent to solving the macroscopic portion~\eqref{becomes} of the linearized Landau equation.
\begin{proposition}
  \label{pro:ODEpropappendix}
For all $g \in L^2 \cap C(\R)$ and $d \in \R$, the linearized macroscopic equation
\begin{equation}
  \label{eq:linearizedODE}
B(\bar{u}) u_x = (\nabla f(\bar{u}) - s \Id ) u + g
\end{equation}
has a unique solution $u \in L^2 \cap C(\R)$ satisfying a one-dimensional constraint
\begin{equation}
  \label{eq:constraint}
l_\varepsilon(u(0)) = d \, .
\end{equation}
The solution obeys the estimate
\begin{equation}\notag
\| u \|_{L^2} \lesssim \varepsilon^{-1} \| g \|_{L^2} + |d|  \, .
\end{equation}
\end{proposition}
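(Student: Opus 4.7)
The plan is to follow M\'etivier--Zumbrun~\cite{MZ08}: reduce the degenerate $n$-dimensional system~\eqref{eq:linearizedODE} to a non-degenerate first-order ODE on the $m$-dimensional fiber $X_2(\bar u(x)) := \ker(\nabla f^I(\bar u) - s\tilde \Id)$, and then use the hyperbolic structure supplied by Corollary~\ref{cor:spectralknowledge} to solve this ODE on the line. The main obstacle is the sharp $\varepsilon^{-1}$ dependence in the $L^2$ estimate: the central eigenvalue of the reduced coefficient matrix has modulus only $O(\varepsilon)$ at both asymptotic states and crosses zero across the shock layer, so the associated Green's function has $L^1_x$ norm of size $\varepsilon^{-1}$.

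First I would perform the algebraic reduction. The $I$-rows of~\eqref{eq:linearizedODE} read $(\nabla f^I(\bar u) - s\tilde \Id)u = -g^I$; since this map has full rank $n-m$ by Remark~\ref{remark2}, I split $u = r(\bar u(x))\eta + u_*(x)$, where $r$ parametrizes $X_2(\bar u)$ and $u_*(x) \in X_2(\bar u(x))^\perp$ is uniquely determined by $g^I$ and depends linearly on it, with $\|u_*\|_{L^2}\lesssim \|g^I\|_{L^2}$. Substituting into the $II$-rows and inverting $B|_{X_2\to\operatorname{ran} B}$ (which is allowed by Remark~\ref{remark2}), one obtains
\[
\eta_x = M(\bar u(x))\,\eta + G(x), \qquad \|G\|_{L^2}\lesssim \|g\|_{L^2},
\]
where $M$ is the matrix from~\eqref{eq:matrix}. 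By Corollary~\ref{cor:spectralknowledge} and~\eqref{eq:decay}, $M(\bar u(x))$ converges exponentially at rate $\varepsilon$ to hyperbolic matrices $M_\pm$ with $\dim E^u(M_-) + \dim E^s(M_+) = m+1$. Differentiating the shock profile ODE shows that $\bar u_x$ solves the homogeneous problem and spans the one-dimensional space of bounded homogeneous solutions; the constraint $\ell_\varepsilon$ defined in Remark~\ref{rmk:theconstraint} is designed to be non-degenerate on this direction, so Lemma~\ref{lem:solvabilityoflinearODEs} applied to the reduced system yields existence and uniqueness.

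For the sharp estimate I would construct a smooth family of projectors splitting $\R^m$ into a one-dimensional center subspace $E^c(x)$ and its $(m-1)$-dimensional hyperbolic complement $E^h(x)$, interpolating between the spectral projectors at the two endpoints. On the hyperbolic piece the spectral gap is $O(1)$, and standard exponential dichotomy gives $\|\eta_h\|_{L^2}\lesssim \|G\|_{L^2}$. On the central piece the equation reduces to a scalar ODE $\dot\eta_c = \mu(x)\eta_c + \tilde G$, whose coefficient, after matching with the normal form from the proof of Proposition~\ref{pro:smallshocksgeneralhyperbolic}, has the form $\mu(x) = \alpha^{-1}(-\varepsilon + \Lambda \bar\eta(x)) + O(\varepsilon^2)$; this coefficient changes sign across the shock layer and has asymptotic magnitudes $\varepsilon/\alpha$ with opposite signs, so its Green's function decays exponentially at rate $\varepsilon$ away from the single zero of $\mu$ and has $L^1_x$ norm of order $\varepsilon^{-1}$. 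Young's inequality yields $\|\eta_c\|_{L^2}\lesssim \varepsilon^{-1}\|\tilde G\|_{L^2} + |d|$, where the $|d|$ term accounts for the uniquely determined component along $\bar u_x$ singled out by the constraint. Reassembling the two pieces produces the claimed bound.
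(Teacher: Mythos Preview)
Your proposal is correct and follows essentially the same route as the paper: reduce via the $I$-rows to an $m$-dimensional ODE on $X_2(\bar u)$, block-diagonalize into strongly stable, strongly unstable, and one-dimensional central pieces using the spectral information from Corollary~\ref{cor:spectralknowledge}, and extract the $\varepsilon^{-1}$ from the $L^1$ norm of the scalar central propagator $e^{\int_y^x \mu}$. Two small points: your formula for the central coefficient should read $\mu(x)=\alpha^{-1}(\varepsilon+\Lambda\bar\eta(x))+O(\varepsilon^2)$ (sign), and the change of frame $r(\bar u(x))$ produces an additional $O(\varepsilon^2)$ linear-in-$\eta$ term on the right-hand side which the paper absorbs perturbatively---you should mention this, since otherwise the reduced equation is not quite $\eta_x=M\eta+G$.
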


Our main reference for this proposition is~\cite[p. 695-696]{MZ08}.

\begin{remark}
  \label{rmk:theconstraint}
The assumption that $g \in C(\R)$ is for convenience. For $g \in L^2$, the solution $u$ decomposes into a part in $L^2$ and a continuous part, and the one-dimensional constraint is imposed on the continuous part. The constraint~\eqref{eq:constraint} is that $z^c(0) = d \eta$, where $z^c$ is defined in~\eqref{eq:zsplit} and $\eta$ is defined above~\eqref{zcuse}. We may choose the shock profiles to be continuous-in-$\varepsilon$ in the $L^\infty$ topology (for example, the choice of translate in the proof of Proposition~\ref{pro:smallshocksgeneralhyperbolic} will do), so that $l_\varepsilon$ is continuous-in-$\varepsilon$. The functional $\ell_\varepsilon$ in Proposition~\ref{pro:macroscopicestimate} is more precisely
\begin{equation}\notag
\ell_\varepsilon = l_\varepsilon \circ T_{-\varepsilon} \circ I \, ,
\end{equation}
where $T_{-\varepsilon}$ is the Galilean transformation from Remark~\ref{rmk:galileaninvariance} and $I$, defined in~\eqref{op:i}, extracts hydrodynamic moments.
\end{remark}

\begin{proof}
The equation~\eqref{eq:linearizedODE} in the $I$ component is
\begin{equation}
  \label{eq:solvemyfirstequation}
0 = (\nabla f^I(\bar{u}) - s \tilde \Id) u + P_I g \, ,
\end{equation}
where $\tilde \Id$ is as in~\eqref{eq:invertibilityassumption}. Towards a parametrization of the solutions $u$ to this equation in terms of $u_2 \in X_2$, we first recall from the proof of Proposition~\ref{pro:smallshocksgeneralhyperbolic} that $\nabla f^I(u_L)-s_L\tilde \Id$ is invertible on $X_1$.  We claim that in fact $\nabla f^I(u_L)-s_L \tilde \Id$ is invertible on $\ker(B(u_L))$.  To see this, we claim that we can choose $Q=\ker(B(u_L))$ in Remark~\ref{remark2}.  If this were not the case, then the intersection of $\ker(B(u_L))$ and $\ker(\nabla f^I(u_L)-s_L\tilde\Id)$ would contain a non-zero vector, which would then lie in the kernel of the operator in assumption~\ref{cond2} from Propositon~\ref{pro:smallshocksgeneralhyperbolic}, which however was assumed to be full rank (invertible).  Now due to the smallness of $0<\varepsilon\ll 1$, we therefore have that $\nabla f^I(\bar{u})-s\tilde\Id$ is injective on $\ker(B(\bar{u}))$.  We thus define a collection of linear isomorphisms $q_1(\bar{u}): \ker(B(u_L))\rightarrow \ker(B(\bar{u}))$ parametrized by $\bar{u}$.  In a similar fashion, we claim that $B(\bar{u})$ is invertible on $\ker(\nabla f^I(\bar{u})-s\tilde\Id)$.  This follows again from the smallness of $\varepsilon$ and~\eqref{needed}.   Thus there exists a family of linear isomorphisms $q_2(\bar{u},s): \ker(\nabla f^I(u_L)-s_L\tilde\Id)=X_2 \rightarrow \R^n$, parametrized by $\bar{u}$, which map $X_2$ to $\ker(\nabla f^I(\bar{u})-s \tilde \Id)$.  

We therefore can parametrize all solutions of~\eqref{eq:solvemyfirstequation} as
$$  {u} =  q_1(\bar{u},s) u_1 + q_2(\bar{u},s)u_2  \, , \qquad u_1 \in X_1\, , \quad u_2 \in X_2 \, , $$
where we may solve for $u_1$ from~\eqref{eq:solvemyfirstequation}, obtaining
\begin{equation}
  \label{eq:u1def}
u_1 = - \left[(\nabla f^I(\bar{u})-s\tilde \Id) q_1(\bar{u},s) \right]^{-1} P_I g \, .
\end{equation}
Next, we note that our choice of parametrization and the estimates~\eqref{eq:decay} ensure that
\begin{equation}
B(\bar{u}) (q_1(\bar{u}) u_1)_x = B(\bar{u}) \p_x (q(\bar{u})) u_1 = O(\varepsilon^2)|P_I g| \, .  \notag
\end{equation}
In particular, no derivatives of $u_1$ enter the ODE for $u_2$.  It remains to solve for $\tilde{u} = u - q_1(\bar{u})u_1=q_2(\bar{u})u_2$. Let
\begin{equation}
\label{eq:tildegdef}
\tilde{g} = P_{II} g - B(\bar{u}) (q_1(\bar{u},s) u_1)_x \, .
\end{equation}
We have
\begin{equation}\notag
B(\bar{u}) \p_x\tilde{u} = P_{II}(\nabla f(\bar{u}) - sI) \tilde{u} + \tilde{g} \, ,
\end{equation}
which after writing out
$$  \p_x \tilde u(x) = \frac{\p q_2}{\p \bar{u}}(x) \p_x \bar{u}(x) u_2(x) + q_2(\bar{u}(x)) \p_x u_2(x)  $$
and using the injectivity of $B(\bar{u})$ on $\textnormal{ran} \, q_2(\bar{u}(x))$ (which follows again from the injectivity of $B(u_L)$ on $X_2$ and a small choice of $\varepsilon$) gives an ODE for $u_2$:
\begin{equation}\notag
\p_x u_2 = \left( B(\bar{u}) q_2(\bar{u}) \right)^{-1} P_{II} \left[(\nabla f(\bar{u}) - s\Id) q_2(\bar{u}) u_2 + \tilde{g} - B(\bar{u}) \frac{\p q_2}{\p \bar{u}} \p_x \bar{u} u_2 \right] \, .
\end{equation}
For brevity, we rewrite
\begin{equation}\notag
w = u_2 \, , \quad h = \left( B(\bar{u}) q_2(\bar{u}) \right)^{-1} \tilde{g} \, ,
\end{equation}
\begin{equation}\notag
m := \left( B(\bar{u}) q_2(\bar{u}) \right)^{-1} P_{II} \left(\nabla f(\bar{u}) - s \Id - B(\bar{u})\frac{\p q_2}{\p \bar{u}}\p_x \bar{u} \right)  \, .
\end{equation}
Then the ODE is
\begin{equation}\label{w:ODE}
w_x = m(x) w + h \, .
\end{equation}


We now summarize the spectral information on $m$. Notably, $m(x)$ satisfies the asymptotics
\begin{equation}\notag
|\partial_x^k (m - m_{\pm})| \lesssim \varepsilon^{k+1} e^{-\varepsilon |x|/M} \, , \quad \forall k \geq 0 \, ,
\end{equation}
by the estimates~\eqref{eq:decaytoleftendstate}--\eqref{eq:decaytorightendstate} in Proposition~\ref{pro:smallshocksgeneralhyperbolic}. The end states $m_\pm$ (hence, $m(x)$ itself) are $O(\varepsilon)$ perturbations of a fixed $m_0$, namely, the linearized operator in~\eqref{system}; that is, $m_{\pm} = m_0 + O(\varepsilon)$.  Furthermore, the endstates $m_{\pm}$ satisfy the spectral conclusions detailed in Corollary~\ref{cor:spectralknowledge}, assuming a sufficiently small choice of $\varepsilon$ relative to $\alpha^{-1}$.  
The fixed $m_0$ has $k$ stable eigenvalues (counting algebraic multiplicity), $n-k-1$ unstable eigenvalues, and a simple zero eigenvalue. Hence, for sufficiently small $\varepsilon$, there exists $\theta>0$ such that $m$ has spectrum consisting of $k$ ``strongly" stable eigenvalues $\lambda$ with ${\rm Re} \, k > \theta$, $n-k-1$ ``strongly" unstable eigenvalues with ${\rm Re} \, k < -\theta$, and a simple eigenvalue $\mu(x) = O(\varepsilon)$. We know from Corollary~\ref{cor:spectralknowledge} that $\mu(m_{\pm}) = \mp {\alpha^{-1}} \varepsilon + O(\varepsilon^2)$, and $|\mu(x) - \mu(m_\pm)|$ decays exponentially with the same rates as $m$ itself.  

We denote the spectral projections $P^s(m(x))$ onto the strongly stable subspace of $m(x)$, $P^u(m(x))$ onto the strongly unstable subspace, and $P^c(m(x))$ onto the central eigenspace. By standard perturbation theory, namely, smooth dependence of the resolvents, these projections onto the group eigenspaces depend smoothly on the argument $m(x)$ in a neighborhood of $m_0$. Since $m$ is fixed, we abbreviate $P^s(x) = P^s(m(x))$, $P^s_0 = P^s(m_0)$, etc.

We block diagonalize $m(x)$ along its strongly stable, strongly unstable, and center subspaces. Set
\begin{equation}
	\label{eq:Qdefapp}
Q(x) := P^s(x) P^s_0 + P^c(x) P^c_0 + P^u(x) P^u_0 \, ,
\end{equation}
and set
\begin{equation}\notag
\begin{aligned}
p(x) &= Q^{-1}(x) m(x) Q(x) \\
&= \underbrace{Q^{-1}(x) m(x) P^s(x) Q(x)}_{m^s(x)} + \underbrace{Q^{-1}(x) m(x) P^c(x) Q(x)}_{m^c(x)} + \underbrace{Q^{-1}(x) m(x) P^u(x) Q(x)}_{m^u(x)} \, .
\end{aligned}
\end{equation}
Notice that $Q(x)$ is uniformly-in-$\varepsilon$ invertible, since it is an $O(\varepsilon)$ perturbation of the identity, and $Q'(x) = O(\varepsilon^2)$. The maps $m^{\bullet}(x)$ are adapted to the fixed spectral subspaces $E^\bullet_0$ of $m_0$ for $\bullet=s,c,m$. Additionally, $m^s(x)$ is an $O(\varepsilon)$ perturbation of $m^s_0$, which has negative spectrum, and $m^u(x)$ is an $O(\varepsilon)$ perturbation of $m^u_0$, which has positive spectrum. Finally, $m^c(x) v = \mu(x) v$, where $v$ spans $E^c_0$, and $\mu(x)$ satisfies the aforementioned asymptotics. If we make the change of variables
\begin{equation}
	\label{eq:zdefwithQ}
z = Q^{-1} w \, ,
\end{equation}
then the ODE~\eqref{w:ODE} becomes
\begin{equation}\notag
z'(x) = p(x) z(x) + \underbrace{Q^{-1}(x) h(x) - (Q^{-1})'(x) Q(x) z(x)}_{:=\tilde{h}} \, .
\end{equation}
By our block diagonalization, bounded solutions $z(x)$ may be decomposed as
\begin{equation}
  \label{eq:zsplit}
z(x) = \underbrace{P_0^s z(x)}_{:=z^s} + \underbrace{P^c_0 z(x)}_{:=z^c} + \underbrace{P^u_0 z(x)}_{:=z^u} \, ,
\end{equation}
where
$$ (z^\bullet)'(x) = m^\bullet(x) z^\bullet(x) + P^{\bullet}_0 \tilde h(x) \, , \qquad \bullet=s,c,u \, . $$
Furthermore, there exists propagators $\Phi^s$ and $\Phi^u$ such that
\begin{equation}\notag
z^s(x) = \int_{-\infty}^x \Phi^s(x,y) \tilde{h}(y) \, dy \, , 
\end{equation}
\begin{equation}\notag
z^u(x) = -\int_{x}^\infty \Phi^u(x,y) \tilde{h}(y) \, dy \, .
\end{equation}
Fix $\eta$ with $|\eta| = 1$ such that ${\rm image} \, P^c_0 = {\rm span} \, \eta$. For the central eigenvalue, we integrate starting from $x=0$ to obtain
\begin{equation}\label{zcuse}
z^c(x) =  d \eta  + \int_0^x e^{\int_y^x \mu(y') \, dy'} \tilde{h}(y) \, dy \, .
\end{equation}
The constraint $l_\varepsilon(u(0)) = d$ is precisely that $z^c(0)  = d\eta$.
Due to the fact that $m^s(x)$ has strictly negative spectrum, and $m^u(x)$ has strictly positive spectrum, there exist $M_1$ and $M_2$ such that propagators satisfy
\begin{equation}\notag
\| \Phi^s(x,y) \| \lesssim e^{-(x-y)/M_1} \, , \quad x > y \, ,
\end{equation}
\begin{equation}\notag
\| \Phi^u(x,y) \| \lesssim e^{-(y-x)/M_1 } \, , \quad y < x \, ,
\end{equation}
and
\begin{equation}\notag
\left| e^{\int_y^x \mu(y') \, dy'} \right| \lesssim e^{-\varepsilon |x-y|/M_2} \, , \quad x, y \in \R \, .
\end{equation}
Notice that $\tilde{h}$ contains an $O(\varepsilon^2)$ linear term in $z$, which, due to its size, is considered lower order and can be added on perturbatively by a contraction argument.   In conclusion, Duhamel's formula yields
\begin{equation}\notag
\| u_2 \|_{L^2} \lesssim \varepsilon^{-1} \| \tilde{g} \|_{L^2} + |d|  \, ,
\end{equation}
and, recalling the definitions~\eqref{eq:u1def} and~\eqref{eq:tildegdef} of $u_1$ and $\tilde{g}$, respectively,
\begin{equation}\notag
\| u \|_{L^2} \lesssim \varepsilon^{-1} \| g \|_{L^2} + |d| \, .
\end{equation}
\end{proof}

\begin{remark}
	\label{rmk:Ihavelowerboundsonell}
The functional $l_\varepsilon$ is nearly a projection onto the subspace spanned by the right eigenvector~$r_p$:
\begin{equation}
l_\varepsilon = |r_p| (l_p+O(\varepsilon)) \cdot \, ,
\end{equation}
as can be seen from~\eqref{eq:Qdefapp},~\eqref{eq:zdefwithQ}, and~\eqref{zcuse}. From the ODE~\eqref{eq:eta:def} for the center manifold parameter $\eta$, the shocks in Proposition~\ref{pro:smallshocksgeneralhyperbolic} therefore satisfy
\begin{equation}
l_\varepsilon(\p_x u(x)) \gtrsim (1+O(\varepsilon)) \varepsilon e^{-c\varepsilon x} \, .
\end{equation}
This will be important in verifying uniqueness up to translation of the kinetic shocks in Section~\ref{sec:fixed:point}.
\end{remark}

\section{Kawashima compensators}\label{sec:kawashima}

\subsection{Basic construction}
Let $\C^m = X \oplus Y$ (orthogonal decomposition) and let $A : \C^m \to \C^m$ be a Hermitian matrix. Let $1 \leq n < m$ and assume that $\dim X = n$. We say that $A$ satisfies the \emph{Kawashima condition} if
\begin{equation}
    \label{eq:A}
    \text{for each non-zero } x \in X, \; A^k x \not\in X \text{ for some } k \in \{ 1, \hdots, n \} \, .
\end{equation}
This is equivalent to
\begin{equation}
\label{eq:B}
    \{ \text{eigenvectors of $A$} \} \cap X = \{ 0 \} \, .
    \end{equation}
\begin{proof}[Proof of equivalence]
($\neg\eqref{eq:B} \implies \neg \eqref{eq:A}$) If there exists a non-zero eigenvector $u$ of $A$ in $X$, then $A^k u \in X$ for all $k$.

($\neg\eqref{eq:A} \implies \neg \eqref{eq:B}$)  Let $x$ be such that $A^k x \in X$ for all $k\in \{1,\dots,n\}$.  Since $X$ is $n$-dimensional and we have assumed that $A^k x \in X$ for $k\in\{1,\dots,n\}$, there exists a smallest positive integer $d\in \{1,\dots,n\}$ for which the set of $d+1$ vectors $\{x, Ax, \dots, A^d x\}$ is a linearly dependent set.  Since $d$ is the smallest such integer, any linear relation with
$$  c_0 x + c_1 Ax + \dots + c_d A^d x = 0 \, , \qquad c_i \textnormal{ not all zero} $$
must be such that $c_d\neq 0$ to avoid contradicting the minimality of $d$.  Fix such a linear relation, which without loss of generality may be chosen such that $c_d=1$. Consider the operator 
$$  A|_{\textnormal{span}\{x, Ax, \dots, A^{d-1}x\}} \, .  $$
Then since $A^dx$ is expressible as a linear combination of $x,Ax, \dots, A^{d-1}x$, we have that $A|_{\textnormal{span}\{x, Ax, \dots, A^{d-1}x\}}$ is a linear operator with domain and range $\textnormal{span}\{x, Ax, \dots, A^{d-1}x\}$.  This linear operator must then have a non-zero eigenvector $x_0 \in X$, showing also that $A:\mathbb{C}^m\rightarrow \mathbb{C}^m$ has a non-zero eigenvector in $X$.
\end{proof}

The standard set-up is to suppose that $L : \R^m \to \R^m$ is a negative semi-definite Hermitian matrix which induces the subspaces $X = \ker L$, and $Y = (\ker L)^{\perp}$, and that $A$ satisfies the Kawashima condition with $X=\ker L$. We now assume throughout that $A, L$ satisfy these properties.

\begin{lemma}
    \label{lem:kawashimalem}
If the Kawashima condition is satisfied, then there exists a \emph{Kawashima compensator}, that is, a skew-adjoint operator $K$ satisfying
\begin{equation}
    \label{eq:Kawashimaconditionestimate}
    \| P_X u \|^2 \lesssim {\rm Re} \langle KA u, u \rangle + \| P_Y u \|^2 \, .
\end{equation}
\end{lemma}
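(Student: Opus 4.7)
My plan is to first rephrase the Kawashima condition in a quantitative form. Consider the linear map $\Phi:X\to Y^{n}$ defined by $\Phi x=(P_Y Ax,\,P_Y A^2x,\dots,P_Y A^n x)$. The formulation in \eqref{eq:A} says precisely that $\Phi$ is injective: if $P_Y A^k x=0$ for all $k\in\{1,\dots,n\}$, then $A^k x\in X$ for each such $k$, which by \eqref{eq:A} forces $x=0$. Since $X$ is finite-dimensional, injectivity of $\Phi$ yields a constant $c_0>0$ such that
\begin{equation}\label{eq:kawaquant}
\sum_{k=1}^{n}\|P_Y A^k x\|^2 \geq c_0\,\|x\|^2\qquad\text{for all }x\in X.
\end{equation}
This is the quantitative input that the compensator has to convert into the estimate \eqref{eq:Kawashimaconditionestimate}.

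Next, I would build $K$ as a linear combination of skew-adjoint ``commutator blocks''. For each $k\in\{1,\dots,n\}$, set
\begin{equation}\notag
K_k := A^{k}P_Y A^{k-1}-A^{k-1}P_Y A^{k},
\end{equation}
which is skew-adjoint because $A$ and $P_Y$ are both self-adjoint. A direct computation gives, for $x\in X$ (so that $P_Y x=0$),
\begin{equation}\notag
\operatorname{Re}\langle K_k Ax,x\rangle \;=\; \|P_Y A^k x\|^2 \;-\; \operatorname{Re}\langle P_Y A^{k+1}x,\,A^{k-1}x\rangle.
\end{equation}
The first term is the ``good'' contribution we want; the second is a leakage between levels. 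I would then take
\begin{equation}\notag
K := \sum_{k=1}^{n}\varepsilon^{k} K_k
\end{equation}
with a small parameter $\varepsilon>0$ to be chosen. Summing the identities and splitting each cross term by Young's inequality with weight $\varepsilon^{k+1}$, the contribution $\varepsilon^{k}\operatorname{Re}\langle P_Y A^{k+1}x,A^{k-1}x\rangle$ gets absorbed partially into $\varepsilon^{k+1}\|P_Y A^{k+1}x\|^2$ (which is the good term at the next level) and partially into $\|x\|^2$-bounded terms with a coefficient that can be made as small as desired by taking $\varepsilon$ small, thanks to $\|A^{k-1}x\|\leq\|A\|^{k-1}\|x\|$.

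Combining these estimates with \eqref{eq:kawaquant} produces, for a further small $\varepsilon$, a bound
\begin{equation}\notag
\operatorname{Re}\langle KAx,x\rangle \;\gtrsim\; \|x\|^2 \qquad \text{for } x\in X.
\end{equation}
Finally, I would extend this to general $u\in\mathbb{C}^m$ by writing $u=P_X u+P_Y u$, expanding $\langle KAu,u\rangle$ bilinearly, and absorbing all cross terms between $P_Xu$ and $P_Yu$ into $\|P_Xu\|^2/2$ and a constant multiple of $\|P_Yu\|^2$ via Young's inequality, using that $\|K\|,\|KA\|\leq C$. This yields \eqref{eq:Kawashimaconditionestimate}.

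The main obstacle is the bookkeeping in step two: the commutator blocks $K_k$ each produce both a desirable squared term $\|P_Y A^k x\|^2$ and a cross term linking levels $k-1$ and $k+1$, and these cross terms must be absorbed without collapsing the positive part that \eqref{eq:kawaquant} is supposed to furnish. Choosing the weights $\varepsilon^k$ together with appropriately weighted Young inequalities is the delicate part; finite-dimensionality of $X$ together with the uniform bound $\|A^{k-1}\|\leq\|A\|^{k-1}$ makes this possible for $\varepsilon$ sufficiently small.
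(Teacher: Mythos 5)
Your strategy is genuinely different from the paper's: instead of building the nested filtration $F_k = \{x : x, Ax, \ldots, A^k x \in X\}$ and the complementary subspaces $E_k$, you work with the iterated blocks $K_k := A^k P_Y A^{k-1} - A^{k-1} P_Y A^k$. The quantitative reformulation of the Kawashima condition and the commutator identity
\begin{equation}\notag
\operatorname{Re}\langle K_k Ax,x\rangle = \|P_Y A^k x\|^2 - \operatorname{Re}\langle P_Y A^{k+1}x, A^{k-1}x\rangle \, , \quad x \in X \, ,
\end{equation}
are both correct. The gap is in the absorption of the cross terms. Write $a_k := \|P_Y A^k x\|$. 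Your plan bounds the cross term at level $k$ by
\begin{equation}\notag
\varepsilon^{k} a_{k+1} a_{k-1} \leq \delta \varepsilon^{k+1} a_{k+1}^2 + \frac{1}{4\delta}\,\varepsilon^{k-1}\|A\|^{2(k-1)}\|x\|^2 \, ,
\end{equation}
and claims the $\|x\|^2$-penalty can be made negligible by shrinking $\varepsilon$. This fails for $n \geq 2$: the $k=2$ contribution to the penalty is already $\Theta(\varepsilon)\|x\|^2$, while the positive contribution you retain is $\sum_{k}\varepsilon^k a_k^2 \geq \varepsilon^n \sum_k a_k^2 \geq c_0 \varepsilon^n\|x\|^2$ --- and no better, because the Kawashima inequality only controls the \emph{unweighted} sum and may concentrate all of its mass on $a_n$. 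Since $\varepsilon^n \ll \varepsilon$, the penalty dominates and the sign of the estimate flips. Shrinking $\varepsilon$ does not help: both sides go to zero, but the positive side goes to zero $n-1$ orders faster.

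The fix is to not give up $a_{k-1}$ to the crude bound $\|A\|^{k-1}\|x\|$. Since $P_Y A^{k+1}x \in Y$ and $Y \perp X$, you have $\langle P_Y A^{k+1}x, A^{k-1}x\rangle = \langle P_Y A^{k+1}x, P_Y A^{k-1}x\rangle$, so the cross term at level $k$ is a product $a_{k+1} a_{k-1}$ of two quantities which are \emph{both} controlled by the positive squares at levels $k\pm 1$. (In particular the $k=1$ cross term vanishes identically for $x\in X$, since $A^0 x = x \in X$.) What remains is to verify that the pentadiagonal quadratic form $\sum_k \varepsilon^k a_k^2 - \sum_k \varepsilon^k a_{k+1}a_{k-1}$ (restricted to levels $1,\dots,n$, treating $a_{n+1}$ separately with the crude bound, which harmlessly contributes $O(\varepsilon^{n+1})\|x\|^2$) is positive definite. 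After the change of variables $c_k = \varepsilon^{k/2}a_k$, this form decouples into two copies of the tridiagonal form $\sum c_j^2 - \sum c_j c_{j+1}$, whose positivity is borderline: a symmetric Young splitting gives exactly $0$ at the middle levels, and one must instead invoke the finite-dimensional spectrum (eigenvalues $1-\cos(\pi\ell/(m+1)) > 0$). So your construction can be pushed through, but not with the absorption argument as you stated it; the sharp step has to be a spectral, not a Young-type, argument.

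By contrast the paper avoids this borderline cancellation entirely: its level-$k$ compensator $K_k = P_{F_{k-1}^\perp} A P_{E_k} - P_{E_k} A P_{F_{k-1}^\perp}$ exchanges $E_k$ only with $F_{k-1}^\perp$, so the error terms live in the strictly smaller subspace $F_k$ (with a controllable small coefficient) and in the already-handled complement $F_{k-1}^\perp$ (with a large coefficient absorbed by backwards induction on $k$). The structural input ``$P_{E_{k-1}}A|_{E_k}$ is injective'' plays the role of your injectivity of $\Phi$, but the filtration prevents the levels from interfering with each other.
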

\noindent Notice that ${\rm Re} \langle KA u, u \rangle = \frac{1}{2} \langle [K,A] u , u \rangle$ by (skew-)adjointness. The model equations to keep in mind are
\begin{equation}
\label{eq:equationtoestimate}
\begin{aligned}
\dot u &= i\tau  A u + Lu  \\
u(\cdot,0) &= u_0
\end{aligned}
\end{equation}
and
\begin{equation}
    \label{eq:steadyequation}
 i\tau A u + Lu + f = 0  \, ,
\end{equation}
 where $\tau \in \R$ is a non-zero parameter. 
Solutions to~\eqref{eq:equationtoestimate} and~\eqref{eq:steadyequation} can be estimated via a combination of two energy estimates:
\begin{enumerate}
\item the standard estimate in which we pair the equation with $u$, and
\item a `twisted' estimate in which we apply $iK$ to the equation, pair with $u$, and take ${\rm Re}$.
\end{enumerate}

\begin{lemma}[Steady case]
    \label{lem:steadyestimate}
    Let $(u,f)$ be a solution to~\eqref{eq:steadyequation}, and assume that the Kawashima condition is satisfied. Then
    \begin{equation}
    \| u \| \lesssim \max(1,|\tau|^{-2}) \| f \| \, . 
    \end{equation}
    In particular, $i\tau A + L$ is invertible. 
\end{lemma}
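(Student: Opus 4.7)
The plan is to derive the estimate by combining two energy estimates: a standard one obtained by pairing \eqref{eq:steadyequation} with $u$, and a ``twisted'' one obtained by applying the Kawashima compensator $iK$ and then pairing with $u$. Then I would close the estimate through a Young's inequality argument, which is the point where the factor $\max(1,|\tau|^{-2})$ appears. Invertibility of $i\tau A + L$ on $\mathbb C^m$ will then follow from the \emph{a priori} bound plus the fact that we are in finite dimensions (injectivity implies surjectivity).

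For the standard estimate, I would pair \eqref{eq:steadyequation} with $u$ and take real parts. Hermiticity of $A$ gives $\mathrm{Re}\,\langle i\tau A u,u\rangle = 0$, while negative semi-definiteness of $L$ together with the fact that $L$ is coercive on $Y=(\ker L)^\perp$ yields $-\mathrm{Re}\,\langle Lu,u\rangle \gtrsim \|P_Y u\|^2$. So this step produces the bound
\begin{equation}
\|P_Y u\|^2 \lesssim \|f\|\,\|u\|.
\notag
\end{equation}

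For the twisted estimate, I would multiply \eqref{eq:steadyequation} by $iK$, pair with $u$, and take real parts. Skew-adjointness of $K$ together with Hermiticity of $A$ and $L$ gives the identity $-\tau\,\mathrm{Re}\,\langle KAu,u\rangle + \mathrm{Re}\,\langle iKLu,u\rangle = -\mathrm{Re}\,\langle iKf,u\rangle$. Applying Lemma~\ref{lem:kawashimalem} to extract $\|P_X u\|^2$, using that $Lu = LP_Y u$ (so $\|KLu\|\lesssim \|P_Y u\|$), and that $K$ is a fixed bounded matrix, I obtain
\begin{equation}
|\tau|\,\|P_X u\|^2 \lesssim |\tau|\,\|P_Y u\|^2 + \|P_Y u\|\,\|u\| + \|f\|\,\|u\|.
\notag
\end{equation}

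The final step is to combine these. Substituting the standard estimate $\|P_Y u\|^2\lesssim \|f\|\|u\|$ into the twisted bound and dividing by $|\tau|\|u\|$ gives, after an application of Young's inequality to absorb the $|\tau|^{-1}\|f\|^{1/2}\|u\|^{1/2}$ term into $\|u\|$,
\begin{equation}
\|u\| \lesssim \bigl(1+|\tau|^{-1}+|\tau|^{-2}\bigr)\|f\| \lesssim \max(1,|\tau|^{-2})\,\|f\|.
\notag
\end{equation}
Invertibility then follows because the bound implies injectivity of $i\tau A + L$ as a linear endomorphism of $\mathbb C^m$. The main subtlety I expect is the bookkeeping with powers of $\tau$: neither energy estimate by itself is coercive in the $X$ direction with a useful constant in $\tau$, and only by playing the coercivity in $Y$ from the standard estimate against the Kawashima gain in $X$ from the twisted estimate (and paying for it with $|\tau|^{-1}$ each time a factor of $\|f\|^{1/2}\|u\|^{1/2}$ has to be absorbed) does one arrive at the sharp $\max(1,|\tau|^{-2})$ prefactor.
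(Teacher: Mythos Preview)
Your proposal is correct and follows essentially the same approach as the paper: combine the standard energy estimate with the $K$-twisted estimate and close with Young's inequality, the only cosmetic difference being that the paper splits explicitly into the cases $|\tau|\ge 1$ and $|\tau|\le 1$ (and uses the slightly more refined form $\|P_Y u\|^2\lesssim \|P_Y f\|^2+\|P_X f\|\,\|P_X u\|$ of the standard estimate, which also yields the sharper bound~\eqref{eq:betterestimate} when $P_X f=0$), whereas you handle both regimes at once.
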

\begin{proof}
    The standard energy estimate yields
    \begin{equation}\label{eq:standard:one}
    \| P_Y u \|^2 \lesssim \| P_Y f \|^2 +  \| P_X f \| \| P_X u \|  \, .
    \end{equation}
    The `twisted estimate' yields the identity
    \begin{equation}
        \label{eq:twisted}
    - \tau {\rm Re} \langle KA u, u \rangle + {\rm Re} \langle iKLu,u \rangle = - {\rm Re} \langle iK f, u \rangle \, ,
    \end{equation}
    which gives
    \begin{equation}
        \label{eq:twistedwithtau}
    |\tau| \| P_X u \|^2 \overset{\eqref{eq:Kawashimaconditionestimate}}{\lesssim} \| P_Y u \| \| u \| + \| f \| \| u \| + |\tau| \| P_Y u \|^2 \, .
    \end{equation}
    If $|\tau| \geq 1$, we sum~\eqref{eq:standard:one} and $|\tau|^{-1} \times$~\eqref{eq:twistedwithtau}:
    \begin{equation}
        \label{eq:fromhere}
    \| u \|^2_{L^2} \lesssim \| P_Y f \|^2 +  \| P_X f \| \| P_X u \| + |\tau|^{-1} \| P_Y u \| \| P_X u \| + |\tau|^{-1} \| f \| \| u \| \, ,
    \end{equation}
    after using $\|P_Y u \| \| u \| \lesssim \| P_Y u \|^2 + \| P_Y u \| \|P_X u \|$ and the estimate~\eqref{eq:standard:one}. We then split products in~\eqref{eq:twistedwithtau} using Young's inequality, and absorb small $\| P u \|^2_{L^2}$ terms into the left-hand side to close the estimate:
    \begin{equation}\notag
    \| P u \|^2_{L^2} \lesssim \| f \|^2
    \end{equation}
    Therefore, we focus on $0 < |\tau| \leq 1$. We have
    \begin{equation}   
  \notag
    \| P_Y u \|^2 + \tau^2 \| P_X u \|^2 \lesssim \| P_Y f \|^2 + \| P_X f \| \| P_X u \| + |\tau| \| P_Y u \| \| P_X u \| + |\tau| \| f \| \| u \| \, ,
    \end{equation}
    which, after we split products and absorb small terms, begets
    \begin{equation}
        \label{eq:betterestimate}
    \| P_Y u \|^2 + \tau^2 \| P_X u \|^2 \lesssim \| P_Y f \|^2 + \tau^{-2} \| P_X f \|^2 \, .
    \end{equation}
\end{proof}

\begin{remark}
Notice that we actually have the more precise estimate~\eqref{eq:betterestimate}. In particular, when $P_X f = 0$, we have the improvement
\begin{equation}\notag
\| u \| \lesssim \max(1,|\tau|^{-1}) \| f \| \, ,
\end{equation}
which is akin to the situation in the steady Landau estimates in Lemma~\ref{lem:basicmicroest} (the right-hand side is purely macroscopic, that is, belongs to the damped subspace).
\end{remark}

We may similarly use the Kawashima compensator to recover coercivity in the time-dependent problem~\eqref{eq:equationtoestimate}.  In fact, one may choose $0 < \delta \ll (1+|\tau|^{-1})^{-1}$ such that
\begin{equation}
    \label{eq:hypocoercivityfunctional}
\Phi[u] := \frac{1}{2} \|u\|^2 + \frac{\delta}{2} ({\rm sgn} \, \tau) {\rm Re} \langle iK u, u \rangle
\end{equation}
 is a hypocoercivity functional.

\begin{lemma}[Time-dependent case]
    Let $u$ be a solution to~\eqref{eq:equationtoestimate}, and assume that the Kawashima condition is satisfied. Then
    \begin{equation}\notag
     \|u(t)\| \lesssim e^{-\mu \min(1,|\tau|^2) t} \|u_0\| \, , 
    \end{equation}
    where $\mu > 0$ is independent of $u_0$.
\end{lemma}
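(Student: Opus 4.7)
The plan is to use the hypocoercivity functional $\Phi[u]$ defined in~\eqref{eq:hypocoercivityfunctional} as a Lyapunov function, with the $\tau$-dependent choice $\delta = \delta_0 \min(|\tau|, 1/|\tau|)$ for a sufficiently small universal constant $\delta_0 > 0$. The first step is to observe that since $iK$ is self-adjoint, the bound $|\mathrm{Re}\langle iKu, u\rangle| \leq \|K\|\|u\|^2$ gives, for $\delta_0 \|K\| \leq 1/4$, the uniform equivalence $\tfrac{1}{4}\|u\|^2 \leq \Phi[u] \leq \tfrac{3}{4}\|u\|^2$.

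Next I would compute $\frac{d}{dt}\Phi$ along solutions of $\dot u = i\tau Au + Lu$. Since $A$ is Hermitian, $\mathrm{Re}\langle i\tau Au, u\rangle = 0$, while $\langle Lu, u\rangle \leq -c_L \|P_Y u\|^2$ follows from $L$ being negative semi-definite with kernel $X$. For the twisted part, using the identity $\frac{d}{dt}\mathrm{Re}\langle iKu, u\rangle = 2\mathrm{Re}\langle iK\dot u, u\rangle$ and noting that the factor $(\mathrm{sgn}\,\tau)$ cancels the sign from $-\tau$, one gets
\begin{equation*}
\frac{d}{dt}\Phi[u] = \langle Lu, u\rangle - \delta|\tau|\,\mathrm{Re}\langle KAu, u\rangle + \delta(\mathrm{sgn}\,\tau)\mathrm{Re}\langle iKLu, u\rangle.
\end{equation*}
I would then invoke~\eqref{eq:Kawashimaconditionestimate} in the rearranged form $\mathrm{Re}\langle KAu, u\rangle \gtrsim \|P_X u\|^2 - \|P_Y u\|^2$, together with the pointwise estimate $|\mathrm{Re}\langle iKLu, u\rangle| \lesssim \|P_Y u\|^2 + \|P_Y u\|\|P_X u\|$ (using $Lu = LP_Y u$ and boundedness of $K, L$). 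The resulting cross term $\delta \|P_Y u\|\|P_X u\|$ is split by Young's inequality as $\tfrac{c_K \delta|\tau|}{2}\|P_X u\|^2 + \tfrac{C \delta}{|\tau|}\|P_Y u\|^2$.

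The pivotal observation is that the choice $\delta = \delta_0 \min(|\tau|, 1/|\tau|)$ makes \emph{both} $\delta|\tau| = \delta_0 \min(\tau^2, 1)$ and $\delta/|\tau| = \delta_0 \min(1, 1/\tau^2)$ bounded by $\delta_0$; consequently all the "error" coefficients are $O(\delta_0)$ and can be absorbed into $-c_L \|P_Y u\|^2$ for $\delta_0$ small. One is left with
\begin{equation*}
\frac{d}{dt}\Phi[u] \leq -\tfrac{c_L}{4}\|P_Y u\|^2 - \tfrac{c_K \delta_0}{2}\min(\tau^2, 1)\|P_X u\|^2 \leq -\mu \min(1, \tau^2)\|u\|^2,
\end{equation*}
which combined with the equivalence $\Phi[u] \asymp \|u\|^2$ and Grönwall's inequality yields the claim. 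The main obstacle is precisely this delicate tuning of $\delta$: for large $|\tau|$, the Kawashima error $C_K \delta|\tau|\|P_Y u\|^2$ forces $\delta|\tau|$ to be bounded, whereas for small $|\tau|$ the Young-split cross term forces $\delta/|\tau|$ to be bounded. The geometric interpolation $\delta \sim \min(|\tau|, 1/|\tau|)$ reconciles these competing constraints, consistent with the hint $\delta \ll (1+|\tau|^{-1})^{-1}$ in the statement.
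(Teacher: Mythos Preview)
Your proof is correct and follows essentially the same hypocoercivity argument as the paper: both differentiate the functional $\Phi[u]$, combine the standard energy estimate with the twisted $K$-estimate, and absorb errors by a $\tau$-dependent choice of $\delta$. The only difference is in the precise tuning: the paper takes $\delta \ll (1+|\tau|^{-1})^{-1}$, whereas you take $\delta = \delta_0 \min(|\tau|,|\tau|^{-1})$. Your choice is slightly more conservative for large $|\tau|$ and has the advantage of transparently controlling the $C_K\delta|\tau|\,\|P_Y u\|^2$ error coming from the Kawashima lower bound~\eqref{eq:Kawashimaconditionestimate}; the paper's displayed error $C_2(1+|\tau|^{-1})\|P_Y u\|^2$ appears to omit this $|\tau|$-contribution, so your bookkeeping here is actually cleaner.
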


\begin{proof}
The standard energy estimate yields
\begin{equation}
    \label{eq:standardtimedep}
\frac{d}{dt} \frac{1}{2} \|u\|^2 \leq - c_1 \|P_Y u\|^2 \, .
\end{equation}
Next, we compute
\begin{equation}\notag
\frac{d}{dt} \frac{1}{2} \langle iKu,u\rangle = \frac{1}{2} \langle iK(i\tau A + L)u,u \rangle + \frac{1}{2} \langle iKu, (i\tau A+L)u \rangle \, .
\end{equation}
The right-hand side is
\begin{equation}\notag
\begin{aligned}
- \underbrace{\frac{1}{2} \tau \langle (KA-AK)u,u \rangle}_{\tau {\rm Re} \langle KA u, u \rangle} + \frac{1}{2} \langle iKLu,u \rangle + \frac{1}{2} \langle iKu,Lu \rangle
\end{aligned}
\end{equation}
Therefore, we obtain
\begin{equation}
\begin{aligned}
    \label{eq:twistedtimedepwithtau}
\frac{d}{dt} \frac{1}{2} ({\rm sgn} \, \tau) {\rm Re} \langle iKu,u\rangle &\leq - |\tau| {\rm Re} \langle KA u, u \rangle + C_1 \|P_Y u\| \left( \|P_X u\| + \| P_Y u \| \right) \\
&\leq - c_2 |\tau| \|P_X u\|^2 + C_2 (1+|\tau|^{-1}) \|P_Y u\|^2 \, .
\end{aligned}
\end{equation}
Finally, we define $\Phi[u]$ as in~\eqref{eq:hypocoercivityfunctional} and compute its time derivative (equivalently, add~\eqref{eq:standardtimedep} and $\delta \times~\eqref{eq:twistedtimedepwithtau}$). We have
\begin{equation}\notag
\frac{d}{dt} \Phi[u] \leq - c_3 \Phi[u] \, ,
\end{equation}
provided that $\delta \leq \sfrac{1}{2} \| iK \|_{\rm HS}$ (Hilbert-Schmidt norm) to ensure that $\Phi[u] \approx \sfrac{1}{2} \|u\|^2$ and $\delta \leq c_1 (2C_2)^{-1} (1+|\tau|^{-1})^{-1}$ to ensure that the last term on the right-hand side of $\delta \times \eqref{eq:twistedtimedepwithtau}$ can be absorbed into the $-c_1 \|P_Y u\|^2$ term on the right-hand side of~\eqref{eq:standardtimedep}. In particular, $c_3 \gtrsim \min( \delta c_2 |\tau|,c_1)$. When $0 < |\tau| \leq 1$, $c_3 \gtrsim |\tau|^2$.
\end{proof}


\begin{proof}[Proof of Lemma~\ref{lem:kawashimalem}]
To motivate the construction, we begin with a special case in which $Ax \not\in X$ for all nonzero $x \in X$.

\noindent\emph{Special case}.  Consider the operator $P_Y A|_X$. Our assumption yields that $P_Y A|_X$ is injective; indeed if $P_Y A(x_1) = P_Y A(x_2)$ for two nonzero vectors $x_1 \neq x_2$, then $A(x_1-x_2)\in X$, a contradiction. Therefore we have that $P_Y A|_X$ is a bijection between $X$ and its image $Z := {\rm im}(P_Y A|_X)$. We recall the decomposition $\C^m = X \oplus Y$ and rewrite $A$ as a block matrix
\begin{equation}\notag
    A = \begin{bmatrix}
    A_{11} & A_{12}  \\
    A_{21} & A_{22}
    \end{bmatrix} \, ,
\end{equation}
where $A_{11} = P_X A|_X \equiv 0$ by our assumption, $A_{21} = P_Y A|_X$, etc. Notably, by our earlier claim that $A_{21} : X \to Z$ is invertible, and the fact that $A_{21} = A_{12}^*$ since $A$ is Hermitian and the projections are orthogonal, we have that
\begin{equation}
\label{eq:positivedef}
A_{12} A_{21} \geq \text{const.} \times {\rm Id}_{X \to X} 
\end{equation} is Hermitian positive definite. Using the same block decomposition as above, we define the Kawashima compensator
\begin{equation}\notag
    K = \begin{bmatrix}
    0 & A_{12} \\
    - A_{21} & 0
    \end{bmatrix} \, .
\end{equation}
The idea is that $K$ acts as a rotation between $X$ and $Y$ which interchanges the undamped region $X$ with the damped region $Y$. Recalling that $A_{11} \equiv 0$, the commutator $[K,A]$ is explicitly computed as
\begin{equation}\notag
    KA-AK = \begin{bmatrix}
    2A_{12}A_{21} & A_{12}A_{22} \\
    - A_{22}A_{21} & - 2 A_{21}A_{12} 
      \end{bmatrix} \, .
\end{equation}
Then
\begin{equation}\notag
\begin{aligned}
{\rm Re} \langle KA u, u \rangle &= \frac{1}{2} \langle [K,A] u, u \rangle \\
&\geq \langle A_{12} A_{21} u, u \rangle - |\langle A_{22} u, A_{21} u \rangle| - |\langle A_{21} A_{12} u, u \rangle| \, .
\end{aligned}
\end{equation}
Finally, we split the cross terms using Young's inequality with $\varepsilon$ and combine the above estimate with~\eqref{eq:positivedef} to obtain~\eqref{eq:Kawashimaconditionestimate}.
\smallskip

\noindent\emph{General case}. We define the subspaces
\begin{equation}\notag
F_k := \{ x \in \mathbb{C}^m : x, Ax, \hdots, A^kx \in X \} \, , \quad k \geq 0 \, .
\end{equation}
With the convention $F_{-1} = \mathbb{C}^n$, they satisfy the nesting property
\begin{equation}\notag
\cdots \subset F_{n} \subset \cdots \subset F_1 \subset F_0 = X \subset \mathbb{C}^n = F_{-1} \, .
\end{equation}
We note that for $k \geq 0$, we can characterize $F_k$ as the preimage $A|_X^{-1}(F_{k-1})$.  Furthermore, if $1 \leq k_0 \leq n$ is the least integer such that Condition~\eqref{eq:A} is satisfied for all $x\in X$, then $F_{k} = \{ 0 \}$ for all $k \geq k_0$.

For $k \geq 0$, we define $E_k$ to be the orthogonal complement of $F_k$ in $F_{k-1}$, so that
\begin{equation}
E_k \oplus F_k = F_{k-1} \, .  \label{billys:decomp}
\end{equation}
Using the convention that $E_0 = Y$, we have as a consequence the decomposition
\begin{equation}\notag
    E_n \oplus \cdots \oplus E_{1} = X \, , \qquad   E_n \oplus \cdots \oplus E_{1} \oplus E_0 = X \oplus Y = \mathbb{C}^m \, .
\end{equation}
In addition, $E_{k_0} = F_{k_0 - 1}$, and $E_k = F_k = \{ 0 \}$ for $k > k_0$.
We now crucially observe that whenever $k\leq k_0$ and $x \in E_k \subset F_{k-1}$, we have $x,Ax,\hdots,A^{k-1} x \in X$ but $A^k x \not\in X$. More precisely, while $Ax \in F_{k-1}$ for each $x\in  F_k$, we have that $Ax \notin F_{k-1}$ for each $x \in E_k$, although $Ax \in F_{k-2}$.  As a consequence of these observations, we have that
\begin{equation}\notag
{\rm ker} \, P_{E_{k-1}} A|_{E_k} = \{ 0 \} \, .
\end{equation}
Indeed if $x_1, x_2 \in E_{k}$ were such that $P_{E_{k-1}}Ax_1 = P_{E_{k-1}}Ax_1$, then $P_{E_{k-1}}A(x_1-x_2)=0 \in F_{k-1}$, contradicting the above claim that $Ax \notin F_{k-1}$ for $x\in E_k$. We have now extracted the required coercivity on the subspace $E_k$.  The special case corresponds to $k_0=1$ and $X=E_1$.

Recalling that $E_{k-1}\subseteq F_{k-1}^\perp$, the desired compensator at level $1 \leq k \leq k_0$ is
\begin{equation}\notag
K_k = P_{F^\perp_{k-1}} A P_{E_k} - P_{E_k} A P_{F_{k-1}^\perp} \, .
\end{equation}
For convenience we write $K_k$ as a block matrix in the subspaces $F_k, E_k, F_{k-1}^\perp$, which from \eqref{billys:decomp} form an orthogonal decomposition of $\mathbb{C}^m$.  Set $A_{11} = P|_{F_k} A P_{F_k}$, $A_{12} = P|_{F_k} A P_{E_{k}}$, etc., and recall that $P_{F_{k-1}^\perp} A P_{F_k} \equiv 0$.  Then $A$ and $K_k$ may be written as
\begin{equation}\notag
A = \begin{bmatrix}
     A_{11} & A_{12} & 0 \\
     A_{21} & A_{22} & A_{23} \\
     0 & A_{32} & A_{33}
    \end{bmatrix} \, , \qquad   K_k = \begin{bmatrix}
     0 & 0 & 0 \\
     0 & 0 & A_{23} \\
     0 & -A_{32} & 0
    \end{bmatrix} \, .
\end{equation}
Computing the commutator leads to
\begin{equation}\notag
K_kA-AK_k = \begin{bmatrix}
     0 & 0 & -A_{12} A_{23} \\
     0 & 2A_{23} A_{32} & A_{23} A_{33} - A_{22}A_{23} \\
     -A_{32} A_{21} & -A_{32} A_{22} + A_{33} A_{32} & -2 A_{32}A_{23}
    \end{bmatrix} \, .
\end{equation}
Note crucially that $A_{23} A_{32}$ is coercive on $E_k$, although we will still need to control the contributions from the rest of the commutator.  We compute $\langle [K_k, A]u , u \rangle$ and repeatedly use Cauchy-Schwarz with $\varepsilon$; specifically, we choose $\varepsilon_k \ll1$ and $C_k \geq \varepsilon_k^{-1}$ and use $\varepsilon_k\| P_{F_k} u \|^2$ to help bound $\langle A_{32} A_{21} u , u \rangle$ and $\langle A_{12} A_{23} u , u \rangle$, and $C_k\| P_{F_{k-1}^\perp} u \|^2$ to help bound $\langle (A_{23} A_{33} - A_{22} A_{23}) u , u \rangle$, $\langle (A_{32} A_{22} - A_{33} A_{32}) u , u \rangle$, and $\langle A_{32} A_{23} u , u \rangle$. As a consequence, we find that for $\delta_k, \varepsilon_k$ sufficiently small and $C_k$ sufficiently large, 
\begin{align}
\delta_k  \left\| P_{E_k} \right\|^2 \leq {\rm Re} \left \langle K_k A u, u \right \rangle + C_k \left\| P_{F_{k-1}^\perp} u \right\|^2 + \varepsilon_k \left\| P_{F_k} u \right\|^2 \, . \label{hypoo}
\end{align}

Finally, the full Kawashima compensator is defined by
\begin{equation}\notag
K = \sum_{i=1}^{k_0} \gamma_i K_i \, ,
\end{equation}
where $\gamma_k$, $C_k$, $\varepsilon_k$, and $\delta_k$ are chosen via backwards induction as follows.  Beginning at level $k_0$ and recalling that $F_{k_0}=\{0\}$ and $F_{k_0-1} = E_{k_0}$, we set $\gamma_{k_0}=1$ so that
\begin{align}
\delta_{k_0} \left\| P_{E_{k_0}} u \right\|^2 &\leq {\rm Re} \left \langle K_{k_0} A u, u \right \rangle + C_{k_0} \left\| P_{E_{k_0}^\perp} u \right\|^2 \notag \\
&= {\rm Re} \left \langle K_{k_0} A u, u \right \rangle + C_{k_0} \left\| P_{E_{k_0-1}} u \right\|^2 + C_{k_0} \left\| P_{(E_{k_0}\oplus E_{k_0-1})^\perp} u \right\|^2 \, . \label{hypo:yay}
\end{align}
Now we may choose $\gamma_{k_0-1}$ large enough and $\varepsilon_{k_0-1}$ small enough so that we can absorb the term with $P_{E_{k_0-1}} u$ above and the term with $P_{F_{k_0}-1}u = P_{E_{k_0}}u$ from \eqref{hypoo} with $k=k_0-1$, obtaining the inequality
\begin{align*}
\frac{\delta_{k_0}}{2} \left\| P_{E_{k_0}} u \right\|^2 + \delta_{k_0-1} \left\| P_{E_{k_0-1} }u \right\|^2 &\leq {\rm Re} \left \langle (K_{k_0} + \gamma_{k_0-1} K_{k_0-1}) A u , u \right \rangle \\
&\qquad + C_{k_0} \left\| P_{(E_{k_0}\oplus E_{k_0-1})^\perp} u \right\|^2 + C_{k_0-1} \left\| P_{F_{k_0-2}^\perp} u \right\|^2 \, . 
\end{align*}
Now recalling that $F_{k_0-2} = E_{k_0-1} \oplus F_{k_0-1} = E_{k_0-1} \oplus E_{k_0}$ and re-labeling $\tilde E_{k_0-1} = E_{k_0}\oplus E_{k_0-1}$, $\tilde \delta_{k_0-1} = \min(\delta_{k_0}/2, \delta_{k_0-1}) $, $\tilde K_{k_0-1} = K_{k_0} + \gamma_{k_0-1} K_{k_0-1}$, and $\tilde C_{k_0-1} = 2\max(C_{k_0}, C_{k_0-1})$, we have that
\begin{align*}
\tilde \delta_{k_0-1} \left\| P_{\tilde E_{ k_0-1} }u \right\|^2 &\leq {\rm Re} \left \langle \tilde K_{k_0-1} A u , u \right \rangle + \tilde C_{k_0-1} \left\| P_{\tilde E_{k_0-1}^\perp} u \right\|^2 \, . 
\end{align*}
But this inequality is identical in character to \eqref{hypo:yay}, and so it is clear that we may continue this process inductively. We then effectively reduce to the special case and obtain that
$$  \delta \left\| P_X u \right\|^2 \leq {\rm Re} \left \langle KA u , u \right \rangle + C \left\| P_Y u \right\|^2  \, ,$$
and thus concluding the proof of Lemma~\ref{lem:kawashimalem}.
\end{proof}

\begin{remark}
Alternatively, one can define the compensator at level $k$ as follows. Let $k>1$  and consider the operator
\begin{equation}\notag
\tilde K_k = P_Y A^k P_{E_k} - P_{E_k} A^k P_Y \, .
\end{equation}
If one were allowed to take the commutator of $\tilde K_k$ with $A^k$, then the construction of $K_k$ would be identical to the construction of $K_1$. However, we may define $K_k= \tilde K_k A^{k-1} + A \tilde K_k A^{k-2} + \dots + A^{k-2} \tilde K_k A + A^{k-1} \tilde K_k$, so that by direct computation, 
\begin{equation}\notag
\langle \tilde K_k , A^k \rangle = \langle K_k, A \rangle \, .
\end{equation}
This construction of $K_k$ demonstrates the direct exchange of coercivity between $E_k$ and $Y$, rather than the indirect exchange between $E_k$ and $Y$ via $E_{k-1}, \dots, E_1$.
\end{remark}

\begin{remark}
In \cite{BZ}, Beauchard and Zuazua gave a similar construction and moreover provide several equivalent characterizations of the Kawashima condition; for example, it is equivalent to the Kalman rank condition familiar from control theory.
\end{remark}

\subsection{Example: Oscillators with partial damping}

Consider the damped harmonic oscillator
\begin{equation}\notag
\ddot x + \kappa \dot x + x = 0 \, ,
\end{equation}
where the friction coefficient $\kappa = 1$ for convenience of exposition. We rewrite the second-order ODE as a first order system
\begin{equation}\notag
\begin{aligned}
\dot x &= y \\
\dot y &= - x - y
\end{aligned} \, .
\end{equation}
With $u = (x,y)$, the ODE can be rewritten in the form~\eqref{eq:equationtoestimate} with
\begin{equation}\notag
A = \begin{bmatrix} 0 & -i \\
i & 0 \end{bmatrix} \, , \quad L = \begin{bmatrix} 0 & 0 \\
0 & - 1 \end{bmatrix} \, .
\end{equation}
The Kawashima compensator is
\begin{equation}\notag
 K = \begin{bmatrix}
 0 & -i \\
 -i & 0
 \end{bmatrix} \, ,
 \end{equation} and the hypocoercivity functional is
\begin{equation}\notag
\Phi = \frac{1}{2} |x|^2 + \frac{1}{2} |y|^2 - \delta xy \, ,
\end{equation}
where $0 < \delta \leq 1/2$. One can analogously consider, for $n \geq 3$, oscillator chains of the type
$$
\dot x_1 = x_2 \, , \qquad \dot x_k = -x_{k-1} + x_k \, , \, 2 \leq k \leq n-1 \, , \qquad \dot x_n = - x_{n-1} - x_n \, .
$$

\subsection{Example: Linearized Landau equation}

For $\kappa\in[0,1]$, consider the linearized (and regularized, when $\kappa\in(0,1]$) Landau equation
\begin{equation}
\label{eq:linearizedLandau}
\p_t f + v \cdot \nabla_x f = L^\kappa f \, ,
\end{equation}
where $L^\kappa = \frac{2}{\sqrt{\mu}} Q_\kappa(\mu,\sqrt{\mu} f)$, $Q_\kappa$ is the collision operator defined in~\eqref{Qkappadef}, and $\mu = e^{-\frac{|v|^2}{2}}$. It is well known that $L$ and $L_{\rm R}$, and thus $L^\kappa$, with appropriate densely-defined domain $D(L) \subset L^2$ are closed, self-adjoint, and positive semi-definite.  We furthermore showed in subsection~\ref{sec:propertiesofLandGamma} that the kernel is precisely
\begin{equation}\notag
X := \ker L^\kappa = {\rm span}(\sqrt{\mu},v \sqrt{\mu}, |v|^2 \sqrt{\mu}) \, .
\end{equation}
Then taking the union of $X$ with its image under multiplication by $v_1,v_2,v_3$, we obtain the subspace consisting of thirteen moments,
\begin{equation}\notag
M_{13} = {\rm span}(\sqrt{\mu},v_i \sqrt{\mu}, v_i v_j \sqrt{\mu}, |v|^2 v_i \sqrt{\mu}) \, ,
\end{equation}
where $1 \leq i,j \leq 3$. The equation~\eqref{eq:linearizedLandau} does not preserve the subspace $M_{13}$, so we cannot reduce the problem to the ODE system~\eqref{eq:equationtoestimate}. Nonetheless, the Kawashima approach will be effective. It will be convenient to consider the Fourier transform in the $x$-direction and, without loss of generality, the Fourier variable $\vec{k} = k \vec{e}_1$. With this reduction, we will consider only nine moments,
\begin{equation}\notag
M_{9} = {\rm span}(\sqrt{\mu},v_i \sqrt{\mu}, v_1 v_i \sqrt{\mu}, |v|^2 \sqrt{\mu}, |v|^2 v_1 \sqrt{\mu}) \, ,
\end{equation}
where $1 \leq i \leq 3$. Let $Y$ be the orthogonal complement of $X$ in $M_{9}$. Our goal is to understand the (projected) multiplication operator\footnote{Note that the operator $v_1 P_{M_9}$ is well-defined with domain $M_9$ and range equal to the image of $M_9$ under multiplication by $v_1$, which can be understood as a subset of a Hilbert space with Gaussian weights, defined as in \eqref{eq:sigma:2}. We are therefore justified in considering $P_{M_9} v_1 P_{M_9}$ as an operator from $M_9$ to itself.  We also apologize for the abuse of notation; this operator $A$ is not the same as the $A$ defined in subsection~\ref{sec:propertiesofLandGamma}.}
\begin{equation}\notag
A = P_{M_{9}} v_1 P_{M_{9}} : M_9 \rightarrow M_9 \, .
\end{equation}
If $A$ satisfies the Kawashima condition, then we can construct an anti-symmetric operator $K:M_9\rightarrow M_9$ such that
\begin{equation}\notag
\| P_X f \|_{L^2}^2 \lesssim {\rm Re} \langle KA f, f \rangle + \| P_Y f \|_{L^2}^2 \, .
\end{equation}
Translated into the notation of sections~\ref{sec:outline}--\ref{sekk:linear}, this implies that
\begin{equation}\label{kawa:useful}
\| P g \|_{L^2}^2 \les \langle [K P_{\leq 9}, v_1 ] P_{\leq 9} g , P_{\leq 9} g \rangle + \| \langle v \rangle^{-100} P_{\leq 9} P^\perp g \|_{L^2}^2 \, ,
\end{equation}
where the $\langle v \rangle^{-100}$ is allowed due to the fact that all elements of $M_9$ have Gaussian decay and so can afford any polynomial weight. The computation (actually a more detailed version than is required in our setting) is already contained in the book of Glassey \cite[pg. 75]{Glassey}.  We note that by combining this estimate with the standard $L^2$ energy estimate, one can obtain time-asymptotic linear stability. 
\begin{lemma}
$A$ satisfies the Kawashima condition.
\end{lemma}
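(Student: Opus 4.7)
The plan is to verify the Kawashima condition via the equivalent spectral characterization~\eqref{eq:B}, i.e., by showing that $A : M_9 \to M_9$ has no non-zero eigenvector lying in $X := \ker L^\kappa$. This route bypasses any explicit computation of powers $A^k$ on a basis and reduces the question to a one-line pointwise argument.

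The key observation to exploit is that the subspace $M_9$ has been engineered precisely so that $v_1 \cdot X \subset M_9$. Indeed, writing a generic element of $X$ as
\begin{equation*}
x = (a_0 + a_1 v_1 + a_2 v_2 + a_3 v_3 + a_4 |v|^2)\sqrt{\mu},
\end{equation*}
each of the five terms appearing in $v_1 x$---namely $v_1 \sqrt{\mu}$, $v_1^2 \sqrt{\mu}$, $v_1 v_2 \sqrt{\mu}$, $v_1 v_3 \sqrt{\mu}$, $v_1 |v|^2 \sqrt{\mu}$---is manifestly one of the nine spanning monomials of $M_9$. Consequently $P_{M_9}$ is redundant on $v_1 X$, and $Ax = P_{M_9}(v_1 x) = v_1 x$ pointwise for every $x \in X$. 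An eigenvalue relation $Ax = \lambda x$ for $x \in X$ therefore reduces to $(v_1 - \lambda)x = 0$ as an identity of $L^2(\R^3)$ functions; since $x$ is a polynomial in $v$ of degree at most two multiplied by the Gaussian $\sqrt{\mu}$, it either vanishes identically or is non-zero off a null set, while the hyperplane $\{v_1 = \lambda\}$ has measure zero. This forces $x \equiv 0$, contradicting non-triviality. Hence~\eqref{eq:B} holds, and Lemma~\ref{lem:kawashimalem} produces the compensator $K$; the Fourier-side estimate~\eqref{kawa:useful} follows directly, as already noted in the discussion preceding the lemma.

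I do not anticipate any real obstacle: all the work is absorbed into the choice of $M_9$. Dropping any of the second-order moments $v_1 v_i \sqrt{\mu}$ or $v_1 |v|^2 \sqrt{\mu}$ would break the inclusion $v_1 X \subset M_9$ and would force a more delicate analysis of $P_{M_9}$. A classical alternative in the spirit of Glassey~\cite[p.~75]{Glassey} would verify~\eqref{eq:A} by computing successive $A^k e_i$ on an explicit basis of $X$ and tracking when the orbit first exits $X$; this yields the same conclusion with more bookkeeping and has the side benefit of exhibiting the length of the Kawashima chain quantitatively.
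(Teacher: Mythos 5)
Your proof is correct, but it takes a route that differs from the paper's. Both proofs hinge on the same key observation --- that $v_1 \cdot X \subset M_9$, so $A|_X$ is honest multiplication by $v_1$ without any projection --- but you then verify the spectral characterization~\eqref{eq:B} via the measure-theoretic fact that a multiplication operator has no $L^2$ eigenfunctions of polynomial-times-Gaussian form (the eigenfunction equation $(v_1-\lambda)x = 0$ forces $x$ to vanish off the null hyperplane $\{v_1 = \lambda\}$). The paper instead verifies the iteration characterization~\eqref{eq:A} directly, doing a case analysis on the coefficients of the prefactor polynomial $p(v) = a + b_i v_i + c|v|^2$ to show that $v_1 p$ exits ${\rm span}\{1,v,|v|^2\}$ except in the single residual case $p \equiv a$, where $v_1^2 p$ then does. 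Your argument is arguably more conceptual and more robust: it would work unchanged for any subspace $X$ of polynomial-times-Gaussian functions satisfying $v_1 X \subset M_9$, regardless of which specific polynomials span it, and it sidesteps case-by-case bookkeeping. The paper's approach is more concrete and carries a side benefit you mention yourself --- it exhibits that the Kawashima chain exits $X$ after at most two applications of $A$, quantitative information that is sometimes useful when sizing the compensator in Lemma~\ref{lem:kawashimalem}, but which is not needed here. One small technicality you should be aware of but that does not affect correctness: the equivalence~\eqref{eq:A}$\iff$\eqref{eq:B} is stated over $\mathbb{C}$, so $\lambda$ is a priori complex; however, since $A$ is real symmetric this is harmless, and even for complex $\lambda$ the relation $(v_1-\lambda)x=0$ forces $x\equiv 0$ by the same reasoning.
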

\begin{proof}
Since every element of $M_9$ contains a factor of $\mu^{\sfrac 12}$, we can simply focus on the prefactors, which are polynomials in $\{v_1, v_2, v_3 \}$ of degree at most $2$.  We claim that given any non-zero polynomial $p(v) = a + b_i v_i + c|v|^2 \in {\rm span}\{1, v, |v|^2\}$, either $v_1 p(v) \notin {\rm span} \{1, v, |v|^2\}$, or $v_1^2 p(v) \notin {\rm span} \{1,v, |v|^2\}$.  If $c\neq 0$, then $v_1 p(v)$ is degree 3 and we are done.  If $c = 0$ and there exists $b_{i_0}\neq 0$, then $v_1 b_i v_i$ is a homogeneous quadratic polynomial which is however not in the span of $|v|^2$, and we are done.  Finally, if $c=b_i=0$, then $a\neq 0$, and $a v_1^2$ is a homogeneous quadratic polynomial which is again not in the span of $|v|^2$.
\end{proof}

\begin{remark}
The micro-macro energy method introduced by Guo \cite{guo2006boltzmann} (see also \cite{duan2009stability,duan2011hypocoercivity,strain2012optimal}) is an explicit example of a Kawashima compensator for the linearized Landau equations. Indeed, if we define higher order moments
$\Theta[g]=(\Theta_{ij}[g])_{d\times d}$ and $\Lambda[g]=(\Lambda_1[g],\dots,\Lambda_d[g] )$ by
\begin{align*}
	&\Theta_{ij}[g]=\brak{(v_iv_j-1)\sqrt{\mu},g}_{L^2_v},\\
    &\Lambda_i[g]=\brak{(|v|^2-(d+2))v_i\sqrt{\mu},g}_{L^2_v} \, , 
\end{align*}
the micro-macro energy used in \cite{guo2006boltzmann} is written as the following, for each frequency in $x$: 
	\begin{align*}
\cM:=\,&\Re(\Lambda[(I-P_X)\widehat{f}_k]\cdot  (ik\widehat{E}_k) +b_1\Re\big(\big(\Theta[(I-P_X)\widehat{f}_k]+(2\widehat{E}_k I)\big):(ik\widehat{m}_k+(ik\widehat{m}_k)^T)\big) \notag \\
	&+b_2\Re \big(\widehat{m}_k\cdot (ik\widehat{\varrho}_k)\big),
\end{align*}
where $0<b_2\ll b_1\ll 1$ are small universal constants. 
However, note that this energy can be written in terms of an anti-symmetric operator $K:M_{13} \to M_{13}$: 
\begin{align*}
\cM = \brak{P_{\leq 13} f, K P_{\leq 13} f}. 
\end{align*}
\end{remark}

\begin{remark}
Heuristically, the continuity equation for the density $\varrho$ is $\p_t \varrho + \Div_x (\varrho u) = 0$, which is undamped. However, the density exchanges energy with the momentum, which has its own damping.  We verify the Kawashima condition for the linearized compressible Navier-Stokes equations in its own section, since it requires the notion of symmetrizability.
\end{remark}

\begin{remark}
The Kawashima compensator strategy also gives upper bounds on Taylor dispersion of a passive scalar in a shear flow. Upon applying the Fourier transform in the longitudinal variable, $X$ is the subspace corresponding to the mean in the cross-sectional variable.
\end{remark}

\section{Proof of Lemmas~\ref{lem:propertiesofL} and~\ref{lem:lemma10:new}}
\label{sec:proofofnonlinearestimate}

\begin{proof}[Proof of Lemma~\ref{lem:propertiesofL}, estimate~\eqref{new:co}]
We first prove~\eqref{new:co}.  We begin by recalling~\eqref{some:IDs} and writing that 
\begin{align*}
&-\left \langle \langle v \rangle^{2\theta} L^{\rm R} g, g \right \rangle \\
&\qquad= \left \langle \langle v \rangle^{2\theta} A_{\rm R} g, g \right \rangle + \left \langle \langle v \rangle^{2\theta} K_{\rm R} g, g \right \rangle \\
&\qquad= \left \langle \langle v \rangle^{2\theta} \partial_i [\sigma_{\rm R}^{ij} \partial_j  g], g \right \rangle - \left \langle \langle v \rangle^{2\theta} \sigma_{\rm R}^{ij} v_i v_j g , g \right \rangle + \left \langle \langle v \rangle^{2\theta} \partial_i \sigma_{\rm R}^i g , g \right \rangle + \left \langle \langle v \rangle^{2\theta} K_{\rm R} g, g \right \rangle  \\
&\qquad=  - | g |_{\sigma_{\rm R}, \theta}^2 - \left \langle \partial_i \langle v \rangle^{2\theta} \sigma^{ij}_{\rm R} \partial_j g , g \right\rangle + \left \langle \langle v \rangle^{2\theta} \partial_i \sigma_{\rm R}^i g , g \right \rangle + \left \langle \langle v \rangle^{2\theta} K_{\rm R} g, g \right \rangle \, .
\end{align*}
From~\cite[Lemma~5]{G02}, we have that for any $m>1$, there exists $0<C(m)<\infty$ such that
\begin{align*}
\left| \left \langle \langle v \rangle^{2\theta} K_{\rm R} g, g \right \rangle \right| \leq \frac{C}{m} | g |_{\sigma_{\rm R}, \theta}^2 + C(m) \int_{|v| \leq C(m)} \left| \langle v \rangle^{\theta} g \right|^2 \, \dee v \leq \frac{C}{m} + C(m,\theta) | g |_{\sigma_{\rm R}, \theta-\sfrac 12}
\end{align*}
Choosing $m$ large enough achieves a bound for $K_{\rm R}$ commensurate with that of~\eqref{new:co}, and so we focus on the remaining error terms from $A_{\rm R}$.    From~\eqref{waits} and~\cite[Lemma~3]{G02}, we have that
\begin{equation}
\left| \partial_i {\rm w}_{\rm R}^{2\theta} \sigma_{\rm R}^{ij} \right| + \left| {\rm w}_{\rm R}^{2\theta} \partial_i \sigma_i \right| \les {\rm w}_{\rm R}^{2\theta} \, ,\notag
\end{equation}
and so
\begin{align*}
\left|  - \left \langle \partial_i \langle v \rangle^{2\theta} \sigma^{ij}_{\rm R} \partial_j g , g \right\rangle + \left \langle \langle v \rangle^{2\theta} \partial_i \sigma_{\rm R}^i g , g \right \rangle \right| &= \left|  - \left \langle \partial_i {\rm w}_{\rm R}^{2\theta} \sigma^{ij}_{\rm R} \partial_j g , g \right\rangle + \left \langle {\rm w}_{\rm R}^{2\theta} \partial_i \sigma_{\rm R}^i g , g \right \rangle \right| \\
&\les \int_{\R^3} {\rm w}_{\rm R}^{2\theta} |g | \left( |g| + |\partial_ j g | \right) \, \dee v \\
&\les \int_{\R^3} {\rm w}_{\rm R}^{2(\theta-\sfrac 12)} \langle v \rangle |g | \left( |g| + |\partial_ j g | \right) \, \dee v \\
&\lesssim | g |_{\sigma_{\rm R}, \theta-\sfrac 12}^2 \, .
\end{align*}
Therefore the bound for these terms matches the required bound for~\eqref{new:co}, concluding the proof.  

In order to prove~\eqref{new:co:kappa}, we recall the definition of the $|\cdot|_{\sigma_\kappa, \ell}$ norm from~\eqref{H1kappa} and apply~\eqref{GS:Lemma:9b} and~\eqref{new:co}.  In order to handle the second term from the right-hand side of~\eqref{GS:Lemma:9b}, we bound the integrand, which is confined to a ball around the origin of finite radius, by a multiple of the $|\cdot|_{\sigma, \vartheta+\sfrac 12}$ norm.  
\end{proof}

\begin{proof}[Proof of Lemma~\ref{lem:lemma10:new}]
These nonlinear estimates are similar to \cite[Lemma~10, eqn.~(71)]{GS08}.  The first difference is that in \cite{GS08}, certain terms which contain derivatives of $g_1$ and $g_2$ are bounded using norms which do not include the weights $\sigma$ and $w$.  The second difference is that our estimate must treat the harder potential $\phi_{\rm R}$, which is not considered in~\cite{GS08}.  However, the proof from \cite{GS08} can be adjusted easily to prove the more general estimates we require, and thus the proof below will follow closely the structure of that from \cite{GS08}.  The idea is to use the appearance of $\mu_0^{\sfrac 12}$ and properly balanced applications of H\"{o}lder's inequality to deal with any decay issues.  We will emphasize these slight adjustments when they arise.  We shall also freely use notations and labels from the proof of \cite[Lemma~10]{GS08} so that the reader may easily compare the two.  Finally, since we must treat both $\Gamma$ and $\Gamma_{\rm R}$, we use will use the parameter $\gamma$ to distinguish between the two cases; specifically, $\gamma=-3$ refers to $Q$ (following the convention of~\cite{GS08}, for which the potential in the Landau operator is written as $|v|^{\gamma+2}$, and thus $\gamma=-3$ corresponds to the operator in~\eqref{symmed}), and $\gamma=-1$ refers to $Q_{\rm R}$.   We shall append $\gamma$ to $\sigma$ (in the ``sigma'' norms), $\Gamma$ (the nonlinear operator), $\phi^{ij}$ (the kernel in the collision operator), and $\ell$ (the index of the polynomial moment); more precisely, we set $\ell_\gamma$ to be $\ell$ when $\gamma=-3$, and $\ell_\gamma=-\ell$ when $\gamma=-1$, which matches the difference between ${\rm w}(\ell,0,0)={\rm w}_{\rm R}(-\ell)$.

Using the product rule and \cite[eqn.~(49)]{GS08}, which does not depend on the value of $\gamma$, we expand the inner product as
\begin{align}
\langle {\rm w}^2 \partial_\beta^\alpha \Gamma_\gamma[g_1,g_2] , \partial_\beta^\alpha g_3 \rangle &=  \sum_{\alpha_1, \beta_1} C(\alpha_1,\beta_1,\alpha,\beta)  \bigg{[}
-\left\langle {\rm w}^2 \{ \phi_\gamma^{ij} \ast \partial_{\beta_1} [\mu^{\sfrac 12} \partial^{\alpha_1} g_1] \} \partial_j \partial_{\beta-\beta_1}^{\alpha-\alpha_1} g_2 , \partial_i \partial_\beta^\alpha g_3 \right\rangle \notag\\
&\qquad\qquad\qquad\qquad - \frac 12 \left\langle {\rm w}^2 \{ \phi_\gamma^{ij} \ast \partial_{\beta_1} [ v_i \mu^{\sfrac 12} \partial^{\alpha_1} g_1] \} \partial_j \partial_{\beta-\beta_1}^{\alpha-\alpha_1} g_2 , \partial_\beta^\alpha g_3 \right\rangle \notag\\
&\qquad\qquad\qquad\qquad + \left\langle {\rm w}^2 \{ \phi_\gamma^{ij} \ast \partial_{\beta_1} [\mu^{\sfrac 12} \partial_j\partial^{\alpha_1} g_1] \}\partial_{\beta-\beta_1}^{\alpha-\alpha_1} g_2 , \partial_i \partial_\beta^\alpha g_3 \right\rangle \notag\\
&\qquad\qquad\qquad\qquad + \frac 12 \left\langle {\rm w}^2 \{ \phi_\gamma^{ij} \ast \partial_{\beta_1} [v_i \mu^{\sfrac 12} \partial_j \partial^{\alpha_1} g_1] \}  \partial_{\beta-\beta_1}^{\alpha-\alpha_1} g_2 , \partial_\beta^\alpha g_3 \right\rangle \notag\\
&\qquad\qquad\qquad\qquad - \left\langle \partial_i [{\rm w}^2] \{ \phi_\gamma^{ij} \ast \partial_{\beta_1} [ \mu^{\sfrac 12} \partial^{\alpha_1} g_1] \} \partial_j \partial_{\beta-\beta_1}^{\alpha-\alpha_1} g_2 , \partial_\beta^\alpha g_3 \right\rangle \notag\\
&\qquad\qquad\qquad\qquad + \left\langle \partial_i [{\rm w}^2] \{ \phi_\gamma^{ij} \ast \partial_{\beta_1} [ \mu^{\sfrac 12} \partial_j \partial^{\alpha_1} g_1] \} \partial_{\beta-\beta_1}^{\alpha-\alpha_1} g_2 , \partial_\beta^\alpha g_3 \right\rangle \notag
\bigg{]} \\
&=: (73) + \cdots + (78) \, .  \notag
\end{align}
These six terms are labeled (73)--(78) in \cite[Lemma~10]{GS08}, and in keeping with the order of proof, we estimate (78) first.  After defining ${\rm w}_1\in L^\infty$ by $\left(-\ell \brak{v}^{-2} - \sfrac{q\theta}{4} \brak{v}^{\theta-2} \right)$, we notice that $\partial_i [{\rm w}^2]={\rm w}^2(v){\rm w}_1(v) v_i$.  Now using that $\phi_\gamma^{ij}v_i\equiv\phi_\gamma^{ij}v_j \equiv 0$, we rewrite (78) as
\begin{equation}\label{e:78:rewritten}
\left\langle [{\rm w}^2 {\rm w}_1] \{ \phi_\gamma^{ij} \ast v_i \partial_{\beta_1} [ \mu_0^{\sfrac 12} \partial_j \partial^{\alpha_1} g_1] \} \partial_{\beta-\beta_1}^{\alpha-\alpha_1} g_2 , \partial_\beta^\alpha g_3 \right\rangle \, .
\end{equation}
We then bound
\begin{align}
\{ \phi_\gamma^{ij} \ast v_i \partial_{\beta_1} [\mu_0^{\sfrac 12} \partial_j \partial^{\alpha_1} g_1 ] \}(v) &\leq [|\phi_\gamma^{ij}|^2 \ast \mu_0^{\sfrac 18}]^{\sfrac 12}(v) \sum_{\bar\beta \leq \beta_1} |\mu_0^{\sfrac{1}{32}} \partial_j \partial_{\bar\beta}^{\alpha_1} g_1|_{\ell_\gamma} \notag\\
&\lesssim [1+|v|]^{\gamma+2} \sum_{\bar\beta\leq \beta_1} \left| \partial_{\bar\beta}^{\alpha_1} g_1 \right|_{\sigma_\gamma,{\ell_\gamma}} \, . \label{e:lem10:1}
\end{align}
These inequalities match \cite[eqn.~(79)]{GS08}; note that we have used the appearance of $\mu_0^{\sfrac{1}{32}}$ inside the ${\ell_\gamma}$ norm to control $ |\mu_0^{\sfrac{1}{32}} \partial_j \partial_{\bar\beta}^{\alpha_1} g_1|_{{\ell_\gamma}}$ with $\left| \partial_{\bar\beta}^{\alpha_1} g_1 \right|_{\sigma_\gamma,{\ell_\gamma}}$.  We then integrate in $v$ to obtain that \eqref{e:78:rewritten} is bounded by a constant multiplied by
\begin{align}
&\sum_{\bar\beta \leq \beta_1} \left| \partial_{\bar \beta}^{\alpha_1} g_1 \right|_{\sigma_\gamma,{\ell_\gamma}} \int_{\R^3} {\rm w}^2(\ell,q,\theta) [1+|v|]^{\gamma+2} \left| \partial_{\beta-\beta_1}^{\alpha-\alpha_1} g_2 \partial_\beta^\alpha g_3 \right| \, \dee v \notag\\
&\quad\leq \sum_{\bar\beta \leq \beta_1} \left| \partial_{\bar \beta}^{\alpha_1} g_1 \right|_{\sigma_\gamma,{\ell_\gamma}} \left( \int_{\R^3} {\rm w}^2(\ell,q,\theta) [1+|v|]^{\gamma+2} \left| \partial_{\beta-\beta_1}^{\alpha-\alpha_1} g_2 \right|^2 \, \dee v \right)^{\sfrac 12} \notag\\
&\qquad \qquad \times \left( \int_{\R^3} {\rm w}^2(\ell,q,\theta) [1+|v|]^{\gamma+2} \left| \partial_{\beta}^{\alpha} g_3 \right|^2 \, \dee v \right)^{\sfrac 12}\notag \\
&\quad\leq \sum_{\bar\beta \leq \beta_1} \left| \partial_{\bar \beta}^{\alpha_1} g_1 \right|_{\sigma_\gamma,{\ell_\gamma}} \left| \partial_{\beta-\beta_1}^{\alpha-\alpha_1} g_2 \right|_{\sigma_\gamma,{\ell_\gamma},q,\theta} \left| \partial_{\beta}^{\alpha} g_3 \right|_{\sigma_\gamma,{\ell_\gamma},q,\theta} \, ,  \label{eq:lem:10:holder}
\end{align}
where we have used Lemma~\ref{l:norm:control} to achieve the last inequality. The above string of inequalities differs from the estimates in \cite{GS08} only in how we split ${\rm w}^2 [1+|v|]^{\gamma+2}$ in the application of H\"{o}lder, allowing us to use $|\cdot|_{\sigma_\gamma,{\ell_\gamma},q,\theta}$ norms on $g_2$ and $g_3$. This concludes the proof for (78).

We now split the estimates for (73)-(77) into several cases, depending on the variables of integration $v',v$ in the convolution and inner product:
\begin{equation}\notag
\{ |v| \leq 1 \} \, , \qquad \{2|v'|\geq |v|, \, |v|\geq 1\}\, , \qquad \{ 2|v'| \leq |v|, \, |v|\geq 1  \} \, .
\end{equation}
In fact the last case will be further split into cases depending on the terms in a Taylor expansion of $\phi^{ij}_\gamma$.\\

\noindent\textbf{Case 1: } Terms (73)--(77) over $\{|v|\leq 1\}$.  In this region, we may ignore issues due to $v$-weights on $g_2$ and $g_3$.  Focusing on the convolutions with $g_1$, for (73) we have that for $|v|\leq 1$,
\begin{align}\label{eq:73:1}
\int_{\R^3} \phi_\gamma^{ij}(v-v') \partial_{\beta_1} [\mu_0^{\sfrac 12} \partial^{\alpha_1} g_1](v') \, dv' &= \int_{\R^3} \phi_\gamma^{ij}(v-v') \sum_{\bar\beta\leq \beta_1} C(\bar\beta,\beta_1) \partial_{\bar\beta} \mu_0^{\sfrac 12}(v') \partial_{\beta_1-\bar\beta}^{\alpha_1} g_1(v') \, .
\end{align}
Each term in the sum contains polynomial terms in $v'$ and a Gaussian weight in $v'$, which we may split between $\partial_{\beta_1-\bar\beta}^{\alpha_1}g_1$ and $\phi_\gamma^{ij}(v-v')$ to make the latter integrable in $L^2$ and bound the former using the $|\partial_{\beta_1-\bar\beta}^{\alpha_1}g_1|_{\sigma_\gamma,{\ell_\gamma}}$.  Applying Young's inequality for convolutions, we may then bound \eqref{eq:73:1} by a constant multiplied by
\begin{equation}\label{eq:case:1:1}
\sum_{\bar\beta\leq\beta_1}  |\partial_{\beta_1-\bar\beta}^{\alpha_1}g_1|_{\sigma_\gamma,{\ell_\gamma}} \, .
\end{equation}
Applying the same reasoning for (74)-(77), we may bound the $g_1$ contribution of all five terms in (73)-(77) by the quantity in \eqref{eq:case:1:1}.  Then using that the inner product in $v$ is restricted to $|v|\leq 1$, we bound the $g_2$ and $g_3$ contributions using the $|\cdot|_{\sigma_\gamma,{\ell_\gamma},q,\theta}$ norm.  This concludes the proof of the first case. \\

\noindent\textbf{Case 2: } Terms (73)--(77) over $\{2|v'| \geq |v|, \, |v|\geq 1 \}$.  Using that $|v|\leq 2 |v'|$, we have that for any $\delta>0$, there exist implicit constants such that
\begin{equation}\notag
|\partial_{\beta_1} \mu_0^{\sfrac 12}(v')| + |\partial_{\beta_1}\{ v'_j \mu_0^{\sfrac 12}(v')\} | \lesssim \mu_0^{\sfrac{\delta}{100}}(v') \mu_0^{\sfrac{1}{4}-\delta}(v) \, .
\end{equation}
Using the same type of arguments as in \eqref{e:lem10:1} and \eqref{eq:case:1:1}, we may bound the contribution to the integrand in $v$ in this region from the convolution with $g_1$ in (73) by
\begin{align*}
&\left| \int_{\R^3} \phi^{ij}(v-v') \sum_{\bar\beta\leq\beta_1} \partial_{\bar\beta} \mu_0^{\sfrac 12}(v') \partial_{\beta_1-\bar\beta}^{\alpha_1} g_1(v') \, dv' \right| \\
&\qquad \leq \int_{\R^3} |\phi^{ij}(v-v')| \mu_0^{\sfrac{\delta}{100}}(v') \mu_0^{\sfrac 14-\delta}(v) \sum_{\bar\beta\leq\beta_1} \left|\partial_{\beta_1-\bar\beta}^{\alpha_1} g_1(v') \right| \, dv' \\
&\qquad \leq \mu_0^{\sfrac{1}{4}-\delta}(v) (1+|v|)^{\gamma+2} \sum_{\bar\beta\leq\beta_1} \left| \partial_{\beta_1-\bar\beta}^{\alpha_1} g_1 \right|_{\sigma_\gamma,{\ell_\gamma}} \, .
\end{align*}
Then applying H\"{o}lder as in \eqref{eq:lem:10:holder} and using the leftover power of a Maxwellian in $v$ and Lemma~\ref{l:norm:control} to use the weighted norms on $g_2$ and $g_3$, we bound (73) in this region by
\begin{equation}\notag
\sum_{\bar\beta\leq\beta_1} \left| \partial_{\beta_1-\bar\beta}^{\alpha_1} g_1 \right|_{\sigma_\gamma,{\ell_\gamma}}  \left| \partial_{\beta_1-\bar\beta}^{\alpha-\alpha_1} g_2 \right|_{\sigma_\gamma,{\ell_\gamma},q,\theta}  \left| \partial_{\beta}^{\alpha} g_3 \right|_{\sigma_\gamma,{\ell_\gamma},q,\theta} \, .
\end{equation}
Note here that we are using that $q\in(0,1)$, and that the choice of $q$ depends on $\delta$.  Note also that if $\theta <2$, then $q$ does not have to be taken to be small. Similar arguments for (74)-(77) conclude the proof of the second case.  As in the previous estimates, the only differences with \cite{GS08} are the usage of the weighted norms and the extension to the $Q_{\rm R}$ operator.\\

\noindent\textbf{Case 3: } Terms (73)--(77) with $\{ 2|v'|\leq |v|, \, |v| \geq 1 \}$ and the first term in the expansion for $\phi_\gamma^{ij}$.  Before beginning the analysis of the remaining region, we Taylor expand $\phi_\gamma^{ij}$ as
\begin{align}\notag
\phi^{ij}_\gamma(v-v') = \phi^{ij}_\gamma(v) - \partial_k \phi_\gamma^{ij}(v) v_k' + \frac 12 \partial_{km} \phi_\gamma^{ij}(\tilde v) v_k' v_m' \, ,
\end{align}
where $\tilde v$ lies on the line segment connecting $v$ and $v-v'$.  We then have three more cases, where each corresponds to the region $\{ 2|v'|\leq |v|, \, |v| \geq 1 \}$ but paired with the first, second, and third terms from the above expansion, respectively.

We now first consider (73) with $\phi^{ij}(v-v')$ replaced with $\phi^{ij}(v)$.  In this case, there is no convolution, and so we use H\"{o}lder to bound
\begin{align*}
\left| \int_{\R^3} \partial_{\beta_1} \left(  \mu_0^{\sfrac 12} \partial^{\alpha_1} g_1 \right)(v') \, dv' \right| \lesssim \sum_{\bar\beta\leq \beta_1} \left| \partial_{\bar\beta}^{\alpha_1} g_1 \right|_{\sigma_\gamma,{\ell_\gamma}} \, .
\end{align*}
The above bound holds due to the presence of the Gaussian $\mu_0^{\sfrac 12}$, which allows us to ignore decay issues and use the $|\cdot|_{\sigma_\gamma,\ell}$ norm.  Then using again that $\phi_\gamma^{ij}v_i \equiv \phi_\gamma^{ij} v_j \equiv 0$, we may bound the entire inner product by 
\begin{align*}
& \sum_{\bar\beta\leq \beta_1} \left| \partial_{\bar\beta}^{\alpha_1} g_1 \right|_{\sigma_\gamma,{\ell_\gamma}} \left|\left\langle {\rm w}^2 \phi_\gamma^{ij} (I-P_v) \partial_j \partial_{\beta-\beta_1}^{\alpha-\alpha_1} g_2 , (I-P_v) \partial_i \partial_\beta^\alpha g_3 \right\rangle \right| \\
&\qquad \leq \sum_{\bar\beta\leq \beta_1} \left| \partial_{\bar\beta}^{\alpha_1} g_1 \right|_{\sigma_\gamma,{\ell_\gamma}} \left| \partial_{\beta-\beta_1}^{\alpha-\alpha_1} g_2 \right|_{\sigma_\gamma,{\ell_\gamma},q,\theta} \left| \partial_{\beta}^{\alpha} g_3 \right|_{\sigma_\gamma,{\ell_\gamma},q,\theta} \, ,
\end{align*}
where we have used the fact that $\phi_\gamma^{ij}$ is $O(|v|^{\gamma+2})$ and Lemma~\ref{l:norm:control}, which uses precisely this decay rate for the projection off of $v$.  This finishes the analysis of (73) with the first term from the Taylor expansion of $\phi_\gamma^{ij}$.  In order to bound (74)--(76), we use similar arguments; indeed the Gaussian $\mu_0^{\sfrac 12}(v')$ allows us to bound all the contributions from $g_1$ using $\sum_{\bar\beta\leq \beta_1} \left| \partial_{\beta_1}^{\alpha_1} g_1 \right|_{\sigma_\gamma,{\ell_\gamma}}$, and any $\partial_i$'s or $\partial_j$'s landing on $g_2$ and $g_3$ only see the projection off of $v$, for which the $|\cdot|_{\sigma_\gamma,{\ell_\gamma},q,\theta}$ norm provides a good control.  For (77) we have that $\partial_i [{\rm w}^2] \phi_\gamma^{ij} \equiv 0$, concluding the analysis of this case.\\

\noindent\textbf{Case 4: } Terms (73)--(77) with $\{ 2|v'|\leq |v|, \, |v| \geq 1 \}$ and the second term in the expansion for $\phi_\gamma^{ij}$.  Recalling that $\phi_\gamma^{ij}v_iv_j \equiv \phi_\gamma^{kj} v_j \equiv 0$, we have that
\begin{align}\notag
\partial_k (\phi_\gamma^{ij}(v) v_i v_j) \equiv 0 \implies \partial_k \phi_\gamma^{ij} v_i v_j \equiv 0 \, .
\end{align}
We also note that $\partial_k \phi_\gamma^{ij}$ is $O([1+|v|]^{\gamma+1})$ as $v\rightarrow \infty$.  Applying the first of these observations to (73), we must then bound
\begin{align*}
& \left\langle {\rm w}^2 \partial_k\phi_\gamma^{ij} \left\{ \int v_k' \partial_{\beta_1} [\mu_0^{\sfrac 12} \partial^{\alpha_1} g_1](v') \,dv' \right\} \partial_j \partial_{\beta-\beta_1}^{\alpha-\alpha_1} g_2 , \partial_i \partial_\beta^\alpha g_3 \right\rangle \\
&\qquad = \left\langle {\rm w}^2 \partial_k\phi_\gamma^{ij} \left\{ \int v_k' \partial_{\beta_1} [\mu_0^{\sfrac 12} \partial^{\alpha_1} g_1](v') \,dv' \right\} (I-P_v) [\partial_j \partial_{\beta-\beta_1}^{\alpha-\alpha_1} g_2] , P_v [\partial_i \partial_\beta^\alpha g_3] \right\rangle \\
&\qquad\qquad + \left\langle {\rm w}^2 \partial_k\phi_\gamma^{ij} \left\{ \int v_k' \partial_{\beta_1} [\mu_0^{\sfrac 12} \partial^{\alpha_1} g_1](v') \,dv' \right\} (I-P_v) [\partial_j \partial_{\beta-\beta_1}^{\alpha-\alpha_1} g_2] , (I-P_v) [\partial_i \partial_\beta^\alpha g_3] \right\rangle \\
&\qquad\qquad + \left\langle {\rm w}^2 \partial_k\phi_\gamma^{ij} \left\{ \int v_k' \partial_{\beta_1} [\mu_0^{\sfrac 12} \partial^{\alpha_1} g_1](v') \,dv' \right\} P_v [\partial_j \partial_{\beta-\beta_1}^{\alpha-\alpha_1} g_2] , (I-P_v) [\partial_i \partial_\beta^\alpha g_3] \right\rangle \, ,
\end{align*}
where the worst term with $P_v$ on both $g_2$ and $g_3$ has vanished.  We note that all of the above identities are identical to those from \cite{GS08}.  Plugging in the decay of $\partial_k \phi_\gamma^{ij}$ to the first of these terms, we may bound
\begin{align*}
&\left| \left\langle {\rm w}^2 \partial_k\phi_\gamma^{ij} \left\{ \int v_k' \partial_{\beta_1} [\mu_0^{\sfrac 12} \partial^{\alpha_1} g_1](v') \,dv' \right\} (I-P_v) [\partial_j \partial_{\beta-\beta_1}^{\alpha-\alpha_1} g_2] , P_v [\partial_i \partial_\beta^\alpha g_3] \right\rangle \right| \\
&\qquad \leq \sum_{\bar\beta\leq \beta_1} \left| \partial_{\bar\beta}^{\alpha_1} g_1 \right|_{\sigma_\gamma,{\ell_\gamma}} \left( \int_{\R^3} {\rm w}^2 [1+|v|]^{\gamma+2} \left|(I-P_v) \partial_j \partial_{\beta-\beta_1}^{\alpha-\alpha_1} g_2 \right|^2 \, dv \right)^{\sfrac 12} \notag\\
&\qquad \qquad \qquad \times \left( \int_{\R^3} {\rm  w}^2 [1+|v|]^{\gamma} \left| P_v \partial_i \partial_{\beta}^{\alpha} g_3 \right|^2 \, dv \right)^{\sfrac 12} \\
&\qquad \leq \sum_{\bar\beta\leq \beta_1} \left| \partial_{\bar\beta}^{\alpha_1} g_1 \right|_{\sigma_\gamma,{\ell_\gamma}}  \left| \partial_{\beta-\beta_1}^{\alpha-\alpha_1} g_2 \right|_{\sigma_\gamma,{\ell_\gamma},q,\theta} \left| \partial_{\beta}^{\alpha} g_3 \right|_{\sigma_\gamma,{\ell_\gamma},q,\theta} \, ,
\end{align*}
where we have crucially used the different decay rates for $P_v$ and $I-P_v$ in the application of Lemma~\ref{l:norm:control} to obtain a bound slightly better than that of \cite{GS08}.  It is clear that similar arguments apply to the second and third terms from the decomposition of (73).  For (74)--(77), we note that either $\partial_i$ or $\partial_j$ has always landed off of $g_3$ or $g_2$, respectively.  Therefore we can always split $(1+|v|)^{\gamma+1}$ into $(1+|v|)^{\sfrac\gamma2}$ to pair with the extra derivative, and $(1+|v|)^{\sfrac{\gamma+2}{2}}$ for the remainder and obtain a bound identical to the one above, concluding the analysis of this case.\\

\noindent\textbf{Case 5: } Terms (73)--(77) with $\{ 2|v'|\leq |v|, \, |v| \geq 1 \}$ and the third term in the expansion for $\phi_\gamma^{ij}$.  To analyze the final term, we first note that due to the region of integration, we have that as in \cite[eqn.~(83)]{GS08},
\begin{align}\notag
\frac 12 |v| \leq |v| - |v'| \leq |\tilde v| \leq |v'| + |v| \leq \frac 32 |v| \quad \implies \quad |\partial_{km}\phi_\gamma^{ij}(\tilde v) | \lesssim (1+|\tilde v|)^{\gamma} \lesssim (1+|v|)^{\gamma} \, .
\end{align}
Plugging this into (73), we have 
\begin{align*}
&\left| \left\langle {\rm w}^2 \left\{ \int \partial_{km}\phi_\gamma^{ij}(\tilde v) v_m' v_k' \partial_{\beta_1} [\mu_0^{\sfrac 12} \partial^{\alpha_1} g_1](v') \,dv' \right\} \partial_j \partial_{\beta-\beta_1}^{\alpha-\alpha_1} g_2 , \partial_i \partial_\beta^\alpha g_3 \right\rangle \right| \notag\\
&\qquad \lesssim \left\langle {\rm w}^2 [1+|v|]^{\gamma} \left| \left\{ \int v_m' v_k' \partial_{\beta_1} [\mu_0^{\sfrac 12} \partial^{\alpha_1} g_1](v') \,dv' \right\} \right| \left|\partial_j \partial_{\beta-\beta_1}^{\alpha-\alpha_1} g_2 \right| , \left| \partial_i \partial_\beta^\alpha g_3 \right| \right\rangle \\
&\qquad \lesssim \sum_{\bar\beta\leq \beta_1} \left| \partial_{\bar\beta}^{\alpha_1} g_1 \right|_{\sigma_\gamma,{\ell_\gamma}} \left( \int_{\R^3} {\rm w}^2 [1+|v|]^{\gamma} \left| \partial_j \partial_{\beta-\beta_1}^{\alpha-\alpha_1} g_2 \right|^2 \, \dee v \right)^{\sfrac 12} \notag\\
&\qquad \qquad \qquad \times \left( \int_{\R^3} {\rm w}^2 [1+|v|]^{\gamma} \left| \partial_i \partial_{\beta}^{\alpha} g_3 \right|^2 \, \dee v \right)^{\sfrac 12} \\
&\qquad \leq \sum_{\bar\beta\leq \beta_1} \left| \partial_{\bar\beta}^{\alpha_1} g_1 \right|_{\sigma_\gamma,{\ell_\gamma}} \left| \partial_{\beta-\beta_1}^{\alpha-\alpha_1} g_2 \right|_{\sigma_\gamma,{\ell_\gamma},q,\theta} \left| \partial_{\beta}^{\alpha} g_3 \right|_{\sigma_\gamma,{\ell_\gamma},q,\theta} \, .
\end{align*}
Arguing similarly for the terms (74)--(77) concludes estimates for the final case.
\end{proof}

\printindex
\addcontentsline{toc}{section}{Index of notations and conventions}


\bibliographystyle{abbrv}
\bibliography{biblio}

\end{document}